\begin{document}
\title{Semiadditive Alternating Powers and Twisted Power Operations}
\begin{titlepage}
    \maketitle
    \begin{abstract}
    
        We study a class of representations of symmetric groups in higher semiadditive categories. For these representations in $\ModEn$, the transchromatic character of Hopkins--Kuhn--Ravenel and Stapleton is recovered as a sequence of monoidal characters on suitable categorifications, giving an explicit algorithm for its computation, and relating it to the iterated monoidal character in $(\infty,\chrHeight)$-categories. 
        These representations also give rise to notions of alternating powers and power operations in semiadditive categories, extending the classical alternating powers and $\lambda$-operations in $\K$-theory. 
        We provide explicit computations in both the chromatic and higher categorical settings at low heights.

        \begin{figure}[h]
            \centering
            \includegraphics[width=0.36\linewidth]{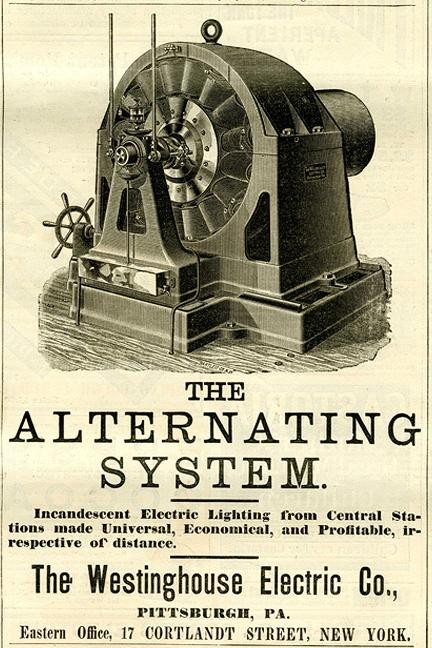}
            \caption*{Westinghouse Electric Company. 1888. \\\quotes{The Alternating Current System.}}
        \end{figure}
    \end{abstract}
\end{titlepage}

\tableofcontents
\newpage
\tableofcontents 

\section{Introduction}
\label{sec:intro}
    Let $\field$ be a field of characteristic 0. Given a $\field$-vector space $V$, its $\degree$-th tensor power induces a $\Sm$-representation
    \begin{equation*}
        \Tm V \coloneqq V\om \in \Vect^{\B\Sm}.
    \end{equation*}
    The classical approach for studying representations is by  decomposing them to isotypic components corresponding to irreducible representations. The simplest irreducible representations are 1-dimensional, i.e.\ characters. 
    For a character $\hchar\colon \Sm \to \field\units$, we denote the isotypic component of $\Tm V$ corresponding to $\hchar$ by $\alt_{\hchar}V$. It can be computed as 
    \begin{equation*}
        \alt_{\hchar} V = (\Tm V \otimes \field[\overline{\hchar}])^{\Sm}
    \end{equation*}
    where $\field[\overline{\hchar}]$ is the 1-dimensional representation corresponding to the inverse character $\overline{\hchar}$. 
    
    The group $\Sm$ admits exactly 2 characters over $\field$: the trivial and the sign representations
    \begin{equation*}
        \triv \colon \Sm \to e \into \field\units, \qquad
        \sgn \colon \Sm \xto{\sgn} \ZZ/2 \into \field\units
    \end{equation*}
    and their isotypic components correspond to the classical symmetric and alternating power respectively:
    \begin{equation*}
        \alt_{\triv} V = \Sym V, \qquad
        \alt_{\sgn} V = \alt V.
    \end{equation*}

    One can also decategorify this story, using algebraic $\K$-theory, in order to get power operations acting on the ring $\K(\field)$. That is, we get two maps
    \begin{equation*}
        \alt_{\triv} = \Sym \colon \K(\field) \to  \K(\field), \qquad 
        \alt_{\sgn} = \alt \colon \K(\field) \to \K(\field).
    \end{equation*}
    These correspond respectively to the usual $\degree$-th symmetric power map and to the $\lambda$ ring structure $\lambda_{\degree}$.
    
    We extend this story to the higher semiadditive setting, in which the group~$\Sm$ supports far more characters than in the classical linear case.

    \medskip\noindent
    \textbf{Higher semiadditive characters.}
    Classically, every character of a finite group factors through the roots of unity of the ground field, and it is therefore natural to assume that the field contains all roots of unity.
    The corresponding condition for an $\infty$-semiadditive, symmetric monoidal category~$\cC$\footnote{Throughout this article, we will use the term `category' to mean an `$(\infty,1)$-category.' Similarly we will use the term `$n$-category' to mean an `$(\infty,n)$-category.' We will use the term `space' to mean an '$(\infty,0)$-category' or an `$\infty$-groupoid'.} is \emph{$(\SS_{(p)},\chrHeight)$-orientability} in the sense of~\cite{BCSY-Fourier}.
    When $\cC$ is $(\SS_{(p)},\chrHeight)$-oriented, the unit object carries a map of connective spectra
    \begin{equation*}
        \Inprime \to \ounit_{\cC}\units,
    \end{equation*}
    where $\Inprime = \tau_{\ge 0}\Sigma^{\chrHeight} I_{\QQ_p/\Zp}$ is the connective cover of the $\chrHeight$-shifted $p$-typical Brown--Comenetz dual of the sphere.
    One may regard $\Inprime$ as the height-$\chrHeight$ analogue of the $p$-power roots of unity $\mu_{p^{\infty}}\simeq\QQ_p/\Zp$.

    For example, $\Vect$ is $(\SS_{(p)},0)$-orientable if and only if $\field$ admits all $p$-typical roots of unity.

    A fundamental theorem of Barthel--Carmeli--Schlank--Yanovski states that $\ModEn$ is $(\SS_{(p)}, \chrHeight)$-orientable.
    
    Throughout the paper we restrict attention to those characters that factor through this orientation map; that is, to $\EE_{1}$-maps
    \begin{equation*}
        \hchar \colon \Sm \to \Inprime.
    \end{equation*}
    We note that such characters are in bijection with $\Hom(\pi_{\chrHeight+1}(\redSS[\B\Sm]), \QQ_p/\ZZ_p)$, where $\redSS[-]$ denotes the reduced suspension spectrum functor. In particular, the set of such characters can be arbitrarily large.
    
    For each such character we define a twisted alternating power functor
    \begin{equation*}
        \begin{split}
            \alt_{\hchar} \colon \cC & \to \cC \\
            V & \mapsto (V\om \otimes \ounit_{\cC}[\overline{\hchar}])^{h\Sm}.
        \end{split}
    \end{equation*}

    Similarly, if $H \to \Sm$ is a homomorphism of finite groups and $\hchar \colon H \to \In$ is a character of $H$ we define
    \begin{equation*}
        \alt_{H,\hchar} V \coloneqq (V\om \otimes \ounit_{\cC}[\overline{\hchar}])^{hH}.
    \end{equation*}
    In height 0 this is the classical story, while in height 1 it generalizes the exterior powers of super linear categories as studied in \cite{Ganter-Kapranov-2014-exterior-categories}\footnote{In the terminology of \cite{Ganter-Kapranov-2014-exterior-categories}, the naive exterior powers correspond to the character we denote by $\sgn^{(1)}$ (\cref{def:sgn^(1)}) and the exterior powers correspond to the character induced by $\widehat{\eta^2} \in \widehat{\pi}_2(\SS)$ as in \cref{characters-from-dual-stable-stems}.}.

    \medskip\noindent
    \textbf{Induced character formula.}
    The dimensions of vector spaces obtained as fixed points of representation, such as symmetric and alternating powers, have a well known formula: If $G$ is a finite group and $V$ is a finite dimensional $G$-representation with character $\chi_V$, then
    \begin{equation*}
        \dim (V^{G}) = \sum_{[g] \in G / \conj} \frac{\chi_{V}(g)}{|\cent_G(g)|}.
    \end{equation*}
    This formula was extended by \cite{Ponto-Shulman-2014-induced-character, Carmeli-Cnossen-Ramzi-Yanovski-2022-characters} using the induced character formula, giving in the 1-semiadditive context for $V \in \cC\dblA[\BG]$ the formula
    \begin{equation*}
        \dim (V^{h G}) = \int_{\L\BG} \chi_{V}.
    \end{equation*}

    Working in $(\infty,\chrHeight)$-categories, one can iterate this formula to compute the iterated dimension using the iterated character: Assume $V \in \cC$ is fully dualizable, then for $1 \le t \le \chrHeight$
    \begin{equation*}
        \dim^t (V^{h G}) = \int_{\L^t \BG} \chi^t_{V}.
    \end{equation*}
    See \cref{subsec:higher-cats} for a formal definition of full dualizability, iterated dimensions and iterated characters.

    Although $\ModEn$ is not an $(\infty,\chrHeight)$-category, it has an analogous iterated character using the transchromatic character developed by Hopkins--Kuhn--Ravenel, and extended by Stapleton and Lurie (\cite{Hopkins-Kuhn-Ravenel-2000-HKR, Stapleton-2013-HKR, Lurie-2019-Elliptic3}). 
    The transchromatic character of height $\chrHeight-t$ is a map we denote
    \begin{equation*}
        \chi^{t,\HKR}_{(-)} \colon \En^{A} \to (C^{\chrHeight}_{\chrHeight-t})^{\Lp^t A}
    \end{equation*}
    for any $\pi$-finite space $A$, where $C^{\chrHeight}_{\chrHeight-t}$ is the $\Kn[\chrHeight-t]$-local splitting algebra of the $p$-divisible group on $\LKn[\chrHeight-t]\En$.\footnote{That is, it is the $\Kn[\chrHeight - t]$-localization of the splitting algebra constructed in \cite{Stapleton-2013-HKR} and \cite{Lurie-2019-Elliptic3}. Throughout this paper, we always consider the splitting algebra in its monochromatically localized form.} Let $V \in (\ModEn)\dblA[\BG]$. 
    Then, by the induced character formula and the shifted semiadditivity of the transchromatic character (\cite[Remark 7.4.8, Remark~7.4.9]{Lurie-2019-Elliptic3}, \cite[Theorem~A]{Ben-Moshe-2024-shifted-semiadditive}),
    \begin{equation}\label{eq:shifted-semiadditivity}\tag{$\spadesuit$}
        \dim(V^{hG}) = \int^{\En}_{\L\BG} \chi_V = \int^{C^{\chrHeight}_{\chrHeight-t}}_{\Lp^{t-1}\L\BG} \chi^{t-1,\HKR}_{\chi_V}.
    \end{equation}

    The monoidal iterated character and the transchromatic character behave similarly and seem to be connected, though they originate from entirely different constructions: the former through successive decategorification and the latter through the theory of formal groups. One of the main goals of this paper is to show that the transchromatic character can similarly be recovered via iterated decategorification, in the case of twisted alternating powers.

    \medskip\noindent
    \textbf{Categorification of maps.}
    The chromatic Nullstellensatz \cite{Yuan-2024-redshift-En, Burklund-Schlank-Yuan-2022-Nullstellensatz} allows one to define a notion of \emph{decategorification} for local systems. Fix a map
    \begin{equation*}
        \LTnp\K((\ModEn)\dbl) \to \Enp(L)
    \end{equation*}
    for some algebraically closed field $L$, and define the decategorification map for a $\pi$-finite space $A$ as the composition
    \begin{equation*}
        \de \colon ((\ModEn)\dblspace)^A \to (\LTnp\K((\ModEn)\dbl))^A \to \Enp(L)^A.
    \end{equation*}
    This provides a means of comparing monoidal characters with transchromatic characters. Specifically, one hopes that the following diagram commutes up to an isomorphism of $\En(K)$:
    \begin{equation*}
        \begin{tikzcd}
            {((\ModEn)\dblspace)^A} && {\Enp(L)^{A}} \\
            {\En^{\L A}} && {C^{\chrHeight+1}_{\chrHeight}(L)^{\Lp A}} \\
            {\En^{\Lp A}} && {\En(K)^{\Lp A},}
            \arrow["\de", from=1-1, to=1-3]
            \arrow["\chi", from=1-1, to=2-1]
            \arrow["{\chi^{1,\HKR}}", from=1-3, to=2-3]
            \arrow[from=2-1, to=3-1]
            \arrow[from=2-3, to=3-3]
            \arrow[from=3-1, to=3-3]
        \end{tikzcd}
    \end{equation*}
    where the map $C^{\chrHeight+1}_{\chrHeight}(L) \to \En(K)$ exists again by the chromatic Nullstellensatz.

    In \cite[Corollary~5.3.24, Lemma~5.3.25]{Keidar-Ragimov-2025-twisted-graded}, it was shown that the diagram commutes for representations of the form
    \begin{equation*}
        \Tm V \otimes \En{}[\overline{\hchar}] \in (\ModEn)\dblA[\B\Sm],
    \end{equation*}
    where $\hchar \colon \Sm \to \In$ is a character. We extend this result to higher transchromatic characters.
    We start by noticing that the character
    \begin{equation*}
        f \coloneqq \chi_{\Tm V \otimes \En{}[\overline{\hchar}]} \in \En^{\L\B\Sm}
    \end{equation*}
    itself admits a categorification. That is, there exists a local system $V_1 \in (\ModEn[\chrHeight-1])^{\L\B\Sm}$ such that $\de(V_1) \simeq f$, after composing with some map $\En \to \En(L')$. 
    Extending the methods of \cite{Keidar-Ragimov-2025-twisted-graded}, we show that the above diagram also commutes for the categorification $V_1$, yielding
    \begin{equation*}
        \res{\chi_{V_1}}{\Lp \L \B\Sm} \simeq \chi^{1,\HKR}_{f}.
    \end{equation*}
    This process can be repeated for all heights, that is, the $(t-1)$-transchromatic character admits a categorification, whose character agrees with the $t$-transchromatic character. More precisely:

    \begin{alphThm}[\cref{thm:chi-tilde-t-is-HKR}]\label{alphthm}
        Let $\hchar \colon \Sm \to \In$ be a character and $V \in (\ModEn)\dbl$. 
        Then for every $1 \le t \le \chrHeight+1$, the map
        \begin{equation*}
            \chi^{t-1,\HKR}_{\chi_{\Tm V \otimes \En{}[\overline{\hchar}]}} \colon \Lp^{t-1}\L \B\Sm \to C^{\chrHeight}_{\chrHeight+1-t} \to \En[\chrHeight+1-t](K_{t-1})
        \end{equation*}
        admits a categorification
        \begin{equation*}
            V_t \in (\ModEn[\chrHeight - t])^{\Lp^{t-1} \L \B\Sm}.
        \end{equation*}
        Moreover, its character
        \begin{equation*}
            \res{\chi_{V_t}}{\Lp^t \L \B\Sm} \colon \Lp^t \L \B\Sm \to \En[\chrHeight - t] \to \En[\chrHeight - t](K_{t})
        \end{equation*}
        agrees with the transchromatic character
        \begin{equation*}
            \chi^{t,\HKR}_{\chi_{\Tm V \otimes \En{}[\overline{\hchar}]}} \colon \Lp^t \L \B\Sm \to C^{\chrHeight}_{\chrHeight - t} \to \En[\chrHeight - t](K_{t}).
        \end{equation*}
    \end{alphThm}

    \begin{remark}
        One can also show that for any $1 \le t \le \chrHeight+1$
        \begin{equation*}
            \res{\chi^{t-1, \HKR}_{\chi_{\Tm V \otimes \En{}[\overline{\hchar}]}}}{\Lp^t \B\Sm} \simeq \chi^{t,\HKR}_{\de(\Tm V \otimes \En{}[\overline{\hchar}])} \simeq \res{\chi_{V_{t-1}}}{\Lp^t \B\Sm}.
        \end{equation*}
    \end{remark}

    This yields an explicit formula for the transchromatic character in these cases, which is often otherwise mysterious. Moreover, the result can be used to give an alternative proof of \labelcref{eq:shifted-semiadditivity} using categorical methods.

    A similar approach to computing transchromatic characters via categorification was used in \cite{BMCSY-cardinality} to evaluate the cardinalities of $\pi$-finite $p$-local spaces in $\En$. Our results are a direct generalization.
    
    We prove \cref{alphthm} by analyzing iterated characters of $\ounit[\overline{\hchar}]$ and of permutation representations. In the process, we express the character of $\Tm V \in \cC^{\B(G \wr \Sm)}$ in terms of the character of $V$, for any $V \in \cC\dblA[\BG]$ (\cref{lem:character-of-wreath-product}). 
    
    In order to compute the character of $\Tm V$, one must first describe $\L\B(G \wr \Sm)$, i.e.\ classify the conjugacy classes and centralizers in the wreath product $G \wr \Sm$. This classification is well known (see, e.g., \cite{Bernhardt-Niemeyer-Rober-Wollenhaupt-2022-wreath}). In \cref{app:conjugacy-of-wreath}, we provide a short alternative proof of this group-theoretic description, using monoidal dimensions and semiadditive integrals.

    \medskip\noindent
    \textbf{Twisted power operations.}
    A similar story to that of twisted alternating powers is that of twisted power operations, which provide a decategorification of that construction. Let $\cC$ be a $(\SS_{(p)}, \chrHeight)$-oriented symmetric monoidal category. Given a homomorphism of finite groups $H \to \Sm$ and a map of spaces
    \begin{equation*}
        \hchar \colon \BH \to \Sigma \Inprime[\chrHeight-1] \to \ounit_{\cC}\units,
    \end{equation*}
    we define the \emph{$\hchar$-twisted power operation}
    \begin{equation*}
        \beta^{\degree}_{H, \hchar} \colon \ounit_{\cC} \xto{(-)^{\degree}} \ounit_{\cC}^{\BH} \xto{(-) \cdot \overline{\hchar}} \ounit_{\cC}^{\BH} \xto{\int_{\BH}} \ounit_{\cC}.
    \end{equation*}
    When $\hchar$ is trivial, this reduces to the usual semiadditive power operation (see, e.g., \cite[Definition~4.1.1]{CSY-teleambi}).
    
    \begin{example}
        Let $\cC = \Mod_{\KUp[2]}(\Sp^{\wedge}_2)$ be the category of 2-completed $\KUp[2]$-modules. Then the sign character recovers the $\lambda$-ring structure on $\KUp[2]$.
    \end{example}
    
    Working in $\Mod_{\cC}$, which has unit $\cC$, pointed maps
    \begin{equation*}
        \hchar \colon \B\Sm \to \Sigma \Inprime \to \cC\units
    \end{equation*}
    are in bijection with $(\SS_{(p)}, \chrHeight)$-orientation characters in $\cC$. In this setting, the twisted power operations coincide with the alternating powers.
    
    In the chromatic case $\cC = \ModEn$, this construction categorifies as well. Let $P \subseteq \Sm$ be a $p$-subgroup. Then for any $\hchar \colon \BP \to \Sigma \Inprime[\chrHeight-1]$ and $d \in \ZZ$,
    \begin{equation*}
        \beta^{\degree}_{P,\hchar}(d) = \begin{cases}
            \dim(\alt_{P,\hchar} \Enm^{\oplus d}), & d \ge 0, \\
            \dim(\alt_{P,\hchar} \Sigma \Enm^{\oplus (-d)}), & d < 0.
        \end{cases}
    \end{equation*}
    
    \begin{remark}
        Using this and previous results, one can compute the value of twisted power operations on integers. However, we do not know how to evaluate twisted power operations on general elements.
    \end{remark}

    \medskip\noindent
    \textbf{Computations for the character $\mathbf{sgn^{(1)}}$.}
    A theorem of Schur (\cite{Schur-1911-alternating-groups}) shows that for $\degree \ge 4$ there exists a unique non-trivial map (i.e.\ a character)
    \begin{equation*}
        \sgn^{(1)} \colon \B\Sm \to \Sigma^2 \QQ_2/\ZZ_2 = \Sigma^2 \mu_{\SS_{(2)}}^{(0)} \to \Sigma \mu_{\SS_{(2)}}^{(1)}.
    \end{equation*}
    We compute the dimensions of the alternating powers completely for this character in the cases 
    \begin{itemize}
        \item $\cC = \ModEn[1]$ at $p=2$, and
        \item  $\cC = \Mod_{\sVect}$ for $\field$ cyclotomically closed.
    \end{itemize}
    From these, one can deduce the evaluation of the corresponding height 2 twisted power operations on integers.

    We adopt the following notations:
    \begin{notation}
        For $\sigma \in \Sm$ we define:
        \begin{enumerate}
            \item $\numcyc(\sigma)$ to be the number of cycles in the disjoint cycle decomposition of $\sigma$;
            \item $\mcal{O}(\degree)$ to be the set of conjugacy classes in $\Sm$ whose disjoint cycle decompositions contain no even cycles;
            \item $\mcal{D}(\degree)$ to be the set of conjugacy classes in $\Sm$ whose disjoint cycle decompositions have pairwise distinct cycle lengths and contain an odd number of even-length cycles.
        \end{enumerate}
        We also write $\Sm^{(2)}$ for the set of 2-power torsion permutations and 
        \begin{equation*}
            \mcal{O}^{(2)}(\degree) \coloneqq \mcal{O}(\degree) \cap \Sm^{(2)} / \conj, \qquad \mcal{D}^{(2)}(\degree) \coloneqq \mcal{D}(\degree) \cap \Sm^{(2)} / \conj.
        \end{equation*}
    \end{notation}

    \begin{alphThm}[\cref{prop:categorical-height-1}, \cref{prop:dim-of-alternating-in-height-1-<0}, \cref{thm:conjugacy-in-Sm+}]
        Let $\degree \ge 4$. Then
        \begin{enumerate}
            \item In $\cC = \Mod_{\sVect}$, for $d \in \ZZ_{\ge 0}$
            \begin{equation*}
                \dim^2(\alt_{\sgn^{(1)}} (\sVect^{\oplus d})) = \sum_{[\sigma] \in \mcal{O}(\degree) \cup \mcal{D}(\degree)} d^{\numcyc}.
            \end{equation*}
            \item For any $V \in (\ModEn[1])\dbl$ with $d \coloneqq \dim(V)$,\footnote{It follows from \cite[Proposition~10.11]{Mathew-2016-Galois} that $d$ is an integer.} 
            \begin{equation*}
                \dim(\alt_{\sgn^{(1)}} V) = \sum_{[\sigma] \in \mcal{O}^{(2)}(\degree) \cup \mcal{D}^{(2)}(\degree)} d^{\numcyc}.
            \end{equation*}
        \end{enumerate}
    \end{alphThm}

    In the 2-typical case, the sets $\mcal{O}^{(2)}(\degree)$ and $\mcal{D}^{(2)}(\degree)$ can be easily understood in terms of the binary expansion of $\degree$. From this, we deduce:
    \begin{corollary}[\cref{cor:dim-height-1->=0}, \cref{cor:dim-height-1-<0}]
        Let $V \in (\ModEn[1])\dbl$ with $\dim(V) = d$, and let $\degree \ge 4$ have binary expansion $\degree = \sum_i b_i 2^i$. Then:
        \begin{enumerate}
            \item If $|\{i > 0 \mid b_i = 1\}|$ is odd and $d \ge 0$, then
            \begin{equation*}
                \dim \alt_{\sgn^{(1)}} V = d^{\degree}.
            \end{equation*}
            
            \item If $|\{i > 0 \mid b_i = 1\}|$ is even and $d \ge 0$, then
            \begin{equation*}
                \dim \alt_{\sgn^{(1)}} V = d^{\degree} + d^{|\{i \ge 0 \mid b_i = 1\}|}.
            \end{equation*}
        
            \item If $|\{i > 0 \mid b_i = 1\}|$ is odd and $d < 0$, then
            \begin{equation*}
                \dim \alt_{\sgn^{(1)}} V = d^{\degree} + (-d)^{\degree} - 1.
            \end{equation*}
        
            \item If $|\{i > 0 \mid b_i = 1\}|$ is even and $d < 0$, then
            \begin{equation*}
                \dim \alt_{\sgn^{(1)}} V = d^{\degree} + (-d)^{\degree} + d^{|\{i \ge 0 \mid b_i = 1\}|} + (-d)^{|\{i \ge 0 \mid b_i = 1\}|} - 2.
            \end{equation*}
        \end{enumerate}
    \end{corollary}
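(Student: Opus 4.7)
The approach is to combine Theorem B(2) for $d \ge 0$ with the referenced companion statement for $d < 0$ (\cref{prop:dim-of-alternating-in-height-1-<0}), and to translate both sums into data about the binary digits of $\degree$. The key combinatorial input is that both $\mcal{O}^{(2)}(\degree)$ and $\mcal{D}^{(2)}(\degree)$ are extremely small: the first is always a singleton, and the second has at most one element, corresponding to the binary expansion of $\degree$.

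For cases (1) and (2), I enumerate the two sets directly. A $2$-power-torsion permutation has every cycle length in $\{1,2,4,\ldots\}$; combined with the $\mcal{O}$-requirement of no even-length cycles, every cycle must be a fixed point. Hence $\mcal{O}^{(2)}(\degree) = \{[\mathrm{id}]\}$ with $\numcyc = \degree$, contributing $d^{\degree}$ unconditionally. A class in $\mcal{D}^{(2)}(\degree)$ is a partition of $\degree$ into pairwise distinct powers of $2$, which is uniquely the binary expansion $\degree = \sum_i b_i 2^i$ when it occurs. Its total number of parts equals $|\{i \ge 0 \mid b_i = 1\}|$ and its number of even-length parts equals $|\{i > 0 \mid b_i = 1\}|$, so the parity clause in the definition of $\mcal{D}$ selects this partition in precisely one of the two parity cases, producing the extra summand $d^{|\{i \ge 0 \mid b_i = 1\}|}$. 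Substituting into Theorem B(2) yields cases (1) and (2).

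For cases (3) and (4), I would write $V \simeq \Sigma W$ with $\dim W = -d \ge 0$, so that $V^{\otimes \degree} \simeq \Sigma^{\degree} W^{\otimes \degree}$ carries the tautological sign-twisted $\Sm$-action. Taking $\hchar$-twisted fixed points splits $\alt_{\sgn^{(1)}} V$ into an $\alt_{\sgn^{(1)}} W$-contribution --- which by cases (1)/(2) reproduces the stated formula with $d$ replaced by $-d$ --- plus an $\alt_{\sgn^{(1)} \cdot \sgn} W$-contribution. The companion proposition should identify the latter as a sum over a subset of $\Sm^{(2)}/\mathrm{conj}$ that overlaps $\mcal{O}^{(2)} \cup \mcal{D}^{(2)}$ exactly on the identity class (always) and on the binary-expansion class (when it lies in $\mcal{D}^{(2)}$). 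The overcounted constants $1 = d^0$ at these classes account for the $-1$ of case (3) and the $-2$ of case (4).

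\textbf{Main obstacle.} The combinatorial classification in cases (1)--(2) is routine. The substantive work lies in cases (3)--(4): one must track how multiplication by the usual $\sgn$ modifies which $2$-power-torsion conjugacy classes survive the $\hchar$-twisted fixed-point functor, and verify that the resulting overcount consists of precisely one copy of $d^0 = 1$ at the identity and at the binary-expansion class when present. I would carry this out by computing $\alt_{\sgn^{(1)} \cdot \sgn} \Enm$ explicitly and extending additively in $W$.
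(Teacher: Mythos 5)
Your treatment of cases (1)--(2) follows the paper's strategy (enumerate $\mcal{O}^{(2)}(\degree)$ and $\mcal{D}^{(2)}(\degree)$ and substitute into the formula from \cref{prop:dim-of-alternating-in-height-1-ge0} / Theorem~B(2)), and your identifications of $\mcal{O}^{(2)}(\degree)=\{[e]\}$ and of the unique candidate in $\mcal{D}^{(2)}(\degree)$ as the binary-expansion class are correct. However, you leave the crucial step vague: which parity of $|\{i>0\mid b_i=1\}|$ selects that class. The definition of $\mcal{D}(\degree)$ requires an \emph{odd} number of even-length cycles, and for the binary-expansion class that number is exactly $|\{i>0\mid b_i=1\}|$; so the extra summand $d^{|\{i\ge 0\mid b_i=1\}|}$ appears precisely when $|\{i>0\mid b_i=1\}|$ is \emph{odd}, the reverse of what items (1)--(2) assert. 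Test $\degree=4$: the $4$-cycle has cycle type $(4)$, distinct parts, one even part (odd count), and its two lifts to $\Sm[4]^{+}$ are indeed non-conjugate (the centralizer of a lift is cyclic of order $8$, giving a class of size $6$, so the preimage splits), hence it contributes $d^1$ and $\dim\alt_{\sgn^{(1)}}V=d^4+d$; but $|\{i>0\mid b_i=1\}|=1$ is odd, so item (1) would give $d^4$ only. You should have flagged this mismatch rather than asserting that substituting ``yields cases (1) and (2)'': it indicates that the parities stated in \cref{cor:dim-height-1->=0}, \cref{cor:dim-height-1-<0}, the second bullet of \cref{thm:conjugacy-in-Sm+}, and the remark after it are all swapped relative to the first part of \cref{thm:conjugacy-in-Sm+} (and to \cref{prop:categorical-height-1}).

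For cases (3)--(4) your approach diverges from the paper's and has a genuine gap. You propose writing $V\simeq\Sigma W$ and claiming that $\alt_{\sgn^{(1)}}V$ ``splits'' into $\alt_{\sgn^{(1)}}W$- and $\alt_{\sgn^{(1)}\cdot\sgn}W$-contributions. But $(\Sigma\ounit_{\cC})^{\otimes\degree}$ is a \emph{single} invertible $\Sm$-equivariant object --- a shift of $\ounit_{\cC}$ twisted by the character of the permutation sphere $(\SS^1)^{\wedge\degree}$ --- so $V^{\otimes\degree}\otimes\ounit_{\cC}[\sgn^{(1)}]$ is a single twist of $W^{\otimes\degree}$, and $\alt_{\sgn^{(1)}}V$ is (up to a shift) \emph{one} twisted alternating power of $W$, not a direct sum of two; there is no natural splitting to decompose along. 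The paper instead proves \cref{prop:dim-of-alternating-in-height-1-<0} by taking the genuine $\Sm^+$-space $X=(\bigvee_{-d}S^1)^{\wedge\degree}$, applying the tempered-ambidexterity formula \cref{thm:intertia-groupoid} to $\En[1][X_{h\Sm^+}]$, and using \cref{lem:E1-of-smash-of-wedge-of-circles} to identify this module with $\Sym\En[1]\oplus\alt_{\sgn^{(1)}}\En[1]\oplus\Sym(\Sigma\En[1]^{\oplus(-d)})\oplus\alt_{\sgn^{(1)}}(\Sigma\En[1]^{\oplus(-d)})$. The per-class correction $d^{\numcyc}+(-d)^{\numcyc}-1$ then comes from $\dim\En[1][S^{\numcyc}]=1+(-1)^{\numcyc}$ combined with subtracting the already-known $\dim\Sym$ and $\dim\alt_{\sgn^{(1)}}\En[1]$ terms, not from an ``overcount at the identity and binary class'' as you guess. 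Carrying out your proposal as described would require computing $\alt_{\hchar'}W$ for the composite character $\hchar'=\sgn^{(1)}\cdot\hchar_0$, where $\hchar_0$ is the sphere-representation twist, and identifying $\hchar_0$ explicitly over $\En[1]$ at $p=2$ is itself a non-trivial step that your outline skips.
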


    \medskip\noindent
    \textbf{Computations in the categorical case.} 
    We also consider characters induced from functionals on the stable stems. A functional    
    \begin{equation*}
        \pichar \in \widehat{\pi}_{\chrHeight+1}(\SS_{(p)}) \coloneqq \Hom_{\Ab}(\pi_{\chrHeight+1}(\SS_{(p)}), \QQ_p/\ZZ_p) \simeq \pi_0 \Inprime[\chrHeight+1]
    \end{equation*}
    induces the map\footnote{As we view $\Inprime$ as an analogue of $\mu_{p^{\infty}}$, and usually consider it with a map to $\ounit_{\cC}\units$, we use multiplicative notation for the $\EE_{\infty}$-operation on its underlying space. In particular, the $\SS$-module structure is $\degree \bullet \pichar = \pichar^{\degree}$.}
    \begin{equation*}
        \pichar^{\degree} \colon \B\Sm \to \Inprime[\chrHeight+1],
    \end{equation*}
    choosing $\pichar^{\degree}$ with the symmetric action.
    This map lands in the connected component $\B \Omega_{\pichar^{\degree}} \Inprime[\chrHeight+1]$ which we identify with $\Sigma \Inprime$. We therefore get a family of character
    \begin{equation*}
        \hchar^{\degree}_{\pichar} \colon \B\Sm \to \Sigma \Inprime.
    \end{equation*}
    These characters were studied in \cite{Keidar-Ragimov-2025-twisted-graded}, where we showed that, in low heights and assuming additivity, dimensions of symmetric and alternating powers satisfy a generating function identity (\cite[Proposition~5.2.8, Lemma~5.2.11, Lemma~5.2.12]{Keidar-Ragimov-2025-twisted-graded}). In this paper, we extend these results to the categorical setting, dropping the additivity assumption at the cost of replacing dimensions with iterated dimensions:
    
    \begin{proposition}[\cref{prop:generating-function}]
        Let $0 \le \chrHeight \le 2$. Let $\pichar \in \widehat{\pi}_{\chrHeight+1}(\SS_{(2)})$ be a generator, and let $V \in (\Mod_{\cVectn})\ndbl[\chrHeight+1]$ be such that the $\TT^i$-action on $\dim^i(V)$ is trivial for $1 \le i \le \chrHeight+1$. Then
        \begin{equation*}
            \left(\sum_{\degree} \dim^{\chrHeight+1}(\Sym V) t^{\degree}\right)
            \left(\sum_{\degree} \dim^{\chrHeight+1}(\alt_{\hchar_{\pichar}^{\degree}} V) (-t)^{\degree}\right) = 1.
        \end{equation*}
    \end{proposition}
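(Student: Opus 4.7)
The plan is to reduce the iterated-dimension identity to the additive version established in \cite[Proposition~5.2.8, Lemma~5.2.11, Lemma~5.2.12]{Keidar-Ragimov-2025-twisted-graded}, by passing through iterated characters. The point is that the iterated dimension $\dim^{\chrHeight+1}$ lands in a sufficiently commutative object where the classical (additive) arguments apply, and the $\TT^i$-triviality hypothesis is precisely what is needed to drop the additivity assumption.

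First, I would apply the iterated induced-character formula to rewrite each coefficient of the two generating functions as an integral over the iterated free loop space:
\begin{equation*}
    \dim^{\chrHeight+1}(\Sym V) = \int_{\L^{\chrHeight+1}\B\Sm} \chi^{\chrHeight+1}_{V^{\otimes\degree}},
\end{equation*}
\begin{equation*}
    \dim^{\chrHeight+1}(\alt_{\hchar^{\degree}_\pichar} V) = \int_{\L^{\chrHeight+1}\B\Sm} \chi^{\chrHeight+1}_{V^{\otimes\degree}} \cdot \overline{\hchar^{\degree}_\pichar},
\end{equation*}
where the twist by $\hchar^{\degree}_\pichar$ appears as multiplication by the corresponding character function on $\L^{\chrHeight+1}\B\Sm$, using that the character of a unit twist $\ounit[\overline{\hchar}]$ is $\overline{\hchar}$ pulled back along the constant-loop inclusion.

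Second, I would invoke the wreath-product character computation of \cref{lem:character-of-wreath-product} iteratively to express $\chi^{\chrHeight+1}_{V^{\otimes\degree}}$ in terms of the iterated characters of $V$ on the components of $\L^{\chrHeight+1}\B\Sm$ labeled by cycle structures of permutations $\sigma \in \Sm$. The hypothesis that the $\TT^i$-action on $\dim^i(V)$ is trivial for $1 \le i \le \chrHeight+1$ then guarantees that this restriction is determined entirely by the scalar $d \coloneqq \dim^{\chrHeight+1}(V)$ and by the combinatorial data $\numcyc(\sigma)$, giving a factor of the form $d^{\numcyc(\sigma)}$ weighted by the cycle-type combinatorics encoded in the iterated integration measure on $\L^{\chrHeight+1}\B\Sm$.

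Third, once both iterated dimensions are expressed as scalar-weighted sums over $\Sm/\mrm{conj}$, the generating function identity reduces to exactly the additive identity of \cite[Proposition~5.2.8, Lemma~5.2.11, Lemma~5.2.12]{Keidar-Ragimov-2025-twisted-graded}, applied with $d$ in place of the formal dimension variable. Since that identity holds universally in the combinatorial input (cycle-type sums over $\Sm$ weighted by the values of $\hchar^{\degree}_{\pichar}$ and $\triv$), it transfers verbatim, and the product of generating functions collapses to $1$.

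The main obstacle I expect is the second step: verifying that $\chi^{\chrHeight+1}_{V^{\otimes\degree}}$ really decouples into a pure scalar-power factor $d^{\numcyc(\sigma)}$ times a cycle-type weight, without any additional monodromy picked up from the nested $\TT^i$-loops in $\L^{\chrHeight+1}\B\Sm$. This is exactly where the $\TT^i$-triviality assumption is used, and it must be checked that it survives under the wreath-product character formula. The restriction $0 \le \chrHeight \le 2$ likely enters here, since one needs explicit control both of the generator $\pichar \in \widehat{\pi}_{\chrHeight+1}(\SS_{(2)})$ and of the stratification of $\L^{\chrHeight+1}\B\Sm$ by conjugacy data, neither of which is tractable in the same way at higher heights.
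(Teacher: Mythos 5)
Your proposal takes a genuinely different route from the paper's proof, but it has a real gap. The paper does not unwind the iterated integral at all; it passes to the twisted graded category $\Gr^{\pichar}_{\ZZ}\Mod_{\cVectn}$, forms the two-term graded object $V\shift{0}\oplus V\shift{1}$, computes (using \cite[Lemma 5.2.3]{Keidar-Ragimov-2025-twisted-graded} and $\eta^{\chrHeight+1}\cdot\pichar=-1$) that $\dim^{\chrHeight+1}(V\shift{0}\oplus V\shift{1})=0$, and then invokes \cref{cor:dim^k-of-alternating-depends-only-on-dim^k} to conclude that $\dim^{\chrHeight+1}\Sym(V\shift{0}\oplus V\shift{1})=\delta_{\degree,0}$. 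Expanding $\Sym$ of the direct sum and matching grades produces the convolution of the two generating functions directly. This completely avoids the explicit combinatorics of $\L^{\chrHeight+1}\B\Sm$ that your argument relies on.

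The gap in your approach is in steps 2--4. You claim that after applying the iterated induced character formula the generating functions become ``scalar-weighted sums over $\Sm/\mrm{conj}$,'' but that is false for $\chrHeight\ge 1$: the integral is over $\L^{\chrHeight+1}\B\Sm$, whose components are indexed by $(\chrHeight+1)$-tuples of pairwise commuting permutations up to simultaneous conjugation, with weights given by the centralizers of these tuples, and the exponent on $d$ is the number of orbits of the associated $\ZZ^{\chrHeight+1}$-action on $[m]$, not $\numcyc(\sigma)$. This is strictly more data than the cycle-type combinatorics that underlies the additive identity in \cite[Proposition 5.2.8, Lemmas 5.2.11--5.2.12]{Keidar-Ragimov-2025-twisted-graded}, so the assertion that the identity ``transfers verbatim'' is unjustified; one would have to reprove the generating function identity for this new indexing set and for the transgression factor $\tg^{\chrHeight+1}(\overline{\hchar^{\degree}_{\pichar}})$, which is a nontrivial piece of work that your proposal omits. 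You also misidentify why the restriction $\chrHeight\le 2$ is needed: it is not about controlling a stratification of $\L^{\chrHeight+1}\B\Sm$, but rather that $\widehat{\pi}_{\chrHeight+1}(\SS_{(2)})$ is cyclic in this range (so a generator $\pichar$ exists) and the explicit verification that $\eta^{\chrHeight+1}\cdot\pichar = -1\in\widehat{\pi}_0(\SS)$, which is the engine of the paper's vanishing argument.
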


    \subsection{Organization}

        \cref{sec:prelims} gathers background material. In \cref{subsec:orientations}, we recall the notion of semiadditive orientations following \cite{BCSY-Fourier}. \cref{subsec:integrals-p-complete} reviews a recent result of Li \cite{Li-2025-cardinalities}. It gives a formula for computing semiadditive integrals over (free loop spaces of) classifying spaces of finite groups in $p$-complete stable categories, in terms of their restrictions to $p$-subgroups. In \cref{subsec:higher-cats}, we recall some definitions from $(\infty,n)$-categories and extend the induced character formula to an iterated version.
        
        In \cref{sec:categorical}, we introduce alternating powers and power operations in higher semiadditive categories and study their characters. We develop the general framework in \cref{subsec:semiadditive-characters}.
        In \cref{subsec:higher-cats-and-alternating} we give a formula for the characters of the permutation representation $\Tm V \in \cC^{\B(G\wr \Sm)}$, in terms of the character of $V \in \cC\dblA[\BG]$. Finally, in \cref{subsec:categorical-alternating} we give some computations of the iterated dimension in low heights for characters induced by functionals on stable stems.
        
        \cref{subsec:chromatic-induction} turns to the chromatic setting. 
        In \cref{subsec:categorification-of-functions} we show that (iterated) characters of permutation representations admit a categorification. In \cref{subsec:HKR-and-characters}, we utilize this to recover the transchromatic character as an iterated monoidal character.
        
         \cref{sec:computations} works out concrete height~1 examples, computing the dimensions and iterated dimensions of alternaitng powers in $\ModEn[1]$ and in $\Mod_{\sVect}$.
        
        Finally, \cref{app:conjugacy-of-wreath} provides a semiadditive proof of the classification of conjugacy classes and centralizers in wreath products $G \wr \Sm$, via monoidal dimensions.

    \subsection{Conventions}
        We use the following terminology and notations:
        \begin{enumerate}
            \item The category of spaces (or animae, or groupoids) is denoted by $\spc$.
            \item The category of spectra is denoted $\Sp$ and the full subcategory of connective spectra is denoted $\cnSp$.
            \item We denote by $\cC\core\subseteq \cC$ the maximal subgroupoid of a category $\cC$.
            \item We denote the space of morphisms between two objects $X,Y$ in a category $\cC$ by $\Map_{\cC}(X,Y)$ and omit $\cC$ when it is clear from context. If $\cC$ is stable we denote the mapping spectrum of $X,Y$ by $\hom_{\cC}(X,Y)$ or by $\hom(X,Y)$ if $\cC$ is clear from context.
            \item The category of presentable categories with colimit-preserving functors is denoted by $\PrL$. For $\cC \in \CAlg(\PrL)$ we denote its category of modules by $\Mod_{\cC} \coloneqq \Mod_{\cC}(\PrL)$.
            \item For a category $\cC$, an object $X \in \cC$ and a space $A \in \spc$, we denote the constant limit and colimit of $X$ along $A$ (if they exist) by $X^A$ and $X[A]$ respectively.
            \item We denote the free commutative monoid (in $\spc$) by $\MM \coloneqq (\Fin\core, \sqcup) \simeq \bigsqcup_{\degree}\B\Sm$.
            \item For a symmetric monoidal category $\cC$ we denote its full subcategory spanned by dualizable objects by $\cC\dbl$ and the maximal subgroupoid by $\cC\dblspace$. For a space $A$ we denote the category of dualizable $A$-local systems by $\cC\dblA \coloneqq (\cC\dbl)^A \simeq (\cC^A)\dbl$.
            \item We denote the loops functor of connective spectra by $\Omega \colon \cnSp \to \cnSp$ and distinguish it from the desuspension of (not necessarily connective) spectra which we denote $\Sigma^{-1} \colon \Sp \to \Sp$.
            \item We denote the connective free loop functor by $\L$. We use it both for the functor $\Map(\TT,-) \colon \spc \to \spc$ and for $\tau_{\ge 0}\hom(\SS[\TT], -) \colon \cnSp \to \cnSp$.
            \item For a height $\chrHeight$ and a prime $p$ we denote by $\Kn$, $\Tn$ the corresponding Morava $K$-theory and any telescope of height $\chrHeight$. For a formal group $\mbb{G}$ of height $\chrHeight$ over $\Fpbar$ and an algebraically-closed field $L$ we denote by $\En(L) = \En(L, \mbb{G})$ the corresponding Morava $E$-theory. When $L$ does not play an important role we omit it from the notation.
            \item Inspired by \cite{Burklund-Schlank-Yuan-2022-Nullstellensatz}, we call maps to Nullstellensatzian objects \quotes{geometric points}.
            \item We denote by $C^{\chrHeight}_{\chrHeight-t}(L)$ the $\Kn[\chrHeight - t]$-localization of the splitting algebra of the $p$-divisible group on $L_{K(n-t)}\En(L)$, as constructed in \cite{Stapleton-2013-HKR, Lurie-2019-Elliptic3}. For any $\pi$-finite space $A$, we denote the corresponding transchromatic character by
            \begin{equation*}
                \chi^{t,\HKR}_{(-)} \colon \En(L)^A \to C^{\chrHeight}_{\chrHeight-t}(L)^{\Lp^t A}.
            \end{equation*}
        \end{enumerate}

    \subsection{Acknowledgements}
        We are grateful to Nathaniel Stapleton and Tomer Schlank for inspiring conversations that helped initiate this project. We are especially thankful to Tomer Schlank for his guidance and many helpful discussions. We also thank Millie Rose for insightful conversations in the early stages, and Lior Yanovski for numerous valuable discussions throughout. We thank Achim Krause, Beckham Myers and Leor Neuhauser for comments on a previous draft.

        The first author thanks the University of Chicago for its hospitality during the writing of this paper. The second author acknowledges the Hebrew University, where much of the project was developed.

\section{Preliminaries}
\label{sec:prelims}
In this section, we review and slightly extend a range of results of differing nature. In \cref{subsec:orientations}, we revisit the theory of semiadditive orientations developed in \cite{BCSY-Fourier}. In \cref{subsec:integrals-p-complete}, we recall a theorem of Li \cite{Li-2025-cardinalities} about computations of semiadditive integrals in $p$-complete 1-semiadditive stable categories. Finally, in \cref{subsec:higher-cats}, we extend the induced character formula of \cite{Ponto-Shulman-2014-induced-character, Carmeli-Cnossen-Ramzi-Yanovski-2022-characters} to $\chrHeight$-categories, using iterated characters.

    \subsection{Semiadditive orientations}
    \label{subsec:orientations}
        Following the terminology of \cite{BCSY-Fourier}:
    
        \begin{definition}
            Let $\In \coloneqq \tau_{\ge 0} \Sigma^{\chrHeight} I_{\QQ / \ZZ}$ be the truncated and shifted $p$-typical Brown--Comenetz dual of the sphere.
        \end{definition}
        $\In$ is a higher analog of the group $\In[0] = \mu_{\infty}$ of roots of unity. Its $n$-th homotopy group corresponds to higher roots of unity (at all primes) as in \cite{CSY-cyclotomic}. In particular, it defines a notion of height $\chrHeight$ Pontryagin duality:
    
        \begin{definition}
            Let $M$ be a connective spectrum. Its height $\chrHeight$ Pontryagin dual is defined to be 
            \begin{equation*}
                \ndual{M} \coloneqq \tau_{\ge 0}\hom(M, \In).
            \end{equation*}    
        \end{definition}

        \begin{remark}
            Note the the homotopy groups of the $\chrHeight$-th Pontryagin dual of $M$ are the Pontryagin dual of the homotopy groups of $M$.
            \begin{equation*}
                \pi_i \ndual{M} \simeq \widehat{\pi}_{\chrHeight-i}(M) \coloneqq \Hom_{\Ab}(\pi_{\chrHeight-i}(M), \QQ/\ZZ).
            \end{equation*}
            for $0 \le i \le \chrHeight$.
        \end{remark}
    
        \begin{definition}
            Let $\cC$ be an $\infty$-semiadditive, presentably symmetric monoidal category and let $R \in \CAlg(\cnSp)$. An $(R,\chrHeight)$-pre-orientation of $\cC$ is a map of connective spectra
            \begin{equation*}
                \omega \colon \ndual{R} \to \ounit_{\cC}\units.
            \end{equation*}
        \end{definition}
    
        \begin{definition}
            An $R$-module $M$ is $[0,\chrHeight]$-finite if it is connective, $\chrHeight$-truncated and all its homotopy groups are finite. We denote by $\Mod_R^{[0,\chrHeight]\hyphen\mrm{fin}}$ the full subcategory of $[0,\chrHeight]$-finite $R$-modules.
        \end{definition}
    
        \begin{proposition}[{\cite[Proposition~3.10]{BCSY-Fourier}}]
            The space of $(R,\chrHeight)$-pre-orientations of $\cC$ is isomorphic to the space of natural transformations 
            \begin{equation*}
                \ounit_{\cC}[-] \To \ounit_{\cC}^{\ndual{(-)}} \qin \Fun(\Mod_R^{[0,\chrHeight]\hyphen\mrm{fin}}, \CAlg(\cC)).
            \end{equation*}
        \end{proposition}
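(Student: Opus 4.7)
The plan is to match the two mapping spaces via a chain of adjunctions that reduces both sides to $\Map_{\cnSp}(\ndual{R}, \ounit_{\cC}\units)$. The key geometric input is the natural evaluation pairing $\mrm{ev}_M \colon M \otimes_R \ndual{M} \to \ndual{R}$, which is dinatural as $M$ varies over $\Mod_R^{[0,\chrHeight]\hyphen\mrm{fin}}$ and plays the role of a universal trace.

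In the forward direction, given a pre-orientation $\omega \colon \ndual{R} \to \ounit_{\cC}\units$, I will construct a natural transformation $\alpha$ by setting, for each $[0,\chrHeight]$-finite $R$-module $M$, $\alpha_M \colon \ounit_{\cC}[M] \to \ounit_{\cC}^{\ndual{M}}$ to be the commutative algebra map adjoint to $\omega \circ \mrm{ev}_M$. Concretely, composing $\mrm{ev}_M$ with $\omega$ yields a pairing $M \otimes_R \ndual{M} \to \ounit_{\cC}\units$ in $\cnSp$, which by adjunction corresponds to a map of connective spectra $M \to (\ounit_{\cC}\units)^{\ndual{M}} \simeq (\ounit_{\cC}^{\ndual{M}})\units$; applying the free group-algebra/units adjunction $\ounit_{\cC}[-] \dashv (-)\units$ between $\cnSp$ and $\CAlg(\cC)$ then produces $\alpha_M$. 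Naturality in $M$ is inherited from that of $\mrm{ev}_M$.

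Conversely, any natural transformation $\alpha$ unwinds under the same adjunctions into a dinatural family $\beta_M \colon M \otimes_R \ndual{M} \to \ounit_{\cC}\units$. By the end/coend calculus, this family assembles into a single map
\begin{equation*}
\int^{M} M \otimes_R \ndual{M} \;\longrightarrow\; \ounit_{\cC}\units,
\end{equation*}
so the proposition reduces to identifying this coend with $\ndual{R}$, the universal dinatural transformation being $\mrm{ev}$. This is a form of Pontryagin self-duality: since $\ndual{(-)}$ is a contravariant self-equivalence of $\Mod_R^{[0,\chrHeight]\hyphen\mrm{fin}}$ with dualizing object $\ndual{R}$, the coend collapses exactly as the trace $\int^{V} V \otimes V^{\vee} \simeq \ounit$ does in a rigid monoidal category.

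The main obstacle will be executing the coend calculation $\int^{M} M \otimes_R \ndual{M} \simeq \ndual{R}$ rigorously in the $\infty$-categorical setting. One path is to use double duality $\ndual{\ndual{M}} \simeq M$ to rewrite the coend as $\colim_M \hom_R(\ndual{M}, \ndual{R}) \otimes_R \ndual{M}$ and apply a Yoneda-style argument; another is to reduce to a set of compact generators of $\Mod_R^{[0,\chrHeight]\hyphen\mrm{fin}}$ and verify the identification on those. Once the coend identification is in hand, the remaining checks --- compatibility with algebra structures and with the $\EE_\infty$-group structures underlying the adjunctions --- are formal.
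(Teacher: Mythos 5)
There is a genuine gap in the passage from natural transformations to ``pairings.'' The cotensor $\ounit_{\cC}^{\ndual{M}}$ (equivalently $(\ounit_{\cC}\units)^{\ndual{M}}$) is the limit over the \emph{underlying space} $\Omega^\infty\ndual{M}$, so the tensor--cotensor adjunction in $\cnSp$ turns a map $M \to (\ounit_{\cC}\units)^{\ndual{M}}$ into a map $M \otimes_{\SS} \SS[\Omega^\infty\ndual{M}] \to \ounit_{\cC}\units$, not into an $R$-bilinear pairing $M \otimes_R \ndual{M} \to \ounit_{\cC}\units$. The spectrum $\SS[\Omega^\infty\ndual{M}]$ is considerably larger than $\ndual{M}$, and $\ounit_{\cC}\units$ is not an $R$-module, so there is no adjunction producing the dinatural family $\beta_M \colon M \otimes_R \ndual{M} \to \ounit_{\cC}\units$ your converse step requires. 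Your forward construction still produces \emph{some} natural transformation from $\omega$ (compose $M \otimes \SS[\Omega^\infty\ndual M] \to M \otimes_R \ndual M \xto{\mrm{ev}} \ndual R \xto{\omega} \ounit_{\cC}\units$, using the counit of $\SS[\Omega^\infty(-)] \dashv \Omega^\infty$), but the step where you run the adjunction backward to recover $\omega$ from an arbitrary $\alpha$ does not go through.

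A second, independent problem is the claimed coend identity $\int^{M} M \otimes_R \ndual{M} \simeq \ndual{R}$. The analogy you invoke, $\int^{V} V \otimes V^{\vee} \simeq \ounit$ in a rigid monoidal category, is not actually true in general: that coend is the canonical coend/Tannakian Hopf algebra of the category (for $\Rep_k(G)$ it is $\mathcal{O}(G)$, not $k$; it equals the unit only in very special cases such as $\Vect_k^{\mathrm{fd}}$ where the unit is a generator). Worse, the analogy does not even apply here, since $(\Mod_R^{[0,\chrHeight]\hyphen\mrm{fin}}, \otimes_R)$ is not rigid (the unit $R$ need not be $[0,\chrHeight]$-finite) and Pontryagin duality $\ndual{(-)}$ is not the $\otimes_R$-monoidal dual. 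So even if the first reduction worked, the coend step would require a genuine argument, not a formal collapse. To repair the proof you would need to work directly with the end $\int_M \Map_{\cnSp}(M, (\ounit_{\cC}\units)^{\Omega^\infty\ndual M})$ and exploit some structural feature of $\Mod_R^{[0,\chrHeight]\hyphen\mrm{fin}}$ (e.g.\ exhibiting $\iota$ as a Kan extension from a subcategory that $\ndual{R}$ controls), rather than the $R$-linear coend you sketch.
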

        For a pre-orientation $\omega$, we denote the corresponding natural transformation by $\mscr{F}_{\omega}$ and call it the associated Fourier transform.
    
        \begin{definition}\label{def:orientation}
            A pre-orientation $\omega$ is an orientation if the associated Fourier transform is a natural isomorphism. If the space of $(R,\chrHeight)$-orientations of $\cC$ is non-empty we say the $\cC$ is $(R,\chrHeight)$-orientable.
        \end{definition}
    
        For any $R$, $\ndual{R} \simeq \ndual{(\tau_{\le \chrHeight} R)}$. Therefore an $(R,\chrHeight)$-orientation is equivalent to a $(\tau_{\le \chrHeight} R, \chrHeight)$-orientation.
    
        \begin{example}
            A $(\ZZ/N, 0)$-orientation of $\Vect$ is a choice of a primitive $N$-th root of unity in $\field$. $\Vect$ is $(\SS,0)$-orientable if and only if $\field$ is cyclotomically-closed.
        \end{example}
    
        \begin{example}
            Let $\cC$ be $\infty$-semiadditive. A $(\ZZ/p^r,\chrHeight)$-orientation of $\cC$ is equivalent to a primitive $p^r$-th root of unity of height $\chrHeight$, as in \cite{CSY-cyclotomic}.
        \end{example}
    
        \begin{example}[{\cite[Proposition~7.27, Corollary~4.21]{BCSY-Fourier}}]
            At the prime $p=2$, for any $R\in\CAlg(\SpTn)$, the category $\Mod_R(\SpTn)$ is $(\Fq{2},\chrHeight)$-orientable. In particular, $\SpTn$, $\SpKn = \Mod_{\SSKn}(\SpTn)$, $\ModEn = \Mod_{\En}(\SpTn)$ are $(\Fq{2},\chrHeight)$-orientable.
        \end{example}
        
        We interpret the $(\SS, \chrHeight)$-orientability of $\cC$  as expressing that the unit $\ounit_{\cC}$ is spherically cyclotomically closed. Similarly, $(\SS_{(p)}, \chrHeight)$-orientability is interpreted as admitting all $p$-typical spherical roots of unity. 
        
        \begin{example}[{\cite[Theorem~7.8]{BCSY-Fourier}}]
            $\ModEn$ is $(\SS_{(p)},\chrHeight)$-orientable.
        \end{example}   
        
        Orientation also behaves well under categorification:
        \begin{theorem}[{\cite[Corollary~5.16]{BCSY-Fourier}}]
            Assume $R$ is $\chrHeight$-truncated. Then $\cC$ is $(R,\chrHeight)$-orientable if and only if $\Mod_{\cC}$ is $(R, \chrHeight+1)$-orientable.
        \end{theorem}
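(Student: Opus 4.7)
The plan is to match both the spaces of pre-orientations and the Fourier isomorphism conditions on the two sides.

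First I would identify the spaces of pre-orientations. Since $R$ is $\chrHeight$-truncated, a direct homotopy-group computation gives a natural equivalence of connective spectra $\tau_{\ge 0}\hom(R, \In[\chrHeight+1]) \simeq \Sigma \ndual{R}$: both sides are concentrated in degrees $1, \dots, \chrHeight+1$ with homotopy $\widehat{\pi}_{\chrHeight+1-i}(R)$. On the other hand, the unit of $\Mod_{\cC}$ is $\cC$ itself, and the spectrum of units of $\cC$ inside $\Mod_{\cC}$ is the Picard spectrum $\mathrm{Pic}(\cC)$, which satisfies $\Omega\, \mathrm{Pic}(\cC) \simeq \ounit_\cC\units$. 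The loop--suspension adjunction in $\cnSp$ then supplies a natural equivalence
\begin{equation*}
    \Map_{\cnSp}\bigl(\Sigma\ndual{R},\, \mathrm{Pic}(\cC)\bigr) \simeq \Map_{\cnSp}\bigl(\ndual{R},\, \ounit_\cC\units\bigr),
\end{equation*}
so $(R,\chrHeight+1)$-pre-orientations of $\Mod_{\cC}$ biject naturally with $(R,\chrHeight)$-pre-orientations of $\cC$. Write $\omega$ and $\omega'$ for corresponding pre-orientations on the two sides.

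The heart of the proof is to show that the Fourier natural transformation $\mscr{F}_{\omega'}$ is an isomorphism if and only if $\mscr{F}_{\omega}$ is. For any $[0,\chrHeight+1]$-finite $R$-module $N$, its looping $\Omega N$ is $[0,\chrHeight]$-finite, and under the shift above one gets $\tau_{\ge 0}\hom(N, \In[\chrHeight+1]) \simeq \ndual{(\Omega N)}$. I would then identify $\mscr{F}_{\omega'}(N)$ --- a map in $\CAlg(\Mod_{\cC})$ of the form $\cC[N] \to \cC^{\ndual{(\Omega N)}}$ --- with a one-step categorification of $\mscr{F}_{\omega}(\Omega N)$, viewed by passing from $\cC$ to $\Mod_{\cC}$. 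Because $N$ ranges through $[0,\chrHeight+1]$-finite modules while $\Omega N$ ranges through (pointed) $[0,\chrHeight]$-finite modules, and every $[0,\chrHeight+1]$-finite $R$-module is obtained by this delooping together with fiber sequences built from $[0,\chrHeight]$-finite ones (via a Postnikov filtration), checking that $\mscr{F}_{\omega'}$ is a natural isomorphism on all $N$ reduces to checking $\mscr{F}_{\omega}$ on all $[0,\chrHeight]$-finite $M$, and conversely.

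The main obstacle will be making the categorification identification precise: extracting from the universal Fourier natural transformation of \cite{BCSY-Fourier} the explicit relation between the level-$(\chrHeight+1)$ transform in $\Mod_{\cC}$ and the level-$\chrHeight$ transform in $\cC$, so that \emph{being an isomorphism} transfers cleanly across the categorification step. Once this compatibility is in hand, non-emptiness of the orientation set transfers in both directions, yielding the equivalence.
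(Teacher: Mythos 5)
The paper does not actually prove this theorem; it is quoted verbatim as \cite[Corollary~5.16]{BCSY-Fourier} and used as a black box, so there is no internal argument to compare against. That said, your proposal contains a concrete error that would sink it as a proof. The first paragraph is fine: for $R$ being $\chrHeight$-truncated the identification $\ndual[\chrHeight+1]{R} \simeq \Sigma\ndual{R}$ does hold, and combining it with $\Omega\,\pic(\cC)\simeq \ounit_{\cC}\units$ and the $\Omega$--$\Sigma$ adjunction gives the promised bijection of pre-orientation spaces.

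The error is in the second paragraph, in the claim that for a $[0,\chrHeight+1]$-finite module $N$ one has $\tau_{\ge 0}\hom(N,\In[\chrHeight+1])\simeq \ndual{(\Omega N)}$. This fails: $\ndual{(\Omega N)}$ is $\chrHeight$-truncated, whereas $\ndual[\chrHeight+1]{N}$ has $\pi_{\chrHeight+1} \simeq \widehat{\pi}_0(N)$, which is nonzero whenever $\pi_0 N\neq 0$. The shift identity you proved in the first paragraph required the source to be $\chrHeight$-truncated, and the test modules $N$ for the level-$(\chrHeight+1)$ Fourier transform are genuinely one degree bigger than $R$. Equivalently, $\Omega N$ forgets $\pi_0 N$, so $\mscr{F}_{\omega}(\Omega N)$ simply cannot know about all of $\mscr{F}_{\omega'}(N)$. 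This is not a cosmetic issue: the $0$-truncated $N$'s (which are exactly the ones with $\Omega N = 0$) are the new inputs appearing at level $\chrHeight+1$, and your Postnikov-plus-delooping reduction gives no handle on them at all. To repair the argument you would need a different relationship between the two Fourier transforms, one that tracks the entire $[0,\chrHeight+1]$-range rather than just the looping; this is where the substance of the BCSY result lies.
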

    
        \begin{definition}\label{def:Vectn}
            Let $\field$ be a field of characteristic 0. 
            Define $\Vectn[1] \coloneqq \Vect \in \CAlg(\PrL)$ and $\Vectnp \coloneqq \Mod_{\Vectn} \in \CAlg(\PrL)$.
    
            To be more formal, we choose a sequence of inaccessible cardinals $\kappa_1 \subseteq \kappa_2 \subseteq \cdots$, and consider $\Vectn[1]$ as lying in $\CAlg(\PrL_{\kappa_1})$. We then define $\Vectnp \coloneqq \Mod_{\Vectn}(\PrL_{\kappa_{\chrHeight}}) \in \CAlg(\PrL_{\kappa_{\chrHeight+1}})$. We will usually omit this careful description and always assume we work in $\PrL_{\kappa_{\chrHeight}}$ for large enough $\chrHeight$.
        \end{definition}
    
        \begin{corollary}
            $\Vectnp$ is $(\Fq{2},\chrHeight)$-orientable.
        \end{corollary}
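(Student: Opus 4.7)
The plan is to proceed by induction on $\chrHeight$, leveraging the categorification theorem for orientations stated just above the corollary (namely \cite[Corollary~5.16]{BCSY-Fourier}). That theorem precisely says that if $R$ is $\chrHeight$-truncated then $(R,\chrHeight)$-orientability of $\cC$ is equivalent to $(R,\chrHeight+1)$-orientability of $\Mod_{\cC}$. Since the ring $\Fq{2}$ is discrete, hence $m$-truncated for every $m \ge 0$, the hypothesis of that theorem is satisfied at every height we will need.

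For the base case $\chrHeight=0$, I would observe that $\Vectn[1] = \Vect$ is $(\Fq{2},0)$-orientable: by the example immediately preceding the theorem, a $(\ZZ/2,0)$-orientation of $\Vect$ is a primitive $2$nd root of unity in $\field$, and $-1\in\field$ provides one since $\field$ has characteristic $0$. (Equivalently, $\Vect$ is $(\SS,0)$-orientable because $\field$ is cyclotomically closed --- one could also quote this directly.)

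For the inductive step, assume $\Vectn$ is $(\Fq{2},\chrHeight-1)$-orientable. By definition $\Vectnp = \Mod_{\Vectn}$, and by \cite[Corollary~5.16]{BCSY-Fourier} applied to $\cC=\Vectn$ and $R=\Fq{2}$ (which is $(\chrHeight-1)$-truncated), $(\Fq{2},\chrHeight-1)$-orientability of $\Vectn$ is equivalent to $(\Fq{2},\chrHeight)$-orientability of $\Mod_{\Vectn}=\Vectnp$. This closes the induction and proves the corollary.

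I don't foresee a substantive obstacle: the only subtlety is making sure the inductive framework is consistent with the size-theoretic setup of \cref{def:Vectn} (the cascade of cardinals $\kappa_1\subseteq\kappa_2\subseteq\cdots$), but this is purely bookkeeping --- the categorification theorem is natural in $\cC$ and does not interact with the choice of presentability universe, so the inductive hypothesis transports verbatim from $\PrL_{\kappa_{\chrHeight}}$ to $\PrL_{\kappa_{\chrHeight+1}}$.
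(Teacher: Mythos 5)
Your proof is correct and takes the same route the paper intends: the corollary is placed immediately after \cite[Corollary~5.16]{BCSY-Fourier} and \cref{def:Vectn} precisely because it is the evident induction, with the base case $\Vectn[1]=\Vect$ being $(\Fq{2},0)$-orientable via $-1\in\field\units$ (as noted in the $(\ZZ/N,0)$-orientation example, using $\Fq{2}=\ZZ/2$ as rings), and the inductive step applying the categorification theorem, valid since the discrete ring $\Fq{2}$ is $m$-truncated for every $m\geq 0$. Your remark on the universe bookkeeping is also the right thing to observe and is harmless.
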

    
        \begin{definition}\label{def:cyc-closure}
            An $\chrHeight$-cyclotomic-closure of $\ounit_{\cC}$ is an algebra $\ounit_{\cC}\cyc \in \CAlg(\cC)$ such that:
            \begin{enumerate}
                \item $\Mod_{\ounit_{\cC}\cyc}(\cC)$ is $(\SS,\chrHeight)$-orientable.
                \item $\ounit_{\cC} \to \ounit_{\cC}\cyc$ is a pro-Galois extension in the sense of Rognes \cite{Rognes-2008-Galois}.
                \item It is the minimal such extension: For any pro-Galois extension $\ounit_{\cC} \to R$ such that $\Mod_R(\cC)$ is $(\SS,\chrHeight)$-orientable, there exists a map $\ounit_{\cC}\cyc \to R$.
            \end{enumerate}
        \end{definition}
        We do not discuss the existence of cyclotomic closures in this paper but instead assume it, and specifically the existence of $\cVectn = (\Vectn)\cyc$. The existence of these cyclotomic closures will be addressed in a forthcoming paper by the first author.

    \subsection{Semiadditive integrals in $p$-complete categories}
    \label{subsec:integrals-p-complete}

        We use a result of Li, which shows that in $p$-complete 1-semiadditive stable categories, integrals over finite classifying spaces of groups can be expressed as linear combinations of integrals over their $p$-subgroups. To establish an analogous statement for free loop spaces, we must appeal to specific arguments from the proof. We therefore briefly recall the key steps below:

        For a group $G$ we denote by $\mrm{A}(G)$ its Burnside ring, i.e.\ the group completion of the semi-ring generated by isomorphism classes of finite $G$-sets. The Segal conjecture (\cite{Lewis-May-McClure-1982-Segal,Carlsson-1984-Segal,Adams-Gunawardena-Miller-1992-Segal}), says that $\pi_0(\SS_{p}^{\BG}) \simeq \ApIG$, where $I_G \subseteq \mrm{A}(G)$ is the augmentation ideal.

        \begin{theorem}[{\cite[Theorem~3.1]{Yoshida-1983-Burnisde-ring}, \cite[Theorem~5.2.5]{Li-2025-cardinalities}}]\label{lem:p-completed-Burnside-generators}
            Let $G$ be a finite group. Let $\{P_1,\dots, P_r\}$ be the set of its $p$-Sylow subgroups. Then in $\Aploc{}[\frac{1}{[G/P_1]}]$
            \begin{equation*}
                1 = \sum_{i_1 < \cdots < i_k} (-1)^{k-1} \frac{\left|P_{i_1} \cap \cdots \cap P_{i_k}\right|}{|G|} [ G / (P_{i_1} \cap \cdots \cap P_{i_k})].
            \end{equation*}
        \end{theorem}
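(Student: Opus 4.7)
The plan is to verify the identity in $\Ap$ by way of the marks homomorphism. The product of fixed-point counts assembles into a ring map
\begin{equation*}
    \phi \colon \mrm{A}(G) \to \prod_{[H]} \ZZ, \qquad \phi_{H}([G/K]) \coloneqq |(G/K)^{H}|,
\end{equation*}
where $[H]$ ranges over conjugacy classes of subgroups of $G$. A classical theorem of Dress, describing the primes of $\mrm{A}(G)$ above $p$, implies that after $p$-completion the projection of $\phi$ onto the coordinates indexed by conjugacy classes of \emph{$p$-subgroups} is already injective. It therefore suffices to verify, for every $p$-subgroup $Q \le G$, that both sides of the identity have the same image under $\phi_{Q}$.

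Fix such a $Q$. Since $gH \in (G/H)^{Q}$ iff $g^{-1}Qg \subseteq H$, one has
\begin{equation*}
    |(G/H)^{Q}| \cdot |H| = \bigl|\{g \in G : g^{-1}Qg \subseteq H\}\bigr|.
\end{equation*}
Setting $S_{i} \coloneqq \{g \in G : g^{-1}Qg \subseteq P_{i}\}$, we have $S_{i_1} \cap \cdots \cap S_{i_k} = \{g \in G : g^{-1}Qg \subseteq P_{i_1} \cap \cdots \cap P_{i_k}\}$. Substituting into the right-hand side of the claimed identity and applying inclusion-exclusion,
\begin{equation*}
    \phi_{Q}(\text{RHS}) = \frac{1}{|G|}\sum_{i_1 < \cdots < i_k}(-1)^{k-1}|S_{i_1} \cap \cdots \cap S_{i_k}| = \frac{|S_{1} \cup \cdots \cup S_{r}|}{|G|}.
\end{equation*}
By Sylow's theorem every $p$-subgroup of $G$ is contained in some Sylow, so each conjugate $g^{-1}Qg$ lies in some $P_{i}$, i.e.\ $g \in S_{i}$; therefore $S_{1} \cup \cdots \cup S_{r} = G$ and $\phi_{Q}(\text{RHS}) = 1 = \phi_{Q}(1)$, as required.

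The main obstacle is the initial reduction to $p$-subgroups, i.e.\ the injectivity of the $p$-subgroup marks on $\Ap$. This is Dress's theorem and constitutes the essential arithmetic input; once granted, everything else reduces to a standard double-coset and Sylow computation combined with inclusion-exclusion.
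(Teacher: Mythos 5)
The paper does not prove this statement; it is cited from Yoshida and Li and used as a black box, so there is no in-paper proof to compare against. Evaluating your argument on its own terms: the fixed-point computation (identifying $\phi_Q(\mathrm{RHS})$ with $|S_1\cup\cdots\cup S_r|/|G|$ via inclusion--exclusion and concluding from Sylow's theorem) is correct and is exactly the computation one expects to find in Yoshida's proof. The gap is in the reduction step, and it is more serious than you indicate.

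First, the coefficients $\tfrac{|P_{i_1}\cap\cdots\cap P_{i_k}|}{|G|}$ are not $p$-adic integers when $k\geq 2$: a proper intersection of distinct $p$-Sylows has order strictly less than the $p$-part of $|G|$, so a positive power of $p$ survives in the denominator. Hence the right-hand side does not a priori live in $\mathrm{A}(G)\otimes\ZZ_p$ at all, and one has to say carefully in which ring the identity is being asserted.

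Second, and relatedly, the claim that the $p$-subgroup marks become injective after $p$-completion is false if "$p$-completion'' means $\mathrm{A}(G)\otimes\ZZ_p$. For example, in $\mathrm{A}(S_3)\otimes\ZZ_2$ the element $x=[S_3/C_2]-[S_3/C_3]-[S_3/S_3]$ has marks $(0,0,-3,-1)$ at the subgroups $(1,C_2,C_3,S_3)$; its marks at the two $2$-subgroups vanish but $x\neq 0$. What Dress's description of $\mathrm{Spec}\,\mathrm{A}(G)$ actually gives is a block decomposition of $\mathrm{A}(G)_{(p)}$ indexed by conjugacy classes of $p$-perfect subgroups, and the $p$-subgroup marks are injective only on the block $e_1^p\,\mathrm{A}(G)_{(p)}$ attached to the trivial subgroup. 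Equivalently, the unique prime of $\mathrm{A}(G)$ containing both $p$ and the augmentation ideal $I_G$ is $\mathfrak{q}(1,p)$, and it is the $(I_G,p)$-adic completion — the ring appearing in the $p$-complete Segal conjecture, which is what the paper is actually using — on which the $p$-subgroup marks are faithful. In that ring the identity becomes $1 = e_1^p$, the coefficient issue disappears after evaluating, and your mark computation finishes the proof. So your argument is correct in spirit but needs the extra input of the $I_G$-adic (not merely $p$-adic) completion, or equivalently localization at the idempotent $e_1^p$; Dress's theorem on primes alone is not sufficient to justify the injectivity you invoke.
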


        Let $G$ be a group with a $p$-Sylow subgroup $P_1 \subseteq G$.
        The evaluation at the base point
        \begin{equation*}
            e^* \colon \pi_0(\SS_{p}^{\BG}) \to \pi_0(\SS_{p}) = \Zp
        \end{equation*}
        detects invertibility and sends $[G/H]$ to $[G:H]$ and in particular $[G/P_1]$ to a prime to $p$ integer.
        Therefore, after completing by the augmentation ideal, $[G/P_1]$ is invertible in $\ApIG \simeq \pi_0(\SS_{p}^{\BG})$.  
        
        Assume that $\cC$ is a $p$-complete stable symmetric monoidal category. 
        Then $\pi_0(\ounit_{\cC}^{\BG})$ is a module over $\pi_0(\SS_{p}^{\BG}) \simeq \ApIG$, and in particular over $\Aploc{}[\frac{1}{[G/P_1]}]$.

        \begin{corollary}[{\cite[Theorem 5.2.5]{Li-2025-cardinalities}}]\label{cor:integral-of-BG-is-linear-combination-along-p-subgroups}
            Let $\cC$ be a $p$-complete, $1$-semiadditive, symmetric monoidal stable category. Let $G$ be a finite group and $\{P_1,\dots, P_r\}$ be the set of its $p$-Sylow subgroups. Then, for  $f\in \ounit_{\cC}^{\BG}$,
            \begin{equation*}
                \int_{\BG} f = \sum_{i_1 < \cdots < i_k} (-1)^{k-1} \frac{\left|P_{i_1} \cap \cdots \cap P_{i_k}\right|}{|G|}\int_{\B(P_{i_1} \cap \cdots \cap P_{i_k})} f.
            \end{equation*} 
        \end{corollary}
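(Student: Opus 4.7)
The route is to transport Yoshida's decomposition of $1 \in \Ap$ from \cref{lem:p-completed-Burnside-generators} across the Segal-conjecture identification $\pi_0(\SS_p^{\BG}) \simeq \ApIG$, turning it into a decomposition of any $f \in \ounit_\cC^{\BG}$ as a signed sum of transfers of restrictions, and then integrate term by term.

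\textbf{Step 1 (Burnside action).} Since $\cC$ is $p$-complete and stable, the mapping spectrum $\ounit_\cC^{\BG}$ is naturally a module over $\SS_p^{\BG}$, so $\pi_0(\ounit_\cC^{\BG})$ becomes a module over $\pi_0(\SS_p^{\BG}) \simeq \ApIG$. Under this identification, the class $[G/K] \in \mrm{A}(G)$ corresponds to $\mrm{tr}_K^G(1) \in \pi_0(\SS_p^{\BG})$, where $\mrm{tr}_K^G$ is the 1-semiadditive pushforward along $q\colon \B K \to \BG$. The projection (Frobenius) formula then gives
\[
    [G/K] \cdot f \;=\; \mrm{tr}_K^G(1) \cdot f \;=\; \mrm{tr}_K^G\bigl(\mrm{res}_G^K f\bigr).
\]

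\textbf{Step 2 (Decompose and integrate).} Applying \cref{lem:p-completed-Burnside-generators} and Step~1 to $f$ yields, with $K_{\underline i} \coloneqq P_{i_1} \cap \cdots \cap P_{i_k}$,
\[
    f \;=\; \sum_{i_1 < \cdots < i_k} (-1)^{k-1} \frac{|K_{\underline i}|}{|G|}\, \mrm{tr}_{K_{\underline i}}^{G}\bigl(\mrm{res}_G^{K_{\underline i}} f\bigr).
\]
Applying $\int_{\BG}$ to both sides and using the elementary identity $\int_{\BG} \circ\, \mrm{tr}_K^G = \int_{\B K}$ (both sides are the 1-semiadditive pushforward along $\B K \to \BG \to *$, and pushforwards compose) immediately produces the claimed formula.

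\textbf{Main obstacle.} The substantive input is Step~1: matching the abstract $\ApIG$-action supplied by Segal's conjecture with the concrete semiadditive operation $\mrm{tr}_K^G \circ \mrm{res}_G^K$ on $\ounit_{\cC}^{\BG}$. This requires (i) the classical description of $[G/K] \in \pi_0(\SS^{\BG})$ as the transfer of $1$ along $\B K \to \BG$, and (ii) the projection formula, to propagate the description from the action on $1$ to the action on a general $f$. Once this identification is in hand, the rest of the argument is bookkeeping with the functoriality of semiadditive integration and the coefficients provided by Yoshida's theorem.
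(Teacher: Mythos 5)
Your proof is correct and takes essentially the same approach as the paper. Both reduce, via Yoshida's theorem and the Segal-conjecture module structure, to verifying $\int_{\BG}([G/K]\cdot f)=\int_{\B K}f$ for a $p$-subgroup $K\leq G$; the paper checks this by composing spans of $\pi$-finite spaces, which packages your three ingredients — the identification $[G/K]=\mathrm{tr}_K^G(1)$, the projection formula, and transitivity of transfers — into a single diagram.
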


        \begin{proof}
            Using the $\Aploc{}[\frac{1}{[G/P_1]}]$-module structure, by \cref{lem:p-completed-Burnside-generators}, it is enough to verify that for a $p$-subgroup $P$ of $G$,
            \begin{equation}\label{eqn:G/P-is-restriction}
                \int_{\BG} ([G/P] \cdot f)= \int_{\BP} f \tag{$\star$}.
            \end{equation}
            This can be done by translating all the operations to the language of spans of spaces.
            We denote by $\Span(\cC; L, R)$ the span category of $\cC$, with left morphisms in $L \subseteq \cC^{[1]}$ and right morphisms in $R\subseteq \cC^{[1]}$ (see \cite[Definition~5.7]{Barwick-2017-spectral-Mackey-functors}). In particular, we will denote by $\Span(\spc; \mfin, \mrm{all})$ the category whose objects are spaces and its morphisms are spans where the wrong way maps have $m$-finite fibers.
            As $\SS_{p}^{\BG}$ is $0$-semiadditive (i.e.\ a commutative monoid) it promotes to a functor
            \begin{equation*}
                (\SS_{p}^{\BG})^{(-)} \colon \Span(\spc, \mfin[0], \textrm{all}){\op} \to \Sp^{\wedge}_{p}.
            \end{equation*}
            The action of $[G / P]$ on $\SS_{p}^{\BG}$ is represented by the span
            \begin{equation*}
                \begin{tikzcd}
                    & {\BP} \\
                    {\BG} & {} & {\BG.}
                    \arrow[from=1-2, to=2-1]
                    \arrow[from=1-2, to=2-3]
                \end{tikzcd}
            \end{equation*}
            Recall that the integral over $\BG$ is defined by evaluating $\ounit_{\cC}^{(-)} \colon \Span(\spcpi[1])\op\to \cC$ against the span
            \begin{equation*}
                \begin{tikzcd}
                    & \BG \\
                    \pt && \BG.
                    \arrow[from=1-2, to=2-1]
                    \arrow[equals, from=1-2, to=2-3]
                \end{tikzcd}
            \end{equation*}
            Thus, the left hand side of \labelcref{eqn:G/P-is-restriction} is represented by the composition of spans
            \begin{equation*}
                \begin{tikzcd}
                    & {\BG} && {\BP} \\
                    \pt && {\BG} & {} & {\BG}
                    \arrow[from=1-2, to=2-1]
                    \arrow[equals, from=1-2, to=2-3]
                    \arrow[from=1-4, to=2-3]
                    \arrow[from=1-4, to=2-5]
                \end{tikzcd} \simeq
                \begin{tikzcd}
                    & {\BP} \\
                    \pt & {} & {\BG,}
                    \arrow[from=1-2, to=2-1]
                    \arrow[from=1-2, to=2-3]
                \end{tikzcd}
            \end{equation*}
            which in turn represents the integral $\int_{\BP}$ of the restriction to $\BP$.
        \end{proof}

        
        \begin{remark}\label{rmrk:Burnside-ring-of-1-types}
            Note that the Burnside ring is the Grothendieck group of the monoid of finite $G$-sets up to isomorphisms, and therefore it depends on $\BG$ rather than on $G$. The space of finite $G$-sets is equivalent to the space of $0$-finite maps of spaces $A \to \BG$. We therefore extend the notion of the Burnside ring of a finite group, to that of a 1-finite space. For a 1-finite space $X$ we define its Burnside ring $\mrm{A}(X)$ as the Grothendieck group of isomorphism classes of 0-finite maps to $X$. If $X = \BG$, then $\mrm{A}(\BG) = \mrm{A}(G)$ is the usual Burnside ring, and if $X = \bigsqcup_i \BG_i$, then $\mrm{A}(X) \simeq \bigsqcap_{i} \mrm{A}(G_i)$.
        \end{remark}

        \begin{lemma}\label{cor:integral-of-LBG-is-linear-combination-along-p-subgroups}
            Let $\cC$ be \emph{$p$-complete}, $1$-semiadditive, symmetric monoidal stable category. Let $G$ be a finite group and $\{P_1,\dots, P_r\}$ be the set of its $p$-Sylow subgroups. Then
            \begin{equation*}
                \int_{\Lp \BG} f = \sum_{i_1 < \cdots < i_k} (-1)^{k-1} \frac{\left|P_{i_1} \cap \cdots \cap P_{i_k}\right|}{|G|}\int_{\L \B(P_{i_1} \cap \cdots \cap P_{i_k})} f.
            \end{equation*} 
        \end{lemma}
    
        \begin{proof}
            Choose a base-point of $\B\Zp$ and let $\ev \colon \Lp\BG \simeq \Map(\B\Zp, \BG) \to \BG$ be the evaluation map.
            Using the notations of \cref{rmrk:Burnside-ring-of-1-types}, consider the two maps of rings
            \begin{align*}
                \Lp \colon \A(\BG) & \to \A(\Lp\BG) & \text{and} && \ev^* \colon \A(\BG) & \to \A(\Lp\BG) \\
                [X \to \BG] & \mapsto [\Lp X \to \Lp\BG] &&& [X \to \BG] & \mapsto [X \times_{\BG} \Lp\BG \to \Lp\BG] .
            \end{align*}
            Denote by $G^{(p)}$ the set of $p$-torsion elements in $G$. Then 
            \begin{equation*}
                \Lp\BG \simeq \bigsqcup_{[g] \in G^{(p)} / \conj} \B\cent_G(g),
            \end{equation*}
            and therefore
            \begin{equation*}
                \A(\Lp\BG) \simeq \bigsqcap_{[g] \in G^{(p)} / \conj} \A(\B\cent_G(g)).
            \end{equation*}
            Composing $\ev^*$ with the projection to $A(\B\cent_G(g))$ is the restriction map
            \begin{equation*}
                \A(G) \to \A(\cent_G(g)).
            \end{equation*}
            In particular, it sends $[G / P_1]$ to the $\cent_G(g)$-set $[G / P_1]$ which factors as
            \begin{equation*}
                \sum_{[x] \in \cent_G(g) \backslash G / P_1} [\cent_G(g) / (x^{-1} P_1 x \cap \cent_G(g))].
            \end{equation*}
            The map $\Lp$ sends $[G/P_1]$ to
            \begin{equation*}
                [\L\BP_1 \simeq \bigsqcup_{[q] \in P_1 / \conj} \B\cent_{P_1}(q) \to \bigsqcup_{[g] \in G / \conj} \B\cent_G(g)],
            \end{equation*}
            therefore the image under the restriction to $\A(\B\cent_G(g))$ is
            \begin{equation*}
                \sum_{\substack{[q] \in P_1 / \conj \\ q \sim g}}[\B\cent_{P_1}(q) \to \B\cent_G(g)] = 
                \sum_{\substack{[x] \in \cent_G(g) \backslash G / P_1 \\ xgx^{-1} \in P_1}} [\B\cent_{P_1}(xgx^{-1}) \to \B\cent_G(g)].
            \end{equation*}
            Written as a sum of $\cent_G(g)$-sets, $[G/P_1] \in \A(G)$ is sent to
            \begin{equation*}
                \sum_{\substack{[x] \in \cent_G(g) \backslash G / P_1 \\ xgx^{-1} \in P}} [\cent_G(g) / x^{-1}\cent_{P_1}(xgx^{-1})x] =                 
                \sum_{\substack{[x] \in \cent_G(g) \backslash G / P_1 \\ xgx^{-1} \in P}} [\cent_G(g) / (x^{-1}P_1 x \cap \cent_{P_1}(g))].
            \end{equation*}
            In particular, the composition
            \begin{equation*}
                \A(G) = \A(\BG) \xto{\ev^* - \Lp} \A(\L\BG) \onto \A(\B\cent_G(g)) = \A(\cent_G(g)) \xto{e^*} \ZZ
            \end{equation*}
            sends $[G/P_1]$ to 
            \begin{equation*}
                \sum_{\substack{[x] \in \cent_G(g) \backslash G / P_1 \\ xgx^{-1} \not\in P}} [\cent_G(g) : (x^{-1}P_1 x \cap \cent_{P_1}(g))].
            \end{equation*}
            As $xgx^{-1} \not\in P$ and is central in $x \cent_G(g) x^{-1}$, the group $P_1 \cap x \cent_G(g) x^{-1}$ can not be a $p$-Sylow subgroup in $x \cent_G(g) x^{-1}$, therefore the above sum is 0 modulo $p$. 
            
            The map $\ev^*$, after $p$-completion and completion by augmentation ideals, has a geometric interpretation as $\ev^* \colon \SS_{p}^{\BG} \to \SS_p$. As $[G/P_1]$ is invertible in $\pi_0 (\SS^{\BG})$ it is sent to an invertible elements in $\A(\cent_G(g))^{\wedge}_{p, I_{\cent_G(g)}}$ for any $[g] \in G^{(p)} / \conj$, and in particular, evaluating at a base point of $\B\cent_G(g)$, it is sent to a prime to $p$ integer in $\Zp$.

            By the above computation, the composition
            \begin{equation*}
                \A(G) \xto{\Lp} \A(\Lp\BG) \onto \A(\cent_G(g)) \to \A(\cent_G(g))^{\wedge}_{p, I_{\cent_G(g)}}
            \end{equation*}
            sends $[G/P_1]$ to an invertible element.
            
            Therefore, in the $\Aploc{}[\frac{1}{[G/P_1]}]$-algebra $\pi_0(\SS_p^{\Lp \BG}) \simeq \bigsqcap_{[g] \in G^{(p)} / \conj} \A(\cent_G(g))^{\wedge}_{p, I_{\cent_G(g)}}$,
            \begin{equation*}
                1 = \Lp(1) = \sum_{i_1 < \cdots < i_k} (-1)^{k-1} \frac{\left|P_{i_1} \cap \cdots \cap P_{i_k}\right|}{|G|}[\Lp \B(P_{i_1} \cap \cdots \cap P_{i_k}) \to \Lp \BG].
            \end{equation*}
            The proof now follows, as in the proof of \cref{cor:integral-of-BG-is-linear-combination-along-p-subgroups}.
        \end{proof}

    \subsection{Higher categories and iterated characters}
    \label{subsec:higher-cats}

        We begin by recalling the theory of higher categories and the monoidal trace. We then use the 1-categorical induced character formula to derive an iterated version suited to the higher categorical setting. 

        Let $\cC$ be a symmetric monoidal $\chrHeight$-category. An object $V \in \cC$ is 1-dualizable (or just dualizable) if there exists $V\dual \in \cC$ along with evaluation and coevaluation maps
        \begin{equation*}
            \ev \colon V \otimes V\dual \to \ounit_{\cC}, \qquad \coev \colon \ounit_{\cC} \to V\dual \otimes V
        \end{equation*}
        satisfying the zig-zag identities, i.e.\ the maps
        \begin{equation*}
            V \xto{\id \otimes \coev} V \otimes V\dual \otimes V \xto{\ev \otimes \id} V, \qquad V\dual \xto{\coev \otimes \id} V\dual \otimes V \otimes V\dual \xto{\id \otimes \ev} V
        \end{equation*}
        are isomorphisms.
        
        \begin{definition}
            We denote the $(\chrHeight-1)$-category spanned by dualizable objects with (higher) morphisms which are left adjoint in $\cC$ by $\cC\ndbl[1]$.
            For $t > 1$ we define the $(\chrHeight-t)$-category
            \begin{equation*}
                \cC\tdbl \coloneqq (\cC\ndbl[(t-1)])\ndbl[1].
            \end{equation*}
            An object $V \in \cC\tdbl$ is called $t$-dualizable. Objects in $\cC\ndbl$ are called fully-dualizable.
        \end{definition}


        \begin{definition}
            Given an $\chrHeight$-category $\cC$, we define its loops as
            \begin{equation*}
                \laxloops \cC \coloneqq \ounit_{\cC} \simeq \End_{\cC}(\ounit_{\cC}) \qin \Mon(\Catn[\chrHeight-1]).
            \end{equation*}
            When $\cC$ is symmetric monoidal, $\laxloops \cC$ is symmetric monoidal and the tensor product of $\ounit_{\cC}$ identifies with the composition of $\End_{\cC}(\ounit_{\cC})$.
        \end{definition}
        \begin{corollary}
            A morphism $f \colon \ounit_{\cC} \to \ounit_{\cC}$ is right $t$-adjointable\footnote{If and only if $f$ is left $t$-adjointable, and in this case it is ambidextrous.} in $\cC$ if and only if it is $t$-dualizable in $\laxloops \cC$.
        \end{corollary}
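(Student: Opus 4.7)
The plan is to argue by induction on $t$, reducing to an essentially tautological base case via an interchange between the loop functor $\laxloops$ and the subcategory-of-dualizable-objects operation $(-)\ndbl[1]$.

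For the base case $t = 1$, I would simply unfold the definitions. The object $f \in \laxloops \cC$ is $1$-dualizable iff there exist $g \in \laxloops \cC$ together with $1$-cells (i.e., $2$-cells in $\cC$) $\ev \colon f \otimes g \to \ounit_{\laxloops \cC}$ and $\coev \colon \ounit_{\laxloops \cC} \to g \otimes f$ satisfying the zig-zag identities. Since the monoidal structure on $\laxloops \cC = \End_{\cC}(\ounit_{\cC})$ is composition in $\cC$ with unit $\id_{\ounit_{\cC}}$, this datum is exactly an adjunction $f \dashv g$ in $\cC$, with the zig-zag identities becoming the triangle identities. Symmetry of the tensor product on $\laxloops \cC$ moreover shows that a one-sided adjoint is automatically two-sided, giving the ambidextrous refinement of the footnote.

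The inductive step rests on an interchange lemma: for any symmetric monoidal higher category $\cD$ there is a natural equivalence
\[
    \laxloops(\cD\ndbl[1]) \simeq (\laxloops \cD)\ndbl[1]
\]
of symmetric monoidal categories. At the level of objects this is the base case applied to $\cD$, since both sides consist of $1$-endomorphisms of $\ounit_{\cD}$ admitting a right adjoint. At the level of higher cells, the content is that adjointability of a $k$-cell in $\cD$ between $1$-endomorphisms of the unit is an intrinsic property of its vertical composition structure, which is precisely the $(k-1)$-composition inside $\laxloops \cD$; so the restrictions defining $\cD\ndbl[1]$ on $k$-cells translate to the restrictions defining $(\laxloops \cD)\ndbl[1]$ on $(k-1)$-cells. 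Iterating this equivalence $t-1$ times yields $\laxloops(\cC\tdbl) \simeq (\laxloops \cC)\tdbl$. Applying the base case to $\cC\ndbl[(t-1)]$ then identifies $1$-dualizability of $f$ in $(\laxloops \cC)\ndbl[(t-1)] \simeq \laxloops(\cC\ndbl[(t-1)])$ with the existence of a right adjoint to $f$ in $\cC\ndbl[(t-1)]$, and unwinding the recursive definition shows the latter is precisely right $t$-adjointability of $f$ in $\cC$.

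The main obstacle is the careful verification of the interchange lemma at higher morphism levels, namely confirming that the restrictions on $k$-cells produced by repeated passage to $(-)\ndbl[1]$ on either side cut out the same subcategory. This reduces to the compatibility between the $n$-categorical structure of $\cD$ and the $(n-1)$-categorical structure of $\laxloops \cD$, and while intuitively transparent it requires a careful combinatorial unfolding of the recursive definition.
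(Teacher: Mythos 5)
Your proof is correct. The paper states this as an unproved corollary — it is taken to follow directly from the definitions of $\laxloops\cC$ and $\cC\tdbl$ — so there is no paper argument to compare against; your base case (that dualizability in $\laxloops\cC = \End_\cC(\ounit_\cC)$, where $\otimes$ is composition with unit $\id_{\ounit_\cC}$, is literally an ambidextrous adjunction in $\cC$ by symmetry of the monoidal structure) and your interchange $\laxloops(\cD\ndbl[1]) \simeq (\laxloops\cD)\ndbl[1]$ (which holds because the hom-categories between $(k+1)$-cells of $\cD$ lying inside $\End_\cD(\ounit_\cD)$ are tautologically the hom-categories between the corresponding $k$-cells of $\laxloops\cD$, so adjointability on either side cuts out the same morphisms) together supply exactly the unpacking the paper leaves implicit. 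Note in passing that your interchange lemma, restricted to cores, recovers the paper's subsequent remark $(\laxloops(\cC\tdbl))\core \simeq ((\laxloops\cC)\tdbl)\core$, and that the ambidexterity asserted in the footnote propagates to all $t$ since the symmetric monoidal structure is inherited by each $(\laxloops\cC)\ndbl[(t-1)]$.
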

        In particular
        \begin{equation*}
            (\laxloops(\cC\tdbl))\core \simeq ((\laxloops \cC)\tdbl)\core.
        \end{equation*}

        \begin{definition}[{\cite[Definition 2.9]{Hoyois-Scherozke-Sibilla-2017-traces}, \cite[Definition~2.24]{Carmeli-Cnossen-Ramzi-Yanovski-2022-characters}}]
            Let $\cC$ be a symmetric monoidal $n$-category. There exists a symmetric monoidal functor\footnote{In the above references they actually construct the functor from a larger category (denoted by $\End(\cC)$ in \cite{Hoyois-Scherozke-Sibilla-2017-traces} and by $\cC^{\mrm{trl}}$ in \cite{Carmeli-Cnossen-Ramzi-Yanovski-2022-characters}) consisting of (not necessarily invertible) endomorphisms of dualizable objects and left adjoint morphisms with lax commuting squares between them. We restrict their construction to the simpler category consisting only of isomorphisms $\cC\ndbl[1]$.}
            \begin{equation*}
                \tr \colon \cC\ndblA[\TT]{1} \to \laxloops \cC
            \end{equation*}
            sending $f \colon V \to V$ to its trace
            \begin{equation*}
                \tr(f \mid V) \colon \ounit_{\cC} \xto{\coev} V \otimes V\dual \xto{f \otimes \id} V \otimes V\dual \isoto V\dual \otimes V \xto{\ev} \ounit_{\cC}.
            \end{equation*}
            The dimension of $V$ is defined to be
            \begin{equation*}
                \dim(V) \coloneqq \tr(\id_V \mid V).
            \end{equation*}
        \end{definition}

        \begin{lemma}[{\cite[Lemma~3.2.1]{Keidar-Ragimov-2025-twisted-graded}}]\label{lem:trace-is-T-invariant}
            The trace map 
            \begin{equation*}
                \tr \colon (\cC\dblspace)^{\TT} \to (\laxloops \cC)\core
            \end{equation*}
            is $\TT$-invariant. Its $\TT$-fixed points induce the dimension map, remembering the $\TT$-action
            \begin{equation*}
                \dim \simeq \tr^{h\TT} \colon \cC\core \to ((\laxloops \cC)\core)^{\B\TT}.
            \end{equation*}
        \end{lemma}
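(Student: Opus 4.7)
The proof splits into two steps: establishing the $\TT$-invariance of the trace, and identifying the induced map on $\TT$-fixed points with the dimension.

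For the first step, the plan is to invoke the cyclicity of the trace in symmetric monoidal $\infty$-categories, as encoded in the functorial construction of \cite{Hoyois-Scherozke-Sibilla-2017-traces, Carmeli-Cnossen-Ramzi-Yanovski-2022-characters}. A point of $(\cC\dblspace)^{\TT}$ is a pair $(V, f)$ with $V$ dualizable and $f$ an automorphism of $V$. The canonical $\TT$-action by loop rotation sends the generator of $\pi_1(\TT) \cong \ZZ$ to the self-equivalence of $(V, f)$ in $(\cC\dblspace)^{\TT}$ given by $f$ itself. The cyclic trace identity, a direct consequence of the zig-zag identities and the naturality of the braiding, implies that $\tr$ sends this self-equivalence to the identity of $\tr(f \mid V)$, which is precisely the $\TT$-invariance of $\tr$ with respect to the trivial action on $(\laxloops \cC)\core$.

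For the second step, I would compute the $\TT$-fixed points of the source. Since $\TT$ acts freely on itself by translation with $\TT_{h\TT} \simeq \ast$, cotensoring gives
\[
((\cC\dblspace)^{\TT})^{h\TT} \simeq (\cC\dblspace)^{\TT_{h\TT}} \simeq \cC\dblspace,
\]
with the inverse equivalence sending $V$ to the constant loop $(V, \id_V)$. Since $\Omega \cC\core$ carries the trivial $\TT$-action, its homotopy fixed points are $(\Omega \cC\core)^{\B\TT}$, so applying $(-)^{h\TT}$ to the $\TT$-invariant trace produces
\[
\tr^{h\TT} \colon \cC\dblspace \to (\Omega \cC\core)^{\B\TT}.
\]
On underlying points this sends $V$ to $\tr(\id_V \mid V) = \dim V$, while the $\B\TT$-structure records the residual loop data.

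The main obstacle is the first step: rigorously pinning down the $\TT$-equivariance at the $\infty$-categorical level requires either extracting it from the defining diagrams via a braiding and duality calculation, or unwinding the functorial trace construction of \cite{Hoyois-Scherozke-Sibilla-2017-traces}. Once this is established, identifying the fixed points is a formal manipulation.
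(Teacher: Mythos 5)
This lemma carries no proof in the present paper; it is cited verbatim from \cite[Lemma~3.2.1]{Keidar-Ragimov-2025-twisted-graded}, so there is no in-paper argument to compare against. Your sketch is nonetheless correct. The $\pi_1$-level identification — that the rotation generator acts on a pair $(V,f) \in (\cC\dblspace)^{\TT}$ by the automorphism $f$ itself, and that cyclicity forces the image of this self-equivalence under $\tr$ to be the identity — is right, and you correctly flag that this is only the lowest-order piece: the full coherent $\TT$-equivariance is the nontrivial input supplied by the functorial trace machinery of \cite{Hoyois-Scherozke-Sibilla-2017-traces, Carmeli-Cnossen-Ramzi-Yanovski-2022-characters} rather than something to re-derive by hand. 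The second step, using $\TT_{h\TT}\simeq\ast$ to identify $((\cC\dblspace)^{\TT})^{h\TT}$ with $\cC\dblspace$ via constant loops and reading off $\tr^{h\TT}(V) \simeq \tr(\id_V\mid V) = \dim(V)$ valued in $(\Omega\cC\core)^{\B\TT}$, is the expected formal manipulation and is correct.
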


        \begin{corollary}\label{cor:tr-of-k-dualizable}
            Assume that $V \in \cC$ is $t$-dualizable and $f \in \Aut(V)$. Then $\tr(f \mid V)$ is $(t-1)$-dualizable.
        \end{corollary}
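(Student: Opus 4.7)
The plan is to reduce the statement to the case $t=1$ by re-indexing: work inside the symmetric monoidal $(\chrHeight - t + 1)$-category $\cC\ndbl[(t-1)]$ and apply the ordinary $(t=1)$ trace construction there.

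First, I would unpack the hypothesis. By definition, $\cC\tdbl = (\cC\ndbl[(t-1)])\ndbl[1]$, so \emph{$V$ is $t$-dualizable in $\cC$} is literally the statement that \emph{$V$ is (1-)dualizable in $\cC\ndbl[(t-1)]$}. Moreover, since $f \in \Aut(V)$ is invertible, it is automatically $k$-adjointable for every $k$, so it belongs to $\cC\ndbl[(t-1)]$. Thus the pair $(V,f)$ defines an object of $(\cC\ndbl[(t-1)])\ndbl[1][\TT]$, and we may form the trace in $\cC\ndbl[(t-1)]$:
\begin{equation*}
    \tr(f \mid V) \in \laxloops\!\left(\cC\ndbl[(t-1)]\right).
\end{equation*}

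Second, I would invoke the equivalence recorded just before the corollary,
\begin{equation*}
    \left(\laxloops(\cC\ndbl[(t-1)])\right)\core \simeq \left((\laxloops \cC)\ndbl[(t-1)]\right)\core,
\end{equation*}
applied with $t-1$ in place of $t$ (a valid specialization, and trivial for $t=1$). This identification immediately yields that $\tr(f \mid V)$ is $(t-1)$-dualizable as an endomorphism of $\ounit_\cC$, which is the desired conclusion.

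The one bookkeeping step is to verify that the trace formed in $\cC\ndbl[(t-1)]$ is carried to the usual trace $\tr(f \mid V) \in \laxloops \cC$ under the symmetric monoidal inclusion $\cC\ndbl[(t-1)] \hookrightarrow \cC$. This is a formality: the trace is built from $\ev$, $\coev$, and the braiding via the universal zig-zag diagram, all of which are preserved by any symmetric monoidal functor, and the trace functor is natural in such functors by its construction in \cite{Hoyois-Scherozke-Sibilla-2017-traces}.

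The main obstacle, such as it is, lies in this last compatibility rather than in any real content: one needs that the trace functor is natural under the embedding $\cC\ndbl[(t-1)] \hookrightarrow \cC$ at the level of $(\infty,n)$-categories, so that the two a priori different traces agree. Once this naturality is in hand, the rest is a direct application of the two facts recorded in the paper, namely the existence of $\tr$ and the swap identity $\laxloops \circ (-)\ndbl[(t-1)] \simeq (-)\ndbl[(t-1)] \circ \laxloops$ on cores.
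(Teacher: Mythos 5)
Your proposal is correct and is essentially the paper's own proof: both factor $\tr$ through $(\cC\ndbl[(t-1)])\ndblA[\TT]{1} \to \laxloops(\cC\ndbl[(t-1)])$ and then use the swap identity $(\laxloops(\cC\ndbl[(t-1)]))\core \simeq ((\laxloops\cC)\ndbl[(t-1)])\core$ recorded just before the corollary. You spell out the compatibility of the trace with the inclusion $\cC\ndbl[(t-1)]\hookrightarrow\cC$, which the paper implicitly absorbs into the phrase ``factors through''; this is the right thing to check and follows from the naturality of the trace under symmetric monoidal functors.
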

        \begin{proof}
            The trace functor of automorphisms of $t$-dualizable objects factors through
            \begin{equation*}
                \cC\tdblA[\TT] \simeq (\cC\ndbl[(t-1)])\ndblA[\TT]{1} \xto{\tr} \laxloops (\cC\ndbl[(t-1)]).
            \end{equation*}
        \end{proof}

        We can therefore define inductively:
        \begin{definition}
            Let $\cC$ be a symmetric monoidal $\chrHeight$-category. Then by \cref{cor:tr-of-k-dualizable}, one can define an iterated trace symmetric monoidal functor
            \begin{equation*}
                \tr^t \colon \cC\tdblA[\TT^t] \to \laxloops^t \cC
            \end{equation*}
            sending $t$ commuting automorphisms $f_1,\dots, f_t$ of $V$ to $\tr(f_1,\dots,f_t \mid V)$.
            We also define the $t$-fold dimension of $V$ as
            \begin{equation*}
                \dim^t(V) \coloneqq \tr^t(\id_V, \dots, \id_V \mid V).
            \end{equation*}
        \end{definition}

        \begin{example}
            $\dim^t(\ounit_{\cC}) \simeq \laxloops^{t+1} \cC$.
        \end{example}
        
        \begin{example}\label{exm:dim^4-of-picard}
            If $V$ is invertible, then it lies in the connective spectrum $\cC\units = \pic(\cC)$. Its $t$-fold dimension lies in its $t$-th loops 
            \begin{equation*}
                \dim^t(V) \in \Omega^t \cC \units = (\laxloops^t \cC)\units.
            \end{equation*}
            By \cite[Proposition~3.20]{CSY-cyclotomic}, $\dim(V)$ is identified with $\eta \cdot V$, where $\eta \in \pi_1 \SS$ is the Hopf element. Therefore
            \begin{equation*}
                \dim^t V = \eta^t \cdot V \in \Omega^t \cC\units.
            \end{equation*}
            In particular, if $t \ge 4$, $\dim^t(V) = \laxloops^{t+1} \cC$, which is the unit of $\Omega^t \cC \units$.
        \end{example}

        By an iterated application of \cref{lem:trace-is-T-invariant}, we deduce:

        \begin{corollary}\label{cor:tr^k-is-T^k-invariant}
            The $t$-fold trace map
            \begin{equation*}
                \tr^t \colon (\cC\tdblspace)^{\TT^t} \to (\laxloops^t \cC)\core
            \end{equation*}
            is $\TT^t$-invariant and its fixed point is the $t$-fold dimension, remembering a $\TT^t$-action
            \begin{equation*}
                \dim^t \simeq (\tr^t)^{h\TT^t} \colon \cC\tdblspace \to ((\laxloops^t \cC)\core)^{\B\TT^t}.
            \end{equation*}
        \end{corollary}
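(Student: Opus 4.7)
The plan is to proceed by induction on $t$. The base case $t = 1$ is precisely \cref{lem:trace-is-T-invariant}.

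For the inductive step, I would express $\tr^t$ as a composition of a single trace (using the first $\TT$ factor) landing in $\laxloops \cC$, followed by $\tr^{t-1}$ inside $\laxloops \cC$ (using the remaining $\TT^{t-1}$ factor). Given a $t$-dualizable object $V$ with $t$ commuting automorphisms $(f_1, \dots, f_t)$, first form $\tr(f_1 \mid V) \in \laxloops \cC$; by \cref{cor:tr-of-k-dualizable} this is $(t-1)$-dualizable, and since $f_2, \dots, f_t$ commute with $f_1$, the functoriality of $\tr$ equips it with $(t-1)$ commuting automorphisms in $\laxloops \cC$. One can then apply $\tr^{t-1}$ inside $\laxloops \cC$ to these data, recovering $\tr^t(f_1, \dots, f_t \mid V)$.

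For the equivariance, the first trace is $\TT$-invariant with respect to rotations of $f_1$ by the base case, while the subsequent $\tr^{t-1}$ is $\TT^{t-1}$-invariant by the inductive hypothesis. Since the two actions are on disjoint factors of $\TT^t \simeq \TT \times \TT^{t-1}$, the composite is $\TT^t$-invariant. Taking $h\TT^t$-fixed points of the diagram and invoking the base case identification $\dim \simeq \tr^{h\TT}$ at each stage, together with $((-)^{\TT^t})^{h\TT^t} \simeq (-)^{\B\TT^t}$, yields $\dim^t \simeq (\tr^t)^{h\TT^t}$.

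I expect the main obstacle to be bookkeeping the two $\TT$-actions: one must verify that $\tr$ on $\cC\tdblA[\TT]$ genuinely lands in $(\laxloops \cC)\ndbl[(t-1)]$ equipped with the expected residual $\TT^{t-1}$-action coming from the other automorphisms, and that the first-factor equivariance from the base case truly commutes with the residual $\TT^{t-1}$-equivariance. Once this is set up correctly, the inductive step becomes an entirely formal consequence of \cref{lem:trace-is-T-invariant}.
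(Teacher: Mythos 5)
Your proposal is correct and takes essentially the same route as the paper, which offers no written proof beyond the phrase ``By an iterated application of \cref{lem:trace-is-T-invariant}, we deduce:''. Your inductive unpacking---decomposing $\tr^t$ as a first trace in the $f_1$-direction landing in $\laxloops\cC$ via \cref{cor:tr-of-k-dualizable}, followed by $\tr^{t-1}$ in the remaining $\TT^{t-1}$-directions, with the equivariances combining along $\TT^t \simeq \TT \times \TT^{t-1}$---is precisely the iteration the authors have in mind, and you correctly flag the one point that deserves care, namely that the trace landing in $(\laxloops\cC)\ndbl[(t-1)]$ carries the residual $\TT^{t-1}$-action compatibly with the first-factor $\TT$-equivariance.
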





        \begin{definition}
            Let $\cC$ be a symmetric monoidal $\chrHeight$-category. Let $A$ be a space and $V \in \cC\tdblA$ be an $A$-local system of $t$-dualizable objects. Define its $t$-fold character as the composition
            \begin{equation*}
                \chi^t_V \colon \L^t A \simeq A^{\TT^t} \xto{V^{\TT^t}} \cC\tdblA[\TT^t] \xto{\tr^t} \laxloops^t \cC.
            \end{equation*}
        \end{definition}

        \begin{remark}
            Let $V \in \cC\tdblA$ and let $(\gamma_1,\dots,\gamma_t) \colon \TT^t \to A$ be $t$ commuting loops based at $a \in A$. The character of $V$ at $(\gamma_1,\dots,\gamma_t)$ is the $t$-fold trace of the induced automorphisms 
            \begin{equation*}
                \tr^t(\gamma_1, \dots, \gamma_t \mid V_a).
            \end{equation*}
            Note in particular that the $t$-fold character is a $t$-fold iteration of the 1-character functor.
        \end{remark}
        
        \begin{corollary}\label{cor:characters-are-T-invariant}
            Let $\cC$ be a symmetric monoidal $\chrHeight$-category, $A \in \spc$ and $V\in \cC\tdblA$. Then the $t$-fold character of $V$
            \begin{equation*}
                \chi^t_V \colon \L^t A \to \laxloops^t \cC                
            \end{equation*}
            is $\TT^t$-invariant.
        \end{corollary}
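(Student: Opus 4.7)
The plan is to factor $\chi^t_V$ as the composite of a $\TT^t$-equivariant map followed by a $\TT^t$-invariant map, making the total composite $\TT^t$-invariant. The first step is to identify $\L^t A \simeq A^{\TT^t} = \Map(\TT^t, A)$ as carrying the $\TT^t$-action by precomposition with the left-translation action of $\TT^t$ on itself, and likewise to equip $\cC\tdblA[\TT^t] \simeq (\cC\tdblspace)^{\TT^t}$ with the analogous precomposition action (using that a functor from a space to $\cC\tdbl$ factors through the maximal subgroupoid $\cC\tdblspace$).

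Next, I would check that the map $V^{\TT^t} \colon A^{\TT^t} \to (\cC\tdblspace)^{\TT^t}$ appearing in the definition of the character, which is postcomposition with $V$, is $\TT^t$-equivariant. This is formal: postcomposition with $V$ and precomposition with a translation $\sigma \in \TT^t$ act on different ``sides'' of a morphism $f \colon \TT^t \to A$, so
\begin{equation*}
    V^{\TT^t}(f \circ \sigma) = V \circ f \circ \sigma = V^{\TT^t}(f) \circ \sigma.
\end{equation*}
Then I would invoke \cref{cor:tr^k-is-T^k-invariant} to conclude that the iterated trace $\tr^t \colon (\cC\tdblspace)^{\TT^t} \to \Omega^t \cC\core$ is $\TT^t$-invariant. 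Since $\chi^t_V = \tr^t \circ V^{\TT^t}$ by definition, and the composition of an equivariant map with an invariant map is invariant, the claim follows.

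This corollary is essentially a direct packaging of \cref{cor:tr^k-is-T^k-invariant}, so no substantive obstacle presents itself. The only point requiring care is the compatibility of the two precomposition $\TT^t$-actions on $A^{\TT^t}$ and on $(\cC\tdblspace)^{\TT^t}$, which is what makes $V^{\TT^t}$ equivariant; this compatibility is built into the construction, but it is worth recording explicitly before composing with the trace.
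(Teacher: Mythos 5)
Your proposal is correct and follows essentially the same approach as the paper: factor $\chi^t_V$ as $\tr^t \circ V^{\TT^t}$, observe that $V^{\TT^t}$ is $\TT^t$-equivariant (the paper simply says ``by definition'', which is the same postcomposition-versus-precomposition argument you spell out), and invoke \cref{cor:tr^k-is-T^k-invariant} for the $\TT^t$-invariance of $\tr^t$.
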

    
        \begin{proof}
            The $t$-fold character map is the composition
            \begin{equation*}
                \chi^t_V \colon \L^t A = A^{\TT^t} \xto{V^{\TT^t}} \cC\tdblA[\TT^t] \xto{\tr^t} \laxloops^t \cC.
            \end{equation*}
            The first map is $\TT^t$-equivariant by definition and the second by \cref{cor:tr^k-is-T^k-invariant}.            
        \end{proof}

        \subsubsection*{Induced character formula}
        Recall the 1-categorical induced character formula as developed in \cite{Ponto-Shulman-2014-induced-character, Carmeli-Cnossen-Ramzi-Yanovski-2022-characters}:

        \begin{theorem}[{\cite[Theorem~5.20]{Carmeli-Cnossen-Ramzi-Yanovski-2022-characters}}]\label{thm:induced-character-formula}
            Let $\cC \in \CAlg(\PrL)$ and let $V \in \cC^A$ be a local system of dualizable objects. For a map of spaces $f \colon A \to B$ such that $f^* \colon \cC[B] \to \cC[A]$ is right adjointable, the induced character $\chi_{f_! V}$ is given by the composition
            \begin{equation*}
                \ounit_{\cC}[\L B] \xto{\tr(f^*)} \ounit_{\cC}[\L A] \xto{\chi_V} \ounit_{\cC}.
            \end{equation*}
            In particular, if $\cC$ is $m$-semiadditive and $f$ is a map of $m$-finite spaces, then
            \begin{equation*}
                \chi_{f_! V} = \int_{\L f} \chi_V.
            \end{equation*}
        \end{theorem}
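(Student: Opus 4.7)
The strategy is to work in the symmetric monoidal $(\infty,2)$-category $\Mod_{\cC}(\PrL)$ and interpret both sides of the formula as bicategorical traces. The essential input is the identification $\tr(\id_{\cC^A}) \simeq \ounit_{\cC}[\L A]$ of the trace of the identity on the dualizable $\cC$-module $\cC^A$, which realizes the ``topological Hochschild homology of $\cC^A$ relative to $\cC$'' as the free loop space. Under this identification, for any dualizable $V \in \cC^A$ the character $\chi_V \colon \ounit_{\cC}[\L A] \to \ounit_{\cC}$ is realized as the bicategorical trace 2-cell associated to the 1-morphism $v \colon \cC \to \cC^A$ that picks out $V$.

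Given $f \colon A \to B$ with $f^*$ right adjointable, the 1-morphism $f^* \colon \cC^B \to \cC^A$ in $\Mod_{\cC}$ carries the adjoint structure needed for a well-defined bicategorical trace, and that trace is a 2-cell $\tr(\id_{\cC^B}) \to \tr(\id_{\cC^A})$ which, via the identification above, is precisely the arrow $\tr(f^*) \colon \ounit_{\cC}[\L B] \to \ounit_{\cC}[\L A]$ of the statement. The object $f_! V$ corresponds to the composed 1-morphism $f_! \circ v \colon \cC \to \cC^B$; I would then invoke multiplicativity of bicategorical trace under composition of 1-morphisms (and the fact that trace is invariant under passage to adjoints) to identify $\chi_{f_! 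V}$ with the composite $\chi_V \circ \tr(f^*)$.

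For the ``in particular'' consequence, when $f$ is a map of $m$-finite spaces and $\cC$ is $m$-semiadditive, $f^*$ is automatically right adjointable with ambidextrous right adjoint $f_* \simeq f_!$ given by the norm/integral construction. The induced map $\L f$ is again $m$-finite, and the bicategorical trace $\tr(f^*)$ unwinds, via the standard comparison between the bicategorical trace of a norm 1-morphism and the semiadditive integral of its underlying map of spaces, to $\int_{\L f}$; this is essentially a formal consequence of the universal characterization of the integral among natural transformations between ambidextrous Kan extensions.

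The principal obstacle is not the high-level strategy but the coherent identification $\tr(\id_{\cC^A}) \simeq \ounit_{\cC}[\L A]$ together with the precise naturality needed to exchange adjoints across the $f^*$ and $v$ factors, and to identify the norm trace with the semiadditive integral. These coherence statements are exactly what is established in detail in \cite{Hoyois-Scherozke-Sibilla-2017-traces} and in \cite{Carmeli-Cnossen-Ramzi-Yanovski-2022-characters}; once imported, the remainder of the argument is a formal manipulation of 2-cells in $\Mod_{\cC}(\PrL)$ as sketched above.
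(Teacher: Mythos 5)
Your bicategorical-trace outline is exactly the argument behind Theorem~5.20 of Carmeli--Cnossen--Ramzi--Yanovski, which the paper's proof cites as a black box for the first formula, then observes that under the norm isomorphism $\ounit_{\cC}[\L -] \simeq \ounit_{\cC}^{\L -}$ the map $\tr(f^*)$ identifies with the restriction $(\L f)^*$, making the integral formula definitional. The one small imprecision in your sketch is the claim that $\tr(f^*)$ ``unwinds to $\int_{\L f}$'': it unwinds to $(\L f)^*\colon \ounit_{\cC}^{\L B} \to \ounit_{\cC}^{\L A}$, and the integral $\int_{\L f}\chi_V$ appears only after composing with $\chi_V$ and reading off the corresponding element of $\ounit_{\cC}^{\L B}$.
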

        \begin{proof}
            The first part is exactly \cite[Theorem~5.20]{Carmeli-Cnossen-Ramzi-Yanovski-2022-characters}. The second is by observing that in the $m$-semiadditive case,
            \begin{equation*}
                \tr (f^*) \colon \ounit_{\cC}[\L B] \to \ounit_{\cC}[\L A]
            \end{equation*}
            is equivalent to the composition
            \begin{equation*}
                (\L f)^* \colon \ounit_{\cC}[\L B] \isoto \ounit_{\cC}^{\L B} \xto{(\L f)^*} \ounit_{\cC}^{\L A} \isogets \ounit_{\cC}[\L A].
            \end{equation*}
            The above composition then agrees with the definition of the semiadditive integral.
        \end{proof}

        We iterate this theorem to give a higher analogue, assuming that the category satisfies some $\chrHeight$-presentability condition.

        \begin{definition}
            Let $\cC$ be an $\chrHeight$-category.
            \begin{enumerate}
                \item We say that $\cC$ is $\chrHeight$-presentable if $\laxloops^i \cC$ is presentable for $0 \le i < \chrHeight$ (\cite{stefanich-2020-presentable}).
                \item We say that $\cC$ is $(\chrHeight,m)$-semiadditive if $\laxloops^i \cC$ is $m$-semiadditive for $0 \le i < \chrHeight$.
            \end{enumerate}
        \end{definition}

        \begin{corollary}\label{cor:induced-k-character-formula}
            Let $\cC$ be a symmetric monoidal $\chrHeight$-presentable category.
            Let $V \in \cC\tdblA$ be a local system of $t$-dualizable objects. Then for a map of spaces $f \colon A \to B$ such that $f^* \colon \cC[B] \to \cC[A]$ is $t$-adjointable, the induced iterated character $\chi^t_{f_! V}$ is given by the composition
            \begin{equation*}
                \laxloops^{t}\cC[\L^t B] \xto{\tr^t(f^*)} \laxloops^{t}\cC[\L^t A] \xto{\chi^t_V} \laxloops^{t} \cC.
            \end{equation*}
            In particular, if $\cC$ is $(\chrHeight,m)$-semiadditive and $f$ is a map of $m$-finite spaces, then
            \begin{equation*}
                \chi^t_{f_! V} \simeq \int_{\L^t f} \chi^t_{V}.
            \end{equation*}
        \end{corollary}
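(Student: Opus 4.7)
The plan is to proceed by induction on $t$, bootstrapping the $1$-categorical induced character formula (\cref{thm:induced-character-formula}) into the higher loops of $\cC$. The base case $t=1$ is precisely \cref{thm:induced-character-formula} applied to the presentable symmetric monoidal category $\cC$.

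For the inductive step, the key observation is that the iterated trace factors through the $1$-fold trace, yielding a factorization of the $t$-fold character
\begin{equation*}
    \chi^t_V \simeq \chi^{t-1}_{\chi_V} \colon \L^{t-1}(\L A) \to \laxloops^{t-1}(\laxloops \cC) = \laxloops^t \cC,
\end{equation*}
where the $1$-fold character $\chi_V \colon \L A \to \laxloops \cC$ is regarded as a local system of $(t-1)$-dualizable objects in the symmetric monoidal $(\chrHeight-1)$-presentable category $\laxloops \cC$; the $(t-1)$-dualizability follows from \cref{cor:tr-of-k-dualizable}. The same factorization applies to $f_! V$.

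To bridge the two, apply the base case in $\cC$ to identify the character $\chi_{f_!V}$ on $\L B$ with the trace-pushforward of $\chi_V$ along $\L f$ (in the semiadditive setting below this becomes literally $(\L f)_! \chi_V$). Since $f^*$ is $t$-adjointable in $\cC$, the corresponding operation in $\laxloops \cC$ is $(t-1)$-adjointable, so the inductive hypothesis applies in $\laxloops \cC$ to the local system $\chi_V$ and the map $\L f \colon \L A \to \L B$, yielding
\begin{equation*}
    \chi^{t-1}_{\chi_{f_!V}} \simeq \chi^{t-1}_{\chi_V} \circ \tr^{t-1}((\L f)^*).
\end{equation*}
Combining this with the inductive compatibility $\tr^t(f^*) \simeq \tr^{t-1}(\tr(f^*))$ of iterated traces and with the factorization of $\chi^t$ above yields the first statement.

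For the semiadditive version, iterate the identification used in the proof of \cref{thm:induced-character-formula}: when $\cC$ is $m$-semiadditive and $f$ is $m$-finite, the trace $\tr(f^*)$ coincides with $(\L f)^*$ viewed as the integral operator. Applying this at every layer of the iteration produces $\tr^t(f^*) \simeq (\L^t f)^*$, whose composition with $\chi^t_V$ is by definition the semiadditive integral $\int_{\L^t f} \chi^t_V$. The main obstacle is rigorously checking the two compatibilities used above: the layered factorization $\tr^t \simeq \tr^{t-1} \circ \tr$ of the iterated trace, and the identification of $\chi_{f_!V}$ with the appropriate trace-pushforward of $\chi_V$ in $\laxloops \cC$ together with the inheritance of $(t-1)$-adjointability of $(\L f)^*$ from $t$-adjointability of $f^*$. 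Both are intuitively clear from the inductive construction of the iterated trace in \cite{Hoyois-Scherozke-Sibilla-2017-traces, Carmeli-Cnossen-Ramzi-Yanovski-2022-characters}, but require careful bookkeeping in the $(\infty, n)$-categorical setting.
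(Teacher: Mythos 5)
Your proposal is correct and matches the paper's intended argument: the paper gives no explicit proof of this corollary, stating only that it is obtained by iterating \cref{thm:induced-character-formula}, which is precisely the induction you carry out. The two compatibilities you flag (the layered factorization $\tr^t \simeq \tr^{t-1}\circ\tr$, hence $\chi^t_V \simeq \chi^{t-1}_{\chi_V}$, and the descent of $t$-adjointability of $f^*$ to $(t-1)$-adjointability of $\tr(f^*)$, resp.\ $(\L f)^*$, in $\laxloops\cC$) are exactly the inductive content; they follow from the inductive definition of $\tr^t$ and the argument of \cref{cor:tr-of-k-dualizable}, and are the same bookkeeping the paper implicitly delegates to the cited constructions in \cite{Hoyois-Scherozke-Sibilla-2017-traces, Carmeli-Cnossen-Ramzi-Yanovski-2022-characters}.
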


\section{Characters and alternating powers in the higher categorical settings}
\label{sec:categorical}

    In \cref{subsec:semiadditive-characters} we introduce alternating powers and power operations in the context of higher semiadditive categories. In the higher categorical framework, one can use the iterated induced character formula \cref{cor:induced-k-character-formula} to compute their iterated dimension.

    In \cite[Lemma~3.3.10]{Keidar-Ragimov-2025-twisted-graded}, independently proven in \cite[Lemma~4.7]{Ramzi-2023-seperability} (see also \cref{lem:character-of-Tm}), we provide a formula for the character of a permutation representation. By this formula, to compute the iterated character, under the assumption that the $\TT$-action on the dimension is trivial, it suffices to extend the character computation from permutation representations $\Tm V \in \cC^{\B\Sm}$ to $\Tm (\BG)^*V \in \cC^{\B(G \wr \Sm)}$. We perform this extension in \cref{subsec:higher-cats-and-alternating}, where we compute the character of $\Tm W \in \cC^{\B(G \wr \Sm)}$ in terms of the character of $W$, for any $W \in \cC\dblA[\BG]$.

    In \cref{subsec:categorical-alternating}, we briefly examine the categorical case $\Mod_{\cVectn}$. In this setting, we derive a formula for the alternating power of the unit. We also provide generating functions that compute iterated dimensions of characters induced from functionals on stable stems in low heights, yielding results analogous to \cite[\textsection~5.2]{Keidar-Ragimov-2025-twisted-graded}.

    \subsection{Semiadditive characters and alternating powers}
    \label{subsec:semiadditive-characters}
        
        \begin{definition}
            Let $\cC$ be a symmetric monoidal category and let $H \in\Grp(\spc)$ be an $\EE_1$-group. 
            A character of $H$ in $\cC$ is a map of $\EE_1$-groups
            \begin{equation*}
                \hchar \colon H \to \ounit_{\cC}\units.
            \end{equation*}
            Equivalently a character is a map of pointed connected spaces $\BH\to \B\ounit_{\cC}\units \simeq \B\Aut(\ounit_{\cC})$ and thus gives an $H$-action on $\ounit_{\cC}$. We denote the unit with this action by $\ounit_{\cC}[\hchar]\in\cC^{\BH}$.
        \end{definition}

        \begin{definition}
            Let $H \in \Grp(\spc)$ and $\hchar \colon H \to \ounit_{\cC}\units$ be a character. Define the inverse character $\overline{\hchar} \colon H \to \ounit_{\cC}\units$ as the composition
            \begin{equation*}
                \overline{\hchar} \colon H \xto{\hchar} \ounit_{\cC}\units \xto{(-)^{-1}} \ounit_{\cC}\units.
            \end{equation*}
        \end{definition}
        
        \begin{definition}
            Let $\cC\in\CAlg(\PrL)$, $H\in\Grp(\spc)$ and $\hchar \colon H \to \ounit_{\cC}\units$ a character. For $V\in\cC^{\BH}$ we define its $\hchar$-isotypic component by
            \begin{equation*}
                V_{\hchar} \coloneqq (V\otimes \ounit_{\cC}[\overline{\hchar}])^{h H}.
            \end{equation*}
        \end{definition}

        \begin{definition}\label{def:alt-power}
            Let $\cC\in\CAlg(\PrL)$, $H$ a group with a homomorphism $H \to \Sm$ and a character $\hchar \colon H \to \ounit_{\cC}\units$. 
            For $V\in \cC$ define its $\hchar$-twisted $\degree$-th alternating power $\alt_{H,\hchar} V \coloneqq (\Tm V)_{\hchar}$ as the $\hchar$-isotypic component of $\Tm V\in \cC^{\BH}$. That is:
            \begin{equation*}
                \alt_{H, \hchar} V = (\Tm V \otimes \ounit_{\cC}[\overline{\hchar}])^{h H}.
            \end{equation*}
        \end{definition}

        \begin{notation}
            Our main interest will be in the case $H = \Sm$ with the identity morphism. In this case we drop $H$ from the notation
            \begin{equation*}
                \alt_{\hchar} V = (\Tm V \otimes \ounit_{\cC}[\overline{\hchar}])^{h\Sm}.
            \end{equation*}
        \end{notation}

        \begin{example}
            For any $\cC$ there exists a trivial character $\hchar \equiv 1$. Its corresponding alternating powers are identified with the symmetric powers.
        \end{example}
        
        \begin{example}
        \label{exm:classical-characters}
            Let $\field$ be a field of characteristic different than 2. Then $\Sm$ has exactly two characters in $\Vect$: the trivial and the sign characters. The corresponding alternating powers are the symmetric powers and the classical alternating powers respectively.
        \end{example}

        Both the trivial and the sign representations factor through $\ZZ/2$. More generally, if $H$ is a finite group, then any character of $H$ in $\Vect$ factors through $\mu_{\infty}(\field)$ --- the group of roots of unity in $\field$. We extend this idea to the higher semiadditive setting, replacing roots of unity with their higher analogue, as captured by semiadditive orientations (\cref{subsec:orientations}).

        \begin{definition}\label{def:characters-from-orientation}
            Let $H$ be a group, $R$ a connective ring spectrum and $\cC$ an $(R,\chrHeight)$-oriented category. 
            A character is said to be an $(R,\chrHeight)$-orientation character if it factors through the orientation map $\ndual{R} \to \ounit_{\cC}\units$.
        \end{definition}




        \subsubsection*{Twisted power operations}
        Let $\hchar \colon \BH \to \B\ounit_{\cC}\units$ be a character. The $\hchar$-twisted alternating power functor
        \begin{equation*}
            \alt_{\hchar} \colon \cC \to \cC
        \end{equation*}
        can be thought of as a (twisted) power operation in $\Mod_{\cC}$. Under semiadditivity assumptions, it introduces operations on the ring $\K(\cC\dbl)$. 

        The character $\hchar$ can also be viewed as a map of pointed spaces
        \begin{equation*}
            \hchar \colon \BH \to \cC\units.
        \end{equation*}
        Observing that $\cC$ is the unit object in $\Mod_{\cC}$ motivates the following definition:
                
        \begin{definition}\label{cons:power-operation}
            Let $\cC$ be a 1-semiadditive, symmetric monoidal category. Let $H$ be a finite group with a homomorphism $H \to \Sm$ and let $\hchar \colon \BH \to \ounit_{\cC}\units$ be a pointed map.
            Define the $\hchar$-twisted power operation as the composition
            \begin{equation*}
                \beta_{H,\hchar}^{\degree} \colon \underlying \ounit_{\cC} \xto{(-)^{\degree}} (\underlying \ounit_{\cC})^{\BH} \xto{-\cdot \overline{\hchar}} (\underlying \ounit_{\cC})^{\BH} \xto{\int_{\BH}} \underlying \ounit_{\cC}.
            \end{equation*}
        \end{definition}

        \begin{notation}
            In the case where $H = \Sm$ with the identity morphism we drop the group from the notation
            \begin{equation*}
                \beta_{\hchar}^{\degree} \colon \underlying \ounit_{\cC} \xto{(-)^{\degree}} (\underlying \ounit_{\cC})^{\B\Sm} \xto{-\cdot \overline{\hchar}} (\underlying \ounit_{\cC})^{\B\Sm} \xto{\int_{\B\Sm}} \underlying \ounit_{\cC}.
            \end{equation*}
        \end{notation}
    
        \begin{example}
            When $\hchar \colon \B\Sm \to \ounit_{\cC}\units$ is the trivial map, the $\hchar$-twisted power operation is the usual semiadditive power operation (see e.g.\ \cite[Definition~4.1.1]{CSY-teleambi}).
        \end{example}

        \begin{example}
            Let $\cC = \Mod_{\KUp[2]}(\SpKn[1]) = \Mod_{\KUp[2]}(\Sp^{\wedge}_2)$.\footnote{We avoid writing $\En[1] = \KUp[2]$ as our $\En[1]$ is Galois closed. That is, $\En[1]$ is the tensor of $\KUp[2]$ with the spherical Witt vectors of $\Fpbar$.} Let
            \begin{equation*}
                \hchar \colon \B\Sm \xto{\sgn} \Sigma \ZZ/2 \to (\KUp[2])\units.
            \end{equation*}
            Then the $\hchar$-twisted power operation $\beta^{\degree}_{\hchar} \colon \KUp[2] \to \KUp[2]$ is the $\degree$-th $\lambda$-operation $\lambda_{\degree}$.
        \end{example}

    

        \begin{remark}\label{rmrk:decategorification-of-alternating-powers}
            The category $\Mod_\cC$ is $\infty$-semiadditive and the semiadditive integral over a map $A \to \FunL_{\cC}(X,Y)$ is computed as the colimit over $A$ in the functor category. 
            We therefore deduce the following equality for any finite group $H$ and $V \in \cC^A$:
            \begin{equation*}
                \int^{\Mod_{\cC}}_{\BH} V= \colim_{\BH} V = V_{hH} \qin \cC.
            \end{equation*}
            In particular, for any character $\hchar \colon \BH \to \B\ounit_{\cC}\units$ the $\hchar$-twisted power operation in $\Mod_{\cC}$ is the $\hchar$-twisted alternating power in $\cC$:
            \begin{equation*}
                \beta^{\degree}_{H,\hchar}(-) = \int_{\B\Sm}^{\Mod_\cC} (-)^{\degree} \cdot \overline{\hchar} =\alt_{H, \hchar} (-).
            \end{equation*}
        \end{remark}

        \subsubsection*{Induced character formula for alternating powers}

        In the context of $\chrHeight$-presentable $(\chrHeight,1)$-semiadditive categories, one can use the iterated induced character formula to compute the dimensions of alternating powers.

        \begin{lemma}
            Let $\cC$ be a symmetric monoidal $\chrHeight$-presentable, $(\chrHeight,1)$-semiadditive category.
            Let $H$ be a finite group, $H \to \Sm$ a homomorphism, $V \in \cC\tdbl$ and $\hchar \colon \Sm \to \ounit_{\cC}\units$ a character. Then 
            \begin{equation*}
                \dim^t(\alt_{H,\hchar}) = \int_{\L^t \BH} \chi^t_{\Tm V \otimes \ounit_{\cC}[\overline{\hchar}]} = \int_{\L^t \BH} \chi^t_{\Tm V} \cdot \chi^t_{\ounit_{\cC}[\overline{\hchar}]} \qin \laxloops^{t} \cC.
            \end{equation*}
        \end{lemma}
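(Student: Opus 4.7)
The plan is to view the twisted alternating power as the pushforward of a local system along the terminal map $f \colon \BH \to \pt$ and to combine the iterated induced character formula (\cref{cor:induced-k-character-formula}) with the symmetric monoidality of the $t$-fold character functor.

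Set $W \coloneqq \Tm V \otimes \ounit_{\cC}[\overline{\hchar}] \in \cC^{\BH}$. Since $V$ is $t$-dualizable, so is $\Tm V$ (tensor powers preserve $t$-dualizability), and $\ounit_{\cC}[\overline{\hchar}]$ is invertible, hence $t$-dualizable. Thus $W$ lies in $(\cC^{\BH})\tdbl$. Because $\cC$ is $1$-semiadditive, the norm map identifies homotopy fixed points with homotopy orbits, so
\begin{equation*}
    \alt_{H,\hchar} V \;=\; W^{hH} \;\simeq\; f_{!} W.
\end{equation*}
I would then invoke \cref{cor:induced-k-character-formula} for the map $f \colon \BH \to \pt$; the required $t$-adjointability of $f^{*}$ follows from the $(\chrHeight,1)$-semiadditivity and $\chrHeight$-presentability of $\cC$, since $\BH$ is a $1$-finite space and the adjointability is inherited layer by layer from the $1$-semiadditivity of each $\laxloops^{i}\cC$. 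Since $\L^{t}\pt \simeq \pt$, the $t$-fold character $\chi^{t}_{f_{!} W}$ is precisely the $t$-fold dimension, and the formula specializes to
\begin{equation*}
    \dim^{t}(\alt_{H,\hchar} V) \;=\; \int_{\L^{t}\BH} \chi^{t}_{W}.
\end{equation*}

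For the second equality, I would use that the $t$-fold character functor is built by composing symmetric monoidal constructions (the pullback $(-)^{\TT^{t}}$ and the iterated trace $\tr^{t}$), so it is itself symmetric monoidal. Applying this to $W = \Tm V \otimes \ounit_{\cC}[\overline{\hchar}]$ yields
\begin{equation*}
    \chi^{t}_{W} \;\simeq\; \chi^{t}_{\Tm V} \cdot \chi^{t}_{\ounit_{\cC}[\overline{\hchar}]},
\end{equation*}
which gives the stated decomposition upon integrating over $\L^{t}\BH$.

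The only non-routine point is checking that the hypotheses of \cref{cor:induced-k-character-formula} are actually in force: one needs to know both that $f_{!}W$ remains $t$-dualizable (so that $\dim^{t}$ of it is defined in the target $\laxloops^{t}\cC$) and that $f^{*}$ is $t$-adjointable through every iterated loops category. Both reduce to the standing presentability and semiadditivity assumptions, applied inductively across the tower $\laxloops^{i}\cC$ for $0 \le i < \chrHeight$; this bookkeeping is the main technical content of the argument, the rest being a direct application of the iterated induced character formula.
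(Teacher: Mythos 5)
Your proof is correct and takes essentially the same route as the paper's (one-line) argument: both reduce to the iterated induced character formula applied to the terminal map $\BH \to \pt$. You are merely more explicit about the supporting points — the identification $W^{hH} \simeq f_!W$ via the norm isomorphism in a $1$-semiadditive category, the $t$-dualizability of $W$, and the symmetric monoidality of the $t$-fold character giving the second equality — none of which is a genuine deviation from the paper's approach.
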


        \begin{proof}
            This follows by \cref{cor:induced-k-character-formula}, as $\alt_{H,\hchar} V$ is the induced representation of $\Tm V \otimes \ounit_{\cC}[\overline{\hchar}]$ under the map $\BH \to \pt$.
        \end{proof}

        \begin{definition}\label{def:transgression}
            Let $A\in\spc$, $X\in \cnSp$ and $\hchar \colon A\to X$ be a map of spaces. We define the transgression of $\hchar$ as
            \begin{equation*}
                \tg(\hchar) \colon \L A \xto{\L\hchar} \L X\simeq  X \times \Omega X \xto{\eta +\id} \Omega X
            \end{equation*}
            where $\eta \colon X \to \Omega X$ is the Hopf map. 
        \end{definition}
        
        \begin{remark}
            In the case where $A = \BH$ and $X = \Sigma^n M$ for an $\EE_1$-group $H$ and an abelian group $M$, the $\pi_0$ of the above transgression defines an operation
            \begin{equation*}
                \H^n(\BH; M) \to \H^{n-1}(\L\BH; M)
            \end{equation*}
            which is named \quotes{transgression} in \cite{Willerton-2008-transgression}. Composing with the map $H = \Omega \BH \to \L\BH$, we get an operation
            \begin{equation*}
                \H^n(\BH; M) \to \H^{n-1}(H; M)
            \end{equation*}
            which is called the transgression in \cite{Dijkgraff-Witten-1990-transgression}. See also \hyperlink{https://ncatlab.org/nlab/show/transgression+in+group+cohomology}{nlab} for further discussion of the transgression. 
        \end{remark}

        \begin{lemma}\label{lem:tg-of-const}
            Let $x \colon A \to X$ be a constant map choosing $x\in X$. Then its transgression is the constant map
            \begin{equation*}
                \tg(x) \equiv \eta x \qin \Omega X.
            \end{equation*}
        \end{lemma}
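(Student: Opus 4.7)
The plan is to unwind the definition of the transgression and exploit the canonical splitting $\L X \simeq X \times \Omega X$ for $X \in \cnSp$. This splitting arises because the basepoint of $\TT$ induces a section of $\SS \to \SS[\TT]$, so that $\SS[\TT] \simeq \SS \oplus \Sigma \SS$, whence $\tau_{\ge 0}\hom(\SS[\TT], X) \simeq X \times \Omega X$. Under this identification, the first projection corresponds to evaluating a loop at its basepoint, while the second projection records the loop itself.

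Next I would observe that, since $x\colon A \to X$ is constant, it factors as $A \to \pt \xto{x} X$. Applying the free loops functor yields $\L x \colon \L A \to \L \pt \simeq \pt \to \L X$, so $\L x$ is itself constant, taking value the constant loop on $x$. Under the splitting $\L X \simeq X \times \Omega X$, this constant loop corresponds to $(x, 0)$, where $0$ denotes the basepoint of $\Omega X$; this is the statement that a constant loop has vanishing derivative, and it follows from naturality applied to the constant map $\pt \to X$.

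Finally, composing with $\eta + \id\colon X \times \Omega X \to \Omega X$ sends $(x,0)$ to $\eta x + 0 = \eta x$, so $\tg(x)$ is constant at $\eta x \in \Omega X$, as claimed. The argument is essentially a definitional unwinding and no step is genuinely difficult; the only thing to be careful about is the identification of the $\Omega X$-component of a constant loop, which is exactly where the splitting of $\SS[\TT]$ through its basepoint enters.
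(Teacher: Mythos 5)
Your proof is correct and is essentially the same as the paper's: both reduce to the observation that $\L x$ is the constant map at $(x,0)$ under the splitting $\L X \simeq X \times \Omega X$, and then compose with $\eta + \id$. Your version simply spells out the splitting and the naturality argument in more detail, but the substance is identical.
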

        \begin{proof}
            Under the identification $\L X \simeq X \oplus \Omega X$, $\L x \colon \L A \to X \oplus \Omega X$ is the constant map on $(x,0)$.
        \end{proof}

        
        \begin{proposition}\label{prop:transgersion-is-charcter-of-character}
            Let $\cC$ be a symmetric monoidal $\chrHeight$-presentable category.
            Let $H$ be a finite group and $\hchar \colon H \to \ounit_{\cC}\units$ a character. Then for $1 \le t \le \chrHeight$
            \begin{equation*}
                \chi^t_{\ounit_{\cC}[\hchar]}=\tg^t(\hchar) \qin (\laxloops^t \cC)^{\L^t \BH}.
            \end{equation*}
        \end{proposition}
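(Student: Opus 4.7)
I would prove this by induction on $t$, with a clean direct computation at $t=1$ and a formal iteration after. The heart of the argument is that, on invertible objects, the trace functor \emph{is} (essentially tautologically) the transgression of the Picard spectrum.

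\emph{Base case $t=1$.} The trace functor restricted to invertibles,
\begin{equation*}
    \tr \colon (\cC\units)^{\TT} \to \Omega \cC\units,
\end{equation*}
is symmetric monoidal, hence a map of connective spectra. Using the canonical splitting $\L X \simeq X \oplus \Omega X$ for $X \in \cnSp$ applied to $X = \cC\units$, I would determine $\tr$ by its restrictions to the two summands. On $\cC\units$ (the constant-loop embedding $V \mapsto (V,\id)$) it computes $\dim$, which equals $\eta$-multiplication by \cref{exm:dim^4-of-picard}. On $\Omega \cC\units$ (the embedding $f \mapsto (\ounit_\cC, f)$) it is the identity, since $\coev$ and $\ev$ at $\ounit_\cC$ are the identity and so $\tr(\ounit_\cC, f) = f$. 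Additivity thus gives $\tr(V,f) = \eta V + f$. Now the invertible local system $\ounit_\cC[\hchar]$ is classified by the pointed map $B\hchar \colon \BH \to \B\ounit_\cC\units \subseteq \cC\units$, and the induced
\begin{equation*}
    (\ounit_\cC[\hchar])^{\TT} \colon \L \BH \to (\cC\units)^{\TT} \simeq \cC\units \oplus \Omega \cC\units
\end{equation*}
sends a loop $(b,g) \mapsto (B\hchar(b), \hchar(g))$. Post-composing with $\tr$ yields $\eta \cdot B\hchar(b) + \hchar(g)$. On the other side, writing $\tg(\hchar)$ in the definition as applied to the spectrum $X = \Sigma \ounit_\cC\units$, the map $\L B\hchar$ sends $(b,g)$ to $(B\hchar(b), \hchar(g)) \in \Sigma \ounit_\cC\units \oplus \ounit_\cC\units$, and $\eta + \id$ gives the same element $\eta \cdot B\hchar(b) + \hchar(g) \in \ounit_\cC\units = \Omega \Sigma \ounit_\cC\units$. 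Thus $\chi_{\ounit_\cC[\hchar]} = \tg(\hchar)$.

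\emph{Inductive step.} The $t$-fold character is the $t$-fold iterate of the $1$-character functor (as remarked in the paper, using that on invertibles $\chi^t_V$ takes values in $(\laxloops^t\cC)\units$ by \cref{cor:tr-of-k-dualizable}, so the next character is well-defined). Concretely,
\begin{equation*}
    \chi^{t+1}_{\ounit_\cC[\hchar]} \simeq \chi_{\laxloops^t \cC}\!\left(\chi^t_{\ounit_\cC[\hchar]}\right),
\end{equation*}
where the right side interprets $\chi^t_{\ounit_\cC[\hchar]} \colon \L^t\BH \to (\laxloops^t\cC)\units$ as an invertible local system on $\L^t\BH$ in $\laxloops^t\cC$ whose classifying map is itself. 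Applying the base case inside the symmetric monoidal $(\chrHeight-t)$-presentable category $\laxloops^t\cC$ gives $\chi_{\laxloops^t \cC}(L) = \tg(L)$ for any such invertible local system $L$, so $\chi^{t+1}_{\ounit_\cC[\hchar]} = \tg(\chi^t_{\ounit_\cC[\hchar]})$. Combining with the inductive hypothesis $\chi^t_{\ounit_\cC[\hchar]} = \tg^t(\hchar)$ finishes: $\chi^{t+1}_{\ounit_\cC[\hchar]} = \tg(\tg^t(\hchar)) = \tg^{t+1}(\hchar)$.

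\emph{Main obstacle.} The delicate point is identifying $\chi^t_V$, a priori just a map of spaces $\L^t\BH \to \laxloops^t\cC$, with the classifying map of itself as an invertible local system in the Picard spectrum of $\laxloops^t\cC$; this is what makes the inductive step of the form \quotes{apply the base case in $\laxloops^t\cC$}. Once the iteration is set up correctly and the convention that $\tg$ is implicitly applied to the delooping $B\hchar \colon \BH \to \Omega^\infty \Sigma \ounit_\cC\units$ is fixed, the remainder is a bookkeeping exercise with the splitting $\L X \simeq X \oplus \Omega X$ and the additivity of the trace on invertibles.
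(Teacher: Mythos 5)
Your proof is correct and follows essentially the same route as the paper: the paper also reduces to the $t=1$ case and then invokes \cite[Lemma~3.2.7]{Keidar-Ragimov-2025-twisted-graded} for exactly the fact you re-derive (that under the splitting $\Map(\TT,\cC\units)\simeq\cC\units\oplus\ounit_{\cC}\units$ the trace becomes $\eta+\id$). One small phrasing point: since $\L^t\B H$ is a disjoint union of classifying spaces rather than a single $\B H'$, the base case should be stated for an arbitrary $1$-finite space $A$ and a map $\hchar\colon A\to\cC\units$ (as the paper does) so that the inductive step applies verbatim; your argument for the base case makes no use of connectivity, so this is a cosmetic fix.
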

        
        \begin{proof}
            It is enough to show that, for a 1-finite space $A$ and $\hchar \colon A \to \cC\units$, we have
            \begin{equation*}
                \chi_{\hchar} \simeq \tg(\hchar).
            \end{equation*}

            In this case the character map $\chi_{\hchar} \colon \L A \to \Omega \cC\core$ factors as 
            \begin{equation*}
                \Map(\TT, A) \xto{\hchar \circ -} \Map(\TT, \cC\units) \xto{\tr} \ounit_{\cC} \units \to \Omega \cC\core.
            \end{equation*}
            By \cite[Lemma~3.2.7]{Keidar-Ragimov-2025-twisted-graded}, under the identification $\Map(\TT, \cC\units) \simeq \cC\units \oplus \ounit_{\cC}\units$ the map $\tr$ becomes $\eta + \id$. Therefore 
            \begin{equation*}
                \chi_{\hchar} = \tg(\hchar).
            \end{equation*}            
            
        \end{proof}

        \begin{remark}
            Note that when $\cC$ is $(R,\chrHeight)$-oriented, then $\laxloops \cC$ is $(R,\chrHeight-1)$-oriented. If $\hchar$ factors through $\ndual{R}$, then its transgression factors as
            \begin{equation*}
                \tg(\hchar) \colon \L\BH \to \L\ndual{R} \xto{\eta + \id} \Omega \ndual{R} \simeq \ndual[\chrHeight-1]{R}.
            \end{equation*}
            That is, transgression sends orientation characters to maps factoring through the orientation.
        \end{remark}
        
        \begin{corollary}\label{cor:k-character-of-alternating}
            Let $\cC$ be a symmetric monoidal $\chrHeight$-presentable, $(\chrHeight,1)$-semiadditive category. Then for $V \in \cC\tdbl$, a homomorphism $H \to \Sm$, and a character $\hchar \colon H \to \ounit_{\cC}\units$,
            \begin{equation*}
                \dim^t (\alt_{H, \hchar} V) \simeq \int_{\L^t \BH} \chi^t_{\Tm V} \cdot \tg^t(\overline{\hchar}) \qin \laxloops^{t} \cC.
            \end{equation*}
        \end{corollary}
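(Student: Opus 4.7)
The plan is to deduce this corollary by composing the preceding lemma with \cref{prop:transgersion-is-charcter-of-character}. The lemma just above already gives
\begin{equation*}
    \dim^t(\alt_{H,\hchar} V) \simeq \int_{\L^t \BH} \chi^t_{\Tm V \otimes \ounit_{\cC}[\overline{\hchar}]} \simeq \int_{\L^t \BH} \chi^t_{\Tm V} \cdot \chi^t_{\ounit_{\cC}[\overline{\hchar}]},
\end{equation*}
where the second equivalence uses that the iterated trace $\tr^t$ is symmetric monoidal, so passing to $\TT^t$-fixed points yields a symmetric monoidal character functor $\chi^t \colon \cC\tdblA \to (\laxloops^t\cC)^{\L^t A}$; in particular it sends the tensor product $\Tm V \otimes \ounit_{\cC}[\overline{\hchar}]$ to the pointwise product of characters.

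Next, I would apply \cref{prop:transgersion-is-charcter-of-character} to the character $\overline{\hchar} \colon H \to \ounit_{\cC}\units$, yielding
\begin{equation*}
    \chi^t_{\ounit_{\cC}[\overline{\hchar}]} \simeq \tg^t(\overline{\hchar}) \qin (\laxloops^t\cC)^{\L^t \BH}.
\end{equation*}
Substituting this into the previous display produces exactly the claimed formula. The hypothesis that $\cC$ is $\chrHeight$-presentable and $(\chrHeight,1)$-semiadditive is needed so that the iterated induced character formula \cref{cor:induced-k-character-formula} applies to the map $\BH \to \pt$ (this is what the preceding lemma is invoking), and so that \cref{prop:transgersion-is-charcter-of-character} is available at the relevant level.

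There is no real obstacle: the result is essentially a combination of two statements that are already in place, with the only subtle point being the monoidality of $\chi^t$, which is guaranteed by the symmetric monoidal structure of $\tr^t$ recalled in \cref{subsec:higher-cats}. One should also note that $\Tm V$ is automatically $t$-dualizable when $V$ is, since $\cC\tdbl \subseteq \cC$ is closed under tensor powers and under the $\Sm$-action (being a full symmetric monoidal subcategory), so that $\chi^t_{\Tm V}$ genuinely lives in $\laxloops^t\cC$ and the product with $\tg^t(\overline{\hchar})$ makes sense in the target.
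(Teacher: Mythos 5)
Your proof is correct and follows exactly the route the paper intends: the corollary is the immediate combination of the preceding lemma (which already records the splitting $\chi^t_{\Tm V \otimes \ounit_{\cC}[\overline{\hchar}]} \simeq \chi^t_{\Tm V}\cdot\chi^t_{\ounit_{\cC}[\overline{\hchar}]}$) with \cref{prop:transgersion-is-charcter-of-character} identifying $\chi^t_{\ounit_{\cC}[\overline{\hchar}]}$ with $\tg^t(\overline{\hchar})$. Your supplementary remarks about the monoidality of $\tr^t$ and the $t$-dualizability of $\Tm V$ are accurate, though the first is already absorbed into the statement of the preceding lemma.
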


    \subsection{Wreath products and permutation representations}
    \label{subsec:higher-cats-and-alternating}
        Our goal in this subsection is to express the character of $\Tm W \in \cC^{\B(G \wr \Sm)}$ for any $W \in \cC\dblA[\BG]$ in terms of the character of $W$ itself.
        We begin by recalling the structure of stabilizers in wreath products. Using this, we deduce the general result from the special case $G = e$, which was treated in \cite{Keidar-Ragimov-2025-twisted-graded}.        
        
        \begin{notation}\label{not:not1}
            Let $G$ be a group. We denote elements of $G^{\degree}$ by $\underline{g} = (g_1, \dots, g_{\degree})$. 
            Recall that the wreath product $G \wr \Sm$ is the semidirect product $G^{\degree} \rtimes \Sm$. We denote its elements by $(\underline{g}; \sigma)$ where $\underline{g} \in G^{\degree}$ and $\sigma \in \Sm$.
        \end{notation}
        
        \begin{notation}\label{not:not2}
            For $\underline{g}\in G^{\degree}$, we say that $x\in \underline{g}$ if there exists $i$ for which $g_i = x$. We define its multiplicity as the number of indices equal to $x$
            \begin{equation*}
                \mul_{\underline{g}}(x) \coloneqq |\{i \mid g_i = x\}|.
            \end{equation*}
        \end{notation}
    
        \begin{notation}\label{not:sigma}
            For $\sigma\in \Sm$. We denote by
            \begin{enumerate}
                \item $\Nk$ --- The number of $k$ cycles in the decomposition to disjoint cycles of $\sigma$.
                \item $\numcyc = \sum_k \Nk$ --- the number of cycles in the decomposition of $\sigma$ to disjoint cycles.
            \end{enumerate}
        \end{notation}
        
        \begin{definition}\label{def:root-of-central-element}
            Let $G$ a be group, $z \in \Z(G)$ and $k \in \NN$. We define the group
            \begin{equation*}
                G\kz{z} \coloneqq G \langle x \mid x^k = z, \ gx = xg\  \forall g \in G \rangle.
            \end{equation*}
            It is an extension of $\Ck$ by $G$
            \begin{equation*}
                1 \to G \to G\kz{z} \to \Ck \to 1.
            \end{equation*}
            In particular, $G\kz{e} = G \times \Ck$.
        \end{definition}

    
        \begin{lemma}\label{lem:decomposition-of-wreath}
            Let $G$ be a finite group. Then
            \begin{equation*}
                \begin{split}
                    \L\B(G \wr \Sm) 
                    & = \bigsqcup_{[\sigma] \in \Sm/\mathrm{conj}} \ 
                    \bigsqcap_{k=1}^\degree \ 
                    \bigsqcup_{[\underline{g}^k] \in \Sm[\Nk] \backslash G^{\Nk} / \conj} \ 
                    \bigsqcap_{x \in \underline{g}^k} \B (\cent_G(x)\kz{x} \wr \Sm[\mul_{\underline{g^k}}(x)]) \\
                    & = \bigsqcup_{\sigma \in \Sm/\mathrm{conj}} \ 
                    \bigsqcup_{[\underline{g}^1], \dots, [\underline{g}^{\degree}]} \ 
                    \bigsqcap_{k=1}^\degree \
                    \bigsqcap_{x \in \underline{g}^k} \B (\cent_G(x)\kz{x} \wr \Sm[\mul_{\underline{g}^k}(x)]).
                \end{split}
            \end{equation*}
            In particular,
            \begin{equation*}
                \L\B\Sm  = \bigsqcup_{[\sigma] \in \Sm / \conj} \ \bigsqcap_{k=1}^{\degree} \B(\ZZ/k \wr \Sm[\Nk]),
            \end{equation*}
            and the group $\bigsqcap_{k=1}^{\degree} (\ZZ/k \wr \Sm[\Nk])$ is the centralizer $\Csigma$. The groups $\ZZMod{k}$ are generated by the $k$-cycles and the groups $\Sn[\Nk]$ permutes the different $\Nk$ $k$-cycles.
        \end{lemma}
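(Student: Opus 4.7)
The plan is to use the standard identification $\L\BH \simeq \bigsqcup_{[h] \in H/\mathrm{conj}} \B\cent_H(h)$ for any finite group $H$, applied to $H = G \wr \Sm$. This reduces the statement to two classical facts: classifying conjugacy classes in $G \wr \Sm$, and identifying the centralizer of each class as the claimed product of wreath products of groups of the form $\cent_G(x)\kz{x}$.

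For the classification, I would analyze an element $(\underline{g};\sigma) \in G \wr \Sm$ by decomposing it along the cycles of $\sigma$. For each cycle $(i_1\ \sigma(i_1)\ \cdots\ \sigma^{k-1}(i_1))$ of length $k$, the associated \emph{cycle product} $g_{\sigma^{k-1}(i_1)} \cdots g_{\sigma(i_1)} g_{i_1} \in G$ is independent of the choice of starting point up to $G$-conjugation. A direct computation, conjugating first by $G^{\degree}$, which changes the cycle products by conjugation in $G$, and then by $\Sm$, which permutes cycles of the same length, shows that two such elements are conjugate in $G\wr\Sm$ if and only if the underlying permutations have the same cycle type in $\Sm$ and, for each $k$, the unordered sequence of cycle products over the $k$-cycles agrees as an element of $\Sm[\Nk]\backslash G^{\Nk}/\mathrm{conj}$. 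This yields exactly the indexing set appearing in the statement.

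For the centralizer, the key step is the single-cycle case: for $\sigma = (1\, 2\, \cdots\, k) \in \Sm[k]$ with cycle product $x$, I identify $\cent_{G \wr \Sm[k]}((\underline{g};\sigma))$ with $\cent_G(x)\kz{x}$. I would first normalize $(\underline{g};\sigma)$ to the form $g_1 = x$, $g_i = e$ for $i > 1$, and directly solve the commutation equations $h_i \cdot g_{\tau^{-1}(i)} = g_i \cdot h_{\sigma^{-1}(i)}$ together with $\tau\sigma = \sigma\tau$. Since $\cent_{\Sm[k]}(\sigma) = \langle\sigma\rangle \simeq \Ck$, solutions are parametrized by pairs $(h,j) \in \cent_G(x)\times\Ck$, showing the centralizer is generated by the diagonally embedded $\cent_G(x)$ together with the element $(\underline{g};\sigma)$ itself, whose $k$-th power is the diagonal element $(x,x,\dots,x)$. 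This realizes precisely the presentation $\cent_G(x)\kz{x}$ of \cref{def:root-of-central-element}. For a general $\sigma$, the centralizer factors as a product over the cycles of $\sigma$; grouping $k$-cycles whose cycle products share a common $G$-conjugacy class $[x]$ and observing that the $\Sm$-factors of $\cent_{\Sm}(\sigma)$ freely permute such cycles yields the wreath product $\cent_G(x)\kz{x} \wr \Sm[\mul_{\underline{g}^k}(x)]$. The degenerate case $G = e$ specializes $\cent_G(x)\kz{x}$ to $\Ck$, recovering the description of $\L\B\Sm$.

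The main obstacle is the identification of the single-cycle centralizer with $\cent_G(x)\kz{x}$. Once the normal form is chosen, the commutation equations are tractable, but verifying that the centralizer is precisely this extension and not some larger central extension of $\Ck$ by $\cent_G(x)$ requires checking both that $(\underline{g};\sigma)$ commutes with the diagonal $\cent_G(x)$ and that its $k$-th power realizes $x$ in that diagonal. All remaining steps are bookkeeping over cycle decompositions and transport of the normal-form argument along conjugation.
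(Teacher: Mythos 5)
Your proof is correct, but it takes a genuinely different route from the one the paper develops. You give the classical, direct group-theoretic argument: invoke $\L\BH \simeq \bigsqcup_{[h]} \B\cent_H(h)$, classify conjugacy classes in $G\wr\Sm$ by cycle type plus unordered multisets of cycle-product classes, and then compute the centralizer explicitly by normalizing to a single cycle and solving the commutation equations $h_i g_{\tau^{-1}(i)} = g_i h_{\sigma^{-1}(i)}$. This is essentially the proof in the reference the paper cites (Bernhardt--Niemeyer--R\"ober--Wollenhaupt), and your single-cycle analysis is sound: the centralizer is generated by the diagonal copy of $\cent_G(x)$ together with $(\underline{g};\sigma)$ itself, whose $k$-th power is the diagonal $(x,\dots,x)$, realizing $\cent_G(x)\kz{x}$. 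The paper, by contrast, only treats $G=e$ as a classical input and deduces the general case categorically in \cref{app:conjugacy-of-wreath}: it computes $\dim(\Sym(\BG))$ in the universal $1$-semiadditive category $\Span(\spcpi[1])$ two ways --- once as $\L\B(G\wr\Sm)$ directly, and once via the induced character formula $\int_{\L\B\Sm}\chi_{\Tm \BG}$ combined with \cref{lem:character-of-Tm} and \cref{lem:cyclic-fixed-points-of-BG}. What your approach buys is elementarity and independence from the semiadditive machinery; what the paper's approach buys is a structural explanation of the formula (the $\cent_G(x)\kz{x}$ factor arises automatically from the $\ZZ/k\subseteq\TT$-action on $\L\BG$) and a demonstration of the span-categorical technique the rest of the paper relies on. One small caveat in your write-up: when you say that for general $\sigma$ the centralizer ``factors as a product over the cycles,'' the $\Sm$-part of $\cent_{G\wr\Sm}(\underline g;\sigma)$ does not stabilize each cycle individually --- it may permute cycles of equal length carrying $G$-conjugate cycle products --- which is exactly what produces the outer $\Sm[\mul_{\underline g^k}(x)]$; you do note this, but it deserves to be flagged as part of the argument rather than an afterthought, since it is where the wreath-product structure of the answer actually comes from.
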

        
        \begin{proof}
            When $G = e$, this result is classical. For a proof in the general case, see \cite{Bernhardt-Niemeyer-Rober-Wollenhaupt-2022-wreath}. A deduction of the general case from the special case $G = e$ using the symmetric monoidal dimension in $\Span(\spcpi[1])$ can be found in \cref{app:conjugacy-of-wreath}. See also \cite{Barthel-Stapleton-2017-power-operations} for a discussion of the $\pi_0$-level version of this result.
        \end{proof}

        \begin{remark}\label{rmrk:conjugacy-classes-to-connected-components}
            Let $(\underline{h}; \sigma) \in G \wr \Sm$. We can explicitly describe the connected component corresponding to the conjugacy class of this element. Suppose that $\sigma$ decomposes into disjoint cycles as
            \begin{equation*}
                \sigma = (a^1_1, \dots, a^1_{m_1}) (a^2_1, \dots, a^2_{m_2}) \cdots (a^r_1, \dots, a^r_{m_r}).
            \end{equation*}
            Then the conjugacy class of $(\underline{h}; \sigma)$ corresponds to the connected component of $[\sigma], [\underline{g}^1], \dots, [\underline{g}^{\degree}]$, where
            \begin{equation*}
                \underline{g}^k = (\, \sqcap_{j} h_{a^i_j} \mid m_i = k\,).
            \end{equation*}
        \end{remark}

        \begin{definition}\label{def:stabilizer-action}
            Let $\cC$ be a symmetric monoidal category.
            Let $\sigma \in \Sm$ and $V \in \cC \dbl$. By abuse of notations, we denote by
            \begin{equation*}
                (\dim V)^{\numcyc} \coloneqq \bigsqcap_{k=1}^{\degree} \Tm[\Nk] \Res^{\TT}_{\ZZ/k} \dim(V) = \bigsqcap_{k=1}^{\degree} \dim(V)^{\Nk} \qin \ounit_{\cC}^{\B\Csigma}
            \end{equation*}
            the object with the corresponding $\Csigma$-action.
            It is the action of $\Csigma \simeq \bigsqcap_{k = 1}^{\degree} ((\ZZ / k)^{\Nk} \rtimes \Sm[\Nk])$ on $(\dim V)^{\numcyc} \in \ounit_{\cC}$, where the component $(\ZZ / k)_{\tau} \subseteq (\ZZ / k)^{\Nk}$ corresponding to a cycle $\tau \subseteq \sigma$ of length $k$, acts on the component $(\dim (V))_{\tau}$ by restriction along the natural map $\ZZ/k \into \TT$. $\Sm[\Nk]$ acts on $\bigsqcap_{\substack{\tau \subseteq \sigma, \\ |\tau| = k}} (\dim V)_{\tau}$ by permutation.
        \end{definition}

        \begin{remark}
            When $\cC$ is an $\chrHeight$-category, as we denote the product in $\ounit_{\cC} = \laxloops \cC$ by $\otimes$, we will denote this element by $(\dim V)\om[\numcyc] \in (\laxloops \cC)^{\B\Csigma}$.
        \end{remark}
    
        \begin{lemma}[{\cite[Lemma~4.7]{Ramzi-2025-endomorphisms-of-THH}, \cite[Lemma~3.3.10]{Keidar-Ragimov-2025-twisted-graded}}]\label{lem:character-of-Tm}
           Let $\cC$ be a symmetric monoidal category, $V\in \cC\dbl$, $[\sigma] \in \Sm / \conj$. Then 
           \begin{equation*}
               \chi_{\Tm V}(\sigma) \simeq \dim(V)^{\numcyc} \qin \ounit_{\cC}^{\B \Csigma}.
           \end{equation*}
        \end{lemma}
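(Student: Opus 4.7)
The strategy is to use the symmetric monoidality of the trace together with an explicit cyclic trace identity, and then to track equivariance against $\Csigma$. By \cref{lem:decomposition-of-wreath}, the connected component of $\L\B\Sm$ indexed by $[\sigma]$ is $\B\Csigma$ with $\Csigma \simeq \bigsqcap_k(\ZZ/k \wr \Sm[\Nk])$, and $\chi_{\Tm V}$ on this component is obtained by applying $\tr$ to the $\sigma$-automorphism of $V^{\otimes \degree}$ equivariantly for the $\Csigma$-action. Fixing a disjoint cycle decomposition $\sigma = \tau_1 \cdots \tau_{\numcyc}$ with $|\tau_i| = k_i$ induces an equivalence $V^{\otimes \degree} \simeq \bigotimes_i V^{\otimes k_i}$ under which the $\sigma$-automorphism splits as the tensor product of the cyclic $\tau_i$-automorphisms on their blocks. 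Since $\tr$ is symmetric monoidal,
\begin{equation*}
    \tr(\sigma \mid V^{\otimes \degree}) \simeq \bigotimes_{i=1}^{\numcyc} \tr(\tau_i \mid V^{\otimes k_i}).
\end{equation*}

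The key input is the cyclic trace identity: for a single $k$-cycle $c_k$,
\begin{equation*}
    \tr(c_k \mid V^{\otimes k}) \simeq \dim V \qin \ounit_{\cC}^{\B(\ZZ/k)},
\end{equation*}
where $\ZZ/k \subseteq \TT$ acts on the right-hand side by restriction of the canonical $\TT$-action on $\dim V$. The underlying equivalence is a direct diagram chase: the evaluation--coevaluation diagram for $V^{\otimes k}$ composed with the cyclic shift can be reassembled, by symmetry and the zig-zag identities, into a single evaluation--coevaluation pair for $V$. Equivariance for the powers of $c_k$ then follows from \cref{lem:trace-is-T-invariant} applied to the automorphism $c_k$ itself, since the powers of $c_k$ appearing in the centralizer correspond exactly to the restriction of the universal $\TT$-action on $\tr$ along $\ZZ/k \into \TT$.

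Assembling the pieces $\Csigma$-equivariantly, for each $k$ the $\Nk$ cycles of length $k$ in $\sigma$ contribute $\Nk$ equal tensor factors $\dim(V)^{\otimes \Nk}$ on the right-hand side. The $(\ZZ/k)^{\Nk}$ component of $\Csigma$ acts on each of these factors by the restricted $\TT$-action on $\dim V$, by the cyclic identity above. The $\Sm[\Nk]$ factor permutes the $\Nk$ tensor copies, which follows from symmetric monoidality of $\tr$ applied to permutations of commuting tensor subsystems. Combined across all $k$, this is precisely the $\Csigma$-action on $\dim(V)^{\numcyc}$ described in \cref{def:stabilizer-action}, yielding the claimed equivalence in $\ounit_{\cC}^{\B\Csigma}$.

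The main obstacle is promoting the cyclic trace identity from an equivalence of underlying objects to a genuinely $\B(\ZZ/k)$-equivariant one. The cleanest route is to apply \cref{lem:trace-is-T-invariant} to a universal family of cyclic automorphisms, but one must ensure that the zig-zag rewriting of the trace diagram is natural in the automorphism and not merely valid pointwise. Once this coherent cyclic trace identity is in hand, the remaining packaging into $\Csigma$ is formal from the symmetric monoidal structure of $\tr$ and the block decomposition of $\sigma$.
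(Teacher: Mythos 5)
Your route is genuinely different from the one the paper follows. The paper cites the result to \cite{Keidar-Ragimov-2025-twisted-graded} and \cite{Ramzi-2025-endomorphisms-of-THH}, and its own commentary right after the lemma makes clear that the cited proof runs through the 1-dimensional cobordism hypothesis: one verifies the statement in the universal case $\Bord_1$ (or $\Bord_1^G$), where the trace of a cyclic permutation of $\degree$ framed points is geometrically a disjoint union of circles, one per cycle of $\sigma$, and the $\Csigma$-action on those circles is visible directly. Your proposal instead decomposes $\sigma$ into disjoint cycles, uses symmetric monoidality of $\tr$ to split into blocks, and appeals to the cyclic trace identity $\tr(c_k \mid V^{\otimes k}) \simeq \dim V$. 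That is a more elementary-sounding path, and all the underlying object-level identifications are correct, but there is a real gap which you yourself flag honestly at the end.

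The gap is exactly the coherence of the cyclic trace identity. That identity, as produced by a zig-zag rewriting, is an isomorphism of underlying objects of $\ounit_{\cC}$; the lemma requires an equivalence in $\ounit_{\cC}^{\B(\ZZ/k)}$, i.e.\ a $\ZZ/k$-equivariant identification under which the centralizer action on $\tr(c_k \mid V^{\otimes k})$ matches the restriction to $\ZZ/k \subseteq \TT$ of the canonical $\TT$-action on $\dim V$. Invoking \cref{lem:trace-is-T-invariant} is the right instinct but does not close the gap: that lemma establishes $\TT$-invariance of the trace functor itself, not naturality of the particular zig-zag move you use to simplify $\tr(c_k \mid V^{\otimes k})$, so it does not by itself promote the pointwise identity to an equivariant one. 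The same issue recurs for the $\Sm[\Nk]$-equivariance of the block reassembly, which again needs the comparison to be coherent rather than merely objectwise. The cobordism hypothesis is precisely the device the paper relies on to supply this coherence all at once, since in the universal bordism category the required equivalence between circles is manifestly $\TT$-equivariant. So your proof proposal identifies the correct structure and the correct obstruction, but it is missing a mechanism to establish the needed coherence, which is the non-formal content that the cited proofs resolve via the cobordism hypothesis.
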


        \begin{corollary}\label{cor:dim-of-alternating-depends-only-on-dim}
            Assume that $\cC$ is 1-semiadditive and presentable. Then for any $V \in \cC\dbl$ and a character $\hchar \colon \Sm \to \ounit_{\cC}\units$, the dimension $\dim(\alt_{\hchar} V)$ depends only on $\dim(V) \in \ounit_{\cC}^{\B\TT}$ and $\hchar$.
        \end{corollary}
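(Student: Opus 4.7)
The plan is to unwind $\dim(\alt_{\hchar} V)$ using the induced character formula and the two character computations already developed in this section, namely \cref{lem:character-of-Tm} for $\chi_{\Tm V}$ and \cref{prop:transgersion-is-charcter-of-character} for $\chi_{\ounit_{\cC}[\overline{\hchar}]}$, and then read off that every ingredient that remains only sees $\dim(V)$ and $\hchar$.

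More concretely, first I would note that $\alt_{\hchar} V$ is by definition the pushforward of $\Tm V \otimes \ounit_{\cC}[\overline{\hchar}] \in \cC^{\B\Sm}$ along $\B\Sm \to \pt$. Since $\cC$ is presentable and $1$-semiadditive, \cref{thm:induced-character-formula} applies and gives
\begin{equation*}
    \dim(\alt_{\hchar} V) \;\simeq\; \int_{\L\B\Sm} \chi_{\Tm V \otimes \ounit_{\cC}[\overline{\hchar}]} \;\simeq\; \int_{\L\B\Sm} \chi_{\Tm V}\cdot \chi_{\ounit_{\cC}[\overline{\hchar}]},
\end{equation*}
where the second equivalence uses that $\chi_{(-)}$ is symmetric monoidal.

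Next I would decompose the integration domain using \cref{lem:decomposition-of-wreath}, writing $\L\B\Sm \simeq \bigsqcup_{[\sigma] \in \Sm/\mrm{conj}} \B\Csigma$, so that
\begin{equation*}
    \dim(\alt_{\hchar} V) \;\simeq\; \sum_{[\sigma] \in \Sm/\mrm{conj}} \int_{\B\Csigma} \bigl(\chi_{\Tm V}\cdot \chi_{\ounit_{\cC}[\overline{\hchar}]}\bigr)\Bigr|_{\B\Csigma}.
\end{equation*}
On each component, \cref{lem:character-of-Tm} identifies $\chi_{\Tm V}|_{\B\Csigma}$ with $\dim(V)^{\numcyc}$, viewed as a $\Csigma$-equivariant element of $\ounit_{\cC}$ via the explicit description in \cref{def:stabilizer-action}: $\Csigma \simeq \bigsqcap_k (\ZZ/k \wr \Sm[\Nk])$ acts on $\dim(V)^{\numcyc}$ through restrictions and permutations that are determined purely by $\sigma$ and by $\dim(V)\in \ounit_{\cC}^{\B\TT}$. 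Meanwhile, \cref{prop:transgersion-is-charcter-of-character} gives $\chi_{\ounit_{\cC}[\overline{\hchar}]} = \tg(\overline{\hchar})$, which manifestly depends only on $\hchar$.

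Putting everything together, each summand is an integral over a space that depends only on $\sigma$, of a product of two factors: one built functorially from $\dim(V) \in \ounit_{\cC}^{\B\TT}$ via \cref{def:stabilizer-action}, and one depending only on $\hchar$. Summing over conjugacy classes of $\Sm$, which themselves depend only on $\degree$, yields the claim. I don't anticipate a genuine obstacle here: the statement is essentially a bookkeeping consequence of the character computations in \cref{lem:character-of-Tm} and \cref{prop:transgersion-is-charcter-of-character}; the only thing to be careful about is that the $\Csigma$-equivariance in \cref{def:stabilizer-action} is recovered solely from the $\TT$-action on $\dim(V)$, which is exactly the content of \cref{lem:character-of-Tm} as stated.
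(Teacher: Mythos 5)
Your proposal is correct and follows essentially the same route as the paper: the paper simply cites \cref{cor:k-character-of-alternating} (which packages the induced character formula together with $\chi_{\ounit_{\cC}[\overline{\hchar}]}=\tg(\overline{\hchar})$) and then observes via \cref{lem:character-of-Tm} that $\chi_{\Tm V}$ depends only on $\dim(V)\in\ounit_{\cC}^{\B\TT}$. You unpack the same two ingredients a bit more explicitly by decomposing $\L\B\Sm$ into conjugacy classes, but the argument is the same, and you correctly flag that the crucial point is that the $\Csigma$-equivariance of $\dim(V)^{\numcyc}$ is recovered entirely from the $\TT$-action on $\dim(V)$.
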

        \begin{proof}
            By \cref{lem:character-of-Tm}, $\chi_{\Tm V}$ depends only on $\dim(V)$. The result now follows by \cref{cor:k-character-of-alternating}.
        \end{proof}

      
        The proof of \cref{lem:character-of-Tm} uses Baez's and Dolan's cobordism hypothesis \cite{Baez-Dolan-1995-cobordism} in dimension 1. Its proof sketched in \cite{Lurie-2008-cobordism} and proved (in dimension 1) in \cite{Harpaz-2012-cobordism}. To generalize this lemma to the case where $V$ admits a group action, we use the cobordism hypothesis with tangential structures, which was proved (assuming the cobordism hypothesis) in \cite[Theorem~2.4.18]{Lurie-2008-cobordism}. We use the slightly less general theorem, when the space is connected:
       \begin{theorem}[{The cobordism hypothesis for $G$-manifolds in dimension 1 \cite{Harpaz-2012-cobordism}, \cite[Theorem~2.4.26]{Lurie-2008-cobordism}}]\label{thm:cobordism-hyp-in-dim-1}
           Let $G$ be an $\EE_1$-group and $\chi \colon G \to O(1)$ a homomorphism of groups. Then for any symmetric monoidal category $\cC$, the evaluation at the point defines an isomorphism of spaces
           \begin{equation*}
               \Map^{\otimes}(\Bord_1^G, \cC) \simeq (\cC\dblspace)^{h G}.
           \end{equation*}
       \end{theorem}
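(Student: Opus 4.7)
The plan is to deduce the statement from the untwisted (framed) $1$-dimensional cobordism hypothesis of Harpaz \cite{Harpaz-2012-cobordism}, combined with the tangential-structure formalism sketched in \cite[\S 2.4]{Lurie-2008-cobordism}.

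First, I would invoke the framed case as the starting point: there is an equivalence
\begin{equation*}
    \Map^{\otimes}(\Bord_1^{\mrm{fr}}, \cC) \isoto \cC\dblspace,
\end{equation*}
given by evaluating a symmetric monoidal functor at the positively framed point. I would then identify the canonical $O(1) \simeq \ZZ/2$-action on both sides. On the left, $O(1)$ acts on $\Bord_1^{\mrm{fr}}$ by reversing framings; on the right, it acts on $\cC\dblspace$ by the dualization involution $V \mapsto V\dual$. That these two actions correspond under the above equivalence is essentially the statement that reversing the framing of the point swaps the two sides of the duality datum $(V, V\dual, \ev, \coev)$.

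Second, following the tangential-structure formalism, $\Bord_1^G$ is obtained from $\Bord_1^{\mrm{fr}}$ as the homotopy quotient by the $G$-action pulled back from $O(1)$ along $\chi$. Applying $\Map^{\otimes}(-, \cC)$ converts homotopy quotients of symmetric monoidal $(\infty,1)$-categories into homotopy fixed points of spaces, yielding
\begin{equation*}
    \Map^{\otimes}(\Bord_1^G, \cC) \simeq \Map^{\otimes}(\Bord_1^{\mrm{fr}}, \cC)^{hG} \simeq (\cC\dblspace)^{hG},
\end{equation*}
with $G$ acting on $\cC\dblspace$ through $\chi$ and the dualization involution. This is the desired equivalence.

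The main obstacle is the rigorous realization of $\Bord_1^G$ as this $G$-homotopy quotient inside symmetric monoidal $(\infty,1)$-categories, and the verification that $\Map^{\otimes}$ exchanges homotopy quotients on the source with homotopy fixed points on the target. Geometrically the picture is transparent --- a $G$-structure on a $1$-manifold is a framing together with a $\chi$-compatible lift of the associated $O(1)$-bundle to $G$ --- but promoting this to the above abstract statement about mapping spaces is precisely the content of Lurie's argument in \cite[\S 2.4]{Lurie-2008-cobordism}. Since this paper uses the theorem as a black box, the most efficient route is to simply cite Harpaz's framed theorem \cite{Harpaz-2012-cobordism} together with \cite[Theorem~2.4.26]{Lurie-2008-cobordism} and let these references handle the point-set and $\infty$-categorical technicalities.
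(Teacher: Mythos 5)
Your proposal matches the paper's treatment: the paper states this theorem as an external black-box result, attributed to Harpaz's $1$-dimensional cobordism hypothesis together with Lurie's tangential-structure machinery (\cite[Theorem~2.4.18, Theorem~2.4.26]{Lurie-2008-cobordism}), and provides no proof of its own. Your sketch of the reduction from the framed case via the $O(1)$-action on $\cC\dblspace$ by dualization and the identification of $\Bord_1^G$ as the relevant colimit over $\B G$ is the standard argument underlying those references, and your conclusion to cite rather than reprove is exactly what the paper does.
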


       \begin{remark}\label{rmrk:G-cobordism}
           If $G$ is discrete and the map $G \to O(1)$ is trivial, we can explicitly describe the category $\Bord_1^G$ (see \cite[Notation~2.4.16, Definition~Sketch~2.4.17]{Lurie-2008-cobordism}). Its objects are liftings
           \begin{equation*}
               \begin{tikzcd}
                    & \BG \\
                    \pt && {\B O(1),}
                    \arrow["e", from=1-2, to=2-3]
                    \arrow[dashed, from=2-1, to=1-2]
                    \arrow[from=2-1, to=2-3]
                \end{tikzcd}
           \end{equation*}
           i.e.\ framed points with a lift to $\BG$, and the morphisms are framed 1-dimensional manifolds with a lift to $\BG$, respecting the boundary. 
           Informally, we can think of $\Bord_1^G$ as consisting of framed points, and the morphisms are 1-dimensional manifolds where on each connected component $M$, we add a weight which is an element in $G$ if $M$ is a line or an element in $G$ up to conjugation if $M$ is a circle. The weight of the composition of bordisms will be the product of the weights.
       \end{remark}

        \begin{recollection}\label{recol:T-action-on-BG}
            Let $G$ be a discrete group. A $\TT$-action on the space $\BG$ is the same as a map of pointed spaces $\B \TT \to \B\Aut(\BG)$, which, as $\B\TT$ is simply connected and $\B\Aut(\BG)$ is 2-truncated, is the same as an element in $\pi_2(\B\Aut(\BG)) = \Z(G)$. That is, a $\TT$-action on $\BG$ is the data of a central element of $G$.
        \end{recollection}
    
        \begin{lemma}
            Let $G$ be a discrete group and $z\in \Z(G)$ corresponding to a $\TT$-action on $\BG$. Then, restricting the $\TT$-action to $\Ck\subseteq \TT$, we have
            \begin{equation*}
                (\BG)_{h \Ck} \simeq \B G\kz{z}.
            \end{equation*}
        \end{lemma}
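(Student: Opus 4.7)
The plan is to realize $(\BG)_{h \Ck}$ as a $K(H, 1)$-space and then identify $H$ with $G\kz{z}$. Since $\BG$ and $\B\Ck$ are both $K(\pi, 1)$-spaces, the long exact sequence of the Borel fibration $\BG \to (\BG)_{h \Ck} \to \B\Ck$ forces $\pi_{\ge 2}((\BG)_{h \Ck}) = 0$, and exhibits $H \coloneqq \pi_1((\BG)_{h \Ck})$ as an extension $1 \to G \to H \to \Ck \to 1$. My task is then to identify $H$ with $G\kz{z}$.

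First I would show that the outer action $\Ck \to \mrm{Out}(G)$ induced by this extension is trivial. The space $\Aut(\BG)$ has $\pi_0 = \mrm{Out}(G)$ and $\pi_1 = Z(G)$, so any pointed map from the simply connected $\B\TT$ to $\B\Aut(\BG)$ factors through the identity component $\B\Aut_0(\BG) \simeq \B^2 Z(G)$. The $\Ck$-action, obtained by restricting along $\B\Ck \to \B\TT$, inherits this factorization. Consequently we can pick a lift $x \in H$ of a generator of $\Ck$ with $xg = gx$ for every $g \in G$, and then automatically $x^k \in G \cap \cent_H(G) = Z(G)$.

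Next I would compute $x^k$ by identifying the central extension class in $H^2(\Ck, Z(G)) \simeq Z(G)/k Z(G)$. This class is represented by the pointed map $\B\Ck \to \B^2 Z(G)$ obtained above, which factors as $\B\Ck \hookrightarrow \B\TT = \B^2 \ZZ \xto{\B^2 z} \B^2 Z(G)$, where the last map is induced by the homomorphism $1 \mapsto z$. The inclusion $\Ck \hookrightarrow \TT$ pulls back the canonical generator of $H^2(\B\TT, \ZZ) = \ZZ$ to a generator of $H^2(\B\Ck, \ZZ) = \ZZ/k$, and post-composing with $z$ sends this to $[z] \in Z(G)/k Z(G)$. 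Therefore, possibly after modifying the lift $x$ by an element of $Z(G)$, we may assume $x^k = z$ on the nose.

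Together these observations give a homomorphism $G\kz{z} \to H$ sending the presentation generator of $G\kz{z}$ to our chosen lift $x$. It is surjective because $G$ and $x$ generate $H$, and an isomorphism by the five lemma applied to the two extensions $1 \to G \to G\kz{z} \to \Ck \to 1$ and $1 \to G \to H \to \Ck \to 1$. Hence $(\BG)_{h\Ck} \simeq \B H \simeq \B G\kz{z}$. The most delicate step is tracing the Euler-class identification carefully so that the class in $H^2(\Ck, Z(G))$ corresponds to the literal element $z$ and not merely a representative of $[z] \in Z(G)/kZ(G)$; an alternative that avoids this bookkeeping is to construct the map $\B G\kz{z} \to (\BG)_{h\Ck}$ explicitly from the presentation of $G\kz{z}$ and verify that it induces an isomorphism on all homotopy groups.
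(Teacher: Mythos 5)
Your proof is correct, and it takes a genuinely different route from the paper's. Both arguments hinge on the same initial observation that the action map $\B\TT \to \B\Aut(\BG)$ factors through the identity component $\B\Aut_0(\BG)\simeq\B^2 Z(G)$, so the outer action is trivial, but the endgames diverge. The paper identifies $(\BG)_{h\Ck}$ with the pullback $\B(G/Z(G))\times_{\B^2 Z(G)}\B\Ck$, handles the abelian case first (where the pullback of $\pt\to\B^2 G\leftarrow\B\Ck$ gives $\B G\kz{z}$ by the classical classification of central extensions), and then reassembles the general case from the abelian one via a $3\times 3$ grid of pullback squares, finishing by an explicit diagram chase in groups: the image of $\sqrt[k]{z}\in Z(G)\kz{z}$ in the pullback group $K$ is a central element $x$ with $x^k=z$, forcing $K\simeq G\kz{z}$. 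You instead stay entirely inside classical obstruction theory: extract the extension $1\to G\to H\to\Ck\to 1$ from the long exact sequence, compute the class in $H^2(\Ck;Z(G))\simeq Z(G)/kZ(G)$ by tracing the Euler class through $\B\Ck\hookrightarrow\B\TT\xrightarrow{\B^2 z}\B^2 Z(G)$, and conclude by the five lemma. Your argument avoids the paper's $3\times3$ diagram at the price of the Euler-class bookkeeping you flag yourself; the paper's argument avoids any explicit appeal to $H^2(\Ck;Z(G))$ at the price of the construction of $Z(G)\kz{z}$ inside $K$. Either is a clean proof, and indeed your fallback suggestion (build the map $\B G\kz{z}\to(\BG)_{h\Ck}$ directly from the presentation and check it is a $\pi_*$-isomorphism) is closer in spirit to what the paper actually writes down.
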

    
        \begin{proof}
            This is proved in \cref{lem:cyclic-fixed-points-of-BG}. We remark that this result is not needed for the rest of the paper if one replaces $\BG\kz{z}$ with $(\BG)_{h\Ck}$ everywhere.
        \end{proof}
    
        \begin{remark}\label{rmrk:roots-of-centralizer-acts-on-character}
           Let $\cC$ be a symmetric monoidal $\chrHeight$-category, $G$ a discrete group and $V \in \cC\dblA[\BG]$.
           For any $x \in G$, the character map restricted to $\B\cent_G(x)$
            \begin{equation*}
                \chi_V \colon \B\cent_G(x) \into \L\BG \to \ounit_{\cC},
            \end{equation*}
            is $\TT$-invariant by \cref{cor:characters-are-T-invariant}, and in particular $\Ck$-invariant for any $k \ge 1$. Therefore, it is equivalent to a map
            \begin{equation*}
                \B\cent_G(x)\kz{x} \simeq (\B\cent_G(x))_{h\Ck} \to \ounit_{\cC},
            \end{equation*}
            choosing $\chi_V(x)$. That is, $\chi_V(x)$ admits a natural action of $\cent_{G}(x)\kz{x}$.

            In the case where $G = e$, this is the $\Ck$-action on $\dim(V)$ restricted from the $\TT$-action.
        \end{remark}
        
        \begin{notation}
             Let $A$ be a space and $B\in \spc^A$ be an $A$-local system of spaces. 
             By embedding $\spc$ into the 2-category of spans $\Span_{1.5}(\spc)$\footnote{The notation $\Span_{1.5}$ indicates that the 2-arrows are maps between spans, to distinguish $\Span_2$ where the 2-arrows are spans of spans.}, sending morphisms to the right way maps, we get an $A$-local system with values in $\Span_{1.5}(\spc)$. We will abuse notation and call this local system $B$ as well. 
        \end{notation}
    
        \begin{lemma}\label{lem:colimt-of-character-in-spans}
            Let $B \in \spc^A$ and let 
            \begin{equation*}
                \chi^{\Span}_B \colon \L A \to \laxloops \Span_{1.5}(\spc) \simeq \spc
            \end{equation*}
            be the character of $B$ thought of as a $\Span_{1.5}(\spc)$-valued local system. Then 
            \begin{equation*}
                \colim_{\L A} \chi^{\Span}_B =\L(\colim_A B),
            \end{equation*}
            where both colimits are computed in $\spc$. 
        \end{lemma}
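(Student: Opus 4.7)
The plan is to compute the character $\chi^{\Span}_B$ pointwise, recognize the result as the fiber of the natural map $\L E \to \L A$ (where $E \coloneqq \colim_A B$), and then invoke straightening/unstraightening to identify the colimit on the left with the total space on the right.

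First I would work out the trace in $\Span_{1.5}(\spc)$. Every space $X$ is self-dual with evaluation and coevaluation given by the diagonal spans $X \times X \xleftarrow{\Delta} X \to \pt$ and $\pt \leftarrow X \xrightarrow{\Delta} X \times X$. For a map of spaces $f\colon X \to X$, viewed as the span $X \xleftarrow{\id} X \xrightarrow{f} X$, a direct computation of the relevant pullbacks of spans shows
\begin{equation*}
    \tr(f\mid X) \;\simeq\; X \times_{X \times X} X \;\simeq\; X^{hf},
\end{equation*}
the homotopy equalizer of $f$ and $\id_X$. In particular, for $V = \ounit = \pt$ this recovers $\dim(X) \simeq \L X$, as expected.

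Next I would apply this fiberwise to $B \in \spc^A$. A loop $\gamma \in \L A = A^\TT$ based at $a \in A$ induces an endomorphism of $B_a$ in $\Span_{1.5}(\spc)$, which under the inclusion $\spc \hookrightarrow \Span_{1.5}(\spc)$ is precisely the monodromy self-map $B(\gamma)\colon B_a \to B_a$. By the previous step,
\begin{equation*}
    \chi^{\Span}_B(\gamma) \;\simeq\; B_a^{h\, B(\gamma)},
\end{equation*}
naturally in $\gamma$. Under the straightening/unstraightening equivalence, this identifies $\chi^{\Span}_B$ with the left fibration over $\L A$ whose fiber at $\gamma$ is $B_a^{h\, B(\gamma)}$, so that $\colim_{\L A} \chi^{\Span}_B$ is the total space of that left fibration.

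Finally I would identify this total space with $\L E$. The Grothendieck construction of $B$ is a left fibration $p\colon E \to A$ with fiber $B_a$, and applying $\Map(\TT,-)$ gives $\L p\colon \L E \to \L A$. The fiber of $\L p$ over $\gamma$ is the space of lifts $\TT \to E$ of $\gamma$, i.e.\ $\TT$-equivariant sections of the pullback $\gamma^* p$; the pullback straightens to the $\TT$-space $B_a$ with action factoring through $\ZZ = \pi_1 \TT$ via $B(\gamma)$, and its space of sections is $B_a^{h\, B(\gamma)}$. Thus $\L E \to \L A$ is a left fibration with the same fibers as the one classified by $\chi^{\Span}_B$, naturally in $\gamma$, so the two agree and $\L(\colim_A B) \simeq \colim_{\L A}\chi^{\Span}_B$.

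The main obstacle is not any single calculation — each pullback is elementary — but rather the naturality of the fiber identification $\chi^{\Span}_B(\gamma) \simeq B_a^{h\, B(\gamma)}$ as $\gamma$ varies over $\L A$. Making this coherent requires either unwinding the definition of the character as the composite $A^\TT \to \Span_{1.5}(\spc)^\TT \xrightarrow{\tr} \spc$ and using functoriality of $\tr$, or equivalently using the straightening/unstraightening equivalence to move the comparison into the realm of left fibrations, where it becomes a statement about fibers that is automatically natural.
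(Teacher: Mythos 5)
Your proof is correct, but it takes a genuinely different route than the paper. The paper's proof is a one-liner invoking the induced character formula (\cref{thm:induced-character-formula}) for the map $A \to \pt$: since the colimit of $B$ in $\Span_{1.5}(\spc)$ agrees with the colimit in $\spc$, and since $\dim_{\Span_{1.5}(\spc)}(X) \simeq \L X$, the formula $\chi_{f_!V} = \int_{\L f}\chi_V$ immediately yields $\colim_{\L A}\chi_B \simeq \dim(\colim_A B) = \L(\colim_A B)$, with the integral identified with a colimit because $\Span_{1.5}(\spc)$ is semiadditive. You instead give a direct verification: you compute the trace in $\Span_{1.5}(\spc)$ explicitly ($\tr(f\mid X) \simeq X^{hf}$, the homotopy equalizer of $f$ and $\id$), unstraighten $\chi^{\Span}_B$ to a left fibration over $\L A$, and identify its total space with $\L E$ fiberwise, where $E = \colim_A B$ is the total space of $B$. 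Both approaches are sound; yours is more self-contained and in effect reproves the induced character formula in the span case, while the paper's leverages the general machinery for a shorter argument. One small blemish: the sentence ``for $V = \ounit = \pt$ this recovers $\dim(X) \simeq \L X$'' is garbled; you presumably mean that setting $f = \id_X$ recovers $\dim(X) \simeq \L X$, which is the check you want. The only substantive care point you already flag correctly is the naturality-in-$\gamma$ of the fiber identification, which is cleanest to handle, as you suggest, by working on the unstraightened side.
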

    
        \begin{proof}
            We observe that the colimit of $B$ in $\Span_{1.5}(\spc)$ is computed as the colimit of $B$ in $\spc$. As $\dim_{\Span_{1.5}(\spc)}(\colim_A B) = \L(\colim_A B)$, by the induced character formula
            \begin{equation*}
                \colim_{\L A} \chi_B = \int_{\L A} \chi_B \simeq \dim_{\Span_{1.5}(\spc)}(\colim_A B) = \L(\colim_A B).
            \end{equation*}
        \end{proof}

            
        \begin{proposition}\label{lem:character-of-wreath-product}
            Let $\cC$ be a symmetric monoidal category and $G$ a finite group. Let $V\in \cC\dblA[\BG]$. Then the character of the $G \wr \Sm$-representation $\Tm V$ is the map
            \begin{gather*}
                \chi_{\Tm V} 
                \colon \L\B(G \wr \Sm) 
                \simeq 
                    \bigsqcup_{[\sigma] \in \Sm / \conj} \ 
                    \bigsqcup_{[\underline{g}^1],\dots,[\underline{g}^{\degree}]} \ 
                    \bigsqcap_{k=1}^{\degree} \ 
                    \bigsqcap_{x \in \underline{g}^k} \B(\cent_G(x)\kz{x} \wr \Sm[\mul_{\underline{g}^k}(x)])
                \to \ounit_{\cC} \\
                ((x)_{x \in \underline{g}^k})_{k=1}^{\degree} 
                \mapsto \bigsqcap_{k=1}^{\degree} \bigsqcap_{x \in \underline{g}^k} \Tm[\mul_{\underline{g}^k}(x)]\chi_V(x) 
                \simeq \bigsqcap_{k=1}^{\degree} \bigsqcap_{x \in \underline{g}^k} \chi_V(x)^{\mul_{\underline{g}^k}(x)}
            \end{gather*}
            where the $\cent_G(x)\kz{x}$-action is as described in \cref{rmrk:roots-of-centralizer-acts-on-character}
        \end{proposition}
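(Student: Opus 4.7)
The plan is to reduce to the case $G=e$ handled by \cref{lem:character-of-Tm}, using the cobordism hypothesis with tangential structure (\cref{thm:cobordism-hyp-in-dim-1}) together with the explicit description of $\Bord_1^G$ recalled in \cref{rmrk:G-cobordism}. By that hypothesis, $V \in \cC\dblA[\BG]$ corresponds to a symmetric monoidal functor $Z_V \colon \Bord_1^G \to \cC$, whose restriction to circles labeled by a conjugacy class $[x] \in G/\mrm{conj}$ recovers $\chi_V(x)$. Similarly, $\Tm V \in \cC\dblA[\B(G \wr \Sm)]$ corresponds to $Z_{\Tm V} \colon \Bord_1^{G \wr \Sm} \to \cC$, and evaluating on labeled circles computes $\chi_{\Tm V}$.

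The core step is to identify, at the level of bordisms, how $Z_{\Tm V}$ is assembled from $Z_V$. I claim that a circle in $\Bord_1^{G \wr \Sm}$ labeled by the conjugacy class of $(\underline{h}; \sigma)$ is sent by $Z_{\Tm V}$ to the tensor product, over the cycles of $\sigma$, of the values of $Z_V$ on smaller circles, each labeled by the product of the corresponding $h_i$'s along the cycle. Concretely, if $\sigma = (a^1_1, \dots, a^1_{m_1})\cdots(a^r_1, \dots, a^r_{m_r})$, the labeled circle decomposes into $r$ smaller circles, the $i$-th carrying label $[h_{a^i_1}\cdots h_{a^i_{m_i}}]\in G/\mrm{conj}$; by \cref{rmrk:conjugacy-classes-to-connected-components} these are precisely the entries of the $\underline{g}^k$. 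The intuition is that $\Tm V$ consists of $\degree$ parallel copies of $V$ and the permutation $\sigma$ glues these copies along each cycle into a single circle, with the $G$-decorations accumulating along the traversal. This is the bordism-categorical incarnation of the cyclic trace identity $\tr(A_k\cdots A_1 \mid V) = \tr(\sigma\circ(A_1 \otimes \cdots \otimes A_k) \mid V^{\otimes k})$.

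Applying $Z_V$ to each smaller circle yields $\chi_V$ evaluated at the appropriate $x \in G$, and grouping circles with identical labels produces the pointwise formula $\bigsqcap_k \bigsqcap_{x \in \underline{g}^k} \chi_V(x)^{\mul_{\underline{g}^k}(x)}$. For the equivariance, the $\Sm[\mul_{\underline{g}^k}(x)]$-factor acts by permuting the identical copies of $\chi_V(x)$, matching the symmetric-group part of the wreath centralizer. The $\cent_G(x)\kz{x}$-factor acts on each $\chi_V(x)$ as follows: the quotient $\cent_G(x)$ comes from the automorphism group of a circle in $\Bord_1^G$ labeled by $[x]$, and the $\Ck$-extension encodes rotation of the length-$k$ circle by $\Ck \subseteq \TT$, which, through the central element $x$ and \cref{recol:T-action-on-BG}, assembles into the group $\cent_G(x)\kz{x}$ acting on $\chi_V(x)$ exactly as in \cref{rmrk:roots-of-centralizer-acts-on-character}.

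I expect the main obstacle to be formalizing the bordism-level decomposition in the core step: that a $(G \wr \Sm)$-labeled circle factors through a disjoint union of smaller $G$-labeled circles when evaluated under $Z_{\Tm V}$. One can alternatively bypass the bordism machinery by computing the pointwise trace directly via cyclic trace identities (as in the $G = e$ case of \cref{lem:character-of-Tm}) and then verifying the equivariance along $\bigsqcap_k \bigsqcap_x \cent_G(x)\kz{x} \wr \Sm[\mul_{\underline{g}^k}(x)]$ separately using the $\TT$-invariance of characters (\cref{cor:characters-are-T-invariant}) and the functoriality of the trace, but the bordism approach packages both the value and the equivariance uniformly.
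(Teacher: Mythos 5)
Your proposal is correct and takes essentially the same approach as the paper: both proofs reduce to the universal case via the cobordism hypothesis with tangential structures (\cref{thm:cobordism-hyp-in-dim-1}, \cref{rmrk:G-cobordism}) and read off the answer from \cref{rmrk:conjugacy-classes-to-connected-components}. The difference is one of scaffolding. The paper first invokes \cref{lem:colimt-of-character-in-spans} together with \cref{lem:character-of-Tm} to identify $\L\B(G\wr\Sm)$ as $\colim_{\L\B\Sm}\chi^{\Span}_{\BG^{\degree}}$, with $\chi^{\Span}_{\BG^{\degree}}(\sigma) \simeq (\L\BG)^{\numcyc}$, so that $\chi_{\Tm V}$ is presented as a system of $\Csigma$-invariant maps $(\L\BG)^{\numcyc}\to\ounit_{\cC}$ indexed by $[\sigma]\in\Sm/\mrm{conj}$; only after this organizing step does it pass to the universal case $\Bord_1^G$. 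You instead go directly to the bordism description of both $V$ and $\Tm V$ via $\Bord_1^G$ and $\Bord_1^{G\wr\Sm}$ and decompose a $(G\wr\Sm)$-labeled circle into $G$-labeled circles, which handles the value and the equivariance in one stroke. The step you flag as the main obstacle --- formalizing that a $(G\wr\Sm)$-labeled circle factors through a disjoint union of $G$-labeled circles --- is precisely what the paper's span-level computation sidesteps: once the domain of $\chi_{\Tm V}$ is already identified as the colimit over $\L\B\Sm$ of $(\L\BG)^{\numcyc}$, the remaining verification is a pointwise check on each $(\L\BG)^{\numcyc}$, which is the "straightforward" part. So your route is geometrically more direct, while the paper's is slightly more modular in that it outsources the hard combinatorics to the already-established \cref{lem:character-of-Tm} applied in $\Span_{1.5}(\spc)$.
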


        \begin{proof}
            First note that $\L\B(G \wr \Sm) \simeq \L((\BG^{\degree})_{h\Sm})$ and therefore, by \cref{lem:colimt-of-character-in-spans} 
            \begin{equation*}
                \L\B(G \wr \Sm) \simeq \colim_{\L\B\Sm} \chi^{\Span}_{\BG^{\degree}}.
            \end{equation*}
            By \cref{lem:character-of-Tm}, 
            \begin{equation*}
                \chi^{\Span}_{\BG^{\degree}}(\sigma) \simeq (\L \BG)^{\numcyc} \qin \spc^{\B \Csigma}.
            \end{equation*}
            The character map $\chi_{\Tm V} \colon \L\B(G \wr \Sm) \to \ounit_{\cC}$ can therefore be described as a system of $\Csigma$-invariant maps
            \begin{equation*}
                \chi_{\Tm V}(\sigma) \colon (\L\BG)^{\numcyc} \to \ounit_{\cC}.
            \end{equation*}
            It is enough to check that the map $\chi_{\Tm V}(\sigma)$ is the claimed map, and it is enough to verify it in the universal case $\Bord_1^G$ (\cref{thm:cobordism-hyp-in-dim-1,rmrk:G-cobordism}). In that case, it is straightforward using \cref{rmrk:conjugacy-classes-to-connected-components}.
        \end{proof}
        
        \begin{corollary}\label{cor:character-when-no-T-action}
            Let $\cC$ be a 1-semiadditive symmetric monoidal category, $G$ a finite group and  $V \in  \cC\dbl$. Assume that the $\TT$-action on $\dim(V)$ is trivial. Then the character of the $G \wr \Sm$-representation $\Tm (\BG)^* V$ is
            \begin{gather*}
                \chi_{\Tm (\BG)^* V}
                \colon \L\B(G \wr \Sm) 
                \simeq 
                    \bigsqcup_{[\sigma] \in \Sm / \conj} \ 
                    \bigsqcup_{[\underline{g}^1],\dots,[\underline{g}^{\degree}]} \ 
                    \bigsqcap_{k=1}^{\degree} \ 
                    \bigsqcap_{x \in \underline{g}^k} \B(\cent_G(x)\kz{x} \wr \Sm[\mul_{\underline{g}^k}(x)])
                \to \ounit_{\cC} \\
                ((x)_{x \in \underline{g}^k})_{k=1}^{\degree} 
                \mapsto \bigsqcap_{k=1}^{\degree} \bigsqcap_{x \in \underline{g}^k} \Tm[\mul_{\underline{g}^k}(x)] (\B\cent_G(x)\kz{x})^*(\dim V)
                \simeq \bigsqcap_{k=1}^{\degree} \bigsqcap_{x \in \underline{g}^k} (\dim V)^{\mul_{\underline{g}^k}(x)}.
            \end{gather*}
            That is, the $\cent_G(x)\kz{x}$-action is trivial, and the action of $\Sm[\mul_{\underline{g}^k}(x)]$ is by permuting the corresponding elements. 
        \end{corollary}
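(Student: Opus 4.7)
The plan is to specialize \cref{lem:character-of-wreath-product} to the constant local system $W \coloneqq (\BG)^*V \in \cC\dblA[\BG]$, and then show that the extra structure from the assumption on $\dim(V)$ forces all of the centralizer-root actions to be trivial.

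First, I would identify the pointwise character of $W$. Since $W$ is pulled back from $V$ along the terminal map $\BG \to \pt$, the character factors through loops as
\begin{equation*}
    \chi_W \colon \L\BG \to \L\pt \simeq \pt \xto{\dim(V)} \ounit_\cC,
\end{equation*}
so $\chi_W(x) = \dim(V)$ as an object of $\ounit_\cC$ for every $x \in G$. Substituting this into the formula of \cref{lem:character-of-wreath-product} gives the claimed underlying map on connected components; what remains is to verify that the $\cent_G(x)\kz{x}$-action on each factor $\dim(V)$ is trivial.

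Next, I would analyze this action following \cref{rmrk:roots-of-centralizer-acts-on-character}. The action of the subgroup $\cent_G(x) \subseteq \cent_G(x)\kz{x}$ on $\chi_W(x)$ is obtained from restricting $\chi_W$ to $\B\cent_G(x) \hookrightarrow \L\BG$; because $\chi_W$ factors through the point, this restriction is the constant map at $\dim(V)$, so the $\cent_G(x)$-action on $\chi_W(x)$ is trivial. The quotient $\Ck$-action on $\chi_V(x)$ is, by construction, the restriction along $\Ck \hookrightarrow \TT$ of the $\TT$-action on $\chi_W(x)$ coming from the $\TT$-invariance of characters (\cref{cor:characters-are-T-invariant}). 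For $W = (\BG)^*V$, this $\TT$-action is exactly the intrinsic $\TT$-action on $\dim(V)$, which is trivial by hypothesis. Combining the two pieces, the full $\cent_G(x)\kz{x}$-action on $\dim(V)$ is trivial.

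The $\Sm[\mul_{\underline{g}^k}(x)]$-action by permutation of equal factors is inherited directly from \cref{lem:character-of-wreath-product}, so no further work is needed. The only subtlety is verifying that the extension data of $\cent_G(x)\kz{x}$ really splits into the two pieces described above, so that checking triviality on each piece suffices; this is essentially unpacking the construction in \cref{rmrk:roots-of-centralizer-acts-on-character} in the constant case, and is the one place where some care is warranted.
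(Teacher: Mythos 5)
Your approach — specializing \cref{lem:character-of-wreath-product} to the constant local system $W = (\BG)^*V$ and showing the $\cent_G(x)\kz{x}$-actions collapse — is precisely the intended deduction, so you are on the same track as the paper. The subtlety you flag about the extension \emph{splitting into two pieces} is genuine as you have phrased it (knowing the action restricts trivially to a normal subgroup and that a related quotient action is trivial does not by itself give triviality of the full action in the $\infty$-categorical setting), but it is resolved cleanly by observing that the factorization $\chi_W \colon \L\BG \to \L\pt \simeq \pt \xto{\dim V} \ounit_{\cC}$ is already a factorization of $\TT$-equivariant maps. Taking $\Ck$-homotopy orbits of the first map exhibits the restricted character as the composite
\begin{equation*}
    \B\cent_G(x)\kz{x} \simeq (\B\cent_G(x))_{h\Ck} \longrightarrow (\pt)_{h\Ck} \simeq \B\Ck \longrightarrow \ounit_{\cC},
\end{equation*}
where the second map encodes the $\Ck$-restriction of the $\TT$-action on $\dim(V)$. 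Since that action is trivial by hypothesis, the composite is constant, so the whole $\cent_G(x)\kz{x}$-action on $\dim(V)$ is trivial with no need for a piecewise subgroup/quotient argument. For what it's worth, the remark following the corollary in the paper records an even shorter route: since $\Tm(\BG)^*V \simeq p^*\Tm V$ for the projection $p \colon \B(G\wr\Sm) \to \B\Sm$, one can pull $\chi_{\Tm V}$ back along $\L p$ and invoke only \cref{lem:character-of-Tm}, bypassing \cref{lem:character-of-wreath-product} entirely.
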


        \begin{remark}
            Let
            \begin{equation*}
                p \colon \B(G \wr \Sm) \to \B\Sm
            \end{equation*}
            be the natural projection. Noting that $\Tm (\BG)^* V \simeq p^* \Tm V$ for $V \in \cC\dbl$, one can alternatively prove \cref{cor:character-when-no-T-action} using only \cref{lem:character-of-Tm}, via the composite
            \begin{equation*}
                \chi_{p^* \Tm V} \colon \L\B(G \wr \Sm) \xto{(\L p)^*} \L\B\Sm \xto{\chi_{\Tm V}} \ounit_{\cC}.
            \end{equation*}
            Although in this paper we consider only characters of representations of the form $\Tm(\BG)^* V$, we chose to present the computation in \cref{lem:character-of-wreath-product} as we find it conceptually satisfying.
        \end{remark}

        \begin{corollary}\label{cor:dim^k-of-alternating-depends-only-on-dim^k}
            Let $\cC$ be a symmetric monoidal $\chrHeight$-presentable, $(\chrHeight,1)$-semiadditive category. Let $V \in \cC\tdbl$, $H \to \Sm$ a homomorphism of finite groups and $\hchar \colon H \to \ounit_{\cC}\units$ a character. Assume that the $\TT$-action on $\dim^i(V)$ is trivial for every $0\le i \le t$. Then $\dim^t(\alt_{H,\hchar} V) \in (\laxloops^t \cC)^{\B\TT^t}$ depends only on $\dim^t(V) \in (\laxloops^t \cC)^{\B\TT^t}$.
        \end{corollary}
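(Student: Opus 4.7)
By \cref{cor:k-character-of-alternating},
\begin{equation*}
    \dim^t(\alt_{H,\hchar} V) \simeq \int_{\L^t \BH} \chi^t_{\Tm V} \cdot \tg^t(\overline{\hchar}),
\end{equation*}
and the transgression factor depends only on $\hchar$. It therefore suffices to show that the iterated character $\chi^t_{\Tm V} \in (\laxloops^t \cC)^{\L^t \B\Sm}$ is determined by $\dim^t V$. I would prove this by induction on $t$, the case $t = 1$ being \cref{lem:character-of-Tm} (no triviality of $\TT$-action is required at this stage).

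For the inductive step, I would unfold $\chi^t_{\Tm V}$ as $\chi^{t-1}$ applied to $\chi_{\Tm V}$, where by \cref{cor:tr-of-k-dualizable} the latter is a local system of $(t-1)$-dualizable objects in the $(\chrHeight-1)$-presentable, $(\chrHeight-1,1)$-semiadditive category $\laxloops \cC$. On each connected component $\B\Csigma$ of $\L\B\Sm$, \cref{cor:character-when-no-T-action} (with $G=e$), combined with the triviality of the $\TT$-action on $\dim V$, identifies
\begin{equation*}
    \chi_{\Tm V}|_{\B\Csigma} \simeq \bigsqcap_{k=1}^{\degree} \Tm[\Nk] \dim V,
\end{equation*}
pulled back along the quotient $\B\Csigma \to \bigsqcap_k \B\Sm[\Nk]$; in particular, the $\ZZ/k$-factors of $\Csigma$ act trivially, so the local system is genuinely a pullback from $\bigsqcap_k \B\Sm[\Nk]$.

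I would then apply the iterated character functor $\chi^{t-1}$ to this pullback and use its compatibility with external tensor products to reduce to computing each $\chi^{t-1}_{\Tm[\Nk] \dim V}$ separately. The object $\dim V \in \laxloops \cC$ is $(t-1)$-dualizable by \cref{cor:tr-of-k-dualizable}, and the triviality hypotheses transfer since $\dim^i(\dim V) \simeq \dim^{i+1} V$ for $0 \le i \le t-1$. By the inductive hypothesis, each $\chi^{t-1}_{\Tm[\Nk] \dim V}$ depends only on $\dim^{t-1}(\dim V) = \dim^t V$, which completes the induction.

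The main subtlety, to my mind, is ensuring at every iteration that the centralizer action still factors through the permutation quotient, so that one can genuinely view $\chi_{\Tm V}|_{\B\Csigma}$ as a pullback and apply the induction. This is exactly what the hypothesis on the triviality of the $\TT$-action on every $\dim^i V$ buys us: at stage $i$, it ensures via \cref{rmrk:roots-of-centralizer-acts-on-character} that the $\ZZ/k$-factors of the centralizer act trivially on $\dim^i V$, keeping the iterated characters within the class of pulled-back permutation representations throughout.
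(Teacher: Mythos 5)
Your argument is correct and reconstructs the proof the paper leaves implicit: the reduction via \cref{cor:k-character-of-alternating}, the induction on $t$ through the iterative description $\chi^t = \chi^{t-1}\circ\chi$, and the use of \cref{cor:character-when-no-T-action} to identify the restriction of $\chi_{\Tm V}$ to each component $\B\Csigma$ as a permutation representation pulled back from $\bigsqcap_k \B\Sm[\Nk]$ are all exactly what is intended. One small addition worth spelling out is that the statement concerns $\dim^t(\alt_{H,\hchar}V)$ as an element of $(\laxloops^t\cC)^{\B\TT^t}$, and the required $\TT^t$-equivariance comes for free because the $t$-fold character is $\TT^t$-invariant (\cref{cor:characters-are-T-invariant}) and the iterated induced character formula (\cref{cor:induced-k-character-formula}) respects this structure.
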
 




        \begin{remark}\label{rmrk:enough-cyclic-subgroups}
            The character does not depend on the full $\TT^i$-action, but on its restriction to any finite subgroup. Therefore the corollary also holds if the restriction of the $\TT^i$-action on $\dim^i(V)$ to any finite subgroup is trivial, for every $1 \le i \le k$.
        \end{remark}

    \subsection{Characters induced from duals of stable stems}
    \label{subsec:categorical-alternating}
        We now specialize to the case $\cC = \Mod_{\cVectn}$ (\cref{def:cyc-closure}), which is a symmetric monoidal, $(\chrHeight+1)$-presentable, and $(\chrHeight+1,\infty)$-semiadditive category.
        
        We compute the iterated dimension of the alternating power of the unit:
        \begin{notation}
            Let $\Pi \field \in \sVect$ be the $(0|1)$-dimensional super vector space, so that $\pi_0 (\sVect\units) = \{\field, \Pi\field\}$

            Let $f \colon A \to \sVect\units$ be an $A$-local system of invertible super vector spaces. Define
            \begin{itemize}
                \item $d_+(f)$ to be the number of connected components of $A$ on which $f$ is $\field$ with the trivial action;
                \item $d_-(f)$ to be the number of connected components of $A$ on which $f$ is $\Pi \field$ with the trivial action.
            \end{itemize}
        \end{notation}
        \begin{proposition}\label{cor:inductive-formula-Vectn}
            Let $H$ be a finite group with a homomorphism $H \to \Sm$. Let $\hchar \colon \B\Sm \to \B(\cVectn)\units$ be a character in $\Mod_{\cVectn}$. 
            Then the $(\chrHeight+1)$-fold dimension
            \begin{equation*}
                \dim^{\chrHeight+1}(\alt_{H,\hchar} \cVectn) = d_+(\tg^{\chrHeight}(\overline{\hchar})) - d_-(\tg^{\chrHeight}(\overline{\hchar})) \qin \field.
            \end{equation*}
        \end{proposition}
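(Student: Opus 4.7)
The plan is to apply the iterated induced character formula to the projection $\BH \to \pt$, reduce the computation to an integral of a transgressed character, and then evaluate this integral componentwise.

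First, I will observe that since $\cVectn$ is the $\otimes$-unit of $\Mod_{\cVectn}$, the $\Sm$-representation $\Tm \cVectn$ is canonically trivial, and this persists after pullback along $H \to \Sm$. Therefore $\alt_{H, \hchar} \cVectn \simeq \cVectn[\overline{\hchar}]^{hH}$. The category $\Mod_{\cVectn}$ is $(\chrHeight+1)$-presentable and $(\chrHeight+1,\infty)$-semiadditive, so I can apply \cref{cor:induced-k-character-formula} to $p \colon \BH \to \pt$. Combined with the identification $\chi^{\chrHeight+1}_{\cVectn[\overline{\hchar}]} \simeq \tg^{\chrHeight+1}(\overline{\hchar})$ from \cref{prop:transgersion-is-charcter-of-character}, this will give
\[
\dim^{\chrHeight+1}(\alt_{H, \hchar} \cVectn) \simeq \int_{\L^{\chrHeight+1} \BH} \tg^{\chrHeight+1}(\overline{\hchar}) \qin \field.
\]

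Next, I will write $\tg^{\chrHeight+1}(\overline{\hchar}) = \tg(\tg^{\chrHeight}(\overline{\hchar}))$ and evaluate the integral componentwise over $\L^{\chrHeight}\BH$. On a connected component where $\tg^{\chrHeight}(\overline{\hchar})$ is constantly $\field$ (resp.\ $\Pi \field$) with trivial higher action, \cref{lem:tg-of-const} shows that the further transgression equals the constant $\eta \cdot \field = 1$ (resp.\ $\eta \cdot \Pi \field = -1$) in $\field$, contributing $\pm 1$ per such component via the orbit--stabilizer identity $\int_{\L \B G} 1 = 1$. On components where $\tg^{\chrHeight}(\overline{\hchar})$ corresponds to a nontrivial character $\chi\colon C \to \field\units$ of a centralizer $C$, the vanishing $\tfrac{1}{|C|}\sum_{c \in C} \chi(c) = 0$ of nontrivial characters in characteristic zero makes the contribution vanish. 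Summing over all components yields $d_+(\tg^{\chrHeight}(\overline{\hchar})) - d_-(\tg^{\chrHeight}(\overline{\hchar}))$.

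The main obstacle is identifying the target of $\tg^{\chrHeight}(\overline{\hchar})$ with $\sVect\units$ as in the formulation of $d_{\pm}$. A priori, $\tg^{\chrHeight}(\overline{\hchar})$ lands in the iterated loop spectrum $\Omega^{\chrHeight-1}(\cVectn)\units$, and this must be identified with $\sVect\units$ via the tower $\Omega \cC\units \simeq (\laxloops \cC)\units$ together with the description of the Picard spectra of the iterated categorical loops $\cVectn[k]$ coming from the semiadditive orientation structure of \cref{def:orientation}. Once this identification is in place, the computation reduces to the orbit--stabilizer and character-orthogonality facts outlined above.
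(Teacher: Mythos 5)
Your proposal takes essentially the same route as the paper's proof. Both start from the iterated induced character formula (\cref{cor:k-character-of-alternating}) together with $\chi^t_{\cVectn[\overline{\hchar}]} \simeq \tg^t(\overline{\hchar})$ from \cref{prop:transgersion-is-charcter-of-character}, and both reduce the computation to a sum over connected components. The only cosmetic difference is where you stop unfolding: the paper computes $\dim^{\chrHeight}(\alt_{H,\hchar}\cVectn) \simeq \colim_{\L^{\chrHeight}\BH}\tg^{\chrHeight}(\overline{\hchar})$ as a direct sum over $\pi_0\L^{\chrHeight}\BH$ of fixed points of one-dimensional (super) vector spaces, and only then applies $\dim$ once more (via $\dim(\Pi\field) = -1$), whereas you push directly to $\dim^{\chrHeight+1} = \int_{\L^{\chrHeight+1}\BH}\tg^{\chrHeight+1}(\overline{\hchar})$ and evaluate component-by-component using $\int_{\L\BC}1 = 1$ and character orthogonality. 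These are the same argument written at two adjacent levels of the loop tower; your orbit--stabilizer and orthogonality identities are exactly the $\QQ$-linear unwinding of the paper's observation that fixed points of a one-dimensional representation are one-dimensional or zero.

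One remark: the identification of $\laxloops^{\chrHeight}\Mod_{\cVectn}$ with $\sVect$ (so that $\tg^{\chrHeight}(\overline{\hchar})$ lands in $\sVect\units$), which you correctly flag as the point needing care, is also asserted without proof in the paper; it is understood to follow from the definition of the cyclotomic closure $\cVectn$, whose construction is deferred elsewhere. So this is a shared deferral, not a gap specific to your argument.
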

    
        \begin{proof}
            By \cref{cor:k-character-of-alternating}
            \begin{equation*}
                \begin{split}
                    \dim^{\chrHeight}(\alt_{H,\hchar} \cVectn) 
                    & \simeq \colim_{\L^{\chrHeight}\BH} \tg^{\chrHeight}(\overline{\hchar}) \\
                    & \simeq \bigoplus_{x \in \pi_0 \L^{\chrHeight}\BH} \tg^{\chrHeight}(\overline{\hchar})(x)^{h\Omega_x \L^{\chrHeight}\BH} \qin \sVect.
                \end{split}
            \end{equation*}
            For any $x\in \L^{\chrHeight} \BH$, $\tg^{\chrHeight}(\overline{\hchar})(x)$ lands in $\sVect\units$ and therefore is a one-dimensional vector space (up to $\Pi$). Its fixed-points is therefore one-dimensional if the action is trivial and trivial otherwise. The proof follows as $\dim(\Pi \field) = -1$.
        \end{proof}

        \subsubsection*{Characters induced from duals of stable stems}

        \label{characters-from-dual-stable-stems}
        The $(\chrHeight+1)$-category $\Mod_{\cVectn}$ is $(\SS,\chrHeight)$-oriented. Recall that $(\SS,\chrHeight)$-orientation characters of symmetric groups are maps of connected spaces $\B\Sm \to \B\In$. 
        \begin{equation*}
            \pi_0 \Map_*(\B\Sm, \B\In) \simeq \pi_0 \Map_{\cnSp}(\redSS[\B\Sm], \Sigma \In) \simeq \pi_0\Map_{\cnSp}(\Omega \redSS[\B\Sm], \In) \simeq \widehat{\pi}_{\chrHeight+1}(\redSS[\B\Sm]).
        \end{equation*}
        Therefore orientation characters are classified by $\widehat{\pi}_{\chrHeight+1}(\redSS[\B\Sm])$.
        The composition
        \begin{equation*}
            \redSS[\B\Sm] \to \redSS[\B\Sinfty] \simeq \redSS[\tau_{\ge 1}\SS] \xto{c} \tau_{\ge 1}\SS,
        \end{equation*}
        defines a map
        \begin{equation*}
            \widehat{\pi}_{\chrHeight+1}(\SS) = \widehat{\pi}_{\chrHeight+1}(\tau_{\ge 1}\SS) \to \widehat{\pi}_{\chrHeight+1}(\redSS[\B\Sm]).
        \end{equation*}
        That is, each functional $\pichar$ on $\pin$ defines a character of $\Sinfty$, which can be restricted to a character of $\Sm$. We denote the corresponding alternating powers by $\alt_{\pichar}$.
        
        These characters and their corresponding alternating powers were studied in \cite{Keidar-Ragimov-2025-twisted-graded}.
        There it was shown that for \emph{additive} $(\SS,\chrHeight)$-oriented categories in low heights, there exists a simple generating function computing the dimensions of the alternating powers (\cite[Proposition 5.2.8, Lemma 5.2.11, Lemma 5.2.12]{Keidar-Ragimov-2025-twisted-graded}), under some conditions.
        
        In $\Mod_{\cVectn}$ we can get similar results for the iterated dimension:
        Recall that for $\chrHeight \le 2$, $\widehat{\pi}_{\chrHeight+1}(\SS)$ is a cyclic group.
        \begin{proposition}\label{prop:generating-function}
            Let $0 \le \chrHeight \le 2$. Let $\pichar \in \widehat{\pi}_{\chrHeight+1}(\SS)$ be a generator and\footnote{Here $\Mod_{\cVectn[0] } = \catname{Vect}_{\field\cyc}$.}  $V \in (\Mod_{\cVectn})\ndbl[\chrHeight+1]$ such that the $\TT^i$-action on $\dim^i(V)$ is trivial for $1\le i \le \chrHeight+1$. Then
            \begin{equation*}
                (\sum_{\degree} \dim^{\chrHeight+1}(\Sym V) t^{\degree}) (\sum_{\degree} \dim^{\chrHeight+1}(\alt_{\pichar} V) (-t)^{\degree}) = 1.
            \end{equation*}
        \end{proposition}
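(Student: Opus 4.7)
The plan is to reduce this iterated-dimension identity to the single-dimension identity of \cite[Proposition~5.2.8, Lemma~5.2.11, Lemma~5.2.12]{Keidar-Ragimov-2025-twisted-graded} by exploiting that, under the triviality hypothesis on $\TT^i$-actions, the iterated dimension in $\Mod_{\cVectn}$ depends only on a single scalar.

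First, I would invoke \cref{cor:dim^k-of-alternating-depends-only-on-dim^k} together with \cref{rmrk:enough-cyclic-subgroups}: the triviality of the $\TT^i$-action on $\dim^i(V)$ for $1 \le i \le \chrHeight+1$ implies that both $\dim^{\chrHeight+1}(\Sym V)$ and $\dim^{\chrHeight+1}(\alt_{\pichar} V)$ depend only on the scalar $d \coloneqq \dim^{\chrHeight+1}(V) \in \laxloops^{\chrHeight+1}\Mod_{\cVectn}$. Since the iterated dimension is additive on direct sums, the two coefficient sequences are polynomial in $d$, so it suffices to verify the identity for the universal family $V = \cVectn^{\oplus d}$ with $d \in \ZZ_{\ge 0}$, treating $d$ as a formal variable.

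Second, I would expand each coefficient using the iterated induced-character formula \cref{cor:k-character-of-alternating}:
\begin{equation*}
\dim^{\chrHeight+1}(\alt_{\pichar} V) \simeq \int_{\L^{\chrHeight+1}\B\Sm} \chi^{\chrHeight+1}_{\Tm V} \cdot \tg^{\chrHeight+1}(\overline{\pichar}),
\end{equation*}
and the analogous formula for $\Sym V$ with $\tg^{\chrHeight+1}(\overline{\pichar})$ replaced by the constant character. By \cref{cor:character-when-no-T-action}, the triviality of all $\TT^i$-actions collapses the iterated character $\chi^{\chrHeight+1}_{\Tm V}$ on each conjugacy class $[\sigma] \in \Sm/\mrm{conj}$ to the scalar $d^{\numcyc}$ equipped with the trivial $\Csigma$-action, matching exactly the shape of the permutation-character computations used in the previous paper.

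Third, I would identify the resulting integral with the corresponding integral in the additive setting of \cite[Proposition~5.2.8, Lemma~5.2.11, Lemma~5.2.12]{Keidar-Ragimov-2025-twisted-graded}. Both sides compute an integral over $\L^{\chrHeight+1}\B\Sm$ of the product of the same scalar permutation character with the transgression of the same generator of $\widehat{\pi}_{\chrHeight+1}(\SS)$, and so yield the same polynomial in $d$. Once these polynomial functions are matched in the two settings, the generating function identity in our setting follows directly from the cited results. The principal obstacle I anticipate is this last identification: carefully unfolding the $(\chrHeight+1)$-fold trace against the $(\chrHeight+1)$-fold transgression and verifying, via \cref{prop:transgersion-is-charcter-of-character} and the Fubini-style compatibilities of iterated semiadditive integrals, that the nested integral collapses to exactly the polynomial in $d$ computed in the additive case.
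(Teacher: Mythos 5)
Your plan breaks down at the third step. You assert that ``both sides compute an integral over $\L^{\chrHeight+1}\B\Sm$,'' but this is not the case: the generating-function identities of \cite[Proposition~5.2.8, Lemma~5.2.11, Lemma~5.2.12]{Keidar-Ragimov-2025-twisted-graded} concern the \emph{ordinary} dimension in an additive $(\SS,\chrHeight)$-oriented category, whose induced character formula is an integral over $\L\B\Sm$ valued in $\pi_0$ of the unit, whereas the $(\chrHeight+1)$-fold dimension in $\Mod_{\cVectn}$ is, by \cref{cor:induced-k-character-formula}, an integral over $\L^{\chrHeight+1}\B\Sm$ valued in $\field\cyc$. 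These are integrals over genuinely different spaces of genuinely different (scalar-valued, iterated) characters, and there is no a priori reason for them to agree as polynomials in $d$. Bridging them is precisely what shifted semiadditivity and the transchromatic character theory are about, and those tools are established in the paper only in the chromatic setting $\ModEn$, not in $\Mod_{\cVectn}$; you would need to reprove a comparison of this kind from scratch rather than cite the additive result. In short, the key identification you defer to the final paragraph is not a routine unfolding of Fubini — it is exactly the hard content your reduction was supposed to avoid.

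The paper's proof bypasses all of this by working inside the twisted graded category $\Gr^{\pichar}_{\ZZ}\Mod_{\cVectn}$. One sets $W \coloneqq V\shift{0} \oplus V\shift{1}$ and observes, using $\dim^{\chrHeight+1}(\cVectn\shift{1}) = \eta^{\chrHeight+1}\cdot\pichar = -1$, that $\dim^{\chrHeight+1}(W) = 0$. Then \cref{cor:dim^k-of-alternating-depends-only-on-dim^k} (with \cref{rmrk:enough-cyclic-subgroups}, after checking the required triviality of the finite-subgroup $\TT^i$-actions on $\dim^i(W)$) forces $\dim^{\chrHeight+1}\Sym(W) = \dim^{\chrHeight+1}\Sym(0)$, which is $1$ in degree $0$ and vanishes in positive degree. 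On the other hand, the binomial decomposition
\begin{equation*}
\Sym(W) \simeq \bigoplus_{i+j=\degree} (\Sym[i] V)\shift{0} \otimes (\alt[j]_{\pichar} V)\shift{0} \otimes \cVectn\shift{j}
\end{equation*}
gives, after taking $\dim^{\chrHeight+1}$ and using $\dim^{\chrHeight+1}(\cVectn\shift{1}) = -1$, precisely the convolution $\sum_{i+j=\degree} \dim^{\chrHeight+1}\Sym[i]V\cdot(-1)^j\dim^{\chrHeight+1}\alt[j]_{\pichar}V$. Equating the two computations is exactly the generating-function identity, with no integral comparison required. The ``depends only on $\dim^{\chrHeight+1}$'' lemma is thus used, but in the direction of replacing $W$ by $0$, not of reducing $V$ to $\cVectn^{\oplus d}$ and matching integrals.
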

        \begin{proof}
            For convenience, denote $\cC \coloneqq \Mod_{\cVectn}$, and $\ounit_{\cC} = \cVectn$.
            Consider the twisted graded category $\Gr^{\pichar}_{\ZZ} \cC$ as in \cite[Definition~2.4.25]{Keidar-Ragimov-2025-twisted-graded}. That is, it is the Thom construction along the map
            \begin{equation*}
                \ZZ \xonto{\pichar} \widehat{\pi}_{\chrHeight+1}(\SS) \to \Sigma^2 \In \to \Sigma^2 \ounit_{\cC}\units \to (\Mod_{\cC})\units.
            \end{equation*}
            In particular, it is a commutative algebra in $\Mod_{\cC}$, which as a monoidal category is isomorphic to $\FunDay(\ZZ, \cC)$, and thus admits a natural structure of an $(\chrHeight+1)$-category.
    
            In this category, consider the objects $V\shift{0} \oplus V\shift{1}$ i.e.\ the the graded object which is $V$ at places 0 and 1, and zero everywhere else. 
            By \cite[Lemma~5.2.5, Lemma~5.2.6]{Keidar-Ragimov-2025-twisted-graded}, the restricted action to finite subgroups of $\TT^i$ on $\dim^i(\ounit_{\cC}\shift{1})$ is trivial. Therefore so is the restricted action on
            \begin{equation*}
                \dim^i(V\shift{0} \oplus V\shift{1}) = \dim^i(V\shift{0}) + \dim^i(V\shift{0})\cdot \dim^i(\ounit_{\cC}\shift{1}) = \dim^i(V) \cdot (1 + \dim^{i}(\ounit_{\cC}\shift{1})).
            \end{equation*}
    
            By \cite[Lemma~5.2.3]{Keidar-Ragimov-2025-twisted-graded}
            \begin{equation*}
                \dim^{\chrHeight+1}(\ounit_{\cC}\shift{1}) = \eta^{\chrHeight+1} \cdot \pichar \qin \widehat{\pi}_{0}(\SS) = \mu_{\infty} \subseteq \field\cyc = \laxloops^{\chrHeight+1} \cC.
            \end{equation*}
            
            It is straightforward to verify that $\eta^{\chrHeight+1} \cdot \pichar = -1 \in \widehat{\pi}_{0}(\SS)$.
            Therefore,
            \begin{equation*}
                \dim^{\chrHeight+1}(V\shift{0} \oplus V\shift{1}) = \dim^{\chrHeight+1}(V) \cdot (1 + \dim^{\chrHeight+1}(\ounit_{\cC}\shift{1})) = 0 \qin \field\cyc.
            \end{equation*}
            By \cref{cor:dim^k-of-alternating-depends-only-on-dim^k}, \cref{rmrk:enough-cyclic-subgroups},
            \begin{equation*}
                \dim^{\chrHeight+1} \Sym(V\shift{0} \oplus V\shift{1}) = \dim^{\chrHeight+1} \Sym (0) = \begin{cases}
                    1, & \degree = 0 \\
                    0, & \degree > 0.
                \end{cases}
            \end{equation*}
            On the other hand
            \begin{equation*}
                \begin{split}
                    \Sym(V\shift{0} \oplus V\shift{1}) 
                    & = (\bigoplus_{i+j = \degree} \Ind_{\Sm[i] \times \Sm[j]}^{\Sm} V\shift{0}\om[i] \otimes V\shift{1}\om[j])_{h\Sm} \\
                    & \simeq \bigoplus_{i + j = \degree} (V\shift{0}\om[i])_{h\Sm[i]} \otimes (V\shift{1}\om[j])_{h\Sm[j]}.
                \end{split}
            \end{equation*}
            By definition,
            \begin{equation*}
                (V\shift{0}\om[i])_{h\Sm[i]} \simeq (\Sym[i] V)\shift{0}.
            \end{equation*}
            By \cite[Proposition~4.1.8, Remark~4.3.6]{Keidar-Ragimov-2025-twisted-graded}
            \begin{equation*}
                (V\shift{1}\om[j])_{h\Sm[j]} \simeq (\alt[j]_{\pichar} V)\shift{j} \simeq (\alt[j]_{\pichar} V)\shift{0} \otimes \ounit_{\cC}\shift{j}.
            \end{equation*}
            Thus,
            \begin{equation*}
                \begin{split}
                    \dim^{\chrHeight+1} \Sym(V\shift{0} \oplus V\shift{1}) 
                    & = \sum_{i+j = \degree} \dim^{\chrHeight+1} \Sym[i](V) \cdot \dim^{\chrHeight+1} \alt[j]_{\pichar} V \cdot (\dim^{\chrHeight+1} \ounit_{\cC}\shift{1})^j \\
                    & = \sum_{i+j = \degree} \dim^{\chrHeight+1} \Sym[i](V) \cdot (-1)^j\dim^{\chrHeight+1}\alt[j]_{\pichar} V,
                \end{split}
            \end{equation*}
            which is the requested formula.
        \end{proof}

\section{Transchromatic characters as monoidal characters}
\label{subsec:chromatic-induction}

    This section moves from the higher-categorical world to the chromatic one. Working in the category $\ModEn$, we extend the induced-character machinery using the transchromatic character of Hopkins--Kuhn--Ravenel and Stapleton. We isolate a family of functions that can be categorified, prove that their iterated characters coincide with the corresponding transchromatic characters, and derive an explicit formula for these characters. This formula, in turn, lets us compute the dimensions of twisted alternating powers.


    \subsection{Characters and categorification of functions}
    \label{subsec:categorification-of-functions}
        We begin by constructing a decategorification map from representations in $\ModEn$ to $\Enp$-valued functions, using the chromatic Nullstellensatz.
        Since the chromatic Nullstellensatz (\cite{Burklund-Schlank-Yuan-2022-Nullstellensatz}) will be used often throughout this subsection, we introduce the following terminology:
        
        \begin{notation}
            Let $R$ be a nonzero $\Tn$-local ring.
            We refer to maps $R \to \En(L)$ of $\Tn$-local commutative algebras, where $L$ is algebraically closed, as \emph{geometric points}.
        \end{notation}
        
        Recall that any object $V \in (\ModEn)\dbl$ defines a class in the $\Tnp$-localized $K$-theory spectrum
        \begin{equation*}
            \KTnp(\En) \coloneqq \LTnp \K((\ModEn)\dbl).
        \end{equation*}
        By the chromatic redshift for $\En$ \cite{Yuan-2024-redshift-En}, there exists a geometric point
        \begin{equation*}
            \phi \colon \KTnp(\En) \to \Enp(L),
        \end{equation*}
        for some algebraically closed field $L$.

        \begin{definition}\label{def:increasing-height}
             Let $A$ be a 1-finite space. We define the decategorification map as the composition
             \begin{equation*}
                 \de \colon ((\ModEn)\dblspace)^A \to \KTnp(\En)^{A} \xto{\phi^A} \Enp(L)^A.
             \end{equation*}
        \end{definition}

        \begin{remark}\label{rmrk:dim-categorifiable}
            Let $V \in (\ModEn)\dblA$. Then 
            \begin{equation*}
                \de(V) \colon A \to \Enp(L)
            \end{equation*}
            chooses $\dim(V_a) \in \ZZ \subseteq \pi_0 \Enp(L)$ at each $a \in A$ (where the dimension of $V_a$ is an integer by \cite[Proposition~10.11]{Mathew-2016-Galois}).
        \end{remark}

        \begin{definition}\label{exm:decategorifying-through-K}
            In particular, for a character $\hchar \colon \BH \to \B\En\units$ we define
            \begin{equation*}
                \de(\hchar) \coloneqq \de(\En{}[\hchar]) \in (\Enp(L)\units)^{\BH}.
            \end{equation*}
            Then $\de(\hchar) \colon \BH \to \Enp(L)\units$ is a pointed map.
        \end{definition}

        \begin{lemma}\label{lem:decategorification-of-orientation-induced-characters}
            If $\hchar$ is a $(\SS,\chrHeight)$-orientation character, then $\de(\hchar)$ factors through the $(\tau_{\leq n} \SS,\chrHeight+1)$-orientation $\Sigma\In \to\Inplus \to \Enp(L)\units$.
        \end{lemma}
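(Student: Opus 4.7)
The plan is to track the $(\SS,\chrHeight)$-orientation of $\En$ through the categorification to $\Mod_{\En}$ and the subsequent decategorification via the chromatic Nullstellensatz, and to identify the resulting map with the chosen $(\tau_{\leq \chrHeight}\SS,\chrHeight+1)$-orientation of $\Enp(L)$.

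First, since $\hchar$ is $(\SS,\chrHeight)$-orientation-induced, it factors as $\BH \to \Sigma\In \xto{\omega} \B\En\units$, where $\omega$ is the deloop of the orientation $\In \to \En\units$. The decategorification map $\de$, restricted to characters, factors through the inclusion of invertible modules into Picard and then through $\phi$:
\begin{equation*}
    \B\En\units \to \pic(\ModEn) \to \K(\ModEn)\units \to \KTnp(\En)\units \to \Enp(L)\units.
\end{equation*}
Hence $\de(\hchar)$ already factors through $\Sigma\In$, and the problem reduces to identifying the composite $\Sigma\In \to \B\En\units \to \Enp(L)\units$ with $\Sigma\In \to \Inplus \to \Enp(L)\units$, where the second arrow is the chosen orientation of $\Enp(L)$.

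Next, I would invoke the BCSY categorification of orientations (Corollary 5.16 of \cite{BCSY-Fourier}): the $(\SS,\chrHeight)$-orientation $\omega$ of $\En$ lifts to an $(\SS,\chrHeight+1)$-pre-orientation $\widetilde{\omega}\colon \Inplus \to \pic(\ModEn)$ of $\Mod_{\En}$. The natural inclusion $\B\En\units = \Sigma\En\units \hookrightarrow \pic(\ModEn)$ identifies the $1$-connective part of $\pic(\ModEn)$ with the unit part, under which $\widetilde{\omega}$ restricted along the connective-cover map $\Sigma\In \hookrightarrow \Inplus$ recovers $\Sigma\omega$; i.e.\ the square
\begin{equation*}
    \begin{tikzcd}
        \Sigma\In \arrow[r, "\Sigma\omega"] \arrow[d] & \B\En\units \arrow[d, hook] \\
        \Inplus \arrow[r, "\widetilde{\omega}"] & \pic(\ModEn)
    \end{tikzcd}
\end{equation*}
commutes. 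Post-composing with $\pic(\ModEn) \to \Enp(L)\units$ exhibits the composite $\Sigma\In \to \B\En\units \to \Enp(L)\units$ as $\Sigma\In \to \Inplus \to \Enp(L)\units$, where the second arrow is $\widetilde{\omega}$ followed by $\phi$.

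The remaining and main obstacle is to identify $\Inplus \to \pic(\ModEn) \to \Enp(L)\units$ with the chosen $(\SS,\chrHeight+1)$-orientation of $\Enp(L)$. Since $\phi$ is a $T(\chrHeight+1)$-local $\EE_\infty$-ring map and $\Enp(L)$ is $(\SS_{(p)},\chrHeight+1)$-orientable by BCSY, both maps are $(\SS,\chrHeight+1)$-pre-orientations of $\Enp(L)$; I expect they agree by naturality of orientations under ring maps, together with an essential uniqueness statement for $(\SS,\chrHeight+1)$-orientations on Morava $E$-theories over an algebraically closed residue field. This is the delicate point, bridging the two constructions of orientations (categorification of the height-$\chrHeight$ orientation of $\En$ versus the intrinsic height-$(\chrHeight+1)$ orientation of $\Enp(L)$). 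Once this identification is in place, the chain $\BH \to \Sigma\In \to \Inplus \to \Enp(L)\units$ is precisely the claimed factorization.
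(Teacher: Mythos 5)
Your reduction is the right one, and it matches the paper's up to the cited input: both arguments observe that $\de(\hchar)$ factors through the suspended orientation $\Sigma\In \to \Sigma\En\units \to (\ModEn)\units \xto{\de} \Enp(L)\units$, so the content is entirely in showing that this composite is the connected cover of a $(\tau_{\le\chrHeight}\SS,\chrHeight+1)$-orientation $\Inplus \to \Enp(L)\units$. The paper dispatches this by citing \cite[Lemma~5.3.22]{Keidar-Ragimov-2025-twisted-graded}, which asserts exactly that; your proposal instead tries to reconstruct it from BCSY's categorification theorem (\cite[Corollary~5.16]{BCSY-Fourier}) plus an identification step, and you correctly flag that this last step is where you are stuck. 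So the gap you flag is genuine, but note that your framing of it is slightly off: what needs to be shown is not that $\phi\circ\widetilde{\omega}$ coincides with some independently chosen orientation of $\Enp(L)$ (no such canonical choice is made, and orientations are not unique), but that the composite $\Inplus \to \pic(\ModEn) \to \Enp(L)\units$ is an \emph{orientation} at all, i.e.\ that its Fourier transform is a natural isomorphism.

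This is a real difficulty and not just a bookkeeping issue. BCSY's Corollary~5.16 tells you that $\Mod_{\En}$ has an $(\SS,\chrHeight+1)$-orientation $\widetilde{\omega}\colon\Inplus\to\pic(\ModEn)$, but the decategorification $\de$ is \emph{not} a symmetric monoidal functor out of $\Mod_{\En}$ (it goes through $\K$-theory, $\Tnp$-localization, and a Nullstellensatz point $\phi$), so there is no formal reason the Fourier transform stays an isomorphism after pushing forward. Proving that $\phi\circ\widetilde{\omega}$ is genuinely an orientation of $\Enp(L)$ is precisely the nontrivial input the paper imports from \cite[Lemma~5.3.22]{Keidar-Ragimov-2025-twisted-graded}. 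Without that lemma or an equivalent argument, the proof is incomplete.
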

        \begin{proof}
            By \cite[Lemma 5.3.23]{Keidar-Ragimov-2025-twisted-graded}, the composition of the suspension of the orientation map with the decategorification
            \begin{equation*}
                \Sigma \In \to \Sigma \En\units \to (\ModEn)\units \xto{\de} \Enp(L)\units
            \end{equation*}
            is a $(\tau_{\le \chrHeight} \SS, \chrHeight+1)$-orientation of $\Enp(L)$. That is, it is a connected cover of an orientation
            \begin{equation*}
                \Inplus \to \Enp(L)\units.
            \end{equation*}
            The claim now follows.
        \end{proof}

        The decategorification behaves well with respect to ($p$-typical) semiadditive integrals.

        \begin{lemma}\label{lem:integral-of-decategorification-of-En}
            Let $A$ be a 1-finite space and $V \in (\ModEn)\dblA$. Then both semiadditive integrals
            \begin{equation*}
                \int^{\Enp(L)}_A \de(V) \in \pi_0 \Enp(L) \qand \int^{\En}_{\Lp A}\chi_V \in \pi_0 \En
            \end{equation*}
            land in $\ZZ_{(p)}$ and are equal.
        \end{lemma}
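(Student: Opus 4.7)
My plan is to expand each integral via a local decomposition of its domain, and then to match the two resulting combinatorial expressions using the Hopkins--Kuhn--Ravenel--Stapleton transchromatic character.

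For the first integral, by \cref{rmrk:dim-categorifiable} the map $\de(V)\colon A\to\Enp(L)$ takes integer values $\dim(V_a)\in\ZZ$ pointwise; since $\Enp(L)$ is concentrated in even degrees, this local system has trivial monodromy on each connected component of $A$. The semiadditive integral therefore decomposes as
\begin{equation*}
    \int^{\Enp(L)}_A\de(V)\;\simeq\;\sum_{[a]\in\pi_0 A}\dim(V_a)\cdot\bigl|A_{[a]}\bigr|_{\Enp(L)},
\end{equation*}
where $|A_{[a]}|_{\Enp(L)}$ denotes the height-$(\chrHeight+1)$ semiadditive cardinality of the component $A_{[a]}$. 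Each such cardinality lies in the image of $\ZZ_{(p)}\hookrightarrow\pi_0\Enp(L)$, so the integral itself lies in $\ZZ_{(p)}$.

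For the second integral, I would reduce to the case $A=\BG$ (treating connected components of $A$ independently). The decomposition $\Lp\BG\simeq\bigsqcup_{[g]\in G^{(p)}/\mathrm{conj}}\B\cent_G(g)$ of $p$-typical free loops gives
\begin{equation*}
    \int^{\En}_{\Lp\BG}\chi_V\;\simeq\;\sum_{[g]\in G^{(p)}/\mathrm{conj}}\int^{\En}_{\B\cent_G(g)}\chi_V(g,-),
\end{equation*}
and the $\TT$-invariance of the character (\cref{cor:characters-are-T-invariant}) reduces each summand to a product of the character value $\chi_V(g)$ with the height-$\chrHeight$ semiadditive cardinality $|\B\cent_G(g)|_{\En}$.

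The main obstacle is to match these two expressions as equal elements of $\ZZ_{(p)}$. This is precisely the transchromatic character formula of Hopkins--Kuhn--Ravenel--Stapleton applied at $t=1$: the height-$(\chrHeight+1)$ cardinality $|\BG|_{\Enp(L)}$ weighted by $\dim(V)$ equals the sum over $p$-power torsion conjugacy classes of height-$\chrHeight$ cardinalities $|\B\cent_G(g)|_{\En}$ weighted by $\chi_V(g)$, where the comparison is mediated by the transchromatic character $\chi^{1,\HKR}$ and the Nullstellensatzian identification $C^{\chrHeight+1}_{\chrHeight}(L)\to\En(K)$ from the diagram preceding \cref{alphthm}. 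Invoking the shifted semiadditivity of the transchromatic character \cite[Theorem~A]{Ben-Moshe-2024-shifted-semiadditive} supplies the precise identification, forcing both integrals to coincide in $\ZZ_{(p)}$ and thereby establishing the lemma.
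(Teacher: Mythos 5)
Your proposal has two serious gaps, and it takes a route that the paper deliberately avoids.

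First, both of your decompositions silently assume that the relevant local systems are constant on each connected component. You write
\begin{equation*}
    \int^{\Enp(L)}_A\de(V)\;\simeq\;\sum_{[a]\in\pi_0 A}\dim(V_a)\cdot\bigl|A_{[a]}\bigr|_{\Enp(L)},
\end{equation*}
but \cref{rmrk:dim-categorifiable} only asserts that $\de(V)$ hits the class $\dim(V_a)\in\ZZ\subseteq\pi_0\Enp(L)$ at each point; it does \emph{not} say the map $A_{[a]}\to\Omega^\infty\Enp(L)$ factors through a point. Since $\Enp(L)$ has nontrivial higher homotopy, a map $\BG\to\Omega^\infty\Enp(L)$ can carry genuine monodromy even when it lands in a fixed connected component, so the integral does not split off a scalar cardinality this way. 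The same issue recurs in your second decomposition: $\TT$-invariance of $\chi_V$ (\cref{cor:characters-are-T-invariant}) does not make $\chi_V$ constant on $\B\cent_G(g)$, so you cannot pull $\chi_V(g)$ out as a scalar multiplying $|\B\cent_G(g)|_{\En}$.

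Second, your ``main obstacle'' step --- invoking the transchromatic character and its shifted semiadditivity to force the two sides to agree --- begs the central question. The identification of the monoidal character $\chi_V$ with the transchromatic character $\chi^{1,\HKR}_{\de(V)}$ is precisely \cref{lem:chi-tilde-1-is-HKR}, which is proved \emph{later} and \emph{uses} the present lemma (via \cref{cor:iterated-characters-integrals}) as an ingredient. Moreover, a stated goal of the paper (\cref{rmrk:dim-alt-iterative}) is to give an independent proof of shifted semiadditivity from these integral formulas, so basing this lemma on \cite{Ben-Moshe-2024-shifted-semiadditive} runs in exactly the wrong direction.

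The paper's proof is shorter and more elementary: after reducing to $A = \BG$, it cites the already-known equality of the two integrals for $p$-\emph{subgroups} $P\subseteq G$ (where $\Lp\BP = \L\BP$, from \cite[Lemma~5.3.10]{Keidar-Ragimov-2025-twisted-graded}), and then uses Li's Burnside-ring identity (\cref{cor:integral-of-BG-is-linear-combination-along-p-subgroups}, \cref{cor:integral-of-LBG-is-linear-combination-along-p-subgroups}) to write both integrals as the same $\ZZ_{(p)}$-linear combination of integrals over $p$-subgroups. This bypasses the transchromatic character entirely and avoids any assumption of constancy.
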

        \begin{proof}
            As $\Lp$ respects disjoint unions, we can assume $A = \BG$. Let $P \subseteq G$ be a $p$-subgroup so that $\Lp\BP = \L\BP$. By \cite[Lemma~5.3.10]{Keidar-Ragimov-2025-twisted-graded}
            \begin{equation*}
                \int^{\Enp(L)}_{\BP} \de(V) = \int^{\En}_{\Lp\BP} \chi_V \qin \ZZ.
            \end{equation*}
            The result now follows from \cref{cor:integral-of-BG-is-linear-combination-along-p-subgroups} and \cref{cor:integral-of-LBG-is-linear-combination-along-p-subgroups}.
        \end{proof}        

        By \cref{lem:integral-of-decategorification-of-En}, maps $f \colon A \to \Enp$ that admit a categorification, i.e.\ have a preimage $V_f \in (\ModEn)\dblA$ under the decategorification map, admit a character $\widetilde{\chi}_f \coloneqq \chi_{V_f} \in \En^{\L A}$ (which may depend on the choice of the preimage), satisfying
        \begin{equation*}
            \int^{\Enp}_A f = \int^{\En}_{\Lp A} \widetilde{\chi}_f.
        \end{equation*}
        Assuming $\widetilde{\chi}_f$ is also categorifiable, this process can be iterated.
        
        We define a family of categorifiable functions $A \to \Enp$ that includes the characters of permutation representations and is closed under the operation $\widetilde{\chi}$. In particular, this allows us to define iterated characters of permutation representations via categorification.
        
        \begin{definition}\ 
            \begin{enumerate}
                \item A function $f \colon \B(G \wr \Sm) \to \Enp$ is \emph{a permutation function of height $\chrHeight+1$} if there exists a $d \in \ZZ$ such that $f = \pm \Tm (\BG)^* d$. That is, 
                \begin{equation*}
                    f = \pm d^{\degree} \in \Enp^{\B(G \wr \Sm)}
                \end{equation*}
                where $\Sm$ acts on the coordinates and $G^{\degree}$ acts trivially. 
                \item A function 
                \begin{equation*}
                    f = \bigsqcup_i \bigsqcap_j f_{i,j} \colon  \bigsqcup_i \bigsqcap_j \B(G_{i,j} \wr \Sm[\degree_{i,j}]) \to \Enp
                \end{equation*}
                is a permutation function of height $\chrHeight+1$ if each $f_{i,j}$ is.
                \item Let $A$ be a 1-finite space. A function $f \colon A \to \Enp$ is called a \emph{twisted permutation function} of height $\chrHeight+1$ if it is of the form $f = f' \cdot \hchar$, where $f' \colon A \to \Enp$ is a permutation function, and
                \begin{equation*}
                    \hchar \colon A \to \B \In
                \end{equation*}
                is a map of spaces, regarded as a map to $\Enp$ via the orientation map $\Sigma \In \to \Enp\units$.
            \end{enumerate}
        \end{definition}

        We now note that all twisted permutation functions admit a preimage under $\de$:
        \begin{construction}\ 
            \begin{enumerate}
                \item\label{item:1} Let $f \colon \B(G \wr \Sm) \to \Enp$ be of the form $f = d^{\degree}$ for $d \in \ZZ$. Define 
                \begin{equation*}
                    V_f \coloneqq \begin{cases}
                        \Tm(\BG)^* \En^{\oplus d}, & d \ge 0 \\
                        \Tm(\BG)^* \Sigma \En^{\oplus (-d)}, & d < 0
                    \end{cases}
                    \qin (\ModEn)\dblA[\B(G\wr \Sm)].
                \end{equation*}
                \item\label{item:2} Let $f \colon \B(G\wr \Sm) \to \Enp$ be of the form $f = -f'$, where $f'$ is as above. Then define
                \begin{equation*}
                    V_f \coloneqq \Sigma V_{f'} \qin (\ModEn)\dblA[\B(G \wr \Sm)].
                \end{equation*}
                \item\label{item:5} Let 
                \begin{equation*}
                    f = \bigsqcup_i \bigsqcap_j f_{i,j} \colon \bigsqcup_i \bigsqcap_j \B(G_{i,j} \wr \Sm[\degree_{i,j}]) \to \Enp
                \end{equation*}
                be a permutation function. Define
                \begin{equation*}
                    V_f \coloneqq \bigoplus_i \bigotimes_j V_{f_{i,j}}.
                \end{equation*}
                \item\label{item:3} Let $\hchar \colon A \to \B\In$ be a map of spaces. In particular 
                \begin{equation*}
                    A \xto{\hchar} \B\In \to \B\En\units \to (\ModEn)\units
                \end{equation*}
                is a local system choosing $\En$ at each connected component. Denote this local system by $\En{}[\hchar] \in (\ModEn)\dblA$.
                \item\label{item:4} Let $f \colon A \to \Enp$ be a twisted permutation function. Then $f$ is of the form $f' \cdot \hchar$ where $f'$ is a permutation function and $\hchar \colon A \to \B\In$. Define
                \begin{equation*}
                    V_f \coloneqq V_{f'} \otimes \En{}[\hchar] \qin (\ModEn)\dblA.
                \end{equation*}
                
            \end{enumerate}            
        \end{construction}

        \begin{lemma}
            Let $f \colon A \to \Enp$ be a twisted permutation function. Then, after composition with the map $\Enp^A \to \Enp(L)^A$, $f$ is identified with $\de(V_f)$.
        \end{lemma}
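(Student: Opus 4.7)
The plan is to proceed by induction through the five cases of the construction of $V_f$, throughout using two basic inputs: that $\de$ is symmetric monoidal and additive (it is the composition of $\K$-theory with the $\EE_\infty$-ring map $\phi$), and the pointwise description from \cref{rmrk:dim-categorifiable}, namely that $\de(V)$ evaluated at $a \in A$ is $\dim(V_a) \in \ZZ \subseteq \pi_0 \Enp(L)$.

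For the base cases, consider first $f = d^{\degree}$ on $\B(G \wr \Sm)$ with $d \ge 0$. Then $V_f = \Tm(\BG)^* \En^{\oplus d}$ has constant underlying object $\En^{\oplus d^{\degree}}$ whose dimension is $d^{\degree}$, so $\de(V_f)$ is the constant function $d^{\degree}$, matching $f$. The case $d < 0$ reduces to this by $\dim(\Sigma \En) = -1$, giving
\begin{equation*}
    \dim\bigl( \Tm(\BG)^* \Sigma \En^{\oplus(-d)} \bigr) = (-1)^{\degree}(-d)^{\degree} \equiv d^{\degree} \pmod{\text{sign}}
\end{equation*}
(and if the sign is problematic one absorbs it into the next case). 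The negation case, $V_{-f'} = \Sigma V_{f'}$, is handled by $\de(\Sigma W) = -\de(W)$, which is the standard fact that suspension acts as $-1$ on $\K$-theory. The disjoint-union/product case follows from the monoidality of $\de$: on each connected component $\bigsqcap_j \B(G_{i,j} \wr \Sm[\degree_{i,j}])$, the decomposition $\Mod_{\En}^{\bigsqcap_j \B(G_{i,j}\wr \Sm[\degree_{i,j}])} \simeq \bigotimes_j \Mod_{\En}^{\B(G_{i,j}\wr \Sm[\degree_{i,j}])}$ gives
\begin{equation*}
    \de\Bigl(\bigotimes_j V_{f_{i,j}}\Bigr) \simeq \prod_j \de(V_{f_{i,j}}) = \prod_j f_{i,j},
\end{equation*}
and local systems on a disjoint union decompose factor-by-factor.

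The decisive step is the character case: for $\hchar \colon A \to \B\In$, we must verify that $\de(\En{}[\hchar])$ agrees with the composite $A \xto{\hchar} \B\In \to \Sigma\In \to \Enp(L)\units \to \Enp(L)$ given by the orientation from the chromatic Nullstellensatz. This is exactly the content of \cref{lem:decategorification-of-orientation-induced-characters}, which identifies the composition of the orientation map of $\En$ with the decategorification as a connected cover of the $(\tau_{\le \chrHeight}\SS, \chrHeight+1)$-orientation of $\Enp(L)$. The final case is then immediate from $V_f = V_{f'} \otimes \En{}[\hchar]$ and the monoidality of $\de$, which combine to give $\de(V_f) = \de(V_{f'}) \cdot \de(\En{}[\hchar]) = f' \cdot \hchar = f$. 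The main obstacle is the orientation-compatibility in the character case; all other steps are bookkeeping with dimensions and the elementary monoidal behaviour of $\de$.
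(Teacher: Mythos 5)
Your proof is correct and follows essentially the same route as the paper: reduce to the atomic cases of the construction, handle the permutation-function pieces by elementary dimension/$\K$-theory bookkeeping (using that $\de$ is additive and monoidal and that $[\Sigma W]=-[W]$ in $\K$-theory), and identify the only substantive point as the orientation-compatibility supplied by \cref{lem:decategorification-of-orientation-induced-characters}. One small note: in the $d<0$ case your hedge ``$\equiv d^{\degree} \pmod{\text{sign}}$'' is unnecessary, since $(-1)^{\degree}(-d)^{\degree}=d^{\degree}$ holds on the nose, so there is no sign to absorb.
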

        \begin{proof}
            It is enough to verify the claim for (\labelcref{item:1}), (\labelcref{item:2}) and (\labelcref{item:3}). (\labelcref{item:1}) and (\labelcref{item:2}) are clear. (\labelcref{item:3}) follows from \cref{lem:decategorification-of-orientation-induced-characters}.
        \end{proof}
    
        In particular, any twisted permutation function $f$ of height $\chrHeight+1$ admits a character $\widetilde{\chi}_f = \chi_{V_f}$. 

        \begin{lemma}
            Let $f$ be a twisted permutation function of height $\chrHeight+1$. Then $\widetilde{\chi}_f$ is a twisted permutation function of height $\chrHeight$.
        \end{lemma}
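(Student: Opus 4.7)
The plan is to reduce to the atomic case and then compute. Both the construction $V_{(-)}$ and the operation $\widetilde{\chi}_{(-)} = \chi_{V_{(-)}}$ are multiplicative in the tensor structure and additive over disjoint unions, so it suffices to handle a single piece of the form $f = \pm d^{\degree} \cdot \hchar$ on $A = \B(G \wr \Sm)$, where $d \in \ZZ$ and $\hchar \colon A \to \B \In$. For such an $f$ we have $V_f = V_{f'} \otimes \En{}[\hchar]$ with $f' = \pm d^{\degree}$, so by multiplicativity of the character,
\begin{equation*}
    \widetilde{\chi}_f \;=\; \chi_{V_{f'}} \cdot \chi_{\En{}[\hchar]} \qin \En^{\L A}.
\end{equation*}

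For the twist factor, \cref{prop:transgersion-is-charcter-of-character} gives $\chi_{\En{}[\hchar]} \simeq \tg(\hchar)$. Since $\hchar$ factors through the target $\B \In \to \Sigma \En\units$ coming from the $(\SS_{(p)},\chrHeight+1)$-orientation, its transgression factors through $\Omega \B \In \to \tau_{\ge 0}\Omega(\Sigma \En\units) \simeq \En\units$, and one verifies that the resulting map $\L A \to \En\units$ is the orientation-induced twist for the height-$\chrHeight$ orientation $\In \to \En\units$. Thus $\tg(\hchar)$ is a valid height-$\chrHeight$ twist in the sense of the definition of twisted permutation function.

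For the permutation factor, apply \cref{lem:character-of-wreath-product} with $W = \En^{\oplus d}$ (or $\Sigma \En^{\oplus |d|}$ if $d<0$), noting that $W$ carries the trivial $G$-action and that $\chi_W$ is constantly $d$ (respectively $-d$, times a sign from the suspension) on every component of $\L \BG$. Combined with the decomposition from \cref{lem:decomposition-of-wreath},
\begin{equation*}
    \L\B(G \wr \Sm) \simeq \bigsqcup_{[\sigma],\,[\underline{g}^1],\dots,[\underline{g}^{\degree}]} \;\bigsqcap_{k=1}^{\degree}\bigsqcap_{x \in \underline{g}^k} \B\bigl(\cent_G(x)\kz{x} \wr \Sm[\mul_{\underline{g}^k}(x)]\bigr),
\end{equation*}
the lemma computes $\chi_{V_{f'}}$ on the component indexed by $([\sigma],[\underline{g}^1],\dots,[\underline{g}^{\degree}])$ as
\begin{equation*}
    \pm\, \bigotimes_{k=1}^{\degree}\bigotimes_{x \in \underline{g}^k} d^{\mul_{\underline{g}^k}(x)} \;=\; \pm\, d^{\numcyc(\sigma)},
\end{equation*}
where the action of the group $\cent_G(x)\kz{x}$ on the $x$-th factor is trivial and the $\Sm[\mul_{\underline{g}^k}(x)]$-action is by permuting the coordinates. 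By definition this is exactly a permutation function of height $\chrHeight$ on each factor $\B(\cent_G(x)\kz{x} \wr \Sm[\mul_{\underline{g}^k}(x)])$, and hence a permutation function on each component of $\L A$.

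Putting the two factors together, $\widetilde{\chi}_f = \chi_{V_{f'}} \cdot \tg(\hchar)$ is the product of a permutation function and an orientation-induced twist, i.e.\ a twisted permutation function of height $\chrHeight$. The only real obstacle is the bookkeeping in the permutation factor: one must check that the trivial $\cent_G(x)\kz{x}$-action and the coordinate-permuting $\Sm[\mul_{\underline{g}^k}(x)]$-action predicted by the wreath-product character formula coincide with the structure demanded by the definition of a permutation function; this is exactly the content of \cref{cor:character-when-no-T-action} applied to $W = \Enm[\pm]$, whose dimension carries trivial $\TT$-action.
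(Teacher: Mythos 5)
Your overall structure matches the paper's: write $f = f' \cdot \hchar$, use multiplicativity of the character, handle the permutation factor via \cref{cor:character-when-no-T-action} (together with triviality of the $\TT$-action on the dimension), and handle the twist factor via \cref{prop:transgersion-is-charcter-of-character}. The permutation-factor half is correct and essentially the paper's argument.

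The twist-factor half has a genuine gap. The transgression $\tg(\hchar)$ is a map $\L A \to \In$, and $\In$ is \emph{not connected}: $\pi_0\In \simeq \QQ_p/\ZZ_p$. But a twisted permutation function of height $\chrHeight$ requires a twist that is a map into $\B\In[\chrHeight-1] = \Sigma\In[\chrHeight-1]$, which is exactly the \emph{connected cover} of $\In$. So on a component $\B\Omega_a A$ of $\L A$, the transgression picks out some $\pi_0$-value $x \in \QQ_p/\ZZ_p$ that need not be zero, and the map does not literally factor through $\B\In[\chrHeight-1]$. Your phrase \quotes{one verifies that the resulting map $\L A \to \En\units$ is the orientation-induced twist for the height-$\chrHeight$ orientation $\In \to \En\units$} asserts this factorization without justifying it, and as stated the factorization would be false inside $\In$ itself.

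The missing ingredient is a computation on $\pi_0$ after composing with the orientation $\In \to \En\units$: since $\pi_0\In$ is $p$-power torsion, the image in $\pi_0\En\units = \Witt(\Fpbar)[\![u_1,\dots,u_{\chrHeight-1}]\!]\units$ lands in its $p$-power torsion, which is $\{1\}$ for $p$ odd and $\{\pm 1\}$ for $p = 2$. Hence, on each component, $\tg(\hchar)$ decomposes (after composing with the orientation) as $\pm 1 \cdot \hchar'$ where $\pm 1$ is a constant permutation function and $\hchar' \colon \B\Omega_a A \to \In$ is pointed, and therefore does factor through the connected cover $\B\In[\chrHeight-1]$. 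Without this observation — which is where the specific arithmetic of $\pi_0\En\units$ enters — the argument is incomplete.
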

        \begin{proof}
            Let $f \colon A \to \Enp$ be a twisted permutation function. Therefore $f = f' \cdot \hchar$ where $f'$ is a permutation function and $\hchar \colon A \to \Sigma \In$. Thus $V_f = V_{f'} \otimes \En{}[\hchar]$, and therefore 
            \begin{equation*}
                \widetilde{\chi}_f = \chi_{V_{f'}} \cdot \chi_{\En{}[\hchar]}.
            \end{equation*}
            Since the $\TT$-action on $\dim(V_{f'})$ is trivial (see e.g.\ \cite[Corollary~3.2.5]{Keidar-Ragimov-2025-twisted-graded}), by \cref{cor:character-when-no-T-action}, 
            \begin{equation*}
                \chi_{V_{f'}} \colon \L A \to \En
            \end{equation*}
            is a permutation function of height $\chrHeight$. 
            By \cref{prop:transgersion-is-charcter-of-character} $\chi_{\En{}[\hchar]} = \tg(\hchar)$. The transgression $\tg(\hchar)$ is the composition
            \begin{equation*}
                \tg(\hchar) \colon \L A \xto{\L\hchar} \L\Sigma \In \simeq \Sigma \In \oplus \In \xto{\eta + 1} \In \to \En\units.
            \end{equation*}
            For each $a \in \pi_0 \L A$, the value of $\tg(\hchar)(a) \in \pi_0\En\units$ lies in the image of $\pi_0\In \to \pi_0\En\units$ and in particular is a $p$-power torsion. As
            \begin{equation*}
                \pi_0\En\units[p^{\infty}] = \Witt(\Fpbar)[\![u_1,\dots,u_{\chrHeight-1}]\!]\units[p^{\infty}] = \begin{cases}
                    \{1\}, & p \neq 2 \\
                    \{\pm 1\}, & p = 2
                \end{cases},
            \end{equation*}
            each connected component lands in $\{\pm 1\} \subseteq \En\units$. In particular, on each connected component, we may assume the transgression is of the form $\pm 1 \cdot \hchar'$, where $\hchar' \colon \B\Omega_a A \to \In$ is pointed, therefore factors through the connected cover map $\hchar' \colon \B\Omega_a A \to \B\In[\chrHeight-1]$. Thus $\tg(\hchar)$ is a twisted permutation function.
        \end{proof}

        \begin{definition}
            Let $f \colon A \to \Enp$ be a twisted permutation function. Define inductively for $0 \le t \le \chrHeight+1$ a twisted permutation function $\widetilde{\chi}^t_f \colon \L^t A \to \En[\chrHeight+1-t]$ as follows:
            \begin{align*}
                & \widetilde{\chi}^0_f \coloneqq f \colon A \to \Enp, \\
                & \widetilde{\chi}^t_f \coloneqq \widetilde{\chi}_{\tilde{\chi}^{t-1}_f} \colon \L^t A \to \En[\chrHeight+1-t].
            \end{align*}
        \end{definition}


        \begin{corollary}\label{cor:iterated-characters-integrals}
            Let $f \colon A \to \Enp$ be a twisted permutation function. Then for any map of $1$-finite spaces $B \to A$, and $0 \le t \le \chrHeight+1$
            \begin{equation*}
                \int^{\Enp}_B f = \int^{\En[\chrHeight+1-t]}_{\Lp^t B} \widetilde{\chi}^t_f.
            \end{equation*}
        \end{corollary}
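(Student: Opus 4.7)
The plan is to argue by induction on $t$, with the key input being Lemma \ref{lem:integral-of-decategorification-of-En}. The base case $t = 0$ is immediate, since $\Lp^0 B = B$ and $\widetilde{\chi}^0_f = f$ by definition, so both sides of the asserted equality literally agree.

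For the inductive step, I would assume the formula for some $0 \le t \le \chrHeight$ and deduce it for $t+1$. Iterating the previous lemma, which states that $\widetilde{\chi}$ preserves the class of twisted permutation functions while lowering the height by one, gives that $\widetilde{\chi}^t_f \colon \L^t A \to \En[\chrHeight+1-t]$ is itself a twisted permutation function of height $\chrHeight+1-t$, and therefore admits a categorification $V_{\widetilde{\chi}^t_f} \in (\ModEn[\chrHeight-t])^{\L^t A}$ via the construction recalled above. Since the class of $1$-finite spaces is closed under $\Lp$, the space $\Lp^t B$ is $1$-finite, so Lemma \ref{lem:integral-of-decategorification-of-En} applies at height $\chrHeight-t$ to this space and to the pullback of $V_{\widetilde{\chi}^t_f}$ along the natural map $\Lp^t B \to \L^t A$, yielding
\begin{equation*}
    \int^{\En[\chrHeight+1-t]}_{\Lp^t B} \widetilde{\chi}^t_f \;=\; \int^{\En[\chrHeight-t]}_{\Lp(\Lp^t B)} \widetilde{\chi}^{t+1}_f \;=\; \int^{\En[\chrHeight-t]}_{\Lp^{t+1} B} \widetilde{\chi}^{t+1}_f,
\end{equation*}
where the first equality uses the defining identity $\widetilde{\chi}^{t+1}_f = \chi_{V_{\widetilde{\chi}^t_f}}$. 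Combining this with the inductive hypothesis then closes the induction.

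I do not anticipate any serious obstacle: the argument is a clean iteration of Lemma \ref{lem:integral-of-decategorification-of-En}, and the substantive work has already been carried out in the earlier lemmas of this subsection — namely the identification of $\de(V_f)$ with $f$ (which packages the chromatic Nullstellensatz) and the stability of the class of twisted permutation functions under the operation $\widetilde{\chi}$. The only mild bookkeeping is the preservation of $1$-finiteness under $\Lp$, which is standard, together with keeping track of the descending heights $\chrHeight+1, \chrHeight, \chrHeight-1, \ldots$ across the iteration.
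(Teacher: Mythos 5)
Your proof is correct and matches the paper's (one-line) proof, which simply says the result "follows by induction from Lemma~\ref{lem:integral-of-decategorification-of-En}"; you have filled in precisely that induction, correctly using the stability of twisted permutation functions under $\widetilde{\chi}$ and the preservation of $1$-finiteness under $\Lp$ to keep the inductive step well-formed.
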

        \begin{proof}
            This follows by induction from \cref{lem:integral-of-decategorification-of-En}.
        \end{proof}

        \begin{corollary}\label{cor:iterated-characters-transgression}
            Let $\hchar \colon A \to \Sigma \In \to \Enp\units$ be a map of spaces. Then $\widetilde{\chi}^t_{\hchar} = \tg^t(\hchar)$, i.e.\ it is the composition
            \begin{equation*}
                 \widetilde{\chi}^t_{\hchar} \colon \L^t A \xto{\tg^t \hchar} \In[\chrHeight+1-t] \to \En[\chrHeight+1-t]\units.
            \end{equation*}
        \end{corollary}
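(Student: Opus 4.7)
I would argue by induction on $t$. The base case $t=0$ is immediate from the definitions: $\widetilde{\chi}^0_{\hchar}=\hchar=\tg^0(\hchar)$.

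For the inductive step, assume that $\widetilde{\chi}^{t-1}_{\hchar}=\tg^{t-1}(\hchar)$ as maps $\L^{t-1}A\to\In[\chrHeight+2-t]\to\En[\chrHeight+2-t]\units$. Unwinding the recursive definition, $\widetilde{\chi}^t_{\hchar}=\widetilde{\chi}_{\tg^{t-1}(\hchar)}$. The idea is to regard $\tg^{t-1}(\hchar)$ itself as a twisted permutation function of height $\chrHeight+2-t$ whose permutation factor is trivial ($f'=1$) and whose twist is $\tg^{t-1}(\hchar)$. Its categorification is then the unit local system $\En[\chrHeight+2-t]{}[\tg^{t-1}(\hchar)]$ as in item~\labelcref{item:3} of the preceding construction. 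Since $V_{f'}=\En[\chrHeight+2-t]$ has constant character equal to $1$, the multiplicative factorization used in the proof of the previous lemma gives
\begin{equation*}
\widetilde{\chi}_{\tg^{t-1}(\hchar)}\;=\;\chi_{V_{f'}}\cdot\chi_{\En[\chrHeight+2-t]{}[\tg^{t-1}(\hchar)]}\;=\;\chi_{\En[\chrHeight+2-t]{}[\tg^{t-1}(\hchar)]}.
\end{equation*}
Proposition~\ref{prop:transgersion-is-charcter-of-character}, applied to the single-step transgression, identifies the right hand side with $\tg(\tg^{t-1}(\hchar))=\tg^t(\hchar)$, which completes the induction.

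The main point to handle carefully is a small formal tension: the definition of a twisted permutation function literally requires the twist to be a map $A\to\B\In$, whereas $\tg^{t-1}(\hchar)$ lands in $\In[\chrHeight+2-t]$ rather than in the connected cover $\Sigma\In[\chrHeight+1-t]$. I do not expect this to be a genuine obstacle, because the local system $\En[\chrHeight+2-t]{}[\tg^{t-1}(\hchar)]$ is defined directly from any map to $\En[\chrHeight+2-t]\units$, and Proposition~\ref{prop:transgersion-is-charcter-of-character} is formulated for arbitrary pointed maps to $\cC\units$ rather than only for those factoring through $\B\In$. Decomposing $\L^{t-1}A$ into its connected components reduces the situation to pointed data, after which the argument proceeds unchanged. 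The compatibility of the successive orientations $\Sigma\In[\chrHeight-t]\to\En[\chrHeight+1-t]\units$ with the decategorification map is guaranteed by Lemma~\ref{lem:decategorification-of-orientation-induced-characters}, so no obstruction arises when passing between heights.
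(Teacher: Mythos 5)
Your overall strategy — induction on $t$, using the recursive definition $\widetilde{\chi}^t_{\hchar}=\widetilde{\chi}_{\tg^{t-1}(\hchar)}$ and then invoking Proposition~\ref{prop:transgersion-is-charcter-of-character} — is the right one, and it is what the paper implicitly does (the corollary is stated without a separate proof). However, there is a genuine gap in the inductive step, and your treatment of the ``formal tension'' does not actually close it.

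The claim that $\tg^{t-1}(\hchar)$ may be regarded as a twisted permutation function ``whose permutation factor is trivial ($f'=1$)'' is false in general. The value of $\tg^{t-1}(\hchar)$ on $\pi_0$ of a connected component of $\L^{t-1}A$ lies in the image of $\pi_0\In[\chrHeight+2-t]\to\pi_0\En[\chrHeight+2-t]\units$, and (as the paper observes in the lemma showing that $\widetilde{\chi}_f$ is again a twisted permutation function) at $p=2$ this image is $\{\pm 1\}$, not just $\{1\}$. So the decomposition into permutation and twist factors prescribed by the Construction can have $f'=-1$ on some components, forcing $V_{f'}=\Sigma\En[\chrHeight+1-t]$. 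Your alternative — defining the categorification directly as a single local system $\En[\chrHeight+1-t][\tg^{t-1}(\hchar)]$ via a map $\In[\chrHeight+2-t]\to(\ModEn[\chrHeight+1-t])\units$ — is a \emph{different} categorification than the one the paper's Construction specifies, and the definition of $\widetilde{\chi}_f$ depends a~priori on that choice (the paper even flags this parenthetically). To make your version rigorous you would need to check that the two categorifications coincide, which comes down to verifying that the categorified orientation $\In[\chrHeight+2-t]\to\pic(\ModEn[\chrHeight+1-t])$ sends the order-$2$ element of $\pi_0$ to the class of $\Sigma\En[\chrHeight+1-t]$, matching the Construction's assignment $V_{-1}=\Sigma\En[\chrHeight+1-t]$. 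Alternatively, one can keep the Construction's decomposition $\tg^{t-1}(\hchar)=f'\cdot\hchar'$ and track both factors: using Lemma~\ref{lem:tg-of-const} and the identity $\dim(\Sigma\En[\chrHeight+1-t])=\eta\cdot[\Sigma\En[\chrHeight+1-t]]$ (cf.\ Example~\ref{exm:dim^4-of-picard}), the constant character $\chi_{V_{f'}}=f'$ matches the transgression $\tg(f')=\eta f'$ under the orientation, while $\chi_{\En{}[\hchar']}=\tg(\hchar')$ by the proposition; additivity of $\tg$ then gives $\tg(f')+\tg(\hchar')=\tg(f'+\hchar')=\tg^t(\hchar)$. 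Either route requires the orientation compatibility on $\pi_0$, which your write-up passes over.
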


    \subsection{Transchromatic character theory for twisted permutation functions}
    \label{subsec:HKR-and-characters}

        The transchromatic character theory, developed by Hopkins--Kuhn--Ravenel, Stapleton and Lurie (\cite{Hopkins-Kuhn-Ravenel-2000-HKR,Stapleton-2013-HKR,Lurie-2019-Elliptic3}) provides for any $\pi$-finite space $A$ a character map 
        \begin{equation*}
            \chi^{t, \HKR}_{(-)} \colon \Enp(L)^A \to C^{\chrHeight+1}_{\chrHeight+1-t}(L)^{\Lp^{t} A},
        \end{equation*}
        where $\LKn[\chrHeight+1-t] \Enp(L) \to C^{\chrHeight+1}_{\chrHeight+1-t}(L)$ is the $\Kn[\chrHeight+1-t]$-local splitting algebra at height $\chrHeight+1-t$. By the chromatic Nullstellensatz, there exists a geometric point
        \begin{equation*}
            C^{\chrHeight+1}_{\chrHeight+1-t}(L) \to \En[\chrHeight+1-t](K_t).
        \end{equation*}
        We also denote the composition
        \begin{equation*}
            \Enp(L)^A \xrightarrow{\chi^{t, \HKR}_{(-)}} C^{\chrHeight+1}_{\chrHeight+1-t}(L)^{\Lp^{t} A} \to \En[\chrHeight+1-t](K_t)^{\Lp^{t} A}
        \end{equation*}
        again by $\chi^{t, \HKR}_{(-)}$.
        For $t = 1$ we write $\chi^{\HKR} = \chi^{1, \HKR}$.

        When $f$ admits a preimage $V_f$ under $\de$, we have defined
        \begin{equation*}
            \widetilde{\chi}_f \coloneqq \chi_{V_f} \in \En^{\L A}.
        \end{equation*}
        On the other hand, the transchromatic character provides a map
        \begin{equation*}
            \chi^{\HKR}_f \in \En(K_1)^{\Lp A}.
        \end{equation*}
        We expect that, after composing with a map $\En \to \En(K_1)$ and restricting along $\Lp A \to \L A$, the two notions agree. Although we do not prove this in full generality, we verify that it holds in the cases relevant to our applications.

        
        \begin{lemma}[{\cite[Lemma~5.3.21]{Keidar-Ragimov-2025-twisted-graded}}]\label{lem:HKR-vs-twice-HKR}
             Let $A$ be a $\pi$-finite space and $0 \le t \le s \le \chrHeight+1$. Then for any geometric point $C^{\chrHeight+1}_{\chrHeight+1-t} \to \Enp[\chrHeight+1-t](K_t)$ there exists a map of $\Kn[\chrHeight+1-s]$-local commutative algebras $C^{\chrHeight+1}_{\chrHeight+1-s} \to C^{\chrHeight+1-t}_{\chrHeight+1-s}(K_t)$ rendering the following diagram commutative
             \begin{equation*}
                \begin{tikzcd}
                    {\Enp^{A}} & {(C^{\chrHeight+1}_{\chrHeight+1-t})^{\Lp^{t}A}} & {\En[\chrHeight+1-t](K_t)^{\Lp^{t}A}} \\ \\
                    {(C^{\chrHeight+1}_{\chrHeight+1-s})^{\Lp^s A}} && {C^{\chrHeight+1-t}_{\chrHeight+1-s}(K_t)^{\Lp^{s}A}.}
                    \arrow["{\chi^{t,\HKR}}", from=1-1, to=1-2]
                    \arrow["{\chi^{s,\HKR}}", from=1-1, to=3-1]
                    \arrow[from=1-2, to=1-3]
                    \arrow["{\chi^{s-t,\HKR}}", from=1-3, to=3-3]
                    \arrow[from=3-1, to=3-3]
                \end{tikzcd}
            \end{equation*}
         \end{lemma}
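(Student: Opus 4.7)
The plan is to construct the algebra map from the compositional behavior of the Stapleton--Lurie splitting algebra construction, and then to deduce commutativity from the universal property underlying the definition of the transchromatic character.

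First I would recall that $C^{\chrHeight+1}_{\chrHeight+1-r}$ is, by construction, the $\Kn[\chrHeight+1-r]$-local $\LKn[\chrHeight+1-r]\En[\chrHeight+1]$-algebra that universally splits the $p$-divisible group $G_{\chrHeight+1}$ of $\Enp$ as $\mu_{p^{\infty}}^{r} \times \mathbb{G}$, where $\mathbb{G}$ is of height $\chrHeight+1-r$. The transchromatic character $\chi^{r,\HKR}$ is then obtained by evaluating $A$-indexed families of characters against this universal splitting, composed with the canonical \quotes{global sections} formality on $\Lp^{r} A$.

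Next I would observe a compositional universal property: a depth-$s$ splitting of $G_{\chrHeight+1}$ is equivalent to first choosing a depth-$t$ splitting (giving $\mu_{p^{\infty}}^{t} \times G_{\chrHeight+1-t}$) and then choosing a depth-$(s-t)$ splitting of the residual $G_{\chrHeight+1-t}$. Over the geometric point $C^{\chrHeight+1}_{\chrHeight+1-t} \to \En[\chrHeight+1-t](K_t)$, the residual $p$-divisible group is exactly the one giving rise to $C^{\chrHeight+1-t}_{\chrHeight+1-s}(K_t)$ after $\Kn[\chrHeight+1-s]$-localization. The universal property of $C^{\chrHeight+1}_{\chrHeight+1-s}$ therefore produces a canonical map
\begin{equation*}
    C^{\chrHeight+1}_{\chrHeight+1-s} \to C^{\chrHeight+1-t}_{\chrHeight+1-s}(K_t)
\end{equation*}
of $\Kn[\chrHeight+1-s]$-local commutative algebras.

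For the commutativity of the square, I would argue that both composites record the same operation: evaluating an $A$-indexed family of line bundles (or, more precisely, a character of a $\pi$-finite group) against a depth-$s$ splitting of $G_{\chrHeight+1}$ base changed to $\En[\chrHeight+1-t](K_t)$. The upper-right path performs this in two stages via the decomposition $\mu_{p^{\infty}}^{s} \cong \mu_{p^{\infty}}^{t} \times \mu_{p^{\infty}}^{s-t}$ and the corresponding free loops space identification $\Lp^{s} A \simeq \Lp^{s-t}\Lp^{t} A$, while the lower-left path performs it in one stroke. The universal property of the splitting algebras, together with the naturality of Stapleton's construction under base change, forces the two routes to agree.

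The main obstacle, which I would expect to absorb most of the technical work, is verifying that the $\Kn[\chrHeight+1-s]$-localization commutes (up to the canonical map) with base change along the chosen geometric point \(C^{\chrHeight+1}_{\chrHeight+1-t} \to \En[\chrHeight+1-t](K_t)\), so that the algebra denoted $C^{\chrHeight+1-t}_{\chrHeight+1-s}(K_t)$ really receives the claimed map from $C^{\chrHeight+1}_{\chrHeight+1-s}$ and agrees with the relevant base change of the intermediate splitting algebra. This is precisely the bookkeeping handled in \cite[Lemma~5.3.20]{Keidar-Ragimov-2025-twisted-graded}, which relies on the compositional structure of $K$-local completions in the Stapleton--Lurie construction.
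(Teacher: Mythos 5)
Your construction of the algebra map $C^{\chrHeight+1}_{\chrHeight+1-s} \to C^{\chrHeight+1-t}_{\chrHeight+1-s}(K_t)$ from the compositional universal property of splitting algebras is the right idea and matches what the cited lemma from \cite{Keidar-Ragimov-2025-twisted-graded} does. Where you diverge from the paper is in how you establish commutativity for general $\pi$-finite $A$: you propose to argue it directly, matching the two composites stage by stage via the identification $\Lp^{s} A \simeq \Lp^{s-t}\Lp^t A$ and appealing to ``naturality of Stapleton's construction under base change.'' The paper instead makes a sharper structural move: it records that the constructed map is a map of \emph{tempered} $\EE_\infty$-rings in the sense of \cite[Notation~4.0.1]{Lurie-2019-Elliptic3}, checks commutativity only in the degenerate case $A = \pt$ (where it holds essentially by construction of the algebra map), and then invokes \cite[Theorem~4.3.2]{Lurie-2019-Elliptic3} to propagate commutativity from the point to all $\pi$-finite spaces. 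This reduction-to-a-point is precisely what absorbs the ``bookkeeping'' about $K$-local completion and base change that you flag as the main obstacle at the end of your sketch; by routing through the tempered-ring formalism, the paper never has to check anything about free-loop-space decompositions or localizations for general $A$. Your route is not wrong, but as written it is substantially more work to make rigorous, and the naturality statement you lean on is itself essentially the content of Lurie's Theorem~4.3.2, so you would end up reinventing it. If you want to keep your approach, the thing to add is the explicit observation that the map of splitting algebras is a map of tempered $\EE_\infty$-rings, and then cite the tempered-ambidexterity naturality theorem rather than asserting the naturality informally.
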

         \begin{proof}
            By the proof of \cite[Lemma~5.3.21]{Keidar-Ragimov-2025-twisted-graded}, there exists a map $C^{\chrHeight+1}_{\chrHeight+1-s} \to C^{\chrHeight+1-t}_{\chrHeight+1-s}(K_t)$ of \emph{tempered function spectra} as in \cite[Notation~4.0.1]{Lurie-2019-Elliptic3}, giving a commutative diagram of tempered function spectra
            \begin{equation*}
                \begin{tikzcd}
        	        {\En(K)_{\mbf{G}^{\mcal{Q}}}} & {C_t^{\chrHeight}(K)_{\mbf{G}^{\mcal{Q}}\oplus\QQ_p/\ZZ_p^{\chrHeight-t}}} & {\En[t](K_t)_{\mbf{G}^{\mcal{Q}}\oplus\QQ_p/\ZZ_p^{\chrHeight-t}}} \\
        	        {C_s^n(K)_{\mbf{G}^{\mcal{Q}}\oplus\QQ_p/\ZZ_p^{\chrHeight-s}}} && {C_s^t(K_t)_{\mbf{G}^{\mcal{Q}}\oplus\QQ_p/\ZZ_p^{\chrHeight-s}}.}
        	        \arrow[from=1-1, to=1-2]
                    \arrow[from=1-1, to=2-1]
                    \arrow[from=1-2, to=1-3]
                    \arrow[from=1-3, to=2-3]
                    \arrow[from=2-1, to=2-3]
                \end{tikzcd}
            \end{equation*}
            See also the proof of \cite[Lemma~5.3.21]{Keidar-Ragimov-2025-twisted-graded}.
            Applying \cite[Theorem 4.2.5, Theorem 4.3.2]{Lurie-2019-Elliptic3} gives the desired diagram.
         \end{proof}

        \begin{proposition}\label{lem:chi-tilde-1-is-HKR}
            Let $f \colon A \to \Enp\units$ a twisted permutation function. Then, after composing with the map $\En \to \En(K_{\chrHeight})$,
            \begin{equation*}
                \res{\widetilde{\chi}_{f}}{\Lp A} \simeq \chi^{\HKR}_f \qin \En(K_{\chrHeight})^{\Lp A}.
            \end{equation*}
        \end{proposition}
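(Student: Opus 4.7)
The strategy is to exploit the multiplicative structure of both $\widetilde{\chi}$ and $\chi^{\HKR}$ in order to decompose the problem into two elementary building blocks, each of which has essentially been dealt with in the literature: a permutation piece and an orientation-induced piece.

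First I would reduce to a connected component, so $A \simeq \B(G \wr \Sm)$; this is legitimate because both $\widetilde{\chi}$ and $\chi^{\HKR}$ are compatible with disjoint unions. Writing $f = f' \cdot \hchar$ with $f' = \pm d^{\degree}$ a permutation function and $\hchar \colon A \to \Sigma \In$ a twisting map of spaces, the construction gives $V_f \simeq V_{f'} \otimes \En{}[\hchar]$, so multiplicativity of the trace under tensor products yields
\begin{equation*}
    \widetilde{\chi}_f \simeq \chi_{V_{f'}} \cdot \chi_{\En{}[\hchar]}.
\end{equation*}
On the other side, $\chi^{\HKR}_{(-)}$ is a map of $\EE_\infty$-rings and hence also multiplicative: $\chi^{\HKR}_f \simeq \chi^{\HKR}_{f'} \cdot \chi^{\HKR}_{\hchar}$. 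It therefore suffices to prove the desired identification separately for the permutation factor $f'$ and for the orientation factor $\hchar$, after which multiplying the two identifications completes the proof.

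For the permutation factor, both $f'$ and $V_{f'}$ are pulled back along the natural projection $\pi \colon \B(G \wr \Sm) \to \B\Sm$ from $d^{\degree} \in \Enp^{\B\Sm}$ and $\Tm \En^{\oplus |d|}$ (possibly suspended) respectively. By naturality of $\chi^{\HKR}$ and of $\chi_{(-)}$ along $\Lp \pi$ and $\L \pi$, this case reduces immediately to the statement for $\B\Sm$, which is exactly \cite[Corollary~5.3.23, Lemma~5.3.24]{Keidar-Ragimov-2025-twisted-graded}. For the orientation factor, Proposition \ref{prop:transgersion-is-charcter-of-character} identifies $\chi_{\En{}[\hchar]}$ with the transgression $\tg(\hchar)$, so the remaining task is to show that, after landing in $\En(K_{\chrHeight})$,
\begin{equation*}
    \res{\chi^{\HKR}_{\hchar}}{\Lp A} \simeq \res{\tg(\hchar)}{\Lp A}.
\end{equation*}
This is the assertion that the transchromatic character intertwines the height-$(\chrHeight{+}1)$ orientation of $\Enp(L)$ with the height-$\chrHeight$ orientation of $\En(K_{\chrHeight})$ via transgression. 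I expect it to follow from shifted semiadditivity of $\chi^{\HKR}$ (\labelcref{eq:shifted-semiadditivity}) combined with the compatibility statement \cite[Lemma~5.3.22]{Keidar-Ragimov-2025-twisted-graded}: the latter says that the composition of the orientation with decategorification is again an orientation, which forces the resulting transchromatic image to agree with the $\eta + \id$ transgression construction from \cref{def:transgression}.

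The main obstacle is the orientation factor. The delicate point is verifying that $\chi^{\HKR}$ does send the orientation class $\hchar$ to its transgression $\tg(\hchar)$, and not merely some other orientation-induced class. The cleanest approach I foresee is to reduce to the universal case, where $\hchar$ is the tautological class on an Eilenberg--MacLane target $\B^2 \ZZ/p^r$, and to perform the computation there using Stapleton's explicit formula for $\chi^{\HKR}$ on $p$-group characters, together with \cref{cor:integral-of-LBG-is-linear-combination-along-p-subgroups} to reduce general $G$-classes to $p$-subgroups where the formula is available. Naturality in $A$ and $r$ then propagates the identification, and combining with Steps 1--3 yields the proposition.
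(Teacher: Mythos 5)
Your decomposition $f = f'\cdot\hchar$, the multiplicativity of both $\widetilde{\chi}$ and $\chi^{\HKR}$, and the reduction of the permutation factor to $\B\Sm$ via the projection $p\colon\B(G\wr\Sm)\to\B\Sm$ are all exactly what the paper does. (Minor citation shuffle: in \cite{Keidar-Ragimov-2025-twisted-graded}, Lemma~5.3.24 is the permutation piece and Corollary~5.3.23 is the orientation piece, not both for the permutation case.) The paper also adds one extra reduction you skipped: the sign factor $\pm 1$ is handled separately by noting $-1$ categorifies to $\Sigma\En$ and both arrows in the square send $\Sigma\En$ to the constant $-1$, so one may assume $f' = d^{\degree}$ with $d\ge 0$.

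Where you deviate is on the orientation factor, and this is where there is a real gap. You identify it as the main obstacle and propose to re-derive $\chi^{\HKR}_{\hchar} \simeq \tg(\hchar)$ either via shifted semiadditivity plus \cite[Lemma~5.3.22]{Keidar-Ragimov-2025-twisted-graded}, or via reduction to Eilenberg--MacLane targets and Stapleton's formula. The first of these cannot work as stated: shifted semiadditivity (\labelcref{eq:shifted-semiadditivity}) relates \emph{integrals} of $f$ and $\chi^{\HKR}_f$, not their \emph{pointwise} values, and Lemma~5.3.22 only records that the decategorified orientation is again an orientation — neither pins down $\chi^{\HKR}_{\hchar}$ as the specific transgression $\tg(\hchar)$ rather than some other orientation-induced class. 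The universal EM-space computation might be made to work, but it is unnecessary: the pointwise identification $\res{\chi^{\HKR}_{\En[\hchar]}}{\Lp A}\simeq\res{\tg(\hchar)}{\Lp A}$ is precisely the content of \cite[Corollary~5.3.23]{Keidar-Ragimov-2025-twisted-graded}, which the paper simply cites. So your plan either has a hole (the semiadditivity route) or recreates existing machinery (the EM route) for a step that is already available off the shelf.
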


        \begin{proof}
            The statement is exactly that the image of $V_f$ under both compositions in the diagram below agree
            \begin{equation}\label{eq:monoidal-vs-HKR}
                \begin{tikzcd}
                    {(\ModEn)^{A}} && {\Enp(L)^{A}} \\
                    {\En^{\L A}}&& {C^{\chrHeight+1}_{\chrHeight}(L)^{\Lp A}} \\
                    {\En^{\Lp A}} && {\En(K_{\chrHeight})^{\Lp A}.}
                    \arrow["\de", from=1-1, to=1-3]
                    \arrow["\chi", from=1-1, to=2-1]
                    \arrow[from=2-1, to=3-1]
                    \arrow["{\chi^{\HKR}}", from=1-3, to=2-3]
                    \arrow[from=2-3, to=3-3]
                    \arrow[from=3-1, to=3-3]
                \end{tikzcd}\tag{$\diamondsuit$}
            \end{equation}
            Write $f$ as $f = f' \cdot \hchar$ where $f'$ is a permutation function and $\hchar \colon A \to \B\In$. Then $V_f = V_{f'} \otimes \En{}[\hchar]$. It is enough to verify it for $V_{f'}$ and for $\En{}[\hchar]$ separately. The latter is \cite[Corollary~5.3.24]{Keidar-Ragimov-2025-twisted-graded}. So it is enough to assume $f$ is a permutation function. We may therefore reduce to the case $A = \B(G\wr \Sm)$ and
            \begin{equation*}
                f = \pm d^{\degree} \colon \B(G \wr \Sm) \to \Enp\units.
            \end{equation*}
            As $(-1)$ categorifies to the constant local system $\Sigma \En$, and both compositions in \labelcref{eq:monoidal-vs-HKR} send $\Sigma \En$ to the constant function $(-1)$, we may further reduce to the case $f = d^{\degree}$.
            Thus, $V_f = \Tm (\BG)^* V$ for some $V \in (\ModEn)\dbl$.
            Consider the projection
            \begin{equation*}
                p \colon \B(G \wr \Sm) \to \B\Sm.
            \end{equation*}
            Then $\Tm(\BG)^* V \simeq p^* \Tm V$. As both $\chi$ and $\chi^{\HKR}$ are functorial with respect to restriction, \labelcref{eq:monoidal-vs-HKR} is given by $p^*$ applied to the diagram
            \begin{equation}\label{eq:monoidal-vs-HKR2}
                \begin{tikzcd}
                    {(\ModEn)^{\B\Sm}} && {\Enp(L)^{\B\Sm}} \\
                    {\En^{\L \B\Sm}}&& {C^{\chrHeight+1}_{\chrHeight}(L)^{\Lp\B\Sm}} \\
                    {\En^{\Lp \B\Sm}} && {\En(K_{\chrHeight})^{\Lp \B\Sm}.}
                    \arrow["\de", from=1-1, to=1-3]
                    \arrow["\chi", from=1-1, to=2-1]
                    \arrow[from=2-1, to=3-1]
                    \arrow["{\chi^{\HKR}}", from=1-3, to=2-3]
                    \arrow[from=2-3, to=3-3]
                    \arrow[from=3-1, to=3-3]
                \end{tikzcd}\tag{$\diamondsuit'$}
            \end{equation}
            It is therefore enough to verify \labelcref{eq:monoidal-vs-HKR2} commutes. 

            The proof is now the same as in \cite[Lemma~5.3.25]{Keidar-Ragimov-2025-twisted-graded}, using that $\Sm$ is a good group (e.g.\ by \cite[Theorem~E]{Hopkins-Kuhn-Ravenel-2000-HKR}).
        \end{proof}

        \begin{theorem}\label{thm:chi-tilde-t-is-HKR}
            Let $f \colon A \to \Enp$ be a twisted permutation function. Then for $0 \le t \le \chrHeight+1$, after composition with a geometric point 
            \begin{equation*}
                \En[\chrHeight+1-t] \to \En[\chrHeight+1-t](K_t) \to \En[\chrHeight+1-t](K'_t)
            \end{equation*}
            for some algebraically-closed field $K'_t$,
            \begin{equation*}
                \res{\widetilde{\chi}^{t}_f}{\Lp^{t} A} \simeq \chi^{t,\HKR}_f \qin \En[\chrHeight+1-t](K'_t)^{\Lp^{t} A}.
            \end{equation*}
        \end{theorem}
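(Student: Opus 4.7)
The plan is to proceed by induction on $t$. The base case $t=0$ is tautological since both sides reduce to $f$, and the case $t=1$ is precisely \cref{lem:chi-tilde-1-is-HKR}. Thus the substance of the argument is the inductive step, which assembles the $t=1$ result and the compatibility between successive transchromatic characters provided by \cref{lem:HKR-vs-twice-HKR}.

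For the inductive step, assume the statement for $t-1$. The class of twisted permutation functions is closed under the operation $h \mapsto \widetilde{\chi}_h$ (as shown in the discussion preceding \cref{cor:iterated-characters-integrals}), so by iteration $\widetilde{\chi}^{t-1}_f \colon \L^{t-1} A \to \En[\chrHeight+2-t]$ is itself a twisted permutation function of height $\chrHeight+2-t$ on the $\pi$-finite space $\L^{t-1}A$. By the inductive definition $\widetilde{\chi}^{t}_f = \widetilde{\chi}_{\widetilde{\chi}^{t-1}_f}$, applying \cref{lem:chi-tilde-1-is-HKR} to the twisted permutation function $\widetilde{\chi}^{t-1}_f$ yields, after composing with a suitable geometric point $\En[\chrHeight+1-t] \to \En[\chrHeight+1-t](K)$,
\[
\res{\widetilde{\chi}^{t}_f}{\Lp \L^{t-1}A} \simeq \chi^{\HKR}_{\widetilde{\chi}^{t-1}_f}.
\]
Restricting further along the natural map $\Lp^{t} A \to \Lp \L^{t-1} A$ induced by $\Lp^{t-1}A \to \L^{t-1}A$ produces the identification on $\Lp^{t}A$.

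Now apply the inductive hypothesis: after composition with a geometric point $\En[\chrHeight+2-t] \to \En[\chrHeight+2-t](K'_{t-1})$, we have $\res{\widetilde{\chi}^{t-1}_f}{\Lp^{t-1}A} \simeq \chi^{t-1,\HKR}_f$. Functoriality of the transchromatic character in its argument then gives
\[
\res{\chi^{\HKR}_{\widetilde{\chi}^{t-1}_f}}{\Lp^{t}A} \simeq \chi^{\HKR}_{\chi^{t-1,\HKR}_f}.
\]
To identify the right hand side with $\chi^{t,\HKR}_f$, invoke \cref{lem:HKR-vs-twice-HKR} with the target height parameter $s$ set to $t$ and the intermediate one to $t-1$: it furnishes compatible geometric points making the two-step transchromatic character equal to the single-step $\chi^{t,\HKR}_f$. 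Chaining the three identifications above and enlarging $K'_t$ to accommodate all intermediate algebraically closed fields completes the induction.

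The main obstacle is purely the bookkeeping of geometric points: each application of \cref{lem:chi-tilde-1-is-HKR} in the inductive step introduces a new Nullstellensatzian target, and the comparison with $\chi^{t,\HKR}_f$ requires assembling these with the geometric points implicit in \cref{lem:HKR-vs-twice-HKR} into a single composite $\En[\chrHeight+1-t] \to \En[\chrHeight+1-t](K'_t)$. This is formal, since one may always enlarge algebraically closed fields along the universal property of Nullstellensatzian objects; the genuine mathematical content is already contained in the height-one base case \cref{lem:chi-tilde-1-is-HKR} and the compatibility lemma \cref{lem:HKR-vs-twice-HKR}.
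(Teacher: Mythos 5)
Your proof is correct and follows essentially the same route as the paper: induction on $t$, with the inductive step assembling \cref{lem:chi-tilde-1-is-HKR} (applied to the twisted permutation function $\widetilde{\chi}^{t-1}_f$), the compatibility \cref{lem:HKR-vs-twice-HKR}, and a Nullstellensatz argument to merge geometric points. The only cosmetic difference is the indexing of the induction and that you make explicit the stability of twisted permutation functions under $\widetilde{\chi}$, which the paper invokes implicitly via \cref{lem:decomposition-of-wreath}.
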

        \begin{proof}
            We prove this by induction on $t$. For $t = 0$ it is trivial. Assume 
            \begin{equation*}
                \res{\widetilde{\chi}^{t}_f}{\Lp^{t-1} \B A} \simeq \chi^{t,\HKR}_f \qin \En[\chrHeight+1-t](K'_{t})^{\Lp^{t} \B A}.
            \end{equation*}
            By \cref{lem:chi-tilde-1-is-HKR}, \cref{lem:decomposition-of-wreath}, there exists a geometric point $C^{\chrHeight+1-t}_{\chrHeight-t}(K'_{t}) \to \En[\chrHeight-t](K''_{t+1})$ such that
            \begin{equation*}
                \res{\widetilde{\chi}^{t+1}_f}{\Lp^{t} \B A} \simeq \chi^{\HKR}_{\chi^{t,\HKR}_f} \qin \En[\chrHeight-t](K''_{t+1})^{\Lp^{t+1} \B A}.
            \end{equation*}
            By \cref{lem:HKR-vs-twice-HKR}
            \begin{equation*}
                \chi^{\HKR}_{\chi^{t,\HKR}_f} \simeq \chi^{t+1, \HKR}_f.
            \end{equation*}
            By \cite[Lemma 7.14, Theorem 7.2(2)]{Burklund-Schlank-Yuan-2022-Nullstellensatz} there exists an algebraically-closed field $K'_{t+1}$ and geometric points $\En[\chrHeight-t](K''_{t+1}) \to \En[\chrHeight-t](K'_{t+1})$ and $\En[\chrHeight-t](K_{t+1}) \to \En[\chrHeight-t](K'_{t+1})$, proving the claim.
        \end{proof}

        \begin{remark}\label{rmrk:dim-alt-iterative}
            \cref{thm:chi-tilde-t-is-HKR} gives an alternative proof of the shifted semiadditivity of the transchromatic character (\cite[Remark 7.4.8, Remark~7.4.9]{Lurie-2019-Elliptic3}, \cite{Ben-Moshe-2024-shifted-semiadditive}) for twisted permutation functions, using the usual shifted semiadditivity of the monoidal dimension (\cref{cor:iterated-characters-integrals}).

            In particular, for $V\in (\ModEn)\dbl$, a character $\hchar \colon \B\Sm \to \B\En\units$ factoring through the spherical orientation and $0 \le t \le \chrHeight$
            \begin{equation*}
                \dim(\alt_{\hchar} V) =  \int^{C^{\chrHeight}_{\chrHeight+1-t}}_{\Lp^{t-1} \L\B\Sm}\chi^{t,\HKR}_{\chi_{\Tm V}} \cdot \tg^{t+1}(\overline{\hchar}) \qin \ZZ.
            \end{equation*}
        \end{remark}

        \begin{remark}
            \cref{thm:chi-tilde-t-is-HKR} along with the proof of \cref{lem:chi-tilde-1-is-HKR}, provide an algorithm for computing the dimension of $\alt_{\hchar} V$ for $V \in (\ModEn)\dbl$. First,
            \begin{equation*}
                \dim(\alt_{\hchar} V) = \int^{\En[0]}_{\Lp^{\chrHeight} \L \B\Sm} \widetilde{\chi}^{\chrHeight+1}_{d^{\degree}} \cdot \tg^{\chrHeight+1}(\overline{\hchar}).
            \end{equation*}
            This semiadditive integral is taken in a rational ring, therefore it is computed as
            \begin{equation*}
                \dim(\alt_{\hchar} V) = \sum_{x \in \pi_0 \Lp^{\chrHeight} \L \B\Sm} \frac{1}{|\Omega_x \Lp^{\chrHeight} \L \B\Sm|} \widetilde{\chi}^{\chrHeight+1}_{d^{\degree}} (x) \cdot \tg^{\chrHeight+1}(\overline{\hchar})(x).
            \end{equation*}
            The character $\widetilde{\chi}^{\chrHeight+1}_{d^{\degree}} $ can be computed using the character of the total power operation \cite{Barthel-Stapleton-2017-power-operations} which has a closed formula. More concretely, if we consider an element $x \in \pi_0(\Lp^{\chrHeight} \L\B\Sm)$ as a $\Zp^\chrHeight \times \ZZ$-set of size $\degree$. Then, $\chi^{\chrHeight+1}_{d^{\degree}}(x)$ is equal to $d$ raised to the number of orbits of this $\Zp^{\chrHeight} \times \ZZ$-set.

            We do not know of a general closed formula for computing the transgression. However, such a formula exists when $\hchar$ factors through  $(\ZZ,\chrHeight)$-orientation, that is,
            \begin{equation*}
                \hchar \colon \B\Sm \to \B^{\chrHeight+1}\QQ/\ZZ
            \end{equation*}
            and therefore corresponds to a cohomology class $\hchar \in \H^{\chrHeight+1}(\Sm; \QQ/\ZZ)$.
            If $\hchar$ is represented by a cocycle
            \begin{equation*}
                \hchar:\Sm^{\chrHeight+1}\to \QQ/\ZZ
            \end{equation*}
            then, by \cite[Theorem~3]{Willerton-2008-transgression} its transgression on $(g_1,\cdots,g_{\chrHeight}) \in \Csigma^\chrHeight$ is given by the alternating sum:
            \begin{equation*}
                \tg(\hchar)(g_1,\dots,g_{\chrHeight})= \hchar(\sigma,g_1,g_2,\dots,g_{\chrHeight})-\hchar(g_1,\sigma,g_2,\dots,g_\chrHeight)+\dots+(-1)^{\chrHeight+1} \hchar(g_1,\dots,g_{\chrHeight} ,\sigma)
            \end{equation*}
            This formula can be iterated in order to compute $\tg^{\chrHeight+1}(\hchar)(x)$.
        \end{remark}

        \begin{corollary}\label{cor:power-op-iterative}
            Let $\hchar \colon \B\Sm \to \Sigma \In$ be a map of pointed spaces. Let $H \to \Sm$ be a homomorphism of finite groups and $d \in \ZZ \subseteq \pi_0 \En$. Then for $1\le t \le \chrHeight+1$:
            \begin{equation*}
                \beta^{\degree}_{H, \hchar}(d) = \begin{cases}
                    \int\limits^{C^{\chrHeight}_{\chrHeight+1-t}}_{\Lp^t \BH} \chi^{t-1,\HKR}_{\chi_{\Tm \En^{\oplus d}}} \cdot \tg^t(\overline{\hchar}), & d \ge 0 \\
                     & \\
                    \int\limits^{C^{\chrHeight}_{\chrHeight+1-t}}_{\Lp^t \BH} \chi^{t-1,\HKR}_{\chi_{\Tm \Sigma \En^{\oplus (-d)}}} \cdot \tg^t(\overline{\hchar}), & d < 0
                \end{cases}
            \end{equation*}
        \end{corollary}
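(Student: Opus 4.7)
The strategy is to realize $\beta^{\degree}_{H,\hchar}(d)$ as an integral over $\BH$ of a twisted permutation function, and then apply the iterated character machinery developed earlier in this section.

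First, unwind the definitions. Set $W \coloneqq \En^{\oplus d}$ if $d \ge 0$ and $W \coloneqq \Sigma\En^{\oplus(-d)}$ if $d < 0$, so that $\dim(W) = d$ in both cases. The total power operation $(-)^{\degree} \colon \En \to \En^{h\Sm}$ applied to $d \in \pi_0 \En$ and restricted along $\BH \to \B\Sm$ produces the element $d^{\degree} \in \pi_0(\En^{\BH})$, which is the decategorification of the local system $\Tm W \in (\ModEn)^{\BH}$. Tensoring with $\overline{\hchar}$ yields a twisted permutation function of height $\chrHeight+1$
\[
F \coloneqq d^{\degree} \cdot \overline{\hchar} \colon \BH \to \Enp,
\]
whose chosen categorification in the sense of \cref{subsec:categorification-of-functions} is $V_F \simeq \Tm W \otimes \En[\overline{\hchar}]$. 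Since $F$ factors through $\En \hookrightarrow \Enp$, one has
\[
\beta^{\degree}_{H,\hchar}(d) = \int^{\Enp}_{\BH} F.
\]

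Applying \cref{cor:iterated-characters-integrals} to the map $\BH \to \B\Sm$ and the function $F$ gives, for each $1 \le t \le \chrHeight+1$,
\[
\int^{\Enp}_{\BH} F = \int^{\En[\chrHeight+1-t]}_{\Lp^t \BH} \widetilde{\chi}^{\,t}_F.
\]
The categorification $V_{(-)}$ is compatible with pointwise multiplication, so $\widetilde{\chi}_F = \chi_{V_F} \simeq \chi_{\Tm W} \cdot \tg(\overline{\hchar})$ by \cref{prop:transgersion-is-charcter-of-character}; iterating this multiplicativity yields
\[
\widetilde{\chi}^{\,t}_F \simeq \widetilde{\chi}^{\,t-1}_{\chi_{\Tm W}} \cdot \widetilde{\chi}^{\,t-1}_{\tg(\overline{\hchar})}.
\]
By \cref{thm:chi-tilde-t-is-HKR}, $\widetilde{\chi}^{\,t-1}_{\chi_{\Tm W}}$ agrees with $\chi^{t-1,\HKR}_{\chi_{\Tm W}}$ after composition with a suitable geometric point and in particular lifts through $C^{\chrHeight}_{\chrHeight+1-t}$; while \cref{cor:iterated-characters-transgression} applied to $\overline{\hchar}$, combined with $\widetilde{\chi}_{\overline{\hchar}} = \tg(\overline{\hchar})$, gives $\widetilde{\chi}^{\,t-1}_{\tg(\overline{\hchar})} = \tg^{t}(\overline{\hchar})$. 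Assembling these identifications, and using \cref{lem:HKR-vs-twice-HKR} to present the integral with coefficients in $C^{\chrHeight}_{\chrHeight+1-t}$ rather than in $\En[\chrHeight+1-t]$, yields the claimed formula in both sign cases.

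The main subtlety is the identification $\beta^{\degree}_{H,\hchar}(d) = \int^{\Enp}_{\BH} F$ of the first step: one must verify that the total power operation evaluated at the integer $d$ indeed decategorifies the permutation local system $\Tm W$ (so that $F$ is genuinely a twisted permutation function in the sense of \cref{subsec:categorification-of-functions} rather than merely a constant), and that the categorification functor $V_{(-)}$ carries pointwise products of twisted permutation functions to tensor products of their categorifications. Once these coherences are established, the remainder of the argument is a formal assembly of \cref{cor:iterated-characters-integrals}, \cref{thm:chi-tilde-t-is-HKR}, and \cref{cor:iterated-characters-transgression}.
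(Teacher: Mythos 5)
Your proof is correct and takes essentially the same route as the paper: rewrite $\beta^{\degree}_{H,\hchar}(d) = \int^{\Enp}_{\BH} f_d$ with $f_d = d^{\degree}\cdot\overline{\hchar}$ a twisted permutation function with categorification $V_{f_d} = \Tm W \otimes \En[\overline{\hchar}]$, then apply \cref{cor:iterated-characters-integrals}, \cref{thm:chi-tilde-t-is-HKR}, and \cref{cor:iterated-characters-transgression} to the iterated $\widetilde{\chi}^t$. The paper's proof states this in two lines ("the result follows by unraveling the categorification"); your write-up simply makes the unraveling explicit, and the "subtlety" you flag — that $d^{\degree}$ produced by the power operation carries the permutation $\Sm$-action rather than being a bare constant, and that categorification turns pointwise products into tensor products — is indeed exactly the definitional coherence the paper relies on, so it is correct to note it even though it is built into the definitions in \cref{subsec:categorification-of-functions}.
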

        
        \begin{proof}
            Define the twisted permutation function
            \begin{equation*}
                f_d \coloneqq d^{\degree} \cdot \overline{\hchar} \colon \B\Sm \to \Enp.
            \end{equation*}
            Recall that
            \begin{equation*}
                \beta^{\degree}_{H,\hchar}(d) = \int^{\Enp}_{\BH} f_d.
            \end{equation*}
            The result now follows by unraveling the categorification 
            \begin{equation*}
                V_{f_d} = \begin{cases}
                    \En^{\oplus d} \otimes \En{}[\hchar], & d \ge 0 \\
                    \Sigma \En^{\oplus (-d)} \otimes \En{}[\hchar], & d < 0.
                \end{cases}
            \end{equation*}
        \end{proof}

        \begin{remark}
            In particular, if $P \subseteq \Sm$ is a $p$-subgroup and $\hchar \colon \B\Sm \to \Sigma \In$ is a pointed map of spaces, then for $d \in \ZZ$
            \begin{equation*}
                \beta^{\degree}_{P, \hchar}(d) = \begin{cases}
                    \dim(\alt_{P,\hchar} \En^{\oplus d}), & d \ge 0, \\
                    \dim(\alt_{P,\hchar} \Sigma \En^{\oplus (-d)}), & d < 0,
                \end{cases}
            \end{equation*}
            and the iterative formulas of \cref{cor:power-op-iterative} and \cref{rmrk:dim-alt-iterative} agree.
        \end{remark}

\section{Computations in height 1}
\label{sec:computations}

    In this section, we give a concrete computation in height~1, computing the dimensions of alternating powers in $\ModEn[1]$ and $\Mod_{\sVect}$ corresponding to $(\Fq{2},1)$-orientation characters.
    
    Recall that a primitive height 1 minus one, or equivalently an $(\Fq{2},1)$-orientation, is a non-degenerate map of connective spectra
    \begin{equation*}
        (-1)^{(1)} \colon \Sigma \ZZ/2 \simeq \ndual[1]{\Fq{2}} \to \ounit_{\cC}\units.
    \end{equation*}
    An $(\Fq{2},1)$-orientation character of $\Sm$ is a map
    \begin{equation*}
        \hchar \colon \B\Sm \to \Sigma^2 \ZZ/2,
    \end{equation*}
    or equivalently a cohomology class $\hchar \in \H^2(\Sm; \ZZ/2)$.
    
    Given a height 1 minus one and a cohomology class $\hchar \in \H^2(\Sm; \ZZ/2)$ we have constructed the corresponding alternating power functor $\alt_{\hchar}$.
    Recall the following result of Schur:
    \begin{theorem}[\cite{Schur-1911-alternating-groups}]
        \begin{equation*}
            \H^2(\Sm; \QQ/\ZZ) = \begin{cases}
                0, & m \le 3 \\
                \ZZ/2, & m \ge 4
            \end{cases}, \qquad
            \H^2(\Sm; \ZZ/2) = \begin{cases}
                \ZZ/2, & m \le 3 \\
                (\ZZ/2)^2, & m \ge 4
            \end{cases},
        \end{equation*}
        and the map $\H^2(\Sm; \ZZ/2)\to \H^2(\Sm; \QQ/\ZZ)$ is surjective.
    \end{theorem}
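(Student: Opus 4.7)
The plan is to reduce both computations to Schur's classical theorem on the second integral homology $H_2(\Sm; \ZZ)$ (the Schur multiplier), and then read everything off from the universal coefficient theorem together with its naturality in the coefficient module.

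First I would record the two low-dimensional integral homology inputs. The first, that $H_1(\Sm; \ZZ) \simeq \Sm^{\mrm{ab}} \simeq \ZZ/2$ for every $m \ge 2$ (generated by the sign), is elementary. The second is Schur's theorem: $H_2(\Sm; \ZZ) = 0$ for $m \le 3$ and $H_2(\Sm; \ZZ) \simeq \ZZ/2$ for $m \ge 4$. For the purpose of this paper one may simply cite Schur \cite{Schur-1911-alternating-groups}; a modern derivation can be organized via the Lyndon--Hochschild--Serre spectral sequence for $A_m \triangleleft \Sm$, combined with the known Schur multipliers of the alternating groups and a careful analysis of the image of $H_2(A_m)$ in $H_2(\Sm)$. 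This is the only non-formal input; the remaining steps are mechanical.

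Next I would apply the universal coefficient theorem in the form
\begin{equation*}
    \H^2(\Sm; A) \simeq \mrm{Ext}^1(H_1(\Sm; \ZZ), A) \oplus \Hom_{\Ab}(H_2(\Sm; \ZZ), A)
\end{equation*}
for $A = \QQ/\ZZ$ and $A = \ZZ/2$ in turn. For $A = \QQ/\ZZ$, divisibility (injectivity) of $\QQ/\ZZ$ kills the Ext summand, leaving only $\Hom_{\Ab}(H_2(\Sm;\ZZ), \QQ/\ZZ)$, which gives $0$ for $m \le 3$ and $\ZZ/2$ for $m \ge 4$. For $A = \ZZ/2$ the Ext summand always contributes $\mrm{Ext}^1(\ZZ/2, \ZZ/2) \simeq \ZZ/2$, while the Hom summand contributes $0$ or $\ZZ/2$ as above, yielding $\ZZ/2$ for $m \le 3$ and $(\ZZ/2)^2$ for $m \ge 4$.

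For the surjectivity claim I would invoke naturality of the UCT splitting in the coefficient group, applied to the inclusion $\iota \colon \ZZ/2 \hookrightarrow \QQ/\ZZ$ onto the unique order-two subgroup. On the Ext summand the induced map is zero because the target $\mrm{Ext}^1(\ZZ/2, \QQ/\ZZ)$ vanishes, while on the Hom summand it is post-composition with $\iota$, and this is an isomorphism $\Hom_{\Ab}(\ZZ/2, \ZZ/2) \isoto \Hom_{\Ab}(\ZZ/2, \QQ/\ZZ)$ since every non-zero homomorphism from $\ZZ/2$ to $\QQ/\ZZ$ factors through $\iota$. Hence the map $\H^2(\Sm; \ZZ/2) \to \H^2(\Sm; \QQ/\ZZ)$ is the projection onto the Hom factor, which is surjective in both regimes. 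As noted, the one piece of genuine content is Schur's original calculation of $H_2(\Sm; \ZZ)$; the rest is just homological bookkeeping.
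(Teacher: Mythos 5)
Your derivation is correct, and it is the standard way to obtain this statement from Schur's original theorem: reduce via the universal coefficient theorem
\begin{equation*}
    0 \to \mathrm{Ext}^1_{\ZZ}\bigl(H_1(\Sm;\ZZ), A\bigr) \to \H^2(\Sm; A) \to \Hom_{\Ab}\bigl(H_2(\Sm;\ZZ), A\bigr) \to 0
\end{equation*}
to the computation of the Schur multiplier $H_2(\Sm;\ZZ)$, and use divisibility of $\QQ/\ZZ$ to kill the Ext term in that case. The surjectivity argument is also sound, and in fact you do not even need naturality of the \emph{splitting} in the coefficient variable (a slightly delicate point): naturality of the UCT short exact sequence alone suffices, since $\mathrm{Ext}^1(\ZZ/2,\QQ/\ZZ)=0$ identifies $\H^2(\Sm;\QQ/\ZZ)$ with the Hom term, the map $\Hom(H_2,\ZZ/2)\to\Hom(H_2,\QQ/\ZZ)$ is an isomorphism because $\ZZ/2\hookrightarrow\QQ/\ZZ$ is an isomorphism onto the $2$-torsion, and the projection $\H^2(\Sm;\ZZ/2)\to\Hom(H_2,\ZZ/2)$ is already surjective. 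There is nothing in the paper to compare your argument against: the authors state this as a known theorem of Schur and cite \cite{Schur-1911-alternating-groups} without giving a proof, exactly as one would expect for a century-old classical fact. Your write-up is a reasonable reconstruction of the standard derivation, with Schur's calculation of $H_2(\Sm;\ZZ)$ correctly isolated as the sole non-formal input.
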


    \begin{definition}\label{def:sgn^(1)}
        Let $\degree \ge 4$. Let $\sgn^{(1)} \colon \B\Sm \to \B^2\QQ/\ZZ$ be the unique non-trivial map.
        By Schur's theorem, we know it factors (non-uniquely) through a map, which we also denote
        \begin{equation*}
            \sgn^{(1)} \colon \B\Sm \to \B^2 \ZZ/2.
        \end{equation*}
    \end{definition}

    \begin{remark}
        As we are interested in the character $\B\Sm \to \B^2\Ct \to \En[1]\units$, which factors through $\B^2\QQ/\ZZ$, the choice of the decomposition through $\B^2\ZZ/2$ does not matter.
    \end{remark}


    Our goal is to calculate $\dim(\alt_{\sgn^{(1)}} V)$ for $V$ dualizable in the chromatic case. Note that as $\sgn^{(1)}$ factors through $\Sigma^2\ZZ/2$, we have $\overline{\sgn^{(1)}} \simeq \sgn^{(1)}$, and therefore
    \begin{equation*}
        \alt_{\sgn^{(1)}}V = (V\om \times \En[1][\sgn^{(1)}])^{h\Sm}.
    \end{equation*}

    In height 1, the algebra $\En[1][\B\ZZ/2]$ plays a role analogous to that of the regular representation: Recall (\cite[Proposition 4.5]{CSY-cyclotomic}) that there exists a $\B\ZZ/2$-equivariant idempotent $\varepsilon \in \En[1][\B \ZZ/2]$ such that $\En[1][\varepsilon^{-1}] \simeq \En[1]$. The complement is called a height 1 cyclotomic extension of $\En$ and is denoted
    \begin{equation*}
        \En[1][(-1)^{(1)}] \coloneqq \En[1][(1-\varepsilon)^{-1}].
    \end{equation*}
    By \cite[Proposition 5.2]{CSY-cyclotomic}, $\En[1][([(-1)^{(1)}]$ is an $e = (\ZZ/2)\units$-Galois extension of $\En[1]$. That is, $\En[1][(-1)^{(1)}] \simeq \En[1]$ as commutative algebras, but admits a non-trivial $\B\ZZ/2$-action. Restricting along $\sgn^{(1)} \colon \B\Sm \to \B^2 \ZZ/2$, we get a $\Sm$-equivariant decomposition
    \begin{equation*}
        \En[1][\B \ZZ/2] \simeq \En[1][\triv] \oplus \En[1][\sgn^{(1)}].
    \end{equation*}
    Therefore, in order to compute $\alt_{\sgn^{(1)}} V$ for $V\in \ModEn[1]$, we can replace $\En[1][\sgn^{(1)}]$ by $\En[1][\B \ZZ/2]$:
    \begin{equation*}
        (V\om \otimes \En[1][\B \ZZ/2])^{h\Sm} \simeq \Sym V \oplus \alt_{\sgn^{(1)}} V.
    \end{equation*}
    When $V = \En[1][X]$ is a free module,
    \begin{equation*}
        (V\om \otimes \En[1][\B \ZZ/2])^{h\Sm} \simeq (\En[1][X^\degree \times \B\ZZ/2])_{h\Sm} \simeq \En[1][(X^{\degree} \times \B\ZZ/2)_{h\Sm}].
    \end{equation*}
    As $\En[1][-] \colon \Span(\spcpi) \to \ModEn[1]$ is symmetric monoidal, when $X$ is $\pi$-finite, $\En[1][X]$ is dualizable and we can compute the dimension of the above module as the dimension of $(X^{\degree} \times \B\ZZ/2)_{h\Sm}$ in $\Span(\spcpi)$.

    \begin{definition}
        Let $\Sm^+$ be the $\ZZ/2$-central extension of $\Sm$ corresponding to $\sgn^{(1)}$, i.e.\footnote{Note again that the group $\Sm^+$ depends on the choice of factorization $\B\Sm \to \B^2\ZZ/2$. However, the computations are the same for both choices.}
        \begin{equation*}
            \B\Sm^+ = \fib(\B\Sm \xto{\sgn^{(1)}} \B^2 \ZZ/2).
        \end{equation*}
    \end{definition}

    \begin{lemma}\label{lem:Sm+-orbits}
        Let $X$ be a space with $\Sm$-action. Then, restricting along $\Sm^+ \onto \Sm$,
        \begin{equation*}
            X_{h\Sm^+} \simeq (X \times \B\ZZ/2)_{h\Sm}.
        \end{equation*}
    \end{lemma}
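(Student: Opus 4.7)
The plan is to express both sides as colimits of local systems on $\B\Sm$ and match them using the projection formula in $\spc$. By definition of $\Sm^+$, there is a pullback square
\begin{equation*}
\begin{tikzcd}
\B\Sm^+ \ar[r] \ar[d, "p"'] & \pt \ar[d] \\
\B\Sm \ar[r, "\sgn^{(1)}"'] & \B^2\ZZ/2,
\end{tikzcd}
\end{equation*}
and in particular a fiber sequence $\B\ZZ/2 \to \B\Sm^+ \xto{p} \B\Sm$. I view $X$ as a functor $X \colon \B\Sm \to \spc$, so that the restricted $\Sm^+$-action corresponds to $p^*X$.

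Starting from $X_{h\Sm^+} \simeq \colim_{\B\Sm^+} p^*X$, I would factor this colimit through $\B\Sm$ to obtain $X_{h\Sm^+} \simeq \colim_{\B\Sm} p_! p^* X$. The projection formula in $\spc$ (valid since $\spc$ is cartesian closed) then gives $p_! p^* X \simeq X \times p_!\pt$, hence
\begin{equation*}
X_{h\Sm^+} \simeq \colim_{\B\Sm}(X \times p_!\pt).
\end{equation*}

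It remains to identify the $\B\Sm$-local system $p_!\pt$. Unstraightening the fibration $p$, it corresponds to the $\Sm$-space $\B\ZZ/2$ (the fiber of $p$) equipped with its monodromy action, which is classified by the composition $\B\Sm \xto{\sgn^{(1)}} \B^2\ZZ/2 \simeq \B\Aut(\B\ZZ/2)$. As a consistency check, $X = \pt$ recovers $\B\Sm^+ \simeq (\B\ZZ/2)_{h\Sm}$, which is precisely the unstraightening of $p_!\pt$. With this identification we conclude $X_{h\Sm^+} \simeq (X \times \B\ZZ/2)_{h\Sm}$, as claimed.

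The main subtlety is that the intrinsic monodromy $\Sm$-action on $\B\ZZ/2$ arising from the fibration must agree with the $\Sm$-action implicitly used elsewhere in the paper (in particular in the decomposition $\En[1][\B\ZZ/2] \simeq \En[1][\triv] \oplus \En[1][\sgn^{(1)}]$). Both actions are classified by $\sgn^{(1)}$, so this is consistent; aside from this bookkeeping, the argument is a direct application of the projection formula and Grothendieck straightening.
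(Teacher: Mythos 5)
Your proof is correct and takes a genuinely different route from the paper's. The paper argues through a chain of explicit pullback squares: starting from $(X\times\B\ZZ/2)_{h(\Sm\times\B\ZZ/2)}\simeq X_{h\Sm}$ (with $\B\ZZ/2$ acting transitively on itself), it unstraightens to a pullback along $(\id,\sgn^{(1)})\colon \B\Sm \to \B\Sm\times\B^2\ZZ/2$, then factors the bottom map through the diagonal and uses a product of pullback squares to recognize the result as the pullback of $X_{h\Sm}\to\B\Sm$ along $\B\Sm^+\to\B\Sm$, which is $X_{h\Sm^+}$. You instead start from $X_{h\Sm^+}\simeq\colim_{\B\Sm}p_!p^*X$ and invoke the projection formula $p_!p^*X\simeq X\times p_!\pt$ (valid because colimits in $\spc$ are universal), reducing the claim to the identification of $p_!\pt$ with the $\B\Sm$-local system of fibers of $p$, namely $\B\ZZ/2$ with monodromy classified by $\sgn^{(1)}$. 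Your argument is shorter and makes manifest that nothing about $\Sm$ or $\ZZ/2$ is special: the same projection-formula computation gives the analogous statement for any fiber sequence of $\pi$-finite spaces, whereas the paper's more hands-on version keeps everything concrete in terms of pullback squares. You also correctly flag the one subtle point, namely that the monodromy $\Sm$-action on the fiber $\B\ZZ/2$ of $p$ must coincide with the $\Sm$-action classified by $\sgn^{(1)}$ used elsewhere in the section; it does, by construction of $\B\Sm^+$ as the fiber of $\sgn^{(1)}$, and the paper handles this point implicitly through its choice of bottom map $(\id,\sgn^{(1)})$.
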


    \begin{proof}
        Equipping $\B\ZZ/2$ with the transitive $\B\ZZ/2$-action, the space $X \times \B\ZZ/2$ admits a $(\Sm \times \B\ZZ/2)$-action and
        \begin{equation*}
            (X \times \B\ZZ/2)_{h(\Sm \times \B\ZZ/2)} \simeq X_{h\Sm}.
        \end{equation*}
        Moreover, the natural map
        \begin{equation*}
            X_{h\Sm} \to \B\Sm \times \B^2\ZZ/2,
        \end{equation*}
        induced as the orbits of $X \times \B\ZZ/ 2 \to \pt$, is trivial on the second coordinate.
    
        By unstraightening, we get a pullback square
        \begin{equation*}
            \begin{tikzcd}
                {(X \times \B \ZZ/2)_{h\Sm}} & {X_{h\Sm}} \\
                {\B\Sm} & {\B\Sm \times \B^2 \ZZ/2.}
                \arrow[from=1-2, to=2-2]
                \arrow[from=1-1, to=1-2]
                \arrow[from=1-1, to=2-1]
                \arrow["{(\id,\sgn^{(1)})}", from=2-1, to=2-2]
                \arrow["\lrcorner"{anchor=center, pos=0.125}, draw=none, from=1-1, to=2-2]
            \end{tikzcd}
        \end{equation*}
        The map $(\id, \sgn^{(1)}) \colon \B\Sm \to \B\Sm \times \B^2 \ZZ/2$ factors through the diagonal $\Delta \colon \B\Sm \to \B\Sm \times \B\Sm$. Together with the product of pullbacks
        \begin{equation*}
            \begin{tikzcd}
                {X_{h\Sm}} & {X_{h\Sm}} \\
                {\B\Sm} & {\B\Sm}
                \arrow[from=1-2, to=2-2]
                \arrow["{\id}", from=1-1, to=1-2]
                \arrow[from=1-1, to=2-1]
                \arrow["{\id}", from=2-1, to=2-2]
                \arrow["\lrcorner"{anchor=center, pos=0.125}, draw=none, from=1-1, to=2-2]
            \end{tikzcd}
            \qand
            \begin{tikzcd}
                {\B\Sm^+} & {\pt} \\
                {\B\Sm} & {\B^2 \ZZ/2,}
                \arrow[from=1-2, to=2-2]
                \arrow[from=1-1, to=1-2]
                \arrow[from=1-1, to=2-1]
                \arrow["{\sgn^{(1)}}"', from=2-1, to=2-2]
                \arrow["\lrcorner"{anchor=center, pos=0.125}, draw=none, from=1-1, to=2-2]
            \end{tikzcd}
        \end{equation*}
        we get the pullback diagram
        \begin{equation*}
            \begin{tikzcd}
                {(X \times \B\ZZ/2)_{h\Sm}} & {X_{h\Sm} \times \B\Sm^+} & {X_{h\Sm}} \\
                {\B\Sm} & {\B\Sm \times \B\Sm} & {\B\Sm \times \B^2 \ZZ/2.}
                \arrow[from=1-1, to=1-2]
                \arrow[from=1-1, to=2-1]
                \arrow["\lrcorner"{anchor=center, pos=0.125}, draw=none, from=1-1, to=2-2]
                \arrow[from=1-2, to=1-3]
                \arrow[from=1-2, to=2-2]
                \arrow["\lrcorner"{anchor=center, pos=0.125}, draw=none, from=1-2, to=2-3]
                \arrow[from=1-3, to=2-3]
                \arrow["\Delta", from=2-1, to=2-2]
                \arrow["{\id \times \sgn^{(1)}}", from=2-2, to=2-3]
            \end{tikzcd}
        \end{equation*}
        The left hand side pullback square, is a pullback of a product of maps along the diagonal, and therefore is equivalent to the pullback square
        \begin{equation*}
            \begin{tikzcd}
                {(X \times \B\ZZ/2)_{h\Sm}} & {X_{h\Sm}} \\
                {\B\Sm^+} & {\B\Sm,}
                \arrow[from=1-2, to=2-2]
                \arrow[from=1-1, to=1-2]
                \arrow[from=1-1, to=2-1]
                \arrow[from=2-1, to=2-2]
                \arrow["\lrcorner"{anchor=center, pos=0.125}, draw=none, from=1-1, to=2-2]
            \end{tikzcd}
        \end{equation*}
        and by unstraightening, $X_{h\Sm^+} \simeq (X \times \B\ZZ/2)_{h\Sm}$.
    \end{proof}

    \begin{remark}
        In particular, for $X = \pt$, we get the well-known formula for group extensions
        \begin{equation*}
            (\B\ZZ/2)_{h\Sm} \simeq \B\Sm^+.
        \end{equation*}
    \end{remark}
    
    \begin{lemma}\label{lem:E1-of-smash-of-wedge-of-circles}
        Let $d \ge 0$ and $X=(\bigvee_d S^1)^{\wedge m}$. Then
        \begin{align*}
            & \En[1][X_{h\Sm}] \simeq \Sym \En[1] \oplus \Sym(\Sigma \En[1]^{\oplus d}), \\
            & \En[1][X_{h\Sm^+}] \simeq \Sym \En[1] \oplus \alt_{\sgn^{(1)}} \En[1] \oplus \Sym(\Sigma \En[1]^{\oplus d})\oplus \alt_{\sgn^{(1)}}(\Sigma \En[1]^{\oplus d}).
        \end{align*}
    \end{lemma}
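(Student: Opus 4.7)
The plan is to produce an $\Sm$-equivariant decomposition of $\En[1][X]$ and then take orbits along $\Sm$ (resp.\ $\Sm^+$), matching each summand with the appropriate (twisted) symmetric power.

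First, since $\bigvee_d S^1 \simeq \Sigma d_+$ (the reduced suspension of $d$ points with a disjoint basepoint), we have $\En[1][\bigvee_d S^1] \simeq \En[1] \oplus \Sigma\En[1]^{\oplus d}$, split by the basepoint inclusion. As the reduced free-module functor from pointed spaces to $\En[1]$-modules is symmetric monoidal and the basepoint of $X = (\bigvee_d S^1)^{\wedge m}$ is $\Sm$-fixed, this yields an $\Sm$-equivariant decomposition
\begin{equation*}
    \En[1][X] \simeq \En[1] \oplus (\Sigma\En[1]^{\oplus d})^{\otimes m},
\end{equation*}
with trivial action on the first summand and factor-permutation on the second. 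Taking $\Sm$-orbits and identifying $(\En[1])_{h\Sm} \simeq (\En[1]^{\otimes m})_{h\Sm} \simeq \Sym \En[1]$ together with $((\Sigma\En[1]^{\oplus d})^{\otimes m})_{h\Sm} \simeq \Sym(\Sigma\En[1]^{\oplus d})$ gives the first formula.

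For the second formula, apply \cref{lem:Sm+-orbits} to obtain $X_{h\Sm^+} \simeq (X \times \B\ZZ/2)_{h\Sm}$. Since $\En[1][A \times B] \simeq \En[1][A] \otimes \En[1][B]$, combining with the $\Sm$-equivariant splitting $\En[1][\B\ZZ/2] \simeq \En[1][\triv] \oplus \En[1][\sgn^{(1)}]$ established earlier in this section yields
\begin{equation*}
    \En[1][X_{h\Sm^+}] \simeq (\En[1][X])_{h\Sm} \oplus (\En[1][X] \otimes \En[1][\sgn^{(1)}])_{h\Sm}.
\end{equation*}
The first summand is the already-computed $\En[1][X_{h\Sm}]$. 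For the second, substitute the $\Sm$-equivariant decomposition of $\En[1][X]$; using $\overline{\sgn^{(1)}} \simeq \sgn^{(1)}$ (as $\sgn^{(1)}$ is $2$-torsion), the two resulting summands are precisely $\alt_{\sgn^{(1)}} \En[1]$ and $\alt_{\sgn^{(1)}}(\Sigma\En[1]^{\oplus d})$, completing the identity.

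The main subtlety is ensuring the decomposition of $\En[1][X]$ is genuinely $\Sm$-equivariant and tracking the twisted $\Sm$-action on $\En[1][\B\ZZ/2]$ induced by $\sgn^{(1)}$; once those splittings are in place, the rest is a routine manipulation of orbits and tensor products.
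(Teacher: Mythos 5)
Your proof is correct and follows essentially the same route as the paper: split $\En[1][X]$ equivariantly via the $\Sm$-fixed basepoint, use that the reduced free-module functor is symmetric monoidal to identify the reduced piece with $(\Sigma\En[1]^{\oplus d})^{\otimes m}$, invoke \cref{lem:Sm+-orbits} together with the splitting $\En[1][\B\ZZ/2]\simeq\En[1]\oplus\En[1][\sgn^{(1)}]$, and then take $\Sm$-orbits termwise.
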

    
    \begin{proof}
        By \cref{lem:Sm+-orbits}
        \begin{equation*}
            \En[1][X_{h\Sm^+}] \simeq \En[1][(X \times \B\ZZ/2)_{h\Sm}] \simeq \En[1][X \times \B\ZZ/2]_{h\Sm}.
        \end{equation*}
        As before, this splits as
        \begin{equation*}
            \En[1][X_{h\Sm^+}] \simeq (\En[1][X] \otimes (\En[1] \oplus \En[1][\sgn^{(1)}]))_{h\Sm} \simeq (\En[1][X])_{h\Sm} \oplus (\En[1][X] \otimes \En[1][\sgn^{(1)}])_{h\Sm}.
        \end{equation*}
        Write $\redEn[1][-]$ for the reduced homology functor, that is
        \begin{equation*}
            \redEn[1][X]  = \cofib(\En[1][X] \to \En[1]).
        \end{equation*}
        The base point of $X$ is an equivariant section so that $\En[1][X] \simeq \redEn[1][X] \oplus \En[1]$ in $\Sp^{\B\Sm}$.
        The reduced homology functor is symmetric monoidal with respect to the smash product, therefore
        \begin{equation*}
            \En[1][X] \simeq \redEn[1][X] \oplus \En[1] \simeq \redEn[1][\bigvee_d S^1]\om \oplus \En[1] \simeq (\Sigma \En[1]^{\oplus d})\om \oplus \En[1].
        \end{equation*}
        Thus
        \begin{equation*}
            \En[1][X_{h\Sm}] \simeq \En[1][X]_{h\Sm} \simeq (\Sigma \En[1]^{\oplus d})\om_{h\Sm} \oplus (\En[1])_{h\Sm} = \Sym(\Sigma \En[1]^{\oplus d}) \oplus \Sym \En[1],
        \end{equation*}
        and
        \begin{equation*}
            \begin{split}
                ((\En[1][X] \otimes \En[1][\sgn^{(1)}])_{h\Sm}  
                & = ((\Sigma \En[1]^{\oplus d})\om \otimes \En[1][\sgn^{(1)}])_{h\Sm} \oplus (\En[1][\sgn^{(1)}])_{h\Sm} \\
                & = \alt_{\sgn^{(1)}} (\Sigma \En[1]^{\oplus d}) \oplus \alt_{\sgn^{(1)}} \En[1].
            \end{split}
        \end{equation*}
    \end{proof}

    \begin{remark}
        Note that the discussion so far applies to any $(\Fq{2},1)$-oriented category, such as $\SpKn[1]$ or $\Mod_{\Vect}$.
    \end{remark}
    
    \begin{theorem}\cite[Corollary 4.3.4, Corollary 4.7.5]{Lurie-2019-Elliptic3}\label{thm:intertia-groupoid}
        Let $G$ be a finite group and $X$ be a compact genuine $G$-space. Then  
        \begin{equation*}
            \dim(\En[1][X_{hG}]) = \dim((\En[1] \otimes \QQ)[(\bigsqcup_{g \in G^{(2)}} X^{\langle g \rangle})_{hG}],
        \end{equation*} 
        where $G^{(2)}$ is the subset of $2$-power torsion elements in $G$ and the $G$-action on $\bigsqcup_{g \in G^(2)} X^{h \langle g \rangle}$ is given by
        \begin{equation*}
            h.(g,x) = (hgh^{-1}, h.x).
        \end{equation*}
    \end{theorem}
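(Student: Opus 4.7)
The plan is to apply the transchromatic character machinery of the preceding subsection to reduce both sides to a single rational integral. Since $\ModEn[1]$ is $\infty$-semiadditive and $G$ is finite, ambidexterity gives $(\En[1][X])_{hG} \simeq (\En[1][X])^{hG}$, so the LHS equals $\dim((\En[1][X])^{hG})$. By the induced character formula (\cref{thm:induced-character-formula}) applied to $V = \En[1][X] \in (\ModEn[1])^{\BG}$,
\begin{equation*}
    \dim(\En[1][X_{hG}]) \;=\; \int_{\L\BG}^{\En[1]} \chi_{\En[1][X]}.
\end{equation*}
Applying the shifted semiadditivity identity $(\spadesuit)$ at $\chrHeight = 1$, $t = 2$, this rewrites as an integral over $\Lp\L\BG$ taking values in $C^1_0 \simeq L_{K(0)}\En[1]$, which is a rational ring:
\begin{equation*}
    \int_{\L\BG}^{\En[1]} \chi_{\En[1][X]} \;=\; \int_{\Lp\L\BG}^{C^1_0} \chi^{1,\HKR}_{\chi_{\En[1][X]}}.
\end{equation*}

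The second step is to identify the integrand. For the permutation representation $V = \En[1][X]$ on a $G$-space $X$, the classical trace formula gives $\chi_V(g) = \dim\En[1][X^{\langle g\rangle}]$. Using the categorification framework of \cref{subsec:categorification-of-functions} (so that $\chi_V$ is itself a permutation function of height $\chrHeight$) and the identification of iterated $\widetilde{\chi}$ with iterated transchromatic characters (\cref{thm:chi-tilde-t-is-HKR}), one obtains that rationally, at a point $(h,g) \in \Lp\L\BG$ with $h$ a $2$-power torsion element commuting with $g$, the composite equals the rational Euler characteristic of the simultaneous fixed-point space $X^{\langle h,g\rangle}$.

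Finally, decomposing $\Lp\L\BG \simeq \bigsqcup_{[(h,g)]} \B C_G(h,g)$ over simultaneous conjugacy classes of commuting pairs with $h \in G^{(2)}$, the integral becomes the rational sum
\begin{equation*}
    \sum_{[(h,g)]} \frac{|X^{\langle h,g\rangle}|_{\QQ}}{|C_G(h,g)|} \;=\; \dim_{\En[1]\otimes\QQ}\!\Big((\En[1]\otimes\QQ)\Big[\big(\bigsqcup_{h \in G^{(2)}} X^{\langle h\rangle}\big)_{hG}\Big]\Big),
\end{equation*}
where the right-hand equality is just the rational induced-character formula for the $G$-action on the inertia groupoid $\bigsqcup_{h \in G^{(2)}} X^{\langle h\rangle}$. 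The main obstacle is the middle step: matching the transchromatic character of a permutation representation with the rational fixed-point Euler characteristic. In the paper's framework this is controlled by \cref{lem:character-of-wreath-product} together with \cref{thm:chi-tilde-t-is-HKR}; alternatively, and more efficiently, it is exactly the height-$1$ instance of the HKR character theorem as developed in \cite{Hopkins-Kuhn-Ravenel-2000-HKR, Stapleton-2013-HKR, Lurie-2019-Elliptic3}, from which the statement may be invoked directly.
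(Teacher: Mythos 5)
Your proposal takes a genuinely different route from the paper's, but the crucial middle step has a gap that you cannot close with the paper's internal machinery.

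The paper's proof is a direct application of Lurie's tempered-ambidexterity base change (\cite[Corollary 4.3.4, 4.7.5]{Lurie-2019-Elliptic3}): one obtains $\En[1]^{X_{hG}}\otimes B \simeq B^{(\bigsqcup_g X^{\langle g\rangle})_{hG}}$ for the splitting algebra $B$, dualizes via ambidexterity, and then uses faithful flatness of $B$ over $\En[1]\otimes\QQ$ and torsion-freeness of $\pi_0\En[1]$ to push the dimension down to the rational group ring. You instead try to reduce to a rational character integral via $(\spadesuit)$ and then identify the integrand with fixed-point Euler characteristics.

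The gap is in your middle step. The identity you need --- that the rationalized transchromatic character of $\chi_{\En[1][X]}$ evaluated at a commuting pair $(h,g)$ with $h\in G^{(2)}$ equals the rational Euler characteristic of the \emph{genuine} fixed-point space $X^{\langle h,g\rangle}$ --- is precisely the content of the cited Lurie result, specialized to height $1$. The internal machinery you invoke (\cref{lem:character-of-wreath-product}, \cref{thm:chi-tilde-t-is-HKR}) does not give you this: those results concern local systems of the form $\Tm V\otimes\En{}[\overline{\hchar}]$ on $\B(G\wr\Sm)$, that is, twisted permutation functions, and they live entirely in the Borel world. Nothing in that machinery produces geometric fixed points of a compact genuine $G$-space. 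Already the claim ``the classical trace formula gives $\chi_V(g) = \dim\En[1][X^{\langle g\rangle}]$'' requires a Lefschetz-type fixed-point theorem; the trace formula gives you the Lefschetz number $\tr(g\mid\En[1][X])$, and passing to $\chi(X^{\langle g\rangle})$ is already nontrivial. Going further to evaluate the transchromatic character at $(h,g)$ as $\chi(X^{\langle h,g\rangle})$ is an Atiyah--Segal-type character theorem for genuine $G$-spectra, which is exactly what Lurie's base-change formula packages.

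So your proposal, if it works at all, works only because you fall back on citing Lurie for the middle step --- which is what the paper does, but the paper's route is shorter and avoids the index bookkeeping of iterated transchromatic characters entirely. The claim that the middle step ``is controlled by'' the paper's internal character lemmas is wrong; that framework does not touch genuine fixed points, and bridging Borel and genuine requires the external input.
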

    
    \begin{proof}
        Applying global sections to \cite[Corollary 4.3.4, Corollary 4.7.5]{Lurie-2019-Elliptic3} to $A=\En[1]$ with the Quillen $2$-divisible group, the finite group $G$, the colattice $\hat{\Lambda} = \ZZ_p$ and choosing $B$ as the splitting algebra, gives a base change formula 
        \begin{equation*}
            \En[1]^{X_{h G}} \otimes B \simeq B^{(\bigsqcup_{g \in G^{(2)}}X^{\langle g \rangle})_{hG}}.
        \end{equation*}
        By ambidexterity
        \begin{equation*}
            \En[1][X_{h G}] \otimes B \simeq B[(\bigsqcup_{g \in G^{(2)}}X^{\langle g \rangle})_{hG}].
        \end{equation*}
        Using the fact that the splitting algebra is faithfully flat over $\En[1] \otimes \QQ$ we get 
        \begin{equation*}
            \dim(B[(\bigsqcup_{g \in G^{(2)}}X^{\langle g \rangle})_{hG}]) = \dim ((\En[1] \otimes \QQ)[(\bigsqcup_{g \in G^{(2)}}X^{\langle g \rangle})_{hG}]).
        \end{equation*}
        Finally, the claim follows as the symmetric monoidal functor $B \otimes -$ preserves dimensions and since $\pi_0\En[1]$ is torsion-free.
    \end{proof}

    \begin{notation}
        We denote the image of $1 \in \ZZ/2$ under the central homomorphism $\ZZ/2 \to \Sm^+$  by $z \in \Z(\Sm^+)$. We denote the preimages of $\sigma \in \Sm$ under the map $\Sm^+ \to \Sm$ by $\overline{\sigma}$ and $z\overline{\sigma}$.
    \end{notation}
    
    \begin{proposition}\label{prop:dim-of-alternating-in-height-1-ge0}
        Let $V\in (\ModEn[1])\dbl$ with $\dim(V) = d \ge 0$. Then
        \begin{equation*}
            \dim \alt_{\sgn^{(1)}} V = \sum_{\substack{[\sigma] \in \Sm^{(2)} / \conj,\\ \overline{\sigma}\not\sim z\overline{\sigma} \in \Sm^+}} d^{\numcyc}.
        \end{equation*}
    \end{proposition}
    
    \begin{proof}
        Since the $\TT$-action on $\dim(V)$ is trivial (see e.g.\ \cite[Corollary~3.2.5]{Keidar-Ragimov-2025-twisted-graded}), by \cref{cor:dim-of-alternating-depends-only-on-dim} 
        \begin{equation*}
            \dim \alt_{\sgn^{(1)}} V = \dim \alt_{\sgn^{(1)}} \En[1]^{\oplus d}.
        \end{equation*}
        We may therefore assume $V = \En[1]^{\oplus d}$. Let $[d]$ be a set with $d$ elements, so that $[d]^{\degree}$ is a (Borel) $\Sm^+$-space. By \cref{lem:Sm+-orbits}
        \begin{equation*}
            \En[1][[d]^{\degree}_{h\Sm^+}] \simeq ((\En[1]^{\oplus d})\om \otimes \En[1][\B\ZZ/2])^{h\Sm} \simeq \Sym (\En[1]^{\oplus d}) \oplus \alt_{\sgn^{(1)}} (\En[1]^{\oplus d})
        \end{equation*}
        By \cref{thm:intertia-groupoid}
        \begin{equation*}
            \dim(\En[1][[d]^{\degree}_{h\Sm^+}]) 
            = \dim((\En[1] \otimes \QQ)[(\bigsqcup_{\overline{\sigma} \in (\Sm^+)^{(2)}}([d]^{\degree})^{\langle \overline{\sigma} \rangle})_{h\Sm^+}])
            = \sum_{[\overline{\sigma}]\in (\Sm^+)^{(2)} / \conj} d^{\numcyc},
        \end{equation*}
        where in the last equality we used the fact that finite groups have no rational homology. 
        Similarly,
        \begin{equation*}
            \dim \Sym (\En[1]^{\oplus d}) = \dim(\En[1][[d]_{h\Sm}]) = \sum_{[\sigma] \in \Sm^{(2)}/\conj} d^{\numcyc}.
        \end{equation*}
        Therefore
        \begin{equation*}
            \dim \alt_{\sgn^{(1)}} V = \sum_{\substack{[\sigma]\in \Sm^{(2)} / \conj \\ \overline{\sigma} \not\sim z\overline{\sigma} \in \Sm^+}} d^{\numcyc}.
        \end{equation*}
    \end{proof}     

    \begin{proposition}\label{prop:dim-of-alternating-in-height-1-<0}
        Let $V\in (\ModEn[1])\dbl$ with $\dim(V) = d < 0$. Then
        \begin{equation*}
            \dim \alt_{\sgn^{(1)}} V = \sum_{\substack{[\sigma] \in \Sm^{(2)} / \conj,\\ \overline{\sigma}\not\sim z\overline{\sigma} \in \Sm^+}} (d^{\numcyc} + (-d)^{\numcyc} -1).
        \end{equation*}
    \end{proposition}
    
    \begin{proof}
        Again by \cref{cor:dim-of-alternating-depends-only-on-dim} 
        \begin{equation*}
            \dim \alt_{\sgn^{(1)}} V = \dim \alt_{\sgn^{(1)}} \Sigma \En[1]^{\oplus (-d)}.
        \end{equation*}
        We may therefore assume $V = \Sigma \En[1]^{\oplus (-d)}$.
        Consider the topological space $(\bigvee_{(-d)} S^1)$ and the corresponding genuine $\Sm^+$-space $X \coloneqq (\bigvee_{(-d)} S^1)^{\wedge \degree}$.
        By \cref{thm:intertia-groupoid} and \cref{lem:E1-of-smash-of-wedge-of-circles} we have 
        \begin{equation}\label{eq:1}
            \begin{split}
                \dim( (\En[1]\otimes \QQ)[(\bigsqcup_{\overline{\sigma} \in (\Sm^+)^{(2)}} X^{\langle \overline{\sigma} \rangle})_{h\Sm^+}] ) 
                & = \dim \Sym \En[1] + \dim \alt_{\sgn^{(1)}} \En[1] \\
                & + \dim \Sym(\Sigma \En[1]^{\oplus (-d)}) + \dim \alt_{\sgn^{(1)}} (\Sigma \En[1]^{\oplus (-d)}),
            \end{split}
        \end{equation} 
        and 
        \begin{equation}\label{eq:2}
            \dim( (\En[1]\otimes \QQ)[(\bigsqcup_{\sigma \in \Sm^{(2)}} X^{\langle \overline{\sigma} \rangle})_{h\Sm}] ) 
            = \dim \Sym \En[1] + \dim \Sym(\Sigma \En[1]^{\oplus (-d)}).
        \end{equation} 
        By \cref{prop:dim-of-alternating-in-height-1-ge0},
        \begin{equation}\label{eq:3}
            \dim \alt_{\sgn^{(1)}} \En[1] = \sum_{\substack{[\overline{\sigma}] \in \Sm^{(2)} \\ \overline{\sigma} \not\sim z\overline{\sigma} \in \Sm^+}} 1.
        \end{equation}
        Note that non-equivariantly $X \simeq \bigvee_{[-d]^{\degree}}S^{\degree}$. For every non-trivial orbit of $\sigma$ in $[-d]^{\degree}$, $\langle \overline{\sigma}\rangle$ permutes the corresponding spheres and therefore the corresponding fixed points consist only of the base-point.
        If $\sigma$ fixes an element in $[-d]^{\degree}$, then the corresponding fixed points are $S^{\numcyc}$.
        
        Since finite groups have no rational homology, it follows that 
        \begin{equation}\label{eq:4}
            \begin{split}
                \dim( (\En[1]\otimes \QQ)[(\bigsqcup_{\overline{\sigma} \in (\Sm^+)^{(2)}} X^{\langle \overline{\sigma} \rangle})_{h\Sm^+}] ) 
                & = \sum_{[\overline{\sigma}] \in (\Sm^+)^{(2)} / \conj} |[-d]^{\langle \sigma \rangle}| \dim(\En[1][S^{\numcyc}]) \\
                & = \sum_{[\overline{\sigma}] \in (\Sm^+)^{(2)} / \conj} (-d)^{\numcyc} \cdot (1 + (-1)^{\numcyc}) \\
                & = \sum_{[\overline{\sigma}] \in (\Sm^+)^{(2)} / \conj} ((-d)^{\numcyc} + d^{\numcyc}),
            \end{split}
        \end{equation}
        and similarly
        \begin{equation}\label{eq:5}
            \begin{split}
                \dim( (\En[1]\otimes \QQ)[(\bigsqcup_{\sigma \in \Sm^{(2)}} X^{\langle \overline{\sigma} \rangle})_{h\Sm}] ) 
                = \sum_{[\sigma] \in \Sm^{(2)} / \conj} ((-d)^{\numcyc} + d^{\numcyc}),
            \end{split}
        \end{equation}
        Combining \cref{eq:1}, \cref{eq:2}, \cref{eq:3}, \cref{eq:4} and \cref{eq:5} we get
        \begin{equation*}
            \dim \alt_{\sgn^{(1)}} (\Sigma \En[1]^{\oplus (-d)}) = \sum_{\substack{[\sigma] \in \Sm^{(2)} / \conj \\ \overline{\sigma} \not\sim z\overline{\sigma} \in \Sm^+}} (d^{\numcyc} + (-d)^{\numcyc} - 1).
        \end{equation*}
    \end{proof}     
    
    \begin{theorem}[{\cite{Schur-1911-alternating-groups}, see also \cite[Theorem~1.3]{Bessenrodt-1994-covering-of-symmetric-groups}}]\label{thm:conjugacy-in-Sm+}
        Let $\degree \geq 4$ and $\sigma\in\Sm$. Then $\overline{\sigma}$ and $z\overline{\sigma}$ are not conjugate if and only if one of the following holds:
        \begin{enumerate}
            \item The decomposition of $\sigma$ into disjoint cycles contains no even cycles, or
            \item the decomposition of $\sigma$ into disjoint cycles consist of cycles of different lengths, and there are odd number of cycles of even length.
        \end{enumerate}
        In particular, let $\degree = \sum b_i 2^i$ be the binary decomposition. $\sigma \in \Sm^{(2)}$ satisfies one of the above conditions if and only if
        \begin{enumerate}
            \item $\sigma = e$, or
            \item $|\{i > 0 \mid b_i = 1\}|$ is even and $\sigma$ consist of 1 cycle of length $2^i$ for each $i$ such that $b_i = 1$.
        \end{enumerate}
        Thus, for any $\degree$ there is at most one non-trivial element $\sigma \in \Sm^{(2)}$ such that $\overline{\sigma} \not\sim z\overline{\sigma}$.
    \end{theorem}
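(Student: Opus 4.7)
The first half of the statement is Schur's classical theorem on conjugacy classes in the spin cover; my plan is to invoke it directly from Schur 1911 (equivalently Bessenrodt 1994, Theorem 1.3), so no new work is needed there. The underlying mechanism is that in the central extension $1 \to \ZZ/2 \to \Sm^+ \to \Sm \to 1$ the lift $\overline{\sigma}$ is conjugate to $z\overline{\sigma}$ iff there exists $\overline{\tau}\in\Sm^+$ with $[\overline{\tau},\overline{\sigma}] = z$, i.e.\ iff the commutator pairing on the centralizer $\Csigma$ induced by the cocycle $\sgn^{(1)}$ is non-trivial in the direction of $\sigma$. Decomposing $\Csigma \simeq \bigsqcap_k (\ZZ/k)\wr\Sm[\Nk]$ and tracking how $\sgn^{(1)}$ restricts to each wreath factor reproduces the two combinatorial cases in the statement; the identification with cycle-type data becomes transparent once one recalls that the sign of $\sigma$ equals $(-1)$ raised to the number of even-length cycles, which identifies ``distinct cycle lengths with an odd number of even cycles'' as the odd-sign case.

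For the second half, my plan is to apply conditions (1) and (2) in turn to $\sigma \in \Sm^{(2)}$. Since every cycle of $\sigma$ has $2$-power order, every cycle length is a power of $2$. Condition (1), ``no even cycles'', then forces every cycle to have length $2^0 = 1$, i.e.\ $\sigma = e$. Condition (2) forces the cycle lengths to form a \emph{set} of distinct powers of $2$ summing to $\degree$; by uniqueness of the binary expansion this set is exactly $\{2^i : b_i = 1\}$, which pins $\sigma$ down up to conjugacy in $\Sm$. The number of even-length cycles is then $|\{i > 0 : b_i = 1\}|$, so the parity condition from Schur's theorem translates directly into the stated parity condition on the binary digits of $\degree$. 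The closing ``at most one non-trivial splitting class'' assertion follows immediately from this uniqueness of cycle type.

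The main obstacle, if any, lies in the first half, where care is needed to match conventions: one must check that the cocycle $\sgn^{(1)}$ built via the chosen factorization $\B\Sm \to \B^2\ZZ/2$ corresponds to the spin cover appearing in Schur's theorem, rather than to one of the other elements of $H^2(\Sm;\ZZ/2) = (\ZZ/2)^2$. Since $\Sm^+$ is \emph{defined} as the extension classified by $\sgn^{(1)}$ and Schur's analysis is carried out in precisely this cocycle, the identification is essentially built in; the remainder of the argument is routine cycle-type bookkeeping.
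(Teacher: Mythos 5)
The paper supplies no proof of this theorem: it is cited to Schur and Bessenrodt, and the ``In particular'' specialization to $\sigma \in \Sm^{(2)}$ is asserted without argument, so there is no paper proof to compare against. Your plan --- invoke Schur for the first half and do binary-expansion bookkeeping for the second --- is the natural one, and the bookkeeping for condition (1) (every cycle is a $1$-cycle, so $\sigma = e$) and for the uniqueness of the cycle type under condition (2) is correct, as is the ``at most one non-trivial class'' conclusion.

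The gap is in the very last step. You assert that Schur's parity condition ``translates directly into the stated parity condition,'' but you never run the translation, and if you do, the parities come out opposite to what the theorem claims. Schur's condition~(2) requires an \emph{odd} number of even-length cycles. For the unique $\sigma\in\Sm^{(2)}$ with distinct cycle lengths, the lengths are exactly $\{2^i : b_i = 1\}$, and the even-length ones are those with $i>0$; so Schur's requirement is that $|\{i>0 \mid b_i = 1\}|$ be \emph{odd}, not ``even'' as stated in the theorem (and in the corollaries and remark that inherit it). Equivalently, $\mrm{sgn}(\sigma) = (-1)^{\degree - \numcyc}$ and $\degree - \numcyc \equiv |\{i>0 \mid b_i = 1\}| \pmod 2$, so the ``odd permutation'' clause in Schur's condition~(2) is precisely that this cardinality be odd. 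A concrete counterexample to the stated parity: for $\degree = 4$ the $4$-cycle has distinct cycle lengths and one even cycle, hence splits by Schur, yet $|\{i>0\mid b_i = 1\}| = 1$ is odd, which the stated ``even'' criterion would exclude. Carrying your bookkeeping to the end would therefore have surfaced an apparent parity error in the theorem; declaring the translation ``direct'' without actually performing it is exactly where the proposal falls short.
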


    \begin{remark}
        Note that $\lfloor \frac{\degree}{2} \rfloor = \sum_{i \ge 0} b_{i+1} 2^{i}$. Therefore there exists a non-trivial element in $\Sm^{(2)}$ such that its liftings to $\Sm^+$ are non-conjugate if and only if the binary decomposition of $\lfloor \frac{\degree}{2} \rfloor$ has an even number of 1-s.
    \end{remark}

    \begin{corollary}\label{cor:dim-height-1->=0}
        Let $V\in (\ModEn[1])\dbl$ with $\dim(V) = d \ge 0$ and $4 \le \degree = \sum_{i} b_i 2^i$ its binary decomposition. Then 
        \begin{enumerate}
            \item If $|\{i >0 \mid b_i = 1\}|$ is odd then
            \begin{equation*}
                \dim \alt_{\sgn^{(1)}} V = d^{\degree}.
            \end{equation*}
            \item If $|\{i >0 \mid b_i = 1\}|$ is even
            \begin{equation*}
                \dim \alt_{\sgn^{(1)}} V = d^{\degree} + d^{|\{i \ge 0 \mid b_i = 1\}|}.
            \end{equation*}
        \end{enumerate}
    \end{corollary}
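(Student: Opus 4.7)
The plan is to deduce this corollary directly by combining \cref{prop:dim-of-alternating-in-height-1-ge0} with the classification in \cref{thm:conjugacy-in-Sm+}. \cref{prop:dim-of-alternating-in-height-1-ge0} expresses $\dim \alt_{\sgn^{(1)}} V$ as a sum of $d^{\numcyc}$ over the set of conjugacy classes $[\sigma] \in \Sm^{(2)} / \mrm{conj}$ for which the two lifts $\overline{\sigma}, z\overline{\sigma} \in \Sm^+$ fail to be conjugate, so the task reduces to identifying this indexing set in terms of the binary expansion $\degree = \sum_i b_i 2^i$ and then counting cycles of each element.

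First I would apply the second half of \cref{thm:conjugacy-in-Sm+}, which describes the $2$-power torsion elements whose lifts to $\Sm^+$ remain non-conjugate. These consist of at most two conjugacy classes: the identity $\sigma = e$, which is always present, and a single additional class occurring exactly when $|\{i > 0 \mid b_i = 1\}|$ is even, given by the disjoint union of one cycle of length $2^i$ for each index $i$ with $b_i = 1$. This dichotomy explains the case split in the statement.

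Next I would compute $\numcyc$ for each contributing element. For $\sigma = e$, the disjoint cycle decomposition has $\degree$ fixed points, hence $\numcyc = \degree$, producing the summand $d^{\degree}$ which appears in both cases. For the second element (present only in the even case), the decomposition contributes one cycle for each $i$ with $b_i = 1$, so $\numcyc = |\{i \ge 0 \mid b_i = 1\}|$, yielding the additional $d^{|\{i \ge 0 \mid b_i = 1\}|}$ term. Substituting these counts into the sum of \cref{prop:dim-of-alternating-in-height-1-ge0} gives the two formulas.

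There is no serious obstacle here: once the classification theorem \cref{thm:conjugacy-in-Sm+} is granted, the corollary is a direct bookkeeping argument. The only subtlety worth flagging is making sure the hypothesis $\degree \ge 4$ is inherited from \cref{thm:conjugacy-in-Sm+}, which is why the bound $4 \le \degree$ appears in the statement.
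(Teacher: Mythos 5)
Your argument is correct and is exactly the deduction the paper intends: apply the formula of \cref{prop:dim-of-alternating-in-height-1-ge0}, identify the contributing conjugacy classes via \cref{thm:conjugacy-in-Sm+}, and count cycles. The cycle counts $\numcyc = \degree$ for the identity and $\numcyc = |\{i \ge 0 \mid b_i = 1\}|$ for the extra class are right, so there is nothing to add.
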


    \begin{corollary}\label{cor:dim-height-1-<0}
        Let $V\in (\ModEn[1])\dbl$ with $\dim(V) = d < 0$ and $4 \le \degree = \sum_{i} b_i 2^i$ its binary decomposition. Then 
        \begin{enumerate}
            \item If $|\{i >0 \mid b_i = 1\}|$ is odd then
            \begin{equation*}
                \dim \alt_{\sgn^{(1)}} V = d^{\degree} + (-d)^{\degree} - 1.
            \end{equation*}
            \item If $|\{i >0 \mid b_i = 1\}|$ is even
            \begin{equation*}
                \dim \alt_{\sgn^{(1)}} V = d^{\degree} + (-d)^{\degree} + d^{|\{i \ge 0 \mid b_i = 1\}|} + (-d)^{|\{i \ge 0 \mid b_i = 1\}|} - 2.
            \end{equation*}
        \end{enumerate}
    \end{corollary}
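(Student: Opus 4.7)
The plan is to combine the general formula from \cref{prop:dim-of-alternating-in-height-1-<0} with the explicit classification of conjugacy classes in \cref{thm:conjugacy-in-Sm+}. By \cref{prop:dim-of-alternating-in-height-1-<0}, for $d < 0$ one has
\begin{equation*}
    \dim \alt_{\sgn^{(1)}} V = \sum_{\substack{[\sigma] \in \Sm^{(2)} / \mrm{conj},\\ \overline{\sigma}\not\sim z\overline{\sigma} \in \Sm^+}} (d^{\numcyc} + (-d)^{\numcyc} -1),
\end{equation*}
so the entire task reduces to enumerating the $2$-power torsion conjugacy classes satisfying the non-conjugacy condition in $\Sm^+$ and recording their cycle counts.

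First I would invoke \cref{thm:conjugacy-in-Sm+}, which tells us that a class $[\sigma] \in \Sm^{(2)}/\mrm{conj}$ contributes precisely when either $\sigma = e$, or $|\{i > 0 \mid b_i = 1\}|$ is even and $\sigma$ consists of exactly one cycle of length $2^i$ for each $i$ with $b_i = 1$. For $\sigma = e$, every element is a fixed point, so $\numcyc = \degree$ and the term is $d^{\degree} + (-d)^{\degree} - 1$. For the (unique, when it exists) non-trivial $\sigma$, the number of cycles equals the number of $i \ge 0$ with $b_i = 1$, giving the term $d^{|\{i \ge 0 \mid b_i = 1\}|} + (-d)^{|\{i \ge 0 \mid b_i = 1\}|} - 1$.

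Then I would split into the two stated cases. When $|\{i > 0 \mid b_i = 1\}|$ is odd, only $\sigma = e$ contributes and the sum collapses to $d^{\degree} + (-d)^{\degree} - 1$, giving (1). When $|\{i > 0 \mid b_i = 1\}|$ is even, both classes contribute and summing gives
\begin{equation*}
    d^{\degree} + (-d)^{\degree} - 1 + d^{|\{i \ge 0 \mid b_i = 1\}|} + (-d)^{|\{i \ge 0 \mid b_i = 1\}|} - 1,
\end{equation*}
which rearranges to the expression in (2). There is no real obstacle here since all the work has been done in the two cited results; the only care required is to correctly read off $\numcyc$ for each of the (at most) two relevant classes from the binary decomposition of $\degree$.
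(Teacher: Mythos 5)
Your proposal is correct and is exactly the argument the paper intends: the corollary is a direct combination of \cref{prop:dim-of-alternating-in-height-1-<0} with the enumeration of the (at most two) relevant conjugacy classes in \cref{thm:conjugacy-in-Sm+}, together with reading off $\numcyc = \degree$ for $\sigma = e$ and $\numcyc = |\{i \ge 0 \mid b_i = 1\}|$ for the unique nontrivial class. The paper leaves the proof implicit for precisely this reason.
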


    In the categorical case $\cC = \Mod_{\sVect}$, similar arguments show that

    \begin{proposition}\label{prop:categorical-height-1}
        Let $d\in \ZZ_{\ge 0}$. Then
        \begin{equation*}
            \dim^2(\alt_{\sgn^{(1)}} (\sVect^{\oplus d}))=\sum_{[\sigma]\in \mcal{O}(\degree) \cup \mcal{D}(\degree)} d^{\numcyc}
        \end{equation*}
        where
        \begin{enumerate}
            \item $\mcal{O}(\degree)$ is the set of conjugacy classes in $\Sm$ such that their decomposition into disjoint cycles contains no even cycles, and
            \item $\mathcal{D}(\degree)$ denotes the set of conjugacy classes in $\Sm$ such that their decomposition into disjoint cycles consist of cycles of different lengths, and there are odd number of cycles of even length.
        \end{enumerate}
    \end{proposition}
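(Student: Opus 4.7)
The strategy mirrors the chromatic height-$1$ argument of \cref{prop:dim-of-alternating-in-height-1-ge0}, lifted one categorical level to $\Mod_{\sVect}$, which is $(\Fq{2},1)$-oriented. First, by \cref{cor:dim^k-of-alternating-depends-only-on-dim^k} and the fact that the $\TT^{i}$-action on $\dim^{i}(\sVect^{\oplus d})$ is trivial for all $i$, the quantity $\dim^{2}(\alt_{\sgn^{(1)}}(\sVect^{\oplus d}))$ depends only on $d$, so I may work directly with $V = \sVect^{\oplus d}$.

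Next, the $(\Fq{2},1)$-orientation yields a $\Sm$-equivariant splitting $\sVect[B\ZZ/2] \simeq \sVect[\triv] \oplus \sVect[\sgn^{(1)}]$, obtained by restricting the height-$1$ cyclotomic decomposition along $\sgn^{(1)}\colon B\Sm \to B^{2}\ZZ/2$, in direct analogy with the $\En[1]$-case. Combining this with the categorical analog of \cref{lem:Sm+-orbits} gives
\[
  \sVect\bigl[([d]^{\degree})_{h\Sm^{+}}\bigr] \;\simeq\; \Sym(\sVect^{\oplus d}) \,\oplus\, \alt_{\sgn^{(1)}}(\sVect^{\oplus d}),
\]
so the task reduces to evaluating $\dim^{2}$ of the permutation representation $\sVect[([d]^{\degree})_{hG}]$ for $G = \Sm$ and $G = \Sm^{+}$, and subtracting.

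For the dimension computation, I would apply the iterated induced character formula \cref{cor:induced-k-character-formula}, combined with the description of the permutation character in \cref{lem:character-of-Tm} and its $\TT$-trivial refinement \cref{cor:character-when-no-T-action}. The key point is that, exploiting the rational structure of $\laxloops^{2} \Mod_{\sVect} = \field$ and the $\TT^{2}$-invariance of the iterated character (\cref{cor:characters-are-T-invariant}), the iterated trace over $L^{2}BG$ collapses to a sum over single conjugacy classes of $G$, weighted by $d^{\numcyc}$. Subtracting the two sums leaves precisely the classes in $\Sm^{+}$ that split above $\Sm$, and \cref{thm:conjugacy-in-Sm+} identifies these with $\mcal{O}(\degree) \cup \mcal{D}(\degree)$. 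The main obstacle is the collapse step: in the chromatic case this was handled by the inertia-groupoid theorem (\cref{thm:intertia-groupoid}) restricting to $2$-power torsion, whereas here I expect a cleaner categorical substitute arising from rational base-change together with the $\TT$-invariance constraint, which removes the torsion restriction and produces the unrestricted sum over $\Sm/\mrm{conj}$.
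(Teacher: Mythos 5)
Your overall strategy is sound, and the reduction to computing dimensions of $\sVect[([d]^{\degree})_{h\Sm^{+}}]$ and $\sVect[([d]^{\degree})_{h\Sm}]$, followed by subtraction and invoking \cref{thm:conjugacy-in-Sm+}, matches the structure of the chromatic argument the paper cites as an analogue. However, the key "collapse step" — where you argue that $\dim^2$ of the permutation representation is a sum of $d^{\numcyc}$ over all conjugacy classes of $G$ with no torsion restriction — is the heart of the matter, and your proposal only gestures at it. The claim that "rational base-change together with the $\TT$-invariance constraint" yields this is not obviously correct as stated, and you have not actually carried out the computation. In $\Mod_{\sVect}$, one computes $\dim^2 \sVect[X_{hG}]$ by applying $\dim$ twice: the first gives a $\sVect$-module over $\L(X_{hG})$ and the second a $\field$-valued sum; the point is that each loop $(\gamma_1, \gamma_2)$ contributes a one-dimensional super vector space (possibly $\Pi \field$), and one must check that these signs either cancel or all give $+1$. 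The unrestricted sum over $\Sm/\mrm{conj}$ rather than $\Sm^{(2)}/\mrm{conj}$ reflects the fact that in the categorical height-$1$ case every loop in $\L\B\Sm$ contributes (not just $2$-power torsion ones), which is true but needs the argument of \cref{cor:inductive-formula-Vectn} or a direct colimit computation of the fixed points of $\tg^{2}(\sgn^{(1)})$; you have not supplied this.

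More importantly, the paper's own proof is essentially a direct application of the machinery you cite but in a cleaner way: by \cref{cor:dim^k-of-alternating-depends-only-on-dim^k} and \cref{cor:k-character-of-alternating}, one writes $\dim^2(\alt_{\sgn^{(1)}} V)$ as a colimit over $\L^2 \B\Sm$ of $\chi^2_{\Tm V} \cdot \tg^2(\overline{\sgn^{(1)}})$, then observes (via \cref{cor:inductive-formula-Vectn}) that only those connected components where $\tg^2(\sgn^{(1)})$ is the trivial super line with trivial action contribute, and each such component contributes $d^{\numcyc}$. The identification of which components contribute is precisely \cref{thm:conjugacy-in-Sm+}. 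So the proposal is in the right neighborhood but is missing the explicit use of the transgression and its classification of nontrivially-acting components — that is the step that would actually let you deduce $\mcal{O}(\degree) \cup \mcal{D}(\degree)$ rather than merely claiming the inertia-groupoid substitute exists.
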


    \begin{remark}
        From these computations, one can deduce the values of $\sgn^{(1)}$-twisted power operations evaluated on integers (in $\En[2]$ in the chromatic case, or in $\cVectn[2]$ in the categorical case).
    \end{remark}

\newpage

\begin{appendices}

\makeatletter
\def\renewtheorem#1{%
  \expandafter\let\csname#1\endcsname\relax
  \expandafter\let\csname c@#1\endcsname\relax
  \gdef\renewtheorem@envname{#1}
  \renewtheorem@secpar
}
\def\renewtheorem@secpar{\@ifnextchar[{\renewtheorem@numberedlike}{\renewtheorem@nonumberedlike}}
\def\renewtheorem@numberedlike[#1]#2{\newtheorem{\renewtheorem@envname}[#1]{#2}}
\def\renewtheorem@nonumberedlike#1{  
\def\renewtheorem@caption{#1}
\edef\renewtheorem@nowithin{\noexpand\newtheorem{\renewtheorem@envname}{\renewtheorem@caption}}
\renewtheorem@thirdpar
}
\def\renewtheorem@thirdpar{\@ifnextchar[{\renewtheorem@within}{\renewtheorem@nowithin}}
\def\renewtheorem@within[#1]{\renewtheorem@nowithin[#1]}
\makeatother

\theoremstyle{definition}
\newtheorem{Atheorem}{Theorem}[section]
\newtheorem{Adefinition}[Atheorem]{Definition}
\newtheorem{Anotation}[Atheorem]{Notation}
\newtheorem{Aconvention}[Atheorem]{Convention}
\newtheorem{Aterminology}[Atheorem]{Terminology}
\newtheorem{Aconstruction}[Atheorem]{Construction}
\newtheorem{Asolution}[Atheorem]{Solution}
\newtheorem{Aconjecture}[Atheorem]{Conjecture}
\newtheorem{Aremark}[Atheorem]{Remark}
\newtheorem{Aexample}[Atheorem]{Example}
\newtheorem{Aexercise}[Atheorem]{exercise}
\newtheorem{Aquestion}[Atheorem]{Question}
\newtheorem{Arecollection}[Atheorem]{Recollection}

\theoremstyle{plain}
\newtheorem{Aproposition}[Atheorem]{Proposition}
\newtheorem{Alemma}[Atheorem]{Lemma}
\newtheorem{Aclaim}[Atheorem]{Claim}
\newtheorem{Acorollary}[Atheorem]{Corollary}

\theoremstyle{remark}
\newtheorem{Aobservation}[Atheorem]{Observation}
\newtheorem{Afact}[Atheorem]{Fact}
\newtheorem{Anote}[Atheorem]{Note}

\Crefname{Acorollary}{Corollary}{Corollaries}
\crefname{Acorollary}{Corollary}{Corollaries}
\Crefname{Alemma}{Lemma}{Lemmas}
\crefname{Alemma}{Lemma}{Lemmas}

\section{Conjugacy classes of wreath products via the monoidal dimension}
\label{app:conjugacy-of-wreath}

    Our goal in this appendix is to obtain a formula for centralizers and conjugacy classes in the wreath product $G \wr \Sm$ in terms of those of $G$ and $\Sm$. We do so by working in the universal 1-semiadditive category $\Span(\spcpi[1])$, where we compute the dimension of the symmetric power of $\BG$ in two different ways.
    

    First, recall that in $\Span(\spcpi[1])$
    \begin{equation*}
        \dim(\Sym(\BG)) \simeq \L\Sym \BG \simeq \L (\BG^{\degree})_{h\Sm} \simeq \L\B(G\wr\Sm) \simeq \bigsqcup_{[(\underline{g}, \sigma)] \in G \wr \Sm /\conj} \B\cent_{(\underline{g},\sigma)}(G \wr \Sm).
    \end{equation*}
    
    Second, we use the induced character formula \cref{thm:induced-character-formula}
    \begin{equation*}
        \dim (\Sym (\BG)) \simeq \int_{\L\B\Sm} \chi_{\Tm(\BG)}.
    \end{equation*}
    
    \begin{Alemma}
        Let $\sigma\in \Sm$. Then, using the notations of \cref{not:sigma} 
        \begin{equation*}
            \Csigma\simeq  ( \Sm[{\Nk[1]}])\times ((\ZZ/2)^{\Nk[2]}\rtimes \Sm[{\Nk[2]}]) \times \cdots \times ((\ZZ/\degree)^{\Nk[\degree]}\rtimes \Sm[{\Nk[\degree]}]).
        \end{equation*}
        moreover each $\ZZ/k$ is generated by the corresponding cycle and $\Sm[\Nk]$ permutes the all cycles of length $k$.
    \end{Alemma}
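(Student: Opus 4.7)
The plan is to give a direct group-theoretic proof by tracking how a commuting permutation acts on the cycle decomposition of $\sigma$. Fix a disjoint cycle decomposition $\sigma = \prod_{k=1}^{\degree} \prod_{i=1}^{\Nk} c_{k,i}$, where $c_{k,i}$ denotes the $i$-th cycle of length $k$. The key observation is that $\tau \sigma \tau^{-1}$ is obtained from $\sigma$ by replacing each cycle $(a_1, \dots, a_k)$ with $(\tau(a_1), \dots, \tau(a_k))$. Hence $\tau \in \Csigma$ if and only if $\tau$ carries each cycle of $\sigma$ setwise to another cycle of $\sigma$.

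Since conjugation preserves cycle length, any $\tau \in \Csigma$ must permute the $\Nk$ cycles of length $k$ among themselves for every $k$. This yields a homomorphism $\pi \colon \Csigma \to \prod_k \Sm[\Nk]$. I would then compute its kernel: if $\tau$ stabilizes each $c_{k,i}$ as a set, then its restriction to the support of $c_{k,i}$ commutes with the $k$-cycle $c_{k,i}$, hence equals some power of $c_{k,i}$. This identifies $\ker(\pi)$ with $\prod_k (\ZZ/k)^{\Nk}$, each $\ZZ/k$ factor generated by the corresponding cycle, exactly as claimed.

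Finally, I would split $\pi$ by choosing an explicit ordering $a_{k,i,1}, \dots, a_{k,i,k}$ of the support of each $c_{k,i}$ compatible with $c_{k,i}(a_{k,i,j}) = a_{k,i,j+1 \bmod k}$, and lifting $\rho \in \Sm[\Nk]$ to the permutation $a_{k,i,j} \mapsto a_{k,\rho(i),j}$. A direct check shows this is a group homomorphism, and that the induced map $\prod_k \Sm[\Nk] \to \Csigma$ together with the kernel description yields the semidirect product decomposition $\Csigma \simeq \prod_k ((\ZZ/k)^{\Nk} \rtimes \Sm[\Nk]) = \prod_k (\ZZ/k \wr \Sm[\Nk])$, with the asserted action of $\Sm[\Nk]$ on cycles of length $k$. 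The only delicate point is checking that the section respects composition, which follows immediately from the fixed ordering of cycle supports; there is no conceptual obstacle, as the result is essentially the classical description of centralizers in symmetric groups.
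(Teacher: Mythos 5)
Your argument is correct and is the standard elementary proof of the classical centralizer formula. Note that the paper does not prove this lemma at all: it is cited as a classical fact (see the proof of \cref{lem:decomposition-of-wreath}, where the $G = e$ case is explicitly called classical), and the dimension count in $\Span(\spcpi[1])$ carried out in the appendix is designed to \emph{deduce} the general $G \wr \Sm$ formula \emph{from} this base case, not to reprove the base case itself. So you are supplying a proof the paper deliberately omits, and a direct group-theoretic argument like yours is the appropriate thing to give there.

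One small imprecision to tighten: the biconditional ``$\tau \in \Csigma$ if and only if $\tau$ carries each cycle of $\sigma$ setwise to another cycle of $\sigma$'' fails in the backward direction if ``setwise'' is read as a statement about underlying sets only; for example $\tau = (1\,2)$ carries the support of $\sigma = (1\,2\,3)$ to itself as a set but does not commute with $\sigma$. The correct characterization is that $\tau$ must send each cycle to a cycle \emph{as a cyclic word}, i.e.\ $\tau c_{k,i}\tau^{-1}$ must again equal some $c_{k,j}$. This slip does not propagate: you only invoke the forward implication to define the homomorphism $\pi \colon \Csigma \to \prod_k \Sm[\Nk]$, and you establish the kernel identification (the centralizer of a $k$-cycle inside $\Sm[k]$ being the cyclic group it generates) and the explicit order-respecting section independently and correctly, so the resulting semidirect-product decomposition is sound once that one sentence is rephrased.
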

    
    
    \begin{Alemma}[{\cref{lem:character-of-Tm}}]
       Let $\cC\in \CAlg(\PrL)$ and $V\in \cC\dbl$. Then for any $\sigma \in \L\B\Sm$
       \begin{equation*}
            \chi_{\Tm V}(\sigma)=\dim(V)^{\numcyc} \qin \ounit_{\cC}^{\B\Csigma}
       \end{equation*}
       where $\dim(V)^{\numcyc}$ is equipped with the $\Csigma$-action of \cref{def:stabilizer-action}.
    \end{Alemma}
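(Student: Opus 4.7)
My plan is to prove this lemma by reducing to the universal example via the cobordism hypothesis in dimension 1 (\cref{thm:cobordism-hyp-in-dim-1}). A dualizable object $V \in \cC\dbl$ corresponds to a symmetric monoidal functor $F_V \colon \Bord_1^{\mrm{fr}} \to \cC$ sending the positively oriented point to $V$. The assignment $V \mapsto \Tm V$ is itself a symmetric monoidal construction respecting this correspondence, and the $\Sm$-action on $\Tm V$ is natural in $V$. Consequently, to verify the formula for $\chi_{\Tm V}(\sigma)$ as a $\Csigma$-equivariant map with values in $\ounit_{\cC}$, it suffices to check it in the universal case where $\cC = \Bord_1^{\mrm{fr}}$ and $V$ is the standard framed point.

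In this universal case, $\Tm V$ is the disjoint union of $\degree$ framed points, equipped with the $\Sm$-action by permutation. The character at $\sigma$ is computed by forming the trace of this permutation in $\Bord_1^{\mrm{fr}}$: concretely, it is the 1-manifold obtained by taking $\degree$ intervals and gluing the outgoing boundary of the $i$-th interval to the incoming boundary of the $\sigma(i)$-th. The resulting bordism is a disjoint union of circles, one for each disjoint cycle of $\sigma$, where the circle corresponding to a $k$-cycle wraps $k$ intervals. Since the circle is $\dim(V)$ in $\Bord_1^{\mrm{fr}}$ (applied to the universal dualizable object), this gives
\begin{equation*}
    \chi_{\Tm V}(\sigma) \simeq \bigsqcap_{k=1}^{\degree} \dim(V)^{\Nk} = \dim(V)^{\numcyc}.
\end{equation*}

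It remains to identify the $\Csigma$-action on this element with the one specified in \cref{def:stabilizer-action}. For the $\Sm[\Nk]$-factor this is immediate: permuting the $\Nk$ cycles of length $k$ of $\sigma$ permutes the corresponding circles in the bordism trace. For the $\ZZ/k$-factor generated by a $k$-cycle $\tau \subseteq \sigma$, the key observation is that $\tau$ acts on itself by cyclic rotation, which under the bordism trace construction corresponds precisely to the rotation action of $\ZZ/k \subseteq \TT$ on the associated circle component. Since the $\TT$-action on $\dim(V)$ is by definition the canonical rotation of the trace circle (\cref{lem:trace-is-T-invariant}), this recovers the restriction along $\ZZ/k \into \TT$ appearing in \cref{def:stabilizer-action}.

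The main obstacle is the equivariance bookkeeping: one must track that the $\Csigma$-action arising from the character construction (i.e., the induced action on the free loop component $\B\Csigma \subseteq \L\B\Sm$) matches the action on $\dim(V)^{\numcyc}$ described categorically. The cleanest way to handle this is to observe that both actions are natural in $V$ (as functors on $\cC\dbl$), so it again suffices to check the identification in $\Bord_1^{\mrm{fr}}$, where it is a direct combinatorial inspection of how the centralizer $\bigsqcap_k (\ZZ/k \wr \Sm[\Nk])$ acts on the trace bordism.
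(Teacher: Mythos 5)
Your proposal is correct and takes essentially the same approach the paper endorses for this lemma: reduction to the universal case via the 1-dimensional cobordism hypothesis, followed by the combinatorial identification of the trace bordism of a permutation with a disjoint union of circles indexed by its cycles, with the centralizer action matched by inspection. This is precisely the strategy the paper attributes to \cref{lem:character-of-Tm} (cf.\ the remark following \cref{cor:dim-of-alternating-depends-only-on-dim}) and reuses in the $G$-equivariant form $\Bord_1^G$ to prove \cref{lem:character-of-wreath-product}.
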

    
    In the case $\cC = \Span(\spcpi[1])$ and $V = \BG$:
    \begin{equation*}
        \begin{split}
            \L\B(G \wr \Sm) & = \dim(\Sym (\BG)) \\
            & \simeq \int_{\L\B\Sm} \chi_{\Tm\BG} \\
            & \simeq \bigsqcup_{[\sigma] \in \Sm/\conj} \int_{\B\Csigma} \L\BG^{\numcyc} \\
            & \simeq \bigsqcup_{[\sigma] \in \Sm/\conj} (\L\BG^{\numcyc})_{h \Csigma} \\
            & \simeq \bigsqcup_{[\sigma] \in \Sm/\conj}\bigsqcap_{k=1}^\degree (\L\BG^{\Nk})_{h ((\ZZ/k)^{\Nk} \rtimes \Sm[\Nk])}.
        \end{split}
    \end{equation*}
    $((\ZZ/k)^{\Nk} \rtimes \Sm[\Nk])$-orbits can be computed by first taking $(\ZZ/k)^{\Nk}$-orbits and then computing orbits under the induced $\Sm[\Nk]$-action. Note that
    \begin{equation*}
        (\L\BG^{\Nk})_{h (\ZZ/k)^{\Nk}} \simeq ((\L\BG)_{h\ZZ/k})^{\Nk} \qin \spc^{\B\Sm[\Nk]}.
    \end{equation*}
    \begin{Acorollary}\label{cor:LB(G-wr-Sm)}
        Let $G$ be a finite group. Then
        \begin{equation*}
            \L\B(G \wr \Sm) \simeq \bigsqcup_{[\sigma] \in \Sm/\conj} \bigsqcap_{k=1}^\degree (((\L\BG)_{h\ZZ/k})^{\Nk})_{h \Sm[\Nk]}.
        \end{equation*}
    \end{Acorollary}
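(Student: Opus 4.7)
The plan is to exploit the two parallel computations of $\dim(\Sym(\BG))$ in the universal $1$-semiadditive category $\Span(\spcpi[1])$ that are sketched in the discussion preceding the statement, and then to unpack the second computation explicitly. On the one hand, since the symmetric monoidal dimension in $\Span(\spcpi[1])$ is the free loop space functor, one has directly $\dim(\Sym(\BG)) \simeq \L\B(G \wr \Sm)$. On the other hand, the induced character formula (\cref{thm:induced-character-formula}) applied to the terminal map $\B\Sm \to \pt$ gives $\dim(\Sym(\BG)) \simeq \int_{\L\B\Sm} \chi_{\Tm\BG}$. Comparing these two expressions will yield the formula once the right-hand side is identified.

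To evaluate the integral, I would use the standard decomposition $\L\B\Sm \simeq \bigsqcup_{[\sigma] \in \Sm/\mrm{conj}} \B\Csigma$ to break it into a disjoint union of semiadditive integrals over each $\B\Csigma$. \cref{lem:character-of-Tm} identifies the restriction of $\chi_{\Tm\BG}$ to $\B\Csigma$ with $(\L\BG)^{\numcyc}$, carrying the $\Csigma$-action of \cref{def:stabilizer-action}. Since a semiadditive integral $\int_{\B H}$ in $\Span(\spcpi[1])$ is simply the homotopy orbits construction, this step produces $\bigsqcup_{[\sigma]} ((\L\BG)^{\numcyc})_{h\Csigma}$.

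Finally, I would rewrite these homotopy orbits using the semidirect decomposition $\Csigma \simeq \bigsqcap_{k=1}^\degree ((\ZZ/k)^{\Nk} \rtimes \Sm[\Nk])$. The orbits split as a product over $k$, and within each factor I would first take orbits by the normal subgroup $(\ZZ/k)^{\Nk}$, which acts factorwise on $(\L\BG)^{\Nk}$ via the inclusion $\ZZ/k \subseteq \TT$, producing $((\L\BG)_{h\ZZ/k})^{\Nk}$ as an $\Sm[\Nk]$-space, and then take the residual $\Sm[\Nk]$-orbits by permutation of coordinates. The only point requiring a bit of care is the iterated-orbits description along the split extension $1 \to (\ZZ/k)^{\Nk} \to (\ZZ/k)^{\Nk} \rtimes \Sm[\Nk] \to \Sm[\Nk] \to 1$ and the identification of the residual $\Sm[\Nk]$-action; both follow from the general identity $X_{hH} \simeq (X_{hN})_{hQ}$ for a split extension, together with the explicit description of the $\Csigma$-action provided in \cref{def:stabilizer-action}.
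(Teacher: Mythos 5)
Your proposal is correct and follows essentially the same argument as the paper: compute $\dim(\Sym(\BG))$ in $\Span(\spcpi[1])$ both directly (as $\L\B(G\wr\Sm)$) and via the induced character formula, then unwind the resulting integral over $\L\B\Sm$ using \cref{lem:character-of-Tm}, the decomposition of $\Csigma$ into wreath factors, and the iterated-orbits identity. The only minor remark is that the iterated-orbits identity $X_{hH}\simeq (X_{hN})_{hQ}$ holds for any group extension (via the fibration $\B N\to\B H\to\B Q$), so splitness is not actually needed.
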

    
    The $\ZZ/k$-action on $\L\BG$ is the restriction of the $\TT$-action on $\L\BG = \Map(\TT,\BG)$. Recall that a $\TT$-action on $\BG$ is the data of an element in the center $\Z(G)$ (\cref{recol:T-action-on-BG}).
    In particular a $\TT$-action on $\L\BG \simeq \bigsqcup_{[g] \in G / \conj} \B\cent_g(G)$ is an element in $\Z(\cent_g(G))$ for every conjugacy class in $G$. 
    The $\TT$-action on $\L\BG$ is described by $g \in \Z(\cent_g(G))$ for any conjugacy class $[g]$ of $G$.

    \begin{Adefinition}[\cref{def:root-of-central-element}]
            Let $G$ a be group, $z \in \Z(G)$ and $k \in \NN$. We define the group
            \begin{equation*}
                G\kz{z} \coloneqq G \langle x \mid x^k = z, \ gx = xg\  \forall g \in G \rangle.
            \end{equation*}
        \end{Adefinition}
        
    \begin{Alemma}\label{lem:cyclic-fixed-points-of-BG}
        Let $G$ be a finite group and $z \in \Z(G)$ that induces a $\TT$-action on $\BG$.
        Then
        \begin{equation*}
            (\BG)_{h\ZZ/k} \simeq \B G\kz{z}.
        \end{equation*}
    \end{Alemma}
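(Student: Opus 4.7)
The plan is to identify both $(\BG)_{h\ZZ/k}$ and $\B G\kz{z}$ as classifying spaces of the same group extension of $\ZZ/k$ by $G$.

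First, I would argue $(\BG)_{h\ZZ/k}$ is a $1$-type. The fiber sequence $\BG \to (\BG)_{h\ZZ/k} \to \B\ZZ/k$ together with the long exact sequence in homotopy shows $\pi_{\ge 2}((\BG)_{h\ZZ/k}) = 0$, so $(\BG)_{h\ZZ/k} \simeq \B H$ for a unique group $H$ fitting in a short exact sequence
\begin{equation*}
    1 \to G \to H \to \ZZ/k \to 1.
\end{equation*}
This extension is classified by the map $\phi \colon \B\ZZ/k \to \B\Aut(\BG)$. Because the $\ZZ/k$-action on $\BG$ is restricted from the $\TT$-action, $\phi$ factors as $\B\ZZ/k \into \B\TT \to \B\Aut(\BG)$, where the second map realizes $z$ on $\pi_2$ by \cref{recol:T-action-on-BG}. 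Since $\B\TT$ is simply connected, the induced outer action $\ZZ/k \to \pi_1\B\Aut(\BG) = \Out(G)$ vanishes, so the extension is classified by a class in $H^2(\ZZ/k; \Z(G)) \cong \Z(G)/k\Z(G)$ (with trivial coefficients).

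Next, I would identify this class as $[z]$. Because $\phi$ is trivial on $\pi_1$, it lifts through the universal cover $\B^2\Z(G) \to \B\Aut(\BG)$, and the lifted map $\B\ZZ/k \to \B^2\Z(G)$ represents the extension class. By the factorization through $\B\TT$, this lift arises by restriction from the canonical lift $\B\TT \to \B^2\Z(G)$ (unique because $\B\TT$ is simply connected), which is classified by $z \in H^2(\B\TT; \Z(G)) = \Z(G)$. The restriction map $H^2(\B\TT; \Z(G)) \to H^2(\B\ZZ/k; \Z(G))$ is the quotient $\Z(G) \onto \Z(G)/k\Z(G)$, so the extension class of $H$ is $[z]$.

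Finally, I would check that $G\kz{z}$ realizes the same extension. The defining presentation gives a short exact sequence $1 \to G \to G\kz{z} \to \ZZ/k \to 1$ in which $x$ projects to a generator of $\ZZ/k$ and commutes with $G$, so the outer action is trivial. Taking $x$ as a set-theoretic section, the relation $x^k = z$ shows that the associated $2$-cocycle represents $[z] \in \Z(G)/k\Z(G)$. Thus $H$ and $G\kz{z}$ classify the same extension, whence $H \cong G\kz{z}$ and $(\BG)_{h\ZZ/k} \simeq \B G\kz{z}$. The main obstacle lies in the middle step: verifying that the cohomology restriction is the quotient $\Z(G) \onto \Z(G)/k\Z(G)$. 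This is standard but deserves care --- it can be checked by pulling back the first Chern class along $\B\ZZ/k \into \B\TT$ and observing that it generates $H^2(\B\ZZ/k; \ZZ) = \ZZ/k$, since this pullback realizes the classifying class of the inclusion $\ZZ/k \into \TT$.
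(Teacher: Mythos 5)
Your proof is correct, but it organizes the argument quite differently from the paper. The paper first treats the abelian case as a classical fact about central extensions (that $G\kz{z}$ is the extension classified by $[z]\in G/kG$), and then reduces the general case to it by fitting $(\BG)_{h\ZZ/k}$ into an explicit $3\times 3$ diagram of pullback squares over $\B^2\Z(G)$, finally identifying the resulting group by an elementary computation with the image of $\sqrt[k]{z}$. You instead invoke the general Eilenberg--MacLane classification of extensions with non-abelian kernel: because the classifying map $\B\ZZ/k\to\B\Aut(\BG)$ factors through the simply connected $\B\TT$, the outer action is trivial and the extension class lives in $\H^2(\ZZ/k;\Z(G))$; you then compute this class as $[z]$ by restricting the unique lift $\B\TT\to\B^2\Z(G)$ (which records $z\in\pi_2$ by \cref{recol:T-action-on-BG}) along $\B\ZZ/k\hookrightarrow\B\TT$, and match it against the explicit cocycle of $G\kz{z}$. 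Both approaches lean on the same classical fact, namely that the lift $\B Q\to\B^2\Z(G)$ of the classifying map computes the extension class --- the paper packages this as a pullback square in the abelian case, you cite it directly --- so neither route has a gap the other avoids. Your version is more direct and makes the identification of the class as $[z]$ explicit (via restriction of the Chern class), at the cost of appealing to the classification of extensions with non-abelian kernel; the paper's diagram chase stays closer to the abelian case and is a bit more self-contained.
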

    
    \begin{proof}
        The case where $G$ is an abelian group is a reinterpretation of a classical fact. Indeed in this case we want to show that the following 
        \begin{equation*}
            \begin{tikzcd}
                {\BG\kz{z}} & \pt \\
                {\B\ZZ/k} & {\B^2 G=\B^2 \Z(G)}
                \arrow[from=1-2, to=2-2]
                \arrow[from=2-1, to=2-2]
                \arrow[from=1-1, to=2-1]
                \arrow[from=1-1, to=1-2]
            \end{tikzcd}
        \end{equation*}
        is a pullback diagram. This is the claim that $G\kz{z}$ is the central extension of $\ZZ/k$ by $G$ classified by $[z]\in G/kG\simeq \H^2(\ZZ/k; G)$ which is classical.
        
        For the general case one needs to verify that the following
        \begin{equation*}
            \begin{tikzcd}
                {\BG\kz{z}} & {\BG/\Z(G)} \\
                {\B\ZZ/k} & {\B^2\Z(G)}
                \arrow[from=1-2, to=2-2]
                \arrow[from=2-1, to=2-2]
                \arrow[from=1-1, to=2-1]
                \arrow[from=1-1, to=1-2]
            \end{tikzcd}
        \end{equation*}
        is a pullback diagram. 
        Let $P \coloneqq \B G/\Z(G) \times_{\B^2 \Z(G)} \B\ZZ/k$ and consider the following diagram
        \begin{equation*}
            \begin{tikzcd}
                {\B\Z(G)} & {\B\Z(G)\kz{z}} & \pt \\
                \BG & P & {\B G/Z(G)} \\
                \pt & {\B\ZZ/k} & {\B^2\Z(G).}
                \arrow[from=1-1, to=1-2]
                \arrow[from=1-1, to=2-1]
                \arrow["\lrcorner"{anchor=center, pos=0.125}, draw=none, from=1-1, to=2-2]
                \arrow[from=1-2, to=1-3]
                \arrow[from=1-2, to=2-2]
                \arrow["\lrcorner"{anchor=center, pos=0.125}, draw=none, from=1-2, to=3-3]
                \arrow[from=1-3, to=2-3]
                \arrow[from=2-1, to=2-2]
                \arrow[from=2-1, to=3-1]
                \arrow["\lrcorner"{anchor=center, pos=0.125}, draw=none, from=2-1, to=3-2]
                \arrow[from=2-2, to=2-3]
                \arrow[from=2-2, to=3-2]
                \arrow["\lrcorner"{anchor=center, pos=0.125}, draw=none, from=2-2, to=3-3]
                \arrow[from=2-3, to=3-3]
                \arrow[from=3-1, to=3-2]
                \arrow[from=3-2, to=3-3]
            \end{tikzcd}
        \end{equation*}
        Then the bottom-right square is a cartesian by definition, the right rectangle is cartesian as $\Z(G)$ is abelian, and the bottom rectangle and the full square are easily seen to be cartesian. Therefore all squares in the diagram are pullback squares. In particular, $P = \B K$ where $K$ is an extension of $\ZZ/k$ by $G$ and an extension of $G/\Z(G)$ by $\Z(G)\kz{z}$. 
        Choosing a base point for $\B\Z(G)$ we get a commutative diagram of groups where the middle row and middle column are exact
        \begin{equation*}
            \begin{tikzcd}
                && 1 \\
                & {\Z(G)} & {\Z(G)\kz{z}} \\
                1 & G & K & \ZZ/k & 1. \\
                && {G/\Z(G)} \\
                && 1
                \arrow[from=1-3, to=2-3]
                \arrow[from=2-2, to=2-3]
                \arrow[from=2-2, to=3-2]
                \arrow[from=2-3, to=3-3]
                \arrow[from=3-1, to=3-2]
                \arrow[from=3-2, to=3-3]
                \arrow[two heads, from=3-2, to=4-3]
                \arrow[from=3-3, to=3-4]
                \arrow[from=3-3, to=4-3]
                \arrow[from=3-4, to=3-5]
                \arrow[from=4-3, to=5-3]
            \end{tikzcd}
        \end{equation*}
        Let $x \in K$ be the image of $\sqrt[k]{z} \in \Z(G)\kz{z}$. Then $x^k = z$ and as the image of $x$ is trivial in $G/\Z(G)$, $x$ commutes with all elements of $G$, i.e.\ $K = G\kz{z}$.
    \end{proof}
    
    Finally, using the notations of \cref{not:not1,not:not2}, we deduce:
    \begin{Acorollary}
        Let $G$ be a finite group. Then 
            \begin{equation*}
                \L\B(G\wr \Sm) 
                \simeq \bigsqcup_{[\sigma] \in \Sm/\conj} \ \bigsqcap_{k=1}^\degree \ \bigsqcup_{[\underline{g}] \in \Sm[\Nk] \backslash G^{\Nk} / \conj} \ \bigsqcap_{x \in \underline{g}} \B (\cent_G(x)\kz{x} \wr \Sm[\mul_{\underline{g}}(x)]).
            \end{equation*}
    \end{Acorollary}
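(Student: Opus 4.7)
The plan is to start from Corollary \ref{cor:LB(G-wr-Sm)} and then simplify its right-hand side using Lemma \ref{lem:cyclic-fixed-points-of-BG} together with a standard combinatorial identification of orbits of products under permutation actions as wreath products. First, I would decompose the free loop space as $\L\BG \simeq \bigsqcup_{[g]\in G/\mrm{conj}} \B\cent_G(g)$, and recall that the inherited $\TT$-action on the component indexed by $[g]$ corresponds (via \cref{recol:T-action-on-BG}) to the central element $g \in \Z(\cent_G(g))$.

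Next, since orbits commute with disjoint unions, applying Lemma \ref{lem:cyclic-fixed-points-of-BG} componentwise yields
\begin{equation*}
    (\L\BG)_{h\ZZ/k} \simeq \bigsqcup_{[g]\in G/\mrm{conj}} \B\,\cent_G(g)\kz{g}.
\end{equation*}
Taking the $\Nk$-th cartesian power then gives a disjoint union over $(G/\mrm{conj})^{\Nk}$ of products $\bigsqcap_{i=1}^{\Nk}\B\,\cent_G(g_i)\kz{g_i}$, equipped with the evident $\Sm[\Nk]$-action that permutes the indices.

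The key step is then to compute the $\Sm[\Nk]$-homotopy orbits of this disjoint union. Disjoint unions commute with orbits once we pass to orbits on the indexing set, so I would write
\begin{equation*}
    \Bigl(\bigsqcup_{\underline{g}\in (G/\mrm{conj})^{\Nk}} \bigsqcap_{i=1}^{\Nk}\B\,\cent_G(g_i)\kz{g_i}\Bigr)_{h\Sm[\Nk]} \simeq \bigsqcup_{[\underline{g}]\in \Sm[\Nk]\backslash G^{\Nk}/\mrm{conj}} \Bigl(\bigsqcap_{i=1}^{\Nk}\B\,\cent_G(g_i)\kz{g_i}\Bigr)_{h\,\Stab_{\Sm[\Nk]}(\underline{g})}.
\end{equation*}
For a representative $\underline{g}$, its stabilizer in $\Sm[\Nk]$ is the product $\prod_{x\in\underline{g}}\Sm[\mul_{\underline{g}}(x)]$ over the distinct conjugacy classes $x$ appearing in $\underline{g}$, and it acts by permuting the identical factors $\B\,\cent_G(x)\kz{x}$ that correspond to each $x$. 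Taking homotopy orbits of a product of identical copies of $\B H$ under the permutation action of $\Sm[m]$ gives exactly $\B(H\wr \Sm[m])$, so the inner expression becomes $\bigsqcap_{x\in\underline{g}} \B(\cent_G(x)\kz{x}\wr \Sm[\mul_{\underline{g}}(x)])$, as desired.

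The only part requiring real care is the last identification --- keeping track of which permutations preserve the tuple $\underline{g}$ and ensuring that the resulting action on the product of classifying spaces is genuinely the standard wreath-product action. I expect the potential obstacle to be notational: one must be careful that $\underline{g}$ is taken up to the diagonal action which both permutes entries and conjugates each by $G$, and that the conjugation part is absorbed into choosing the representatives $x$ up to conjugacy in $G$ (so that the remaining symmetry is only the permutation of identical slots). Once this bookkeeping is set up, assembling the pieces over $[\sigma]\in\Sm/\mrm{conj}$ and $1\le k\le \degree$ gives precisely the claimed decomposition.
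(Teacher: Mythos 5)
Your proposal is correct and follows essentially the same route as the paper: start from \cref{cor:LB(G-wr-Sm)}, apply \cref{lem:cyclic-fixed-points-of-BG} componentwise over the decomposition $\L\BG \simeq \bigsqcup_{[g]}\B\cent_G(g)$, then compute the $\Sm[\Nk]$-orbits of the resulting $\Nk$-th power by decomposing over orbits of the indexing set and identifying the stabilizer orbits of repeated factors as wreath products. The paper presents this as a single terse chain of equivalences, while you spell out the intermediate steps (in particular the passage to stabilizers and the identification of the double coset space $\Sm[\Nk]\backslash G^{\Nk}/\mrm{conj}$), but the underlying argument is the same.
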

    
    \begin{proof}
        By \cref{cor:LB(G-wr-Sm)} and \cref{lem:cyclic-fixed-points-of-BG} 
        \begin{equation*}
            \begin{split}
                \L\B(G \wr \Sm) 
                & \simeq 
                    \bigsqcup_{[\sigma] \in \Sm/\conj} \ 
                    \bigsqcap_{k=1}^\degree ((\bigsqcup_{[x] \in G/\conj} \B\cent_G(x)\kz{x})^{\Nk})_{h \Sm[\Nk]} \\
                & \simeq 
                    \bigsqcup_{[\sigma] \in \Sm/\conj} \ 
                    \bigsqcap_{k=1}^\degree \ 
                    \bigsqcup_{[\underline{g}] \in \Sm[\Nk] \backslash G^{\Nk}/\conj} \ 
                    \bigsqcap_{x\in \underline{g}}(\B\cent_G(x)\kz{x}^{\mul_{\underline{g}}(x)})_{h \Sm[\mul_{\underline{g}(x)}]} \\
                & \simeq 
                    \bigsqcup_{[\sigma] \in \Sm/\conj} \
                    \bigsqcap_{k=1}^\degree \ 
                    \bigsqcup_{[\underline{g}] \in \Sm[\Nk] \backslash G^{\Nk} / \conj} \ 
                    \bigsqcap_{x \in \underline{g}} \B (\cent_G(x)\kz{x} \wr \Sm[\mul_{\underline{g}}(x)]).
            \end{split}
        \end{equation*}
    \end{proof}

    \bibliographystyle{alpha}
    \phantomsection\addcontentsline{toc}{section}{\refname}
    \bibliography{references}

\end{appendices}

\end{document}